\theoremstyle{plain}
\newtheorem{thm}{Theorem}[section]
\newtheorem{lem}[thm]{Lemma}
\newtheorem{prop}[thm]{Proposition}
\newtheorem*{assumption*}{Assumption}
\theoremstyle{remark}
\newtheorem{rem}[thm]{Remark}
\numberwithin{equation}{section}
\newcommand{\ind}{{\mathbbm{1}}}
\newcommand{\1}[1]{{\ind\mkern -1.5mu}{\{#1\}}}
\DeclareMathOperator{\Hessian}{Hess}
\DeclareMathOperator{\E}{\mathbb E}
\renewcommand{\P}{\mathbb P}
\newcommand{\supp}{\text{supp}}
\renewcommand{\epsilon}{\varepsilon}
\newcommand{\TV}{\mathrm{TV}}
\newcommand{\eps}{\varepsilon}
\newcommand{\ud}{{\mathrm d}}
\newcommand{\R}{{\mathbb R}}
\newcommand{\N}{{\mathbb N}}
\newcommand{\RP}{{\mathbb R}_+}
\newcommand{\cB}{{\mathcal B}}
\newcommand{\cC}{{\mathcal C}}
\newcommand{\cF}{{\mathcal F}}
\newcommand{\cP}{{\mathcal P}}
\newcommand{\cT}{{\mathcal T}}
\newcommand{\cW}{{\mathcal W}}
\newcommand{\cX}{{\mathcal X}}
\newcommand{\Id}{\text{Id}}
\def\namedlabel#1#2{\begingroup  
    #2%
    \def\@currentlabel{#2}%
    \phantomsection\label{#1}\endgroup
}
\newlist{myenumi}{enumerate}{10}
\setlist[myenumi]{leftmargin=0pt, labelindent=\parindent, listparindent=\parindent, labelwidth=0pt, itemindent=!, itemsep=1pt, parsep=4pt}
\newlist{thmenumi}{enumerate}{10}
\setlist[thmenumi]{leftmargin=0pt, labelindent=\parindent, listparindent=\parindent, labelwidth=0pt, itemindent=!}
\begin{document}

%\title[Failure of the CLT]{Tails of return times and the failure of Central Limit Theorem for MCMC algorithms}
%\title{Comparison
% of sampling algorithms  for heavy-tailed  distributions}
\title[Central Limit Theorem for ergodic averages of Markov chains]{Central Limit Theorem for ergodic averages of Markov chains \& the comparison of sampling algorithms for heavy-tailed  distributions}
%\title[Validity of Central Limit Theorems]{Robust comparison
%of sampling algorithms  via $\textrm{L}$-drift conditions:
%validity of the CLT  %Central Limit Theorems 
%for polynomially convergent samplers}
%\author{Miha Bre\v{s}ar \and Aleksandar Mijatovi\'c \and Gareth O. Roberts}
\author{Miha Bre\v{s}ar}
\address{School of Data Science, The Chinese University of Hong Kong, Shenzhen, China} 
\email{mihabresar@cuhk.edu.cn}
\author{Aleksandar Mijatovi\'c}
\address{Department of Statistics, University of Warwick, UK}
\email{a.mijatovic@warwick.ac.uk}
\author{Gareth Roberts}
\address{Department of Statistics, University of Warwick, UK}
\email{gareth.o.roberts@warwick.ac.uk}
\date{\today}

\subjclass[2020]{
Primary  60J05%  	Discrete-time Markov processes on general state spaces
,  60F05%  	Central limit and other weak theorem
; Secondary 
%  60J20% Applications of Markov chains and discrete-time Markov processes on general state spaces 
  60J22%  	Computational methods in Markov chains [See also 65C40]
, 65C40%  	Numerical analysis or methods applied to Markov chains
%, 65C05% Monte Carlo methods
}

\keywords{Discrete-time Markov processes on general state spaces, central limit theorem (CLT) for ergodic averages, 
necessary conditions for CLT via Lyapunov functions,
polynomial ergodicity,
lower bounds, 
rate of convergence in total variation and Wasserstein metric,
heavy-tailed invariant distribution, random walk Metropolis with finite and infinite variance proposals, Metropolis-adjusted Langevin algorithm, stereographic projection sampler, unadjusted Langevin algorithm with increments of finite and infinite variance}

\begin{abstract}
Establishing central limit theorems (CLTs) for ergodic averages of Markov chains is a fundamental problem in probability and its applications. Since the seminal work~\cite{MR834478}, a vast literature has emerged on the sufficient conditions for such CLTs. To counterbalance this, the present paper provides verifiable necessary conditions for CLTs of ergodic averages of Markov chains on general state spaces. Our theory is based on drift conditions, which also yield lower bounds on the rates of convergence to stationarity in various metrics.

The validity of the ergodic CLT is of particular importance for sampling algorithms, where it underpins the error analysis of estimators in Bayesian statistics and machine learning. Although heavy-tailed sampling is of central importance in applications, the characterisation of the CLT and the convergence rates are theoretically poorly understood for almost all practically-used Markov chain Monte Carlo (MCMC) algorithms. In this setting our results provide sharp conditions on the validity of the ergodic CLT and establish convergence rates for large families of MCMC sampling algorithms for heavy-tailed targets. Our study includes a rather complete analyses for random walk Metropolis samplers (with finite- and infinite-variance proposals), Metropolis-adjusted and unadjusted Langevin algorithms and the stereographic projection sampler (as well as the independence sampler).
By providing these sharp results via our practical drift conditions, our theory  offers significant insights into the problems of algorithm selection and comparison for sampling heavy-tailed  distributions (see short YouTube presentations~\cite{YouTube_talk} describing our \href{https://youtu.be/m2y7U4cEqy4}{\underline{theory}} and \href{https://youtu.be/w8I_oOweuko}{\underline{applications}}).
\end{abstract}

%\href{https://youtu.be/m2y7U4cEqy4}{\underline{Part~I}} of a short YouTube presentation~\cite{YouTube_talk} describes our theory, including its applications to unadjusted (ULA-type) algorithms with increments of finite and infinite variance. \href{https://youtu.be/w8I_oOweuko}{\underline{Part~II}} of the presentation discusses the application of our theory to unbiased MCMC algorithms.

\maketitle

\tableofcontents

\section{Introduction}
\label{sec:introduction}

The Central Limit Theorem (CLT) for ergodic averages of Markov Chains plays a fundamental role in probability theory and its applications. There are numerous approaches to establishing the CLT in this setting, ranging from martingale methods and the Poisson equation~\cite{MR834478,MR2981426}  to  classical drift conditions~\cite{MR2446322}. While these approaches provide sufficient conditions for the CLT to hold, to the best of our knowledge, there are no general results that can be applied in practice giving necessary conditions for the validity of the CLT of an ergodic average. Building on the ideas behind the  
\textbf{L}-drift conditions recently introduced in~\cite{brešar2024subexponential} for continuous-time processes, in the present paper we provide such a theory and apply it to compare sampling algorithms for heavy-tailed target distributions in Bayesian statistics and machine learning. This area of application of our theory is motivated by the central role the finiteness of the asymptotic variance for ergodic estimators plays in the error analysis for such sampling algorithms~\cite{MR3235677}.  

The introduction is organised as follows: Section~\ref{subsec:theory_intor} describes the intuition behind our approach and results for general ergodic Markov chains; Section~\ref{subsec:application_intor} outlines  the application of our theory to some of the most fundamental and widely used families of sampling algorithms for heavy-tailed target distributions; Section~\ref{subsection:structure_of_paper} gives the structure of the remainder of the paper.

\subsection{Structure of our theory and main results} \label{subsec:theory_intor}
Our main result, Theorem~\ref{thm:CLT} in Section~\ref{sec:main_results} below, gives sufficient conditions for the \textit{failure} of the ergodic CLT for Markov Chains on general state spaces in terms of the  \textbf{L-drift conditions}, which we now describe (for a more precise and rigorous statement see Assumption~\nameref{sub_drift_conditions} in Section~\ref{subsec:General_Assumptions} below).
Take a Lyapunov function $V:\cX\to[1,\infty)$ of the Markov chain $X$  with a transition kernel $P$ on a locally compact topological space $\cX$. Construct scalar functions $\varphi:(0,1] \to\RP$ and $\Psi:[1,\infty)\to[1,\infty)$ satisfying 
%certain properties (see
%Assumption~\nameref{sub_drift_conditions}).
%Assume that 
the following inequalities on the complement of a compact set in the state space $\cX$:\footnote{Although the conditions on the function $\Psi$ in~\eqref{eq:basic_assumption} and in~\nameref{sub_drift_conditions} are close but not equivalent in general, they coincide for all algorithms studied in this paper. For ease of interpretation, in Section~\ref{sec:introduction} we use the simpler condition on $\Psi$ in~\eqref{eq:basic_assumption}.} 
\begin{equation}
\label{eq:basic_assumption}
P(1/V) - \varphi(1/V)\leq 1/V\qquad\text{and}\qquad
P(\Psi\circ V)\geq \Psi\circ V.
%\E \left[\Psi\circ V(X_1)|X_0\right]\geq \Psi\circ V(X_0).
\end{equation}
Theorem~\ref{thm:CLT} provides sufficient conditions for the \textit{failure} of the CLT for an ergodic estimator of $\pi(g)$ in terms of the functions $(V,\varphi,\Psi)$, where $\pi$ is the stationary distribution of $X$ and the  test function $g:\cX\to \R$ is in $L^2(\pi)$ but grows sufficiently rapidly outside compact regions. Moreover, under~\eqref{eq:basic_assumption}, Section~\ref{sec:main_results} below gives \textit{lower} bounds for the following four quantities: the \textit{rate of convergence to stationarity} in  \textit{total variation} and the \textit{Wasserstein distance} (see Theorems~\ref{thm:f_rate}), the \textit{tails of the invariant measure} $\pi$ of the chain $X$ (see Theorem~\ref{thm:invariant}) and the \textit{tails of the return times} of $X$ to compact sets (see Theorem~\ref{thm:modulated_moments}).

The starting point of our approach is the classical result on the validity of the CLT for ergodic averages of Markov chains, which characterises the finiteness of the asymptotic variance in terms of the tails of the modulated moments of excursions from a small set (see, e.g.,~\cite[Sec.~2]{Chen99}). The first step in our analysis of these excursions is based on the  \textbf{L}-drift  conditions in~\eqref{eq:basic_assumption}, which allow us to extract information about the height and the duration of excursions from  the one-step transition kernel $P$ evaluated against a Lyapunov function and its reciprocal on the complement of a compact set (cf. Theorem~\ref{thm:modulated_moments} in Section~\ref{sec:main_results}).
In fact, the general characterisation of the finiteness of the asymptotic variance~\cite[Sec.~2]{Chen99} is given, not in terms of the original chain $X$, but via additive functionals of the excursions of the Nummelin splitting~\cite{MR776608} of the chain $X$. In the second key step, we generalise the lower bounds on the tails of excursions of the original chain $X$ to the tails of excursions of the Nummelin-split chain of $X$. This allows us to establish necessary conditions for the finiteness of the modulated moments of excursions of the Nummelin-split chain in terms of the  functions 
$(V,\varphi,\Psi)$ in the 
\textbf{L}-drift conditions in~\eqref{eq:basic_assumption} of the original chain $X$. This crucial second step constitutes technically the most  challenging aspect of the paper.    

Our \textbf{L}-drift conditions in~\eqref{eq:basic_assumption}
are practical, requiring only the knowledge of the asymptotic behaviour of the one-step transition kernel $P$ evaluated against a Lyapunov function and its reciprocal. In fact, our main results in Section~\ref{sec:main_results} are given \textit{only} in terms of the functions $(V,\varphi,\Psi)$ satisfying the inequalities in~\eqref{eq:basic_assumption}. This stands in contrast to some of the most influential advances in the field that depend either on the solution of the Poisson equation~\cite{MR834478} or on 
global functional  inequalities, such as  Poincar\'e or log-Sobolev~\cite{MR889476,MR3155209}, holding for \textit{all} $L^2$-functions. In particular, the nature of the \textbf{L}-drift condition in~\eqref{eq:basic_assumption} makes our approach applicable to non-reversible Markov chains, where both solving the Poisson equation (see, e.g.,~\cite[Sec.~3]{MR3737912} and~\cite{MR2981426}) or verifying functional inequalities (see e.g.,~\cite{MR4524509,MR4783036}) is known to be particularly challenging.
Moreover, as our theory works directly in discrete time, in contrast to the seminal paper in~\cite{MR3678479}, our stability results do not rely on the structure of an underlying  continuous-time Markov process (Langevin diffusion in the case of~\cite{MR3678479}). 
This makes our theory applicable to Markov chains  that are not natural discretisations of a continuous-time ergodic Markov process (e.g., algorithms such as the random walk Metropolis with proposals of infinite variance  and the stereographic projection sampler~\cite{Yang24}; see Section~\ref{sec:examples} below for more details). 
Crucially, despite relying  only on simple asymptotic assumptions on the one-step transition kernel in~\eqref{eq:basic_assumption}, our theory yields sharp conditions for the validity of the CLT for a large family of sampling algorithms for heavy-tailed target distributions, as well as precise bounds on their convergence rates (see Section~\ref{subsec:application_intor} below). All these results make our theory based on the \textbf{L}-drift conditions in~\eqref{eq:basic_assumption} a natural counterpart of the classical drift conditions for ergodic Markov chains~\cite{meynadntweedie,MarkovChains,MR2446322}.

\subsection{Comparison of sampling algorithms for heavy-tailed target distributions} \label{subsec:application_intor}
Sampling from heavy-tailed distributions is notoriously challenging for Markov chain Monte Carlo (MCMC) algorithms. For instance, it is well known that the random walk Metropolis (RWM), Metropolis-adjusted Langevin (MALA) and Hamiltonian Monte Carlo  algorithms fail to be geometrically ergodic for any target density with subexponential tails ~\cite{Mengersen96,roberts1996geometric,MR1440273,livingstone2019geometric}. Heavy tails, however, arise naturally across disciplines: in machine-learning models~\cite{MR2551019,balcan2017sample,nguyen2019non,simsekli2020fractional,diakonikolas2020learning}, in statistics and Bayesian computation~\cite{MR3235677,MR2655663,MR2038227,MR3788187,genz2009computation}, and in physical systems governed by L\'evy-type dynamics or long-range interactions~\cite{MR3389843,MR3538368,provost2023adaptive}. A canonical Bayesian example is the use of Cauchy priors for the regression coefficients in logistic regression~\cite[Sec.~16.3]{MR3235677}, recommended as a default choice due to their non-informative nature~\cite{MR2655663}. Nevertheless, such models may lead to heavy-tailed posteriors which, in some cases, do not possess even the first moment~\cite{MR3780427}.

Despite  extensive empirical evidence of the slow mixing and the unreliability of the confidence interval estimators for heavy-tailed target distributions (see Figures~\ref{fig:first}, panels~(A) \& (B), as well as e.g.~\cite{Roberts07,MR1796485}), sampling from such targets
remains poorly understood from a theoretical perspective. In particular, the problem of establishing the failure of an ergodic CLT for Markov chains lacks practically applicable theory, which represents a major obstacle in understanding sampling from heavy-tailed distributions. Although numerous new samplers for heavy-tailed target distributions have been proposed in recent years  (see, e.g.,~\cite{Yang24,MR4580902,heseparation}), without supporting theory it remains unclear whether they offer  any/substantial advantage over the standard algorithms.

\begin{figure}[ht]
  \centering
  \begin{subfigure}[b]{0.32\textwidth} % Changed from 0.45 to 0.3
    \includegraphics[width=\textwidth]{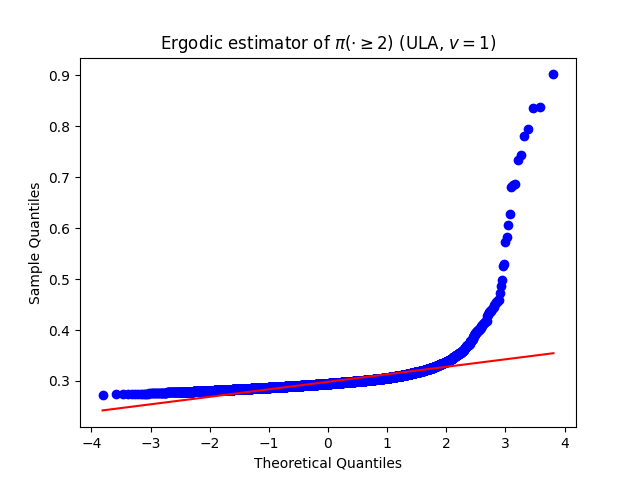}
    \caption{}
    \label{fig:ULA_mean_QQ_1}
  \end{subfigure}
  \hfill
  \begin{subfigure}[b]{0.32\textwidth} % Changed from 0.45 to 0.3
    \includegraphics[width=\textwidth]{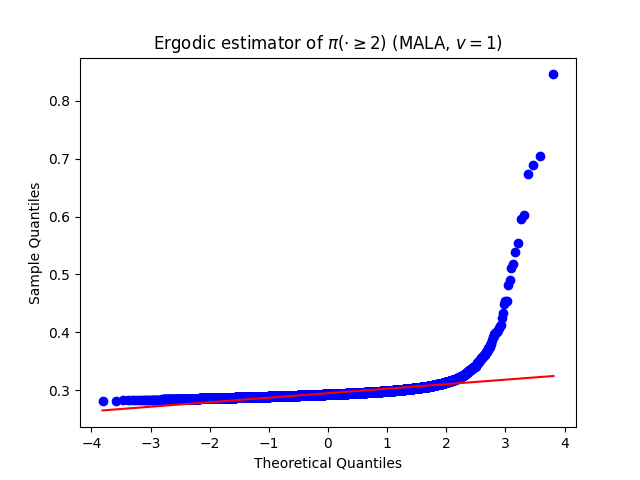}
    \caption{}
    \label{fig:MALA_mean_QQ_1}
  \end{subfigure}
  \hfill
  \begin{subfigure}[b]{0.32\textwidth} % New third subfigure
    \includegraphics[width=\textwidth]{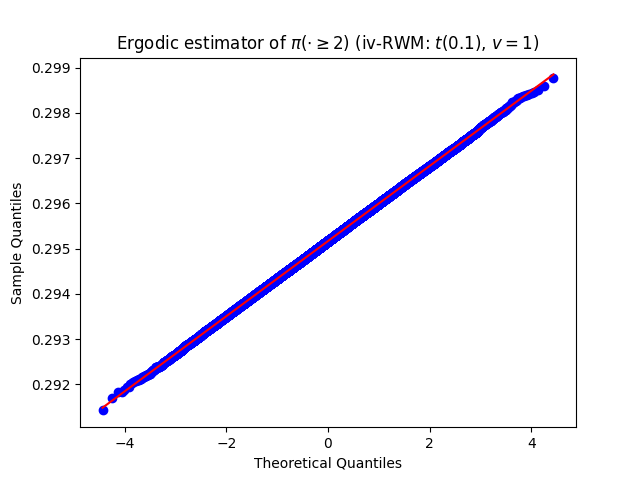} 
    \caption{}
    \label{fig:third_image}
  \end{subfigure}
  \caption{Panels (A), (B) and (C) present QQ-plots of $10^4$ normalised ergodic averages estimating the probability $\pi(\cdot\geq 2)$ via  ULA, MALA and RWM with $\alpha$-stable proposal ($\alpha=0.1$), respectively. $\pi$ is a $t$-distribution on $\R$ with $v=1$ degree of freedom, see ~\eqref{eq:student_t_def} below. Each ergodic average was computed using  $2\cdot 10^8$ steps.\protect\footnotemark}
  \label{fig:first}
\end{figure}
\footnotetext{The target distribution $\pi$ in Figure~\ref{fig:first} is one-dimensional; for simulations in higher dimensions see Figure~\ref{fig:RWM_d_20_ex} below.}

The theory developed in Section~\ref{sec:main_results} below constitutes a novel and comprehensive theoretical framework that enables systematic algorithm comparison and provides general guidelines for designing efficient samplers for heavy-tailed target distributions. The QQ-plots in Figure~\ref{fig:first} (compare panesl (A) \& (B) with panel (C)) illustrate the importance of good algorithm selection even in dimension one.
Applying the theory from Section~\ref{sec:main_results} to large families of sampling algorithms targeting heavy-tailed distributions, in Section~\ref{sec:examples} we obtain a characterisations of the validity of the CLT for their ergodic estimators \textit{and} establish their convergence rates in the Wasserstein and $f$-variation metrics, rigorously identifying the optimal algorithm for a given heavy-tailed target distribution. 
The robustness of our methods is demonstrated by the sharp results in Section~\ref{sec:examples}
 for both reversible Metropolis-adjusted algorithms and non-reversible unadjusted algorithms. In the latter class, our theory yields a characterisation of the tail decay of the invariant distribution, 
which turns out to be independent of the discretisation parameter. Thus  unadjusted algorithms with tail decay in stationarity, different from that of the target distribution, exhibit a significant bias of ergodic estimators for any discretisation level.

\subsubsection{Summary of the results in Section~\ref{sec:examples}}
We apply our theoretical results from Section~\ref{sec:main_results} to the following families of sampling algorithms.

 \smallskip

\noindent\namedlabel{SimAlg}{{\color{purple}\texttt{(SIM-ALG)}}}
\textbf{RWM} with  \textbf{finite}  (\textbf{fv-RWM}) and \textbf{infinite} (\textbf{iv-RWM})  variance  proposals (Section~\ref{subsec:RWM}), \textbf{MALA} (Section~\ref{subsec:MALA}), 
unadjusted Langevin algorithm \textbf{(ULA)} (Section~\ref{subsec:ULA}) and the stereographic projection sampler \textbf{(SPS)} (Section~\ref{subsec:stereographic}). 

 \smallskip

\noindent We also apply our theory to the \textbf{independence sampler (IS)} (Section~\ref{subsec:independence_sampler}) in order  to compare the IS to the samplers in~\ref{SimAlg} (see Table~\ref{tab:samplers-short}) and also to demonstrate that it recovers the known results for the IS~\cite{MR3843830}.

\smallskip

 Using the theory of \textbf{L}-drift conditions from Section~\ref{sec:main_results}, in Section~\ref{sec:examples} below we establish the following results for the algorithms in~\ref{SimAlg} applied to a heavy-tailed target distribution $\pi$ on $\R^d$.

\noindent \textbf{(i)} We characterise the finiteness of the asymptotic variance (i.e. for which $g\in L^2(\pi)$ the CLT for the ergodic estimator of $\pi(g)$ fails) for the   algorithms in~\ref{SimAlg}. Our theoretical results match the conjectured behaviour for the RWM obtained via simulation in~\cite{Roberts07}. To the best of our knowledge, prior to the present paper, such conjectures were resolved only in the case of the independence sampler~\cite{MR3843830} (see Section~\ref{subsec:independence_sampler}).

\noindent \textbf{(ii)} We provide asymptotically \textit{sharp} lower bounds on the convergence rate in the $L^p$-Wasserstein distance (for $p\in[1,\infty)$) and the $f$-variation, including total variation.

\noindent \textbf{(iii)} We resolve an open question from~\cite{Yang24} on the behaviour of the stereographic projection sampler in the non-uniformly ergodic regime by characterising its polynomial rate of convergence to stationarity and the validity of its ergodic CLT (see Section~\ref{subsec:stereographic}).

\noindent \textbf{(iv)}  We provide the tail decay of the stationary measure  $\pi_h$ of the ULA chain, which approximates the target measure $\pi$, for any sufficiently small step size parameter $h>0$. In particular, we prove that the tails of $\pi_h$ decay at the same rate as the tails of the target distribution $\pi$ for \textit{all} small $h>0$ (see Section~\ref{subsec:ULA}).

\noindent \textbf{(v)} We prove that unadjusted algorithms with heavy-tailed increments~\cite{MR4580902,heseparation}, such as Euler-Maruyama discretisation schemes of geometrically ergodic L\'evy-driven SDEs (cf.~\cite{Majka17, MR4580902,MR4665874}),  necessarily exhibit substantial bias that does not decay to zero as the step size $h\to0$  when sampling from a target distribution $\pi$ with a finite second moment. Such algorithms either 
converge to an approximate target distribution $\pi_h$ with an \textit{infinite second moment} (if the drift is \textit{not} super-linear) or \textit{fail to converge} (if the drift is super-linear) as the chain becomes
\textit{transient}, see Section~\ref{subsec:discretisation_levy_algorithm}.

\smallskip

The results in \textbf{(i)-(ii)}  enable algorithm selection and comparison, allowing to identify the best sampler in~\ref{SimAlg}  for a given heavy-tailed target $\pi$. Moreover, the insights in  \textbf{(iii)-(v)} give necessary conditions that constrain the design of algorithms capable of efficiently sampling from a heavy-tailed target distribution. In particular, for the sampling algorithms in~\ref{SimAlg}, our results imply Table~\ref{tab:samplers-short}.

\begin{table}[ht]
\centering
\begin{threeparttable}
\caption{Comparison  of~\ref{SimAlg}  
samplers in $\R^d$ targeting $\pi(x)\!\approx\!|x|^{-(v+d)}$ with tail decay $v>0$}
 \label{tab:samplers-short}
\renewcommand{\arraystretch}{1.1}
\begin{tabularx}{\textwidth}{@{}l || c c |c c| X@{}}
\toprule
Algorithm & \textbf{CLT} &  $\boldsymbol{\rho}$ \textbf{(TV)} & $\boldsymbol{\beta}$ & \textbf{Jump} & Comment \\ 
\midrule
fv-RWM    & $v>2$                  & $v/2$    & $2$    & many  & Robust, slow, some CLT failure \\ 
iv-RWM  ($\eta\in(0,2)$)     &  $v>\eta $          & $v/\eta$   & $\eta$ & many    & Significant improvement on fv-RWM; excellent \textbf{robust} option for  $\eta\ll2$ \\ 
ULA/MALA            & $v>2$                  & $v/2$   & $2$  & many      & Despite gradients, no gain vs. fv-RWM \\[2pt]
SPS ($v<d$) & $ v>d/2 $   & $\frac{v}{d-v}$ & $d-v$   & single    & Fast yet \textbf{fragile}:  in~\ref{SimAlg}, best if  $v>d$~\cite{Yang24} and worst  if $v<d-2$ \\ 
IS ($k>v$)  & $v>k/2 $            & $\frac{v}{k-v}$  & $k-v$   & single    & Extremely \textbf{fragile}:  if $v=k$ exact tail simulation; if $k\neq v$ impractical \\ 
\bottomrule
\end{tabularx}

\begin{tablenotes}[para,flushleft]\small
\textbf{Notation.}  
%$\boldsymbol{v}$: tail decay rate for density $\pi(x)\!\approx\!|x|^{-(v+d)}$ in $\R^d$; 
\textbf{CLT}: sharp condition for the validity of the CLT  of the ergodic estimator $\sum_{k=1}^N\mathbbm{1}_A(X_k)/N$ of $\pi(A)$ for the corresponding algorithm;  
$\boldsymbol{\rho}$ \textbf{(TV)}: rate of decay in total variation $\|P^{n}-\pi\|_{\mathrm{TV}}\approx n^{-\rho}$; $\boldsymbol{\beta}$: for $\varphi$ in \textbf{L}-drift conditions and a rate-optimal Lyapunov function $V$, exponent in $V(x)\varphi(1/V(x))\approx |x|^{-\beta}$ (as $|x|
\to\infty$) controls length of return-time from the tails; \textbf{Jump}: many- or single-jump phenomenon for tail exploration (see the paragraph on ``Tail exploration: single-jump versus many-jump samplers'' in Subsection~\ref{subsubsec:Interpretation} below).   In iv-RWM and independence sampler IS, the proposal distributions admit exactly $\eta$ and $k$ finite moments, respectively. Note that if $v>d$ (resp. $v>k$), then SPS (resp. IS) is uniformly ergodic~\cite{Yang24} (resp.~\cite[Thm~2.1]{Mengersen96}) and the CLT holds.
 \end{tablenotes}
\end{threeparttable}
\end{table}

%It is well-known among practitioners that the convergence of Markov Chain Monte Carlo (MCMC) algorithms to heavy-tailed targets is often slow. This slow convergence arises from the interplay between unbounded state spaces, diminishing gradient information, and local moves, which collectively result in empirically observed \textit{stickiness} and poor theoretical mixing properties. Our novel drift condition presented in \eqref{eq:basic_assumption} offers new insights into this phenomenon. Specifically, the function $\varphi$ quantifies the degree of stickiness experienced by the sampler in the tails, effectively measuring the amount of time the process spends there. A faster decay of $r\mapsto\varphi(1/r)$ indicates prolonged excursions in the tails, leading to poorer mixing and possibly the \textbf{failure} of the Central Limit Theorem for certain estimators. Consequently, our drift condition in \eqref{eq:basic_assumption} precisely identifies and quantifies the factors that slow down the algorithm, detailing both the location and the magnitude of this slowdown and its implications. Furthermore, our results demonstrate that heavy-tailed proposals for Random walk Metropolis (RWM) facilitate more effective exploration of the state space when targeting heavy-tailed distributions, outperforming both light-tailed proposals and gradient-based methods such as the ULA and MALA.

\subsubsection{Interpretation of the results for sampling algorithms} \label{subsubsec:Interpretation} In this subsection we discuss the three main takeaways for the sampling of heavy-tailed target distributions.

\smallskip

\noindent \textbf{Why do vanilla samplers struggle with heavy-tailed target distributions?}
Empirical evidence shows that \textit{vanilla samplers} (e.g. MALA, ULA and RWM with normal proposals) become \textit{sticky in the tails} of a high-dimensional heavy-tailed target distribution $\pi$, degrading their performance. This \textit{sticky} behaviour of the sampler can be quantified via the \textbf{L}-drift conditions  since the asymptotic behaviour of the functions $\varphi$ and $\Psi$,  satisfying~\eqref{eq:basic_assumption}, encodes information about the duration (and hence frequency) of excursions from a compact set. Our Theorem~\ref{thm:invariant} below essentially  states that $$\overline \pi(r)\approx 1/(r\varphi(1/r)\Psi(r))\quad\text{as $r\to\infty$,}$$ where $\overline \pi$ denotes the tail of the target distribution $\pi$. Thus,   
when  $\pi$ is heavy-tailed, the function $r\mapsto r\varphi(1/r)\Psi(r)\approx 1/\overline \pi(r)$ grows slowly, implying   either  \textit{long duration of excursions} (i.e., fast growth of $\Psi$ and fast decay of $r\mapsto r\varphi(1/r)$) or  \textit{high frequency of excursions} (i.e. slow growth of $\Psi$ and slow decay of $r\mapsto r\varphi(1/r)$). Clearly, a sampling algorithm exhibits 
poor performance if the duration of its excursions is long. This stickiness of the algorithm in  the tails of $\pi$ is quantified by the rate of the decay of the function $r\mapsto r\varphi(1/r)$ as $r\to\infty$,  which is determined by the one-step transition kernel of the algorithm applied to the reciprocal of the Lyapunov function via the first $\mathbf{L}$-drift condition in~\eqref{eq:basic_assumption} above.

\noindent Table~\ref{tab:samplers-short} stipulates that vanilla samplers fv-RWM, MALA and ULA in~\ref{SimAlg} target heavy-tailed $\pi$ via long durations of excursions, exhibited by the faster decay of $r\mapsto r\varphi(1/r)$ as $r\to\infty$ (i.e., $\beta=2$), thus manifesting the stickiness in the tails. This results in slower mixing  and the failure of the CLT, including for  ergodic estimators of probabilities under $\pi$ (if $\pi$ has infinite variance, i.e. $v\in(0,2)$ in Table~\ref{tab:samplers-short}). 
Table~\ref{tab:samplers-short} also shows that 
such behaviour is \textit{not} ubiquitous for samplers targeting a heavy-tailed $\pi$. 
For instance, iv-RWM   uses many short excursions (for $\beta=\eta\ll2$) and is correspondingly significantly less sticky in the tails. Hence, the breakdown of the CLT and the emergence of tail-stickiness is a feature of the sampler rather than the heavy-tailed target distribution $\pi$. For more details 
on how algorithms fv-RWM, MALA, ULA and iv-RWM behave when targeting polynomial densities,
see Sections~\ref{subsec:RWM},~\ref{subsec:MALA} and~\ref{subsec:ULA} below.

\smallskip

\noindent \textbf{Tail exploration:  single-jump versus many-jump samplers.} The function $\Psi$ in the $\mathbf{L}$-drift condition in~\eqref{eq:basic_assumption} captures how quickly the probability of the chain $X$ reaching the tail $\{V>r\}$ decays as $r\to\infty$ (recall that $V:\cX\to[1,\infty)$ is the Lyapunov function for $X$). 
Two distinct regimes emerge, depending on whether the function $P(\Psi\circ V)$ 
in \textbf{L}-drift conditions in~\eqref{eq:basic_assumption}
is finite.

\noindent\textit{Many-jump samplers} ($P(\Psi\circ V)<\infty$) inch towards remote tail regions through a succession of small steps. This behaviour is exhibited by  ULA, MALA and the random walk Metropolis  algorithms, even when the proposal distribution in RWM itself is heavy-tailed. Such incremental progress into the tails of the target distribution slow down exploration of the state space: all many-jump samplers in~\ref{SimAlg} fail to achieve exponential ergodicity for any tail decay rate $v$ of the target distributions $\pi$, see Table~\ref{tab:samplers-short}.

\noindent In contrast, \textit{single-jump samplers} ($P(\Psi\circ V)=\infty$) can reach far into the tail in a single step. The Independence sampler and the SPS algorithm exhibit such behaviour. Their ability to traverse large distances in a single jump makes them attractive for heavy-tailed targets. Yet, this feature also renders them \textbf{fragile}: if the target distribution is sufficiently  heavy-tailed, a successful leap into the tail traps the chain there, rejecting subsequent proposals with very high probability (see Sections~\ref{subsec:stereographic}
and~\ref{subsec:independence_sampler}). For such target distributions, both SPS and the independence sampler exhibit excessively high rejection rates leading to the failure of the CLT for ergodic estimators.

\noindent In \textit{summary}, \textit{single-jump} samplers can provide faster tail exploration \textit{when properly tuned}, but are more sensitive to misspecification than their \textit{many-jump} counterparts; see Table~\ref{tab:samplers-short} above for details.

\smallskip

\noindent \textbf{The role of the Metropolis correction in targeting heavy-tailed distributions.} Our theory implies that for \textit{vanilla} algorithms, the use of Metropolis correction, which makes the sampler unbiased, is \textit{not} required for accurately estimating the tail behaviour of the target distribution: the unadjusted Langevin algorithm   captures the correct tail decay for \textit{any} sufficiently  small steps size $h>0$. As seen in Table~\ref{tab:samplers-short}, efficient heavy-tailed sampling also demands rapid exploration of the state space, which in turn requires increments with  heavy-tailed jumps. Implementing such increments without the Metropolis correction introduces non-negligible discretisation bias which persists for all step sizes for target distributions with finite variance (see Subsection~\ref{subsec:discretisation_levy_algorithm}); a Metropolis correction is therefore indispensable for eliminating this bias while preserving the mixing benefits of heavy-tailed proposals (see infinite variance Random walk Metropolis iv-RWM in Section~\ref{subsec:RWM} below).

\smallskip

The short YouTube presentations~\cite{YouTube_talk}  discuss the points raised in Sections~\ref{subsec:theory_intor} and~\ref{subsec:application_intor} above.  The video in \href{https://youtu.be/m2y7U4cEqy4}{\underline{Part~I}} describes the theory, including its applications to unadjusted (ULA-type) algorithms with increments of finite and infinite variance, while \href{https://youtu.be/w8I_oOweuko}{\underline{Part~II}} focuses on the applications of the theory to unbiased  MCMC samplers.

\subsection{Structure of the remainder of the paper} \label{subsection:structure_of_paper} Section~\ref{sec:main_results} presents our main theoretical results. Its Subsection~\ref{sec:discussion} contains a discussion of these results in the context of the related literature. Section~\ref{sec:examples} applies the findings of  Section~\ref{sec:main_results} to the families of sampling algorithms in~\ref{SimAlg}. The results of  Section~\ref{sec:examples} are summarised in  Table~\ref{tab:samplers-short} above and discussed in
Subsection~\ref{subsec:examples_discussion} in the wider context of the literature on sampling algorithms. Section~\ref{sec:main_proofs} proves the the main results of Section~\ref{sec:main_results}.
Section~\ref{sec:return_times_convergence} contains statements and proofs of some of the technical results required in Sections~\ref{sec:main_results} and~\ref{sec:main_proofs}. Appendix~\ref{sec:examples_proofs} applies results of Section~\ref{sec:main_results} to the families of sampling algorithms in~\ref{SimAlg}, thus proving the theorems of Section~\ref{sec:examples}. Appendix~\ref{A:simulations} explains the details behind the simulations resulting in QQ-plots in Figures~\ref{fig:first}-\ref{fig:last_MALA_ULA}.

\section{\textbf{L}-drift conditions for ergodic Markov chains on general state spaces \& necessary conditions for the validity of the CLT of an ergodic average}
\label{sec:main_results}
\subsection{General assumptions and notation}\label{subsec:General_Assumptions}
Let $X=(X_n)_{n\in\N}$ be a discrete-time Markov chain on a locally compact topological space with a Borel $\sigma$-algebra $(\cX,\cB(\cX))$ and transition kernel $P$. The following \textit{standard assumptions} hold throughout the paper (unless stated otherwise). The chain $X$ is aperiodic,  irreducible and positive Harris recurrent~\cite[Sec.~6.1.2]{meynadntweedie}. Let $P^n$, for $n\in\N\coloneqq\{0,1,\ldots\}$, denote the $n$-step transition kernel for the Markov chain $X$, that is, 
\begin{equation*}
    P^n(x,A)=\P_x(X_n\in A),\qquad x\in\cX,~~ A\in \cB(\cX),
\end{equation*}
where $\P_x$ is the condition distribution of the chain $X$, given $X_0=x$. The corresponding expectation operator will be denoted $\E_x$. We assume that $X$ is a \textit{Feller} chain~\cite[Ch.~6]{meynadntweedie}, i.e., for every $n\in\N$ and open set $O\subset \cX$, the function $x\mapsto P^n(x,O)$ is lower semicontinuous. For any $x\in\cX$ and measurable function $f:\cX\to\R$, we denote $Pf(x)\coloneqq \int_\cX f(y)P(x,\ud y)$. For a signed measure $\mu$ on $\cB(\cX)$, we write $\mu(f)\coloneqq \int_\cX f(y)\mu(\ud y)$. We assume that $X$ is an ergodic chain with an invariant distribution $\pi$, i.e., for any $x\in\cX$ we have $\| P^n(x,\cdot)-\pi\|_{\TV}\to 0$ as $n\to\infty$, where $\|\mu\|_{\TV} \coloneqq \sup\{\vert\int_\cX g(x)\mu(\ud x)\vert:g:\cX\to \R \text{ measurable, } |g|\leq 1\}$.

Following~\cite{brešar2024subexponential}, we introduce the \textbf{L}-drift conditions in discrete time.

\begin{assumption*}[\textbf{L($V$,$\varphi$,$\Psi$)}]
\namedlabel{sub_drift_conditions}
Let $V:\cX\to[1,\infty)$ be a continuous function, such that, for all $x\in \cX$, $\limsup_{n\to\infty}V(X_n)=\infty$ holds $\P_x$-a.s. There exists $\ell_0\in[1,\infty)$ such that~\ref{sub_drift_conditions(i)} and~\ref{sub_drift_conditions(ii)} hold.\\
\namedlabel{sub_drift_conditions(i)}{\textbf{(i)}}
Let $\varphi:(0,1] \to\RP$ 
be a continuous function,
such that  $r\mapsto r\varphi(1/r)$ is decreasing\footnote{A non-increasing function may have intervals of constancy, while a decreasing function does not (similarly for non-decreasing and increasing). In particular, since $r\mapsto r\varphi(1/r)$ is decreasing, $\varphi$ must be increasing.} on $[1,\infty)$, the limit
$\lim_{r\to\infty}r\varphi(1/r)=0$ holds
and for some $b\in\RP\coloneqq[0,\infty)$ we have
\begin{equation*}
\label{eq:submartingale_drift}
P(1/V)-\varphi(1/ V)\leq 1/V -b \mathbbm{1}\{V\leq\ell_0\}.
\end{equation*}
\namedlabel{sub_drift_conditions(ii)}{\textbf{(ii)}}
Let $\Psi:[1,\infty)\to[1,\infty)$ be a differentiable, increasing, submultiplicative\footnote{A function $\Psi:[1,\infty) \to[1,\infty)$ is
 \textit{submultiplicative} if it satisfies
$\Psi(r_1+r_2)\leq C\Psi(r_1)\Psi(r_2)$
for some constant $C\in(0,\infty)$ and all $r_1,r_2\in[1,\infty)$~\cite[Def.~25.2]{MR3185174}.}\label{footnote:submultiplicative} function satisfying the following: for any $\ell\in(\ell_0,\infty)$ and $m\in(0,\infty)$, there exists a 
constant $C_{\ell,m}\in(0,1)$ such that 
\begin{equation}
\label{eq:assumption_heavy_tail_bound}
\P_x(T^{(r)}<S_{(\ell)}) \geq 
%\1{V(x)\geq r} + 
C_{\ell,m}/\Psi(r) \quad \text{for all $r\in(\ell+m,\infty)$ and $x\in\{\ell+m\leq V\}$,}
\end{equation}
where $T^{(r)}\coloneqq\inf\{n\in\N :V(X_n)>r\}$ and $S_{(\ell)} \coloneqq \inf\{n\in\N: V(X_n)<\ell\}$.
\end{assumption*}

A Borel measurable function  $g:\cX\to \RP$ is in $L^2(\pi)$ if \begin{equation}
\label{eq:CLT_necessary}
 \quad \int_{\cX} g(x)^2 \pi(\ud x)<\infty.
\end{equation}
Denote the ergodic average by
\begin{equation}
    \label{eq:ergodic_average}
    S_{n}(g) \coloneqq \frac{1}{n}\sum_{k=0}^{n-1} g(X_k), \quad n\geq 1.
\end{equation}
The central limit theorem  for the ergodic average $S_n(g)$ holds if there exists $\sigma_g^2\in(0,\infty)$ such that, for every starting state $X_0=x\in\cX$, we have
\begin{equation}
\label{eq:CLT}
\sqrt{n}(S_n(g)-\pi(g))\rightarrow N(0,\sigma_g^2)\quad \text{in distribution as $n\to\infty$,}
\end{equation}
where $N(0,\sigma_g^2)$ denotes standard Gaussian law with variance $\sigma_g^2$. 
A real function $f$ (defined on a neighbourhood of infinity in $\RP$) is locally integrable at infinity if:
$$f\in L^{1}_{\mathrm{loc}}(\infty)\stackrel{\mathrm{def}}{\iff}
\text{there exists } l\in\RP,\text{ such that }\int_{l}^\infty |f(u)|\ud u<\infty.$$

\subsection{Necessary conditions for the validity of the CLT of an ergodic average \& lower bounds on the tails of the invariant law, total variation and Wasserstein distance}
\label{subsec:Results}
The characterisation of the failure of the CLT in Theorem~\ref{thm:CLT} will follow from  our \textbf{L}-drift conditions~\nameref{sub_drift_conditions}. 
Let $h:\RP\to\RP$ be a measurable function, non-decreasing outside of some compact set
and let $\varphi$ be the function in Assumption~\nameref{sub_drift_conditions}. 
For any constant $q\in(0,1)$, define 
\begin{equation}
\label{eq:G_h}
G_h(r)\coloneqq q(1-q)\frac{h(r)}{r\varphi(1/r)},\quad r\in[1,\infty),
\end{equation}
 which is increasing on a complement of a compact set in $\RP$.

%Theorem~\ref{thm:CLT} below provides a condition for $g$, satisfying~\eqref{eq:CLT_necessary}, under which the CLT fails. 

\begin{thm}
\label{thm:CLT}
Let Assumption~\nameref{sub_drift_conditions} hold.
\begin{myenumi}[label=(\alph*)]
\item \label{thm:CLT(a)}For an eventually positive and non-decreasing $h:\RP\to\RP$, let  $G_h^{-1}$ be the inverse of $G_h$ in~\eqref{eq:G_h} on a neighbourhood of infinity. If~\eqref{eq:CLT_necessary} holds for $g:\cX\to \R$, satisfying $g\geq h\circ V$ and
%If, for some $q\in(0,1)$, we have
\begin{equation}
\label{eq:CLT_integral_test}
t\mapsto 1/\Psi\left(G_h^{-1}(\sqrt{t})/(1-q)^2\right)\notin L^{1}_{ \mathrm{loc}}(\infty)\quad\text{ for some $q\in(0,1)$,} 
\end{equation}
then the CLT \textbf{fails} 
for the ergodic average $S_{n}(g)$ in~\eqref{eq:ergodic_average}.
\item \label{thm:CLT(b)} If the condition in ~\eqref{eq:CLT_integral_test}
holds for $h\equiv 1$,
then   the CLT \textbf{fails} 
for the ergodic average $S_{n}(g)$ in~\eqref{eq:ergodic_average}
for any  bounded measurable function $g:\cX\to\R$.
\end{myenumi}
\end{thm}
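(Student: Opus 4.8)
The plan is to reduce the CLT to the finiteness of the asymptotic variance, and to read off a lower bound on that variance from the behaviour of the excursions of the Nummelin-split chain, using the $\mathbf{L}$-drift conditions to control those excursions. First I would invoke the classical characterisation (as in~\cite[Sec.~2]{Chen99}): for the $\pi$-centred test function $\bar g=g-\pi(g)$, the CLT~\eqref{eq:CLT} holds with a finite variance if and only if the modulated second moment of the excursion of the Nummelin-split chain from (a neighbourhood of) the regeneration atom is finite, i.e. $\mathbb E_\alpha\big[(\sum_{k=0}^{\tau_\alpha-1}|\bar g(X_k)|)^2\big]<\infty$ where $\alpha$ is the atom and $\tau_\alpha$ the return time. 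Hence to prove the CLT \emph{fails} it suffices to show this modulated second moment is infinite. Since $g\ge h\circ V$ and $g\in L^2(\pi)$ forces $\pi(g)<\infty$, on the set $\{V>r_0\}$ for $r_0$ large we have $\bar g\ge \tfrac12 h\circ V$, so it is enough to show $\mathbb E_\alpha\big[(\sum_{k=0}^{\tau_\alpha-1} h(V(X_k))\mathbbm 1\{V(X_k)>r_0\})^2\big]=\infty$.

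Second, I would lower bound this via a single deep excursion. The key probabilistic estimate is: starting from a fixed level $\ell$, the chain reaches the tail set $\{V>r\}$ before returning below $\ell$ with probability $\gtrsim 1/\Psi(r)$; this is exactly the content of~\eqref{eq:assumption_heavy_tail_bound} in~\nameref{sub_drift_conditions}\ref{sub_drift_conditions(ii)}. Complementarily, the first $\mathbf{L}$-drift condition in~\nameref{sub_drift_conditions}\ref{sub_drift_conditions(i)}, applied to the supermartingale-type inequality for $1/V$, tells us that once the chain is at a level $V(X_n)\approx r$ it takes a long time to come back down: the function $r\mapsto r\varphi(1/r)$ governs how slowly $1/V$ can increase along a trajectory, so from level $r$ the return time to a compact set is of order at least $G_h^{-1}$-type quantities, and more precisely during an excursion that has reached level $r$ the chain spends enough time at levels where $h\circ V$ is comparable to $h(r)$ that the accumulated additive functional $\sum h(V(X_k))$ over that excursion is $\gtrsim r\varphi(1/r)\cdot h(r)\cdot(\text{const})$, up to the factors $q(1-q)$ and $(1-q)^2$ appearing in the definition of $G_h$ in~\eqref{eq:G_h} and in~\eqref{eq:CLT_integral_test}. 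This is where the quantity $G_h(r)=q(1-q)h(r)/(r\varphi(1/r))$ enters: an excursion reaching level $r$ contributes at least $\asymp G_h(r)$ (times a harmless constant) to $\sum_{k}h(V(X_k))$, with probability $\gtrsim 1/\Psi(r)$, and these events can be arranged over a dyadic sequence of levels $r_j$ so that the contributions from excursions of depth $\asymp r_j$ are, at the level of the second moment, additive. Making all of this rigorous for the Nummelin-split chain rather than the original chain $X$ is the technical crux — one must transfer the hitting-probability lower bound~\eqref{eq:assumption_heavy_tail_bound} and the slow-descent estimate from $X$ to its split chain, which the paper flags as the hardest step; I would invoke the transfer results from Section~\ref{sec:return_times_convergence} (e.g. the analogue of Theorem~\ref{thm:modulated_moments}) to do so.

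Third, I would assemble the divergence. Writing $Y$ for the value of $G_h\circ V$ at the deepest point of a generic excursion (a random variable whose tail is controlled from below: $\mathbb P(Y>s)\gtrsim 1/\Psi(G_h^{-1}(s/c))$ for a constant $c$ absorbing the $(1-q)$ factors), the modulated second moment is bounded below by $\mathbb E[Y^2]=\int_0^\infty 2s\,\mathbb P(Y>s)\,\mathrm ds \gtrsim \int^\infty s/\Psi(G_h^{-1}(s/c))\,\mathrm ds$, and after the substitution $t=s^2$ this is comparable to $\int^\infty 1/\Psi\big(G_h^{-1}(\sqrt t)/(1-q)^2\big)\,\mathrm dt$, which is infinite precisely by the hypothesis~\eqref{eq:CLT_integral_test}. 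Hence the asymptotic variance is infinite and the CLT fails, proving part~\ref{thm:CLT(a)}. For part~\ref{thm:CLT(b)}, take $h\equiv 1$: then $g\ge h\circ V\equiv 1$ is automatic for any lower bound, but more to the point the argument shows the modulated second moment of $\sum_k \mathbbm 1\{V(X_k)>r_0\}$ (equivalently, of the time spent in the tail during an excursion) is infinite, and since any bounded measurable $g$ with $\pi(g)\ne\text{const}$ — after centring — need only be bounded below away from zero in absolute value on a set of positive $\pi$-measure inside $\{V>r_0\}$, the same divergence forces $\sigma_g^2=\infty$; a short separate argument handles the degenerate case to conclude the CLT cannot hold with any finite positive variance for any bounded $g$. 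The main obstacle, as noted, is the faithful transfer of the excursion estimates to the Nummelin split chain and the bookkeeping of the multiplicative constants $q$, $1-q$ so that they land exactly in the form~\eqref{eq:CLT_integral_test}; everything else is the classical variance characterisation plus a lower bound on $\mathbb E[Y^2]$ via a layer-cake computation.
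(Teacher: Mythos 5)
Your part~\ref{thm:CLT(a)} follows the paper's architecture (Chen's split-chain criterion, excursion depth probability $\gtrsim 1/\Psi(r)$ from~\eqref{eq:assumption_heavy_tail_bound}, slow descent giving an additive-functional contribution $\asymp G_h(r)$, then a layer-cake computation producing~\eqref{eq:CLT_integral_test}), but the reduction to Chen's criterion is mishandled. The usable implication is for the \emph{signed} sum, $\E[(\sum_k (g(X_k)-\pi(g)))^2]=\infty\Rightarrow$ failure; your version with $|\bar g|$ is not an equivalence in the direction you need. Your lower bound then rests on $\bar g\ge \tfrac12 h\circ V$ in the tail, which (i) requires $h\circ V\to\infty$, not assumed ($h$ is only eventually positive and non-decreasing, so bounded $h$ is allowed), and (ii) even when it holds does not control the term $-\pi(g)\cdot(\text{excursion length})$ accumulated outside the tail, so a long excursion need not make the signed sum large. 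The paper avoids centring altogether: Proposition~\ref{prop:CLT} shows that $\E_{\mu_m}[(\sum_{k=0}^{\tau_D(0)}g(X_k))^2]=\infty$ for the nonnegative $g$ already forces failure, using the regeneration identity $\E_{\nu_m^\star}[\sum g]=\pi(g)\E_{\nu_m^\star}[m\tau^\star+m-1]<\infty$. Moreover, the ingredients you flag as "the technical crux" are precisely where the proof lives and are not supplied: one needs Lemma~\ref{lem:bounded_petite_sets} to know the small set $D$ lies in a sublevel set $\{V\le\ell\}$ (so $S_{(\ell)}\le\tau_D(0)$ and Proposition~\ref{prop:lyapunov_return_times}, i.e.~\eqref{eq:Lyapunov_squared_bound}, applies), and a Feller/lower-semicontinuity argument to show the post-regeneration law $\mu_{km}$ charges $\{V>\ell+1\}$; once these are in place a single application of the tail bound suffices, with no dyadic-level bookkeeping.

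Part~\ref{thm:CLT(b)} is a genuine gap. Your claim that any bounded $g$, after centring, is bounded away from zero on a positive-measure subset of the tail is false in general (take $g$ compactly supported with $\pi(g)=0$), and even when $|\bar g|$ is bounded below on the tail, the signed excursion sum is subject to uncontrolled cancellation, so infinitude of the second moment of the time spent in the tail does not give $\E[(\sum\bar g)^2]=\infty$. The paper argues differently: by~\cite[Ch.~II, Thm~4.1(i)]{Chen99} and~\cite[Prop.~5.16(1), Def.~5.5]{MR776608}, the CLT fails for \emph{every} bounded measurable $g$ once the chain fails recurrence of degree~2, i.e.\ once $\int_B\E_x[\tau_B(1)^2]\pi(\ud x)=\infty$ for $B=\{V\le\ell\}$ with $\pi(B)>0$; this is verified from Theorem~\ref{thm:modulated_moments} with $h\equiv1$ together with Proposition~\ref{prop:escape_sublevel}, which guarantees $\int_B P(x,\{V\ge\ell+m\})\pi(\ud x)>0$. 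Some such second-moment-of-return-time criterion is needed to conclude (b); your centring argument does not deliver it.
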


The proof of Theorem~\ref{thm:CLT} requires verifying the negation of the classical  characterisation conditions for the CLT of ergodic averages~\cite{Chen99}, which are given in terms of the excursions away from a small set for the Nummelin split chain of $X$ (see Section~\ref{sec:main_proofs} below for details). Via Theorem~\ref{thm:modulated_moments} below, our $\mathbf{L}$-drift conditions enable us to verify the negation of the characterisation of the CLT in terms of  the transition kernel $P$ of the chain $X$. 

Theorem~\ref{thm:modulated_moments} gives lower bounds on the tails of return times and modulated moments of the chain $X$. For any $k\in\N$ and any set $B\in \cB(\cX)$, define
 the first return time of $X$ to $B$
after $k$ steps as
\begin{equation}
\label{eq:tau_k}
\tau_B(k) \coloneqq \inf\{n\geq k: X_n\in B\}\qquad\text{(with convention $\inf\emptyset \coloneqq\infty$)}.
\end{equation}

\begin{thm}[Tails of return times \& modulated moments]\label{thm:modulated_moments}
Let Assumption~\nameref{sub_drift_conditions} hold with $\ell_0\in[1,\infty)$ and fix $q\in(0,1)$. 
For a continuous function 
$h:[1,\infty)\to[1,\infty)$, 
non-decreasing on $[\ell_0',\infty)\subseteq[\ell_0,\infty)$,
let  $G_h^{-1}:[G_h(\ell_0'),\infty)\to\RP$ be the inverse of $G_h$ given in~\eqref{eq:G_h}. 
Then  for every $\ell\in[\ell_0',\infty)$ and $B\in \cB(\cX)$, satisfying $B \subset \{V\leq \ell\}$, the following statement holds:
%\begin{myenumi}[label=(\alph*)]
%\item\label{thm:modulated_moments_a}
for every $m\in(0,\infty)$, the constant $C_{\ell,m}$ in~\nameref{sub_drift_conditions}\ref{sub_drift_conditions(ii)} and every $x\in\cX$ we have
$$
\P_x\left(\sum_{k=0}^{\tau_B(1)} h\circ V(X_k)\geq r\right)\geq \frac{q C_{\ell,m} P(x,\{V\geq \ell + m\})}{\Psi(G_h^{-1}(r)/(1-q)^2)} \quad \text{for all $r\in(G_h(\ell+m),\infty)$.}
$$
% \item\label{thm:modulated_moments_b}
% Fix $h\equiv1$. For every $m\in(0,\infty)$, the constant $C_{\ell,m}$ in~\nameref{sub_drift_conditions}\ref{sub_drift_conditions(ii)} and every $x\in\cX$ we have
% $$
% \P_x(\tau_B(1)\geq t)\geq \frac{q C_{\ell,m}P(x,\{V\geq \ell + m\})}{\Psi(G_1^{-1}(t)/(1-q)^2)}\quad \text{for all $t\in(G_1(t+m) ,\infty)$.}
% $$
%\end{myenumi}
\end{thm}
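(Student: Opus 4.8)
The plan is to build, for a fixed $q\in(0,1)$ and each $r\in(G_h(\ell+m),\infty)$, an explicit favourable path event on which $\sum_{k=0}^{\tau_B(1)}h\circ V(X_k)\geq r$, and then to bound its probability from below by splicing together three elementary estimates via the Markov property. Put $\rho':=G_h^{-1}(r)$ (so that $G_h(\rho')=r$), $\rho:=\rho'/(1-q)^2$ and $n_0:=r/h(\rho')$; since $G_h$ is increasing on $[\ell_0',\infty)$ and $r>G_h(\ell+m)$, one has $\rho>\rho'>\ell+m\geq\ell\geq\ell_0'\geq\ell_0$, so all these levels lie in the region where Assumption~\nameref{sub_drift_conditions} is in force (in particular the indicator $b\,\mathbbm{1}\{V\leq\ell_0\}$ in~\ref{sub_drift_conditions(i)} is absent there) and $h$ is non-decreasing. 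I would take the favourable event $E$ to be the intersection of: (a) $\{V(X_1)\geq\ell+m\}$; (b) from time $1$ on, the chain enters $\{V>\rho\}$ before $\{V<\ell\}$; and (c) once it first enters $\{V>\rho\}$, say at time $T$, it remains in $\{V>\rho'\}$ throughout the next $\lfloor n_0\rfloor$ steps.

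First I would verify that $E$ forces the asserted lower bound. On $E$ every state $X_n$ with $1\leq n\leq T+\lfloor n_0\rfloor$ has $V(X_n)\geq\ell$, and those with $T\leq n\leq T+\lfloor n_0\rfloor$ have $V(X_n)>\rho'>\ell$; since $B\subseteq\{V\leq\ell\}$, the chain avoids $B$ up to time $T+\lfloor n_0\rfloor$, hence $\tau_B(1)\geq T+\lfloor n_0\rfloor$ (the measure-boundary case $\{V=\ell\}$ is absorbed by running the excursion against a level slightly below $\ell$). As $h$ is non-decreasing on $[\ell_0',\infty)$ and $\rho'\geq\ell_0'$, each of the $\lfloor n_0\rfloor+1$ terms $h\circ V(X_T),\dots,h\circ V(X_{T+\lfloor n_0\rfloor})$ is at least $h(\rho')$, so on $E$ the modulated sum is at least $(\lfloor n_0\rfloor+1)\,h(\rho')>n_0\,h(\rho')=r$.

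It then remains to lower-bound $\P_x(E)$. Applying the Markov property at time $1$ and the strong Markov property at the first entrance of the (shifted) chain to $\{V>\rho\}$, $\P_x(E)$ dominates the product of $P(x,\{V\geq\ell+m\})$, of $\inf_{V(y)\geq\ell+m}\P_y(T^{(\rho)}<S_{(\ell)})$, and of $\inf_{V(z)>\rho}\P_z(V(X_k)>\rho'\text{ for }0\leq k\leq\lfloor n_0\rfloor)$. The middle factor is exactly what Assumption~\nameref{sub_drift_conditions}\ref{sub_drift_conditions(ii)} supplies with radius $\rho>\ell+m$, namely $C_{\ell,m}/\Psi(\rho)$. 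For the last factor I would use the submartingale drift~\ref{sub_drift_conditions(i)}: fix $z$ with $V(z)>\rho$, set $Y_k:=1/V(X_k)$ under $\P_z$ and $N:=\inf\{k:V(X_k)\leq\rho'\}$; while $k<N$ we have $V(X_k)>\rho'>\ell_0$, so~\ref{sub_drift_conditions(i)} together with the monotonicity of $\varphi$ gives $\E[Y_{k+1}\mid\cF_k]\leq Y_k+\varphi(Y_k)\leq Y_k+\varphi(1/\rho')$. A Doob decomposition and optional stopping at the bounded time $N\wedge n$, using $Y_0<1/\rho$ and $Y_{N\wedge n}\geq(1/\rho')\,\mathbbm{1}\{N\leq n\}$, then yield $\P_z(N\leq n)<(1-q)^2+n\,\rho'\varphi(1/\rho')$. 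Finally the defining identity $G_h(\rho')=r$ rearranges to $\rho'\varphi(1/\rho')=q(1-q)h(\rho')/r$, so with $n=\lfloor n_0\rfloor\leq r/h(\rho')$ one gets $n\,\rho'\varphi(1/\rho')\leq q(1-q)$ and hence $\P_z(N\leq\lfloor n_0\rfloor)<(1-q)^2+q(1-q)=1-q$, i.e.\ the last factor is at least $q$. Multiplying the three factors gives $\P_x(E)\geq qC_{\ell,m}P(x,\{V\geq\ell+m\})/\Psi(\rho)$, which is the claimed bound since $\rho=G_h^{-1}(r)/(1-q)^2$.

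The step I expect to be the main obstacle is the rigorous concatenation of the three events through the correct shift operators: the second and third estimates are conditional on the chain's earlier behaviour, so one must check that $E$ is genuinely measurable and that the strong Markov property may be invoked on the relevant sub-events (and handle the cases where $T^{(\rho)}$ or $S_{(\ell)}$ is infinite), together with the small boundary bookkeeping caused by $B\subseteq\{V\leq\ell\}$ against the strict inequality defining $S_{(\ell)}$. By contrast the three individual ingredients — the hitting estimate from~\ref{sub_drift_conditions(ii)}, the martingale/optional-stopping computation, and the arithmetic of the constants (which has been arranged precisely so that the product telescopes to the stated form) — are routine.
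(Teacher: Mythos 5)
Your proposal is correct and is essentially the paper's own argument unrolled: the paper combines the Markov property at time one with Proposition~\ref{prop:lyapunov_return_times} (itself assembled from the maximal inequality of Proposition~\ref{prop:maximal} and the excursion Lemma~\ref{lem:return_times}), which amounts precisely to your three-factor event with the same levels $\rho'=G_h^{-1}(r)$ and $\rho=\rho'/(1-q)^2$, the same sojourn length $r/h(\rho')=q(1-q)/(\rho'\varphi(1/\rho'))$, and the same $(1-q)^2+q(1-q)=1-q$ bookkeeping. The boundary case $\{V=\ell\}\cap B$ that you flag is treated no more carefully in the paper, whose proof simply asserts $S_{(\ell)}\le\tau_B(0)$ for $B\subseteq\{V\le\ell\}$, so your suggestion of working against a level slightly below $\ell$ is, if anything, additional care.
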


Observe that Theorem~\ref{thm:modulated_moments} implies a lower bound on the tail of the return time $\tau_B(1)$: for
$h\equiv1$, every $m\in(0,\infty)$, the constant $C_{\ell,m}$ in~\nameref{sub_drift_conditions}\ref{sub_drift_conditions(ii)} and every $x\in\cX$ we obtain
$$
\P_x(\tau_B(1)\geq t)\geq \frac{q C_{\ell,m}P(x,\{V\geq \ell + m\})}{\Psi(G_1^{-1}(t)/(1-q)^2)}\quad \text{for all $t\in(G_1(\ell+m) ,\infty)$.}
$$

The comparison of the target distribution with the invariant distribution of a biased sampling algorithm, such as ULA, requires lower bounds on the tail of the invariant distribution.
Theorem~\ref{thm:invariant} gives such lower bounds on the tails of the invariant measure $\pi$ of a general ergodic Markov chain $X$ under Assumption~\nameref{sub_drift_conditions}. The bounds are in terms  of the functions $V$, $\varphi$, $\Psi$, which can in applications be obtained directly from the one-step transition kernel $P$ of $X$.
For any $q\in(0,1)$, define the function
\begin{equation}
\label{eq:def_L_eps_q}
    L_q(r)\coloneqq r\varphi(1/r)\Psi(r/(1-q)^2)D(r),\quad\text{$r\in[1,\infty)$,}
\end{equation}
where $D:[1,\infty)\to\RP$ tends to infinity arbitrarily slowly (for example, $D(r)=\log\circ\ldots\circ\log(r)$
for all large $r$, where the number of compositions of logarithms is fixed but arbitrary).

\begin{thm}[Tails of the invariant measure]
\label{thm:invariant}
Let Assumption~\nameref{sub_drift_conditions} hold. Then for any $q\in(0,1)$,  there exists a constant $c_q\in(0,1)$ such that
\begin{equation}
\label{eq:main_result_invariant}
c_q/L_q(r)\leq \pi(\{V \geq r \})\qquad\text{for all $r\in[1,\infty)$.}
\end{equation}
 \end{thm}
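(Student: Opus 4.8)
The plan is to derive the lower bound on $\pi(\{V\geq r\})$ from the lower bound on the tail of return times already established in Theorem~\ref{thm:modulated_moments} (specialised to $h\equiv 1$), combined with the standard representation of the invariant measure via expected occupation times of excursions from a fixed set. First I would fix a compact set $B\subseteq\{V\leq\ell\}$ with $\pi(B)>0$ (such a set exists because $X$ is positive Harris recurrent and $V$ is continuous with $\limsup_n V(X_n)=\infty$, so sublevel sets carry positive mass), and write the Kac-type formula
\begin{equation*}
\pi(A)=\frac{1}{\E_{\nu_B}[\tau_B(1)]}\,\E_{\nu_B}\!\left[\sum_{k=0}^{\tau_B(1)-1}\mathbbm{1}_A(X_k)\right],
\end{equation*}
where $\nu_B=\pi(\cdot\cap B)/\pi(B)$ is the normalised restriction of $\pi$ to $B$. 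Applying this with $A=\{V\geq r\}$, the numerator is at least $\E_{\nu_B}[\text{number of steps in }\{V\geq r\}\text{ before return to }B]$, which I would bound below using a "one big excursion" argument: with positive $\nu_B$-probability the chain leaves $B$ and immediately has $V\geq\ell+m$ (this requires a short lemma that $P(x,\{V\geq \ell+m\})$ is bounded below on a set of positive $\nu_B$-measure, which follows from irreducibility/the Feller property together with $\limsup V=\infty$), and conditionally on that event, Theorem~\ref{thm:modulated_moments} with $h\equiv1$ lower-bounds the probability that the return time to $B$ exceeds $G_1(\ell+m)\cdot(\text{something})$; during any such long return excursion the chain need not stay in $\{V\geq r\}$, so I instead run the modulated-moment bound with a suitable $h$ that forces occupation of $\{V\geq r\}$.

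More precisely, the cleaner route is to apply Theorem~\ref{thm:modulated_moments} directly with the modulating function $h=h_r\coloneqq r\,\mathbbm{1}_{[r,\infty)}+\mathbbm{1}_{[1,r)}$ (or a continuous version thereof), so that $\sum_{k=0}^{\tau_B(1)}h_r\circ V(X_k)\geq r$ forces at least one visit to $\{V\geq r\}$ (minus a bounded correction). This gives, for $x$ in the positive-$\nu_B$-measure set,
\begin{equation*}
\P_x\!\left(\sum_{k=0}^{\tau_B(1)}\mathbbm{1}_{\{V\geq r\}}(X_k)\geq 1\right)\gtrsim \frac{1}{\Psi(G_{h_r}^{-1}(r)/(1-q)^2)},
\end{equation*}
and one computes $G_{h_r}^{-1}(r)\approx r$ because $G_{h_r}(s)=q(1-q)h_r(s)/(s\varphi(1/s))$ equals $q(1-q)r/(s\varphi(1/s))$ for $s\geq r$, so solving $G_{h_r}(s)=r$ gives $s$ with $s\varphi(1/s)=q(1-q)$, a constant — hmm, this needs care. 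The correct move is to note that the bound on $\pi(\{V\ge r\})$ should carry the full product $r\varphi(1/r)\Psi(r)$ in the denominator, so I would instead bound $\pi(\{V\ge r\})$ below by $\P_{\nu_B}(\text{chain reaches }\{V\ge r\}\text{ before returning to }B)\times\E[\text{time spent there}\mid\text{reaches it}]/\E_{\nu_B}[\tau_B(1)]$: the first factor is $\gtrsim 1/\Psi(r/(1-q)^2)$ by exactly the same excursion-splitting argument that proves Theorem~\ref{thm:modulated_moments} (indeed this is the $T^{(r)}<S_{(\ell)}$ event from Assumption~\nameref{sub_drift_conditions}\ref{sub_drift_conditions(ii)}), the second factor is $\gtrsim 1/(r\varphi(1/r))$ because the first drift condition makes $1/V(X_n)$ a supermartingale-type process with increments controlled by $\varphi$, forcing the chain to linger near level $r$ for roughly $1/(r\varphi(1/r))$ steps, and $\E_{\nu_B}[\tau_B(1)]<\infty$ is a fixed constant (absorbed into $c_q$), with the slowly-growing $D(r)$ in $L_q$ providing the slack needed to convert the "expected lingering time" lower bound into something robust to the fluctuations.

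The main obstacle I expect is the middle step: showing that, conditionally on reaching level $\{V\geq r\}$ before returning to $B$, the chain spends at least of order $1/(r\varphi(1/r))$ steps at comparable heights, with a quantitative lower bound uniform in $r$. This is the reciprocal-Lyapunov analogue of a renewal/occupation-time estimate: from $P(1/V)-\varphi(1/V)\leq 1/V$ one gets that $M_n\coloneqq 1/V(X_n)+\sum_{k<n}\varphi(1/V(X_k))$ is a submartingale up to the exit time $S_{(\ell)}$, so optional stopping at $S_{(\ell)}\wedge T$ (for $T$ a deterministic time) yields $\E_x[\sum_{k<S_{(\ell)}\wedge T}\varphi(1/V(X_k))]\leq 1/V(x)\leq 1/r$ when starting from $\{V\geq r\}$; since $\varphi$ is increasing and $1/V(X_k)\leq 1/\ell$ on the relevant excursion, Chebyshev/Markov turns this into a lower bound $\gtrsim (r\varphi(1/r))^{-1}$ on the time to descend below any intermediate level, and iterating or summing over dyadic levels between $\ell$ and $r$ gives the occupation-time bound with the needed slack $D(r)$. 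Packaging the constants uniformly (the $C_{\ell,m}$ from the Assumption, the fixed $\pi(B)$ and $\E_{\nu_B}[\tau_B(1)]$, and the $q$-dependent factors from $G_h$) into a single $c_q\in(0,1)$ valid for all $r\in[1,\infty)$ — extending the bound down to $r=1$ where it is trivial — completes the argument.
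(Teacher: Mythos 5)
Your settled route (the Kac occupation formula plus ``reach a high level, then linger'') is genuinely different from the paper's proof and, with the corrections below, it works. The paper never bounds the occupation measure of $\{V\geq r\}$ directly: it first shows (Proposition~\ref{prop:criteria_for_infinite_moments}, via Theorem~\ref{thm:modulated_moments} and the return-time characterisation of $\pi$) that $\pi(h\circ V)=\infty$ for every test function $h$ whose integral test~\eqref{eq:invariant_infinite_moment_condition} fails, and then invokes a stand-alone conversion result (Proposition~\ref{prop:integral_test}) turning infiniteness of moments for the whole family of such $h$ into a tail lower bound; that conversion is exactly where the slowly growing factor $D(r)$ in $L_q$ of~\eqref{eq:def_L_eps_q} is lost. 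Your route instead bounds, for $x\in\{V\geq\ell+m\}$ and $r$ large,
\begin{equation*}
\E_x\Big[\sum_{k=0}^{S_{(\ell)}}\mathbbm{1}\{V(X_k)\geq r\}\Big]\;\geq\;\frac{q(1-q)}{r\varphi(1/r)}\cdot q\,\P_x\big(T^{(r/(1-q)^2)}<S_{(\ell)}\big)\;\geq\;\frac{q^2(1-q)\,C_{\ell,m}}{r\varphi(1/r)\,\Psi\!\left(r/(1-q)^2\right)},
\end{equation*}
then integrates over one step out of $B=\{V\leq\ell\}$ (Proposition~\ref{prop:escape_sublevel}, plus monotone convergence to pass from $\{V>\ell\}$ to $\{V\geq\ell+m\}$) and uses the Kac representation, noting $\E_{\nu_B}[\tau_B(1)]=1/\pi(B)<\infty$. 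This actually yields a \emph{stronger} bound than~\eqref{eq:main_result_invariant}: no $D(r)$ and no dyadic iteration are needed (your closing remarks about summing over dyadic levels and about $D(r)$ ``providing slack'' are superfluous), and since any admissible $D$ is eventually bounded below by a positive constant, \eqref{eq:main_result_invariant} follows after shrinking $c_q$, the range of small $r$ being trivial because $\pi(\{V\geq r\})$ is non-increasing and positive. What the paper's detour buys is the reusable pair Propositions~\ref{prop:criteria_for_infinite_moments} and~\ref{prop:integral_test} (the latter flagged as of independent interest); what your route buys is a shorter argument and a cleaner constant.

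Three local repairs are needed. First, the lingering estimate must be started from level $r/(1-q)^2$, not from $\{V\geq r\}$: from $V\geq r$ alone the maximal inequality gives nothing, which is precisely why Assumption~\nameref{sub_drift_conditions}\ref{sub_drift_conditions(ii)} is invoked for $T^{(r/(1-q)^2)}$. Your first factor $1/\Psi(r/(1-q)^2)$ shows you intend this, but the phrase ``conditionally on reaching $\{V\geq r\}$'' should be amended; moreover the estimate you single out as the main obstacle is already in the paper, namely inequality~\eqref{eq:r_squared_time_to_come_back} in the proof of Lemma~\ref{lem:return_times}, obtained from Proposition~\ref{prop:maximal}. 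Second, your optional-stopping display is wrong as written: under~\nameref{sub_drift_conditions}\ref{sub_drift_conditions(i)} the process $1/V(X_n)-\sum_{k<n}\varphi(1/V(X_k))$ is a \emph{super}martingale (not ``$1/V+\sum\varphi$ a submartingale''), and stopping it gives $\E_x[1/V(X_{S_{(\ell)}\wedge T})]\leq 1/V(x)+\E_x[\sum_{k<S_{(\ell)}\wedge T}\varphi(1/V(X_k))]$, i.e.\ the $\varphi$-sum sits on the large side, so your inequality $\E_x[\sum_{k<S_{(\ell)}\wedge T}\varphi(1/V(X_k))]\leq 1/V(x)$ does not follow; the correct quantitative step is the maximal inequality $\P_x(\sup_{j<T}1/V(X_j)>1/r)\leq r(1/V(x)+\varphi(1/r)T)$, which with $1/V(x)\leq(1-q)^2/r$ and $T=q(1-q)/(r\varphi(1/r))$ gives lingering probability at least $q$. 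Third, as in the paper's own Theorem~\ref{thm:modulated_moments}, take $B$ inside $\{V<\ell\}$ (or perturb $\ell$) so that the lingering window, on which $V\geq r>\ell$, is guaranteed to precede $\tau_B(1)$; with these adjustments your argument is complete.
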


The proof of Theorem~\ref{thm:invariant} is based on the well-know characterisation of the tails of the invariant measure in terms of excursions away from a small set, see e.g.~\cite[Ch.~11]{MarkovChains}, and
the lower bounds in Theorem~\ref{thm:modulated_moments} above. The details are in Section~\ref{subsec:lower_bound_invariant} below.

For any measurable function $f:\cX\to[1,\infty)$,
the \textit{$f$-variation} of a signed measure $\mu$ on $(\cX,\cB(\cX))$ is given by
 $\|\mu\|_{f} \coloneqq \sup\{\vert\int_\cX g(x)\mu(\ud x)\vert:g:\cX\to \R \text{ measurable, } |g|\leq f\}$.
 In the special case $f\equiv 1$ we obtain the \textit{total variation} $\|\mu\|_{\TV} =\|\mu\|_f$ of the measure $\mu$.

Denote by $\cP(\cX)$ the class of all  probability measures on $\cB(\cX)$. If, in addition, $\cX$ is a metric space equipped with the distance $d:\cX\times\cX\to\RP$, for $p\in[1,\infty)$, let $\cP_p(\cX)$  denote the space of probability measures $\mu\in\cP(\cX)$ satisfying $\int_{\cX}|d(x_0,x)|^p\mu(\ud x)<\infty$ for some $x_0\in\cX$. The $L^p$-\textit{Wasserstein distance} $\cW_p$ on $\cP_p(\cX)$  is defined by
$$
\cW_p(\mu_1,\mu_2) \coloneqq \inf_{\Pi \in \cC(\mu_1,\mu_2)}\left(\int_{\cX\times \cX}d(x,y)^p\Pi(\ud x,\ud y)\right)^{1/p},
$$
where $\cC(\mu_1,\mu_2)$ is the family of couplings of probability measures $\mu_1$ and $\mu_2$ in $\cP_p(\cX)$.

Under Assumption~\nameref{sub_drift_conditions},
Theorem~\ref{thm:f_rate} provides lower bounds on the rates of convergence of the chain $X$ to its invariant measure $\pi$ in $f$-variation and $L^p$-Wasserstein distance.

\begin{thm}[Lower bounds on convergence rates]
\label{thm:f_rate}
Let Assumption~\nameref{sub_drift_conditions} hold.
Pick $q\in(0,1)$ and let the constant $c_q\in(0,1)$ be such that inequality~\eqref{eq:main_result_invariant} holds with the function $L_q$ in~\eqref{eq:def_L_eps_q}.
Let $f_\star :[1,\infty)\to(0,\infty)$ be differentiable.
Assume a continuous function $a_g:[1,\infty)\to\RP$ satisfies
\begin{equation*}
%\label{eq:a_convergence_lower}
a_g(r) \leq  f_\star(g^{-1}(r))c_q/L_q(g^{-1}(r))\quad\text{for all $r\in[1,\infty)$,}
\end{equation*} 
where $g:[1,\infty)\to [1,\infty)$ 
is an increasing continuous function specified below and
$g^{-1}$ is its inverse. Suppose also that  $A_g(r)\coloneqq ra_g(r)$  is increasing, $\lim_{r\to\infty} A_g(r) = \infty$ and denote its inverse by $A_g^{-1}$.
Let a continuous function $h:[1,\infty)\to[1,\infty)$ be such that  the function $h/f_\star$ is increasing  and $\lim_{r\to\infty}h(r)/f_\star(r)=\infty$.
Let  $v:\cX\times\N\to[1,\infty)$ be increasing in the second argument and satisfy 
\begin{equation*}
%\label{eq:h_rate_codition}
P^n(h\circ V)(x)\leq v(x,n)\quad\text{for all $x\in\cX$ and $n\in\N$.}
\end{equation*}
%\item\label{assumption:f_convergence_b}
\begin{myenumi}[label=(\alph*)]
\item\label{assumption:f_convergence}
Set $g\coloneqq h/f_\star$ and assume $g(1) = 1$ and $f_\star\ge 1$.  
 Then the  lower bound on $f$-variation holds for $f=f_\star\circ V$:
\begin{equation}
\label{eq:main_result_rate_of_convergence}
\frac{1}{2}(a_g\circ A_g^{-1}\circ (2 v))(x,n)\leq \| P^n(x,\cdot)-\pi\|_{f} \quad \text{for all $x\in\cX$ and $n\in\N\setminus\{0\}$.}
\end{equation}
\item\label{assumption:Wasserstein_convergence}
Assume in addition that 
$\cX$ 
is a metric space, $V:\cX\to[1,\infty)$ is Lipschitz with Lipschitz constant $L_V\in(0,\infty)$ and, for some $p\in[1,\infty)$,  we have $\pi,P^n(x,\cdot)\in\cP_p(\cX)$ for all $n\in\N$ and $x\in \cX$. Set $f_\star(r) \coloneqq (r/2)^p$, $r>0$, and 
$ g(r)\coloneqq h(r/2)/f_\star(r/2)$, $r\in[2,\infty)$, and 
$ g(r)\coloneqq ( g(2)-1)(r-1)+1$,  
$r\in[1,2)$.
Then we obtain the following lower bound on the $L^p$-Wasserstein distance:
\begin{equation}
\label{eq:main_result_Wasserstein}
\frac{1}{2L_V}(a_{ g}\circ A_{ g}^{-1}\circ (2^{p} v))(x,n)^{1/p}\leq \cW_p(P^n(x,\cdot),\pi) \quad \text{for all $x\in\cX$ and $n\in\N\setminus\{0\}$.}
\end{equation}
\end{myenumi}
\end{thm}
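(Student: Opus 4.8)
The plan is to play the heavy tail of $\pi$ provided by Theorem~\ref{thm:invariant} against the growth bound $P^n(h\circ V)(x)\le v(x,n)$, which through Markov's inequality prevents $P^n(x,\cdot)$ from spreading out into the far tail of $V$. Choosing the threshold level $r=r(x,n)$ so that the resulting lower bound on the $\pi$-side and the upper bound on the $P^n$-side differ by a fixed multiplicative factor, and testing against a function supported on $\{V\ge r\}$, yields the claimed lower bounds; the functions $g$, $a_g$, $A_g$ are precisely the devices encoding this calibration of $r$ against $n$.

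\emph{Part~\ref{assumption:f_convergence}.} Fix $x\in\cX$ and $n\ge1$, set $t:=A_g^{-1}(2v(x,n))$ and $r:=g^{-1}(t)$, so that $t=g(r)=h(r)/f_\star(r)$ and $r\ge1$ (the required inverse functions are defined on the relevant ranges since $g$ and $A_g$ are increasing continuous functions with limit $\infty$; this is routine). From $t\,a_g(t)=A_g(t)=2v(x,n)$ together with $a_g(t)\le f_\star(r)c_q/L_q(r)$ one gets $h(r)=t f_\star(r)\ge 2v(x,n)L_q(r)/c_q$, i.e.\ $v(x,n)\le c_q h(r)/(2L_q(r))$. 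Now test against $g_0:=(f_\star\circ V)\,\mathbbm{1}\{V\ge r\}$, which satisfies $0\le g_0\le f_\star\circ V=f$. On $\{V\ge r\}$ we have $f_\star\circ V\ge f_\star(r)$ (using $f_\star$ non-decreasing) and $f_\star\circ V=(h\circ V)/(g\circ V)\le (h\circ V)/g(r)$ (using $g$ increasing), so by Theorem~\ref{thm:invariant} and $P^n(h\circ V)(x)\le v(x,n)$,
\begin{align*}
\|P^n(x,\cdot)-\pi\|_f\ \ge\ \pi(g_0)-P^n(x,g_0)
&\ \ge\ \frac{f_\star(r)c_q}{L_q(r)}-\frac{v(x,n)}{g(r)}\\
&\ \ge\ \frac{f_\star(r)c_q}{L_q(r)}-\frac{c_q f_\star(r)}{2L_q(r)}\ =\ \frac{f_\star(r)c_q}{2L_q(r)}\ \ge\ \tfrac12 a_g(t),
\end{align*}
which is \eqref{eq:main_result_rate_of_convergence} since $a_g(t)=(a_g\circ A_g^{-1}\circ(2v))(x,n)$.

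\emph{Part~\ref{assumption:Wasserstein_convergence}.} The same scheme is run through the metric. Since $V$ is $L_V$-Lipschitz, pushing any coupling of $P^n(x,\cdot)$ and $\pi$ forward under $V$ and using $|V(y)-V(z)|\le L_V d(y,z)$ gives $\cW_p(P^n(x,\cdot),\pi)\ge L_V^{-1}\cW_p(\mu_n,\mu_\pi)$, where $\mu_n,\mu_\pi$ are the pushforwards of $P^n(x,\cdot)$ and $\pi$ under $V$; this reduces matters to a one-dimensional Wasserstein estimate. The part played by ``testing against $g_0$'' is now played by the elementary coupling inequality: for any coupling $\Pi$ of $\mu_n,\mu_\pi$ and any $1\le a<b$, the event $\{Y\ge b,\ X<a\}$ has $\Pi$-probability at least $\mu_\pi([b,\infty))-\mu_n([a,\infty))$ and forces $|X-Y|\ge b-a$, whence $\cW_p(\mu_n,\mu_\pi)^p\ge (b-a)^p\big(\pi(\{V\ge b\})-P^n(x,\{V\ge a\})\big)_+$. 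Taking $b=2w$, $a=w$, inserting $\pi(\{V\ge 2w\})\ge c_q/L_q(2w)$ (Theorem~\ref{thm:invariant}) and $P^n(x,\{V\ge w\})\le v(x,n)/h(w)$ (Markov's inequality, valid since here $h=(h/f_\star)\cdot f_\star$ is increasing), and calibrating $w$ through $t:=A_g^{-1}(2^p v(x,n))$ and $w:=(h/f_\star)^{-1}(t)$ — for which one checks $f_\star(2w)=w^p$, $a_g(t)=v(x,n)w^p/h(w)$ and $v(x,n)/h(w)\le c_q/L_q(2w)$, using the piecewise-linear continuation of $g$ on $[1,2)$ only to make $g$ an increasing bijection of $[1,\infty)$ — converts the bound into \eqref{eq:main_result_Wasserstein}.

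The domain checks for $A_g^{-1}$, $G_h^{-1}$, $g^{-1}$, the inequality $P^n(g_0)\le v(x,n)/g(r)$, and the monotonicity of $g$ and of $h$ in Part~\ref{assumption:Wasserstein_convergence} are routine. The main obstacle is extracting the precise constant in Part~\ref{assumption:Wasserstein_convergence}: with the single pair of thresholds $(w,2w)$ the $P^n$-contribution is only marginally dominated by the $\pi$-tail term, so one must either invoke the integrated form $\cW_p(\mu_n,\mu_\pi)^p\ge p\int_0^\infty s^{p-1}\big(\pi(\{V\ge 2s\})-P^n(x,\{V\ge s\})\big)_+\,\ud s$ of the coupling inequality, or exploit the freedom in the slowly-growing factor $D$ built into $L_q$ in \eqref{eq:def_L_eps_q} to gain the missing constant; tracking this is exactly what produces the factor $2^p$ in the argument $A_g^{-1}\circ(2^pv)$ and the factor $1/(2L_V)$ in the statement. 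The remaining work — making Parts~\ref{assumption:f_convergence} and~\ref{assumption:Wasserstein_convergence} uniform over all $x\in\cX$ and $n\in\N\setminus\{0\}$ — is careful but standard bookkeeping.
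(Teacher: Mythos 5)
Part~\ref{assumption:f_convergence} of your proposal is correct and is essentially the paper's own argument: you bound the $f$-variation from below by testing against $(f_\star\circ V)\mathbbm{1}\{V\ge r\}$, lower-bound the $\pi$-term via Theorem~\ref{thm:invariant} (exactly the content of Lemma~\ref{cor:f_invariant_lower_bound}), upper-bound the $P^n$-term by Markov's inequality through $P^n(h\circ V)\le v$, and calibrate $r$ via $A_g^{-1}(2v(x,n))$ — this is precisely the tail-comparison scheme of Lemma~\ref{lem:lower_bound_f_convergence_rate}, just written without factoring through that lemma. (Like the paper, you implicitly use monotonicity of $f_\star$ and leave the domain checks for the inverses as routine; that is fine.)

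Part~\ref{assumption:Wasserstein_convergence} takes a genuinely different route: you push $P^n(x,\cdot)$ and $\pi$ forward under the Lipschitz map $V$ and use the two-threshold coupling bound $\cW_p(\mu_n,\mu_\pi)^p\ge(b-a)^p\bigl(\mu_\pi([b,\infty))-\mu_n([a,\infty))\bigr)_+$, whereas the paper stays on $\cX$ and applies Kantorovich--Rubinstein duality to the hinge function $\bigl(V-g^{-1}(s)/2\bigr)\mathbbm{1}\{V\ge g^{-1}(s)/2\}$ (Lemma~\ref{lem:lower_bound_wasserstein}). The skeleton is sound, but as written the argument does not close: with your calibration $t=A_g^{-1}(2^pv(x,n))$ you obtain exactly $v(x,n)/h(w)\le c_q/L_q(2w)$, so the positive part $\bigl(\pi(\{V\ge 2w\})-P^n(x,\{V\ge w\})\bigr)_+$ is only bounded below by $0$ and no constant comes out. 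You flag this yourself, but neither suggested repair works as described: the integrated coupling inequality suffers the same pointwise cancellation (there is no guaranteed gap between $c_q/L_q(2s)$ and $v(x,n)/h(s)$ over a range of $s$), and ``exploiting the freedom in $D$'' is not available, since $L_q$ in~\eqref{eq:def_L_eps_q} is fixed by the hypothesis through which $a_g$ is defined. The clean fix is to recalibrate, e.g.\ $t:=A_g^{-1}(2^{p+1}v(x,n))$, which forces $v(x,n)/h(w)\le \tfrac12 c_q/L_q(2w)$ and yields $\cW_p\ge \tfrac{1}{2L_V}\,a_g\bigl(A_g^{-1}(2^{p+1}v(x,n))\bigr)^{1/p}$ — i.e.\ the theorem with $2^{p+1}$ in place of $2^p$, harmless in all the paper's applications but not literally~\eqref{eq:main_result_Wasserstein}. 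You should also know that the difficulty you hit is real: the paper's proof of Lemma~\ref{lem:lower_bound_wasserstein} calibrates at $s\,a(s)=2^pv$ and then asserts $(2^pv/s)^{1/p}=a(s)^{1/p}/2$, whereas in fact this equals $a(s)^{1/p}$ (so their argument, too, needs $s\,a(s)=4^pv$ or an analogous adjustment). So the constant in the Wasserstein bound is delicate in both treatments; your approach, once recalibrated, is a perfectly sound and somewhat more elementary alternative to the duality argument.
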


\begin{rem}
\noindent (i) Theorem~\ref{thm:f_rate}\ref{assumption:Wasserstein_convergence}  requires a Lipschitz Lyapunov function $V$. We stress that this is \textit{not} a restrictive assumption as it clearly works for all the examples in Section~\ref{sec:examples} below. More generally, the functions $V$ and $V^s$ (for some $s>0$), used as Lyapunov functions in the \textbf{L}-drift condition in~\nameref{sub_drift_conditions},
yield the same lower bounds. This is because the change in the growth rate of the Lyapunov function is naturally absorbed by appropriately modifying $\varphi$ and $\Psi$ (the key quantities in~\nameref{sub_drift_conditions} are the functions $\varphi\circ (1/V)$ and $\Psi\circ V$).
 
\noindent (ii) The integrability condition $\pi,P^n(x,\cdot)\in \cP_p$ for all $n\in\N$ and $x\in \cX$, in part~\ref{assumption:Wasserstein_convergence} of Theorem~\ref{thm:f_rate}, holds if the chain $X$ converges to $\pi$ in the  Wasserstein distance $\cW_p$. Since the convergence in the $L^p$-Wasserstein metric $\cW_p$ holds under appropriate upper bound drift conditions for the chain $X$ (see e.g.~\cite{MR4429313,Durmus16}), this assumption is satisfied in most (if not all) cases of interest.

\noindent (iii) The formulas for the function $g$ differ slightly in parts~\ref{assumption:f_convergence} and~\ref{assumption:Wasserstein_convergence} of Theorem~\ref{thm:f_rate}. The reason for the discrepancy is in the fact that both bounds rely on the comparison between the tails of the invariant measure $\pi$ and the marginal $P^n(x,\cdot)$ but in different metrics. The $f$-variation distance works with discontinuous functions, while the $\cW_p$-distance  uses Lipschitz functions, leading to different definitions of $g$.  

\noindent (iv) Note that the lower bound on the $L^p$-Wasserstein metric $\cW_p$ in~\eqref{eq:main_result_Wasserstein} for $p>1$ is asymptotically larger than the corresponding lower bound on the $f$-variation in~\eqref{eq:main_result_rate_of_convergence}. This difference is illustrated in the context of sampling algorithms in Section~\ref{sec:examples} below.
\end{rem}

\subsection{How to apply the results of Section~\ref{subsec:Results} in practice?}
\label{subsec:applying}
The assumptions of the theorems in Section~\ref{subsec:Results}, essentially consisting of the $\mathbf{L}$-drift conditions in~\nameref{sub_drift_conditions},
can be verified as easily as the existing Lyapunov drift conditions for upper bounds.   If the process $V(X)$ satisfies the upper bound drift conditions (see e.g.~\cite{MR2071426,MarkovChains}, also stated in~\eqref{eq:upper_drift} below), then the assumptions on the ergodicity of the process $X$ in~\nameref{sub_drift_conditions} are met. Moreover, the non-confinement can be directly verified via the following elementary lemma, proved in Section~\ref{subsec:verifying} below.

\begin{lem}
\label{lem:non_confinement}
 If for every $u\geq 1$ there exists $n_0\in\N$ such that $\inf_{x\in \cX} P^{n_0}(x,\{V\geq u\})>0$, 
the non-confinement property holds:
$\P_x(\limsup_{n\to\infty}V(X_n)=\infty)=1$
for every $x\in\cX$.
\end{lem}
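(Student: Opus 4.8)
The goal is to show that the "uniform reachability" hypothesis — for every level $u\geq 1$ there is a time $n_0$ such that $\inf_{x\in\cX}P^{n_0}(x,\{V\geq u\})>0$ — forces $\limsup_n V(X_n)=\infty$ almost surely from every starting point. The natural approach is a Borel–Cantelli / zero–one argument driven by the strong Markov property applied at a sequence of widely-spaced times. The key point is that the uniform lower bound removes any dependence on the current position of the chain, so the events "$V$ exceeds $u$ within the next $n_0$ steps" have probability bounded below by a positive constant $\delta=\delta(u)$, \emph{uniformly over the conditioning}, and hence happen infinitely often.

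\textbf{Step 1 (single level, infinitely often).} Fix $u\geq 1$ and let $n_0=n_0(u)$ and $\delta=\delta(u)\coloneqq\inf_{x}P^{n_0}(x,\{V\geq u\})>0$ be as in the hypothesis. Define the stopping-time-free block times $t_k\coloneqq k n_0$, $k\in\N$, and the events $E_k\coloneqq\{V(X_{j})\geq u \text{ for some } t_k< j\leq t_{k+1}\}$. By the Markov property at time $t_k$ and the tower property, for every $x$,
\[
\P_x(E_k\mid \cF_{t_k}) \;\geq\; P^{n_0}(X_{t_k},\{V\geq u\}) \;\geq\; \delta \qquad \P_x\text{-a.s.}
\]
Since this conditional lower bound holds irrespective of $\cF_{t_k}$, a standard extension of the second Borel–Cantelli lemma (e.g. Lévy's conditional version, or a direct estimate $\P_x(\bigcap_{k=K}^{K+m}E_k^c)\leq(1-\delta)^{m+1}$) gives $\P_x(E_k \text{ i.o.})=1$. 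Hence $\P_x(\sup_{n\geq N}V(X_n)\geq u)=1$ for every $N$, and therefore $\P_x(\limsup_n V(X_n)\geq u)=1$.

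\textbf{Step 2 (all levels simultaneously).} Apply Step 1 along the countable family of levels $u=\ell\in\N$: the event $\bigcap_{\ell\in\N}\{\limsup_n V(X_n)\geq \ell\}=\{\limsup_n V(X_n)=\infty\}$ is a countable intersection of $\P_x$-almost sure events, hence itself $\P_x$-almost sure, for every $x\in\cX$. This is exactly the non-confinement property in~\nameref{sub_drift_conditions}.

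\textbf{Main obstacle.} There is no serious analytic difficulty here; the only point requiring mild care is the measure-theoretic bookkeeping in Step 1 — namely justifying the conditional Borel–Cantelli step cleanly. One must verify that $P^{n_0}(X_{t_k},\{V\geq u\})$ is a genuine version of $\P_x(E_k'\mid\cF_{t_k})$ where $E_k'=\{V(X_j)\geq u\text{ for some }t_k<j\leq t_{k+1}\}\supseteq$ the single-time event, and that lower semicontinuity / measurability of $x\mapsto P^{n_0}(x,\{V\geq u\})$ (which holds since $\{V\geq u\}$ is closed and $X$ is Feller, though in fact only measurability is needed) poses no issue. Once the uniform conditional bound $\P_x(E_k\mid\cF_{t_k})\geq\delta$ is in hand, the conclusion is immediate.
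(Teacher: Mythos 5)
Your proposal is correct and follows essentially the same route as the paper's proof: a uniform conditional lower bound $\P_x(E_k\mid\cF_{t_k})\geq\delta$ obtained via the Markov property at the block times $kn_0$, followed by a conditional (L\'evy) second Borel--Cantelli argument to get each level exceeded infinitely often, and a countable intersection over levels. The only cosmetic difference is that the paper takes $E_k$ to be the single-time event $\{V(X_{kn_0})\geq u\}$ rather than the block event, which changes nothing.
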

It remains to verify  conditions for $\varphi$ and $\Psi$ in~\nameref{sub_drift_conditions}.
The inequality for $\varphi$ can be easily verified directly from the one-step transition kernel $P$. The inequality  $P(1/V)- \varphi(1/V)\leq 1/V$ outside of a compact set can be analysed directly because the function $1/V$ is bounded and therefore integrable.

\subsubsection{Identifying \texorpdfstring{$\Psi$}{Psi}} 
\label{subsubsec:Identifyin_Psi}
The search for the function $\Psi$ is in two steps. We first look for a good candidate and then, using it, verify the condition in~\nameref{sub_drift_conditions}\ref{sub_drift_conditions(ii)}.
Since the functions $\Psi$, $\varphi$ and the tail $\pi(\{V\geq r\})$ satisfy the inequality in~\eqref{eq:main_result_invariant}, a good candidate for the function $\Psi$ is such that the inequality in~\eqref{eq:main_result_invariant}  provides a sharp bound on the tail of $\pi$. In the case where $\pi$ is not given explicitly, a good candidate for its tail decay can be obtained from the upper-bound Lyapunov drift condition (see e.g.~\cite{MR2071426}, also stated in~\eqref{eq:upper_drift}). Once we find a suitable candidate for $\Psi$, we distinguish between the two cases for verifying  the condition in~\eqref{eq:assumption_heavy_tail_bound} of~\nameref{sub_drift_conditions}\ref{sub_drift_conditions(ii)}.

\noindent\textbf{Non-integrable case: single large jump phenomenon.} If $P(\Psi \circ V)(x)=\infty$ on some  complement of a compact set in $\cX$, the condition in \eqref{eq:assumption_heavy_tail_bound} of~\nameref{sub_drift_conditions}\ref{sub_drift_conditions(ii)} can be verified directly from the one-step transition kernel $P$ by choosing $\Psi$ so that the following holds:
$$
\P_x(T^{(r)}<S_{(\ell)})\geq P(x,\{V\geq r\})\geq 1/\Psi(r)\qquad \text{for all $x\in\{V\geq\ell\}$.}
$$
Heuristically, in this case, the probability of a  single large jump offers a good approximation for the probability of exiting the ``interval''
$\{\ell\leq V\leq r\}$ through its upper boundary. The example of an algorithm where such phenomenon arises is the SPS sampler studied in Section~\ref{subsec:stereographic} below.

\noindent \textbf{Integrable case: many small jumps are needed to visit the tails.} If $P(\Psi\circ V)<\infty$ on $\cX$ and 
\begin{equation}
\label{eq:Psi_integrable_case}
P(\Psi \circ V)(x)\geq \Psi \circ V(x)\quad\text{for all $x\in\cX$ on the complement of some compact set,}
\end{equation}
the lemma below applies.  
In this regime, the process exits through the upper boundary of the ``interval''
$\{\ell\leq V\leq r\}$, not by a single large jump, but by the cumulative effect of many smaller jumps driven by the positive ``drift’’ of the process $\Psi\circ  V(X)$, which is by~\eqref{eq:Psi_integrable_case} a submartingale on the complement of some compact set. The examples of algorithms where $P(\Psi\circ V)<\infty$ on $\cX$ include RWM (with light- and heavy-tailed proposals) and MALA studied in Sections~\ref{subsec:RWM} and~\ref{subsec:MALA} below, respectively. 
The proof of Lemma~\ref{lem:overshoot} can be found in Section~\ref{subsec:verifying} below.

\begin{lem}
\label{lem:overshoot}
Let $X$ be a Markov process on $\cX$ and $V:\cX\to[1,\infty)$ such that $\limsup_{n\to\infty}V(X_n) = \infty$. Fix  a non-decreasing function $\Psi:[1,\infty)\to[1,\infty)$.
%\begin{myenumi}
%\item[(a)]
Assume that for some $\ell_0>0$, we have
\begin{equation}
\label{eq:sub_mart_ass}
P(\Psi \circ V)(x)\geq\Psi \circ V(x)\quad\text{for all $x\in \{V\geq\ell_0\}$}.
\end{equation}
 Assume further that there exist  $C\in(0,\infty)$ and
 $r_0\in(\ell_0,\infty)$
 such that for all $r\in(r_0,\infty)$ we have
\begin{equation}
\label{eq:incremental_overshoot}
P(\1{V>r}\Psi\circ V)(x)\leq C\Psi(r)P(x,\{V>r\})\quad\text{for all $x\in\{V\leq r\}$.}
\end{equation}
%\item[(b)] 
Then, 
for any $\ell\in(\ell_0,\infty)$ and $m\in(0,\infty)$, there exists a 
constant $C_{\ell,m}\in(0,1)$ such that 
% for every $m>0$ there exists $C_m\in(0,1)$, such that for all $r\in(\max\{r_0,\ell_0\},\infty)$ and $\ell\in(\ell_0,r)$ we have 
$$
\P_x(T^{(r)}<S_{(\ell)})\geq C_{\ell,m}/\Psi(r) \quad \text{for all $r\in(\max\{r_0,\ell+m\},\infty)$ and $x\in \{V\geq \ell+m\}$.}
$$
%\end{myenumi}
\end{lem}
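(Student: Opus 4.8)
The plan is to exploit that $M_n \coloneqq \Psi\circ V(X_{n\wedge T^{(r)}\wedge S_{(\ell)}})$ is (up to a stopping-time truncation) a submartingale under $\P_x$ for $x\in\{V\geq\ell+m\}$, thanks to~\eqref{eq:sub_mart_ass}, since on the event $\{n< T^{(r)}\wedge S_{(\ell)}\}$ the chain sits in $\{\ell\leq V\leq r\}\subset\{V\geq\ell_0\}$. First I would argue that $\tau\coloneqq T^{(r)}\wedge S_{(\ell)}$ is $\P_x$-a.s.\ finite: by the non-confinement hypothesis $\limsup_n V(X_n)=\infty$, so $T^{(r)}<\infty$ a.s., hence $\tau<\infty$ a.s. Then optional stopping (justified by bounding $M_{n\wedge\tau}$ and monotone/dominated convergence, using that $V$ is bounded by $r$ up to time $\tau$ except possibly at the terminal step, where the overshoot is controlled by~\eqref{eq:incremental_overshoot}) gives
$$
\Psi\circ V(x)\leq \E_x\!\left[\Psi\circ V(X_\tau)\right]
= \E_x\!\left[\Psi\circ V(X_{T^{(r)}});\,T^{(r)}<S_{(\ell)}\right]
+ \E_x\!\left[\Psi\circ V(X_{S_{(\ell)}});\,S_{(\ell)}<T^{(r)}\right].
$$
On $\{S_{(\ell)}<T^{(r)}\}$ we have $V(X_{S_{(\ell)}})<\ell$, so $\Psi\circ V(X_{S_{(\ell)}})\leq\Psi(\ell)$ by monotonicity of $\Psi$; this bounds the second term by $\Psi(\ell)$, a constant independent of $r$.

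The main work is to bound the first term by something of the order $\Psi(r)\,\P_x(T^{(r)}<S_{(\ell)})$, i.e.\ to control the overshoot $\Psi\circ V(X_{T^{(r)}})$ above the level $\Psi(r)$. This is exactly what~\eqref{eq:incremental_overshoot} is designed for: decomposing over the time $k-1<T^{(r)}$ of the jump that crosses level $r$ and conditioning on $X_{k-1}\in\{V\leq r\}$, the one-step bound $P(\1{V>r}\Psi\circ V)(x)\leq C\Psi(r)P(x,\{V>r\})$ gives, after summing over $k$ and using the strong Markov property,
$$
\E_x\!\left[\Psi\circ V(X_{T^{(r)}});\,T^{(r)}<S_{(\ell)}\right]\leq C\,\Psi(r)\,\P_x(T^{(r)}<S_{(\ell)}).
$$
Here I expect the main obstacle to be making this summation-over-crossing-time argument rigorous while simultaneously justifying optional stopping: one must verify that $\Psi\circ V(X_{n\wedge\tau})$ is genuinely integrable and that the limit $n\to\infty$ can be passed inside the expectation. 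A clean way is to work with $\tau\wedge n$ throughout, apply the submartingale inequality $\Psi\circ V(x)\leq\E_x[\Psi\circ V(X_{\tau\wedge n})]$ for each finite $n$, split the right-hand side as above with $\tau\wedge n$ in place of $\tau$, bound the $S_{(\ell)}$-part by $\Psi(\ell)$ and the ``not yet exited'' part $\{\tau>n\}$ crudely (it tends to $0$ since $\tau<\infty$ a.s.\ and the relevant quantities are eventually controlled), and bound the $T^{(r)}$-part uniformly in $n$ via~\eqref{eq:incremental_overshoot} applied at the crossing step — which happens at a time $\leq n$. Letting $n\to\infty$ then yields $\Psi\circ V(x)\leq C\Psi(r)\P_x(T^{(r)}<S_{(\ell)})+\Psi(\ell)$.

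Finally, I would rearrange: for $x\in\{V\geq\ell+m\}$ we have $\Psi\circ V(x)\geq\Psi(\ell+m)$ by monotonicity, so
$$
\P_x(T^{(r)}<S_{(\ell)})\;\geq\;\frac{\Psi(\ell+m)-\Psi(\ell)}{C\,\Psi(r)},
$$
and since $\Psi$ is increasing, $\Psi(\ell+m)-\Psi(\ell)>0$; setting $C_{\ell,m}\coloneqq\min\{1,(\Psi(\ell+m)-\Psi(\ell))/C\}\cdot\tfrac12$ (or any constant in $(0,1)$ not exceeding $(\Psi(\ell+m)-\Psi(\ell))/C$) gives the claimed bound for all $r>\max\{r_0,\ell+m\}$. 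The constraint $r>r_0$ is needed so that~\eqref{eq:incremental_overshoot} is available at the crossing level, and $r>\ell+m$ ensures the starting point lies strictly below $r$ so that $T^{(r)}\geq1$ and the crossing-step decomposition is non-vacuous.
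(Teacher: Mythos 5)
Your proposal is correct and follows essentially the same route as the paper's proof: the submartingale property of $\Psi\circ V$ stopped at $S_{(\ell)}\wedge T^{(r)}$, the overshoot bound $\E_x[\Psi\circ V(X_{T^{(r)}});\,T^{(r)}<S_{(\ell)}]\leq C\Psi(r)\P_x(T^{(r)}<S_{(\ell)})$ obtained by decomposing over the crossing time and applying~\eqref{eq:incremental_overshoot} via the Markov property, and the final rearrangement giving $C_{\ell,m}\propto(\Psi(\ell+m)-\Psi(\ell))/C$. The only cosmetic difference is that you pass to the limit with the truncated stopping time $\tau\wedge n$ directly, whereas the paper invokes Fatou together with the integrable bound from the overshoot estimate; both handle the limit interchange equally well.
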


\begin{rem}
\noindent (i)    Observe that if $P(\Psi \circ V)<\Psi \circ V$ outside of some compact set, composing $\Psi$ with an increasing convex function (e.g., $\Psi^s$ for some $s>1$)  typically yields either the non-integrability of the previous case or (via Jensen's inequality)~\eqref{eq:Psi_integrable_case}. However, if the function  $P(\Psi \circ V)-\Psi \circ V$ 
takes both positive and negative values on the complements of compact sets, this indicates that our Lyapunov function $V$ does not optimally transform the state space $\cX$, leading to suboptimal lower bounds.

\noindent (ii) The proof of Lemma~\ref{lem:overshoot} shows that assumption in~\eqref{eq:incremental_overshoot} alone implies the following:
for any $r\in(r_0,\infty)$ and $\ell\in(0,r_0)$, there exists $C>0$ such that the following inequality holds  
\begin{equation*}
\E_x[\Psi\circ V(X_{T^{(r)}})\mathbbm{1}\{S_{(\ell)}> T^{(r)}\}]\leq C\Psi(r)\P_x(S_{(\ell)}> T^{(r)})\quad\text{for all $x\in\{V\leq r\}$.}
\end{equation*}
\end{rem}

\subsubsection{Choosing \texorpdfstring{$h$}{h} in Theorem~\ref{thm:f_rate}}
A rule of thumb  for choosing the function $h$ in Theorem~\ref{thm:f_rate} is to look for the ``fastest growing''
function for which we can still obtain a sharp bound in $n$ on $P^n(h\circ V)$. A further useful heuristic, and one that yields sharp bounds for all algorithms studied in this paper, is as follows: let $W$ be a Lyapunov function that provides upper bounds via the upper-bound drift condition~\cite{MR2071426} (see also~\eqref{eq:upper_drift} below), and let $V$ be the Lyapunov function satisfying the \textbf{L}-drift condition~\nameref{sub_drift_conditions}. Choosing $h$ such that $h\circ V\propto W$ for all chains in Section~\ref{sec:examples}
yields lower bounds that match the upper bounds obtained via the Lyapounov function $W$.\footnote{Here and throughout we denote
$u\vee v\coloneqq \max\{u,v\}$ for any $u,v\in\R$ and $f\propto g$ for $f,g:U\to(0,\infty)$, defined on a set $U$, if the function $f/g$ is constant on $U$.} In particular, observe that~\eqref{eq:upper_drift} implies  at most linear growth in $n$ of  the sequence $P^n(h\circ V)$.

We expect the above heuristic to hold for all polynomially ergodic chains. However, in cases of faster convergence (e.g., exponential or stretched exponential), it is likely necessary to choose a function 
$h$, such that $h\circ V$ grows faster than $W$. Examples of such Markov processes can be found in~\cite[Sec. 3.1.1 and~3.1.2]{brešar2024subexponential}. In those cases, matching lower bounds on the rates of convergence are obtained by selecting $h$ such that $h\circ V/ W\to\infty$ on the complements of  compact sets, leading to a superlinear growth in $n$ of the sequence
$P^n(h\circ V)$. Obtaining sharp bounds on the growth of $P^n(h\circ V)$  would then require a discrete-time version of~\cite[Lemma~2.8]{brešar2024subexponential}. The detailed treatment of lower bounds for convergence rates faster than polynomial is beyond the scope of this paper and is left for future work.

\subsection{Related literature and a discussion of the main results of Section~\ref{sec:main_results}}
\label{sec:discussion}

\paragraph{\textbf{Central limit theorems for ergodic averages}}  Sufficient conditions for the validity of the CLT for the sample mean of Markov processes are central results in probability theory~\cite{MR834478,MR1782272}. 
The CLT for estimators in sampling algorithms is of fundamental importance in applications~\cite{meynadntweedie,MarkovChains}.
Many authors have investigated conditions under which the CLT holds in the context of MCMC (see~\cite{MR1890063,meynadntweedie,MR2068475,MarkovChains} and references therein). 
However,  while there exist numerous representations of the asymptotic variance for ergodic estimators (see, e.g.,~\cite[Sec.~17.4]{meynadntweedie} and~\cite[Sec.~21]{MarkovChains}) in terms of the solution of the Poisson equation for the Markov chain or its related correlation function,  there are still very few (if any) general results giving  practical \textit{necessary} conditions (in terms of the one-step transition kernel) for the CLT to hold. A important advancement in this direction was made in~\cite{MR3843830}, where, by directly analysing the transition kernel, the authors established sharp necessary and sufficient conditions for the validity of the CLT for the independence sampler (see Section~\ref{subsec:independence_sampler} below). In this context, the main contribution of the present paper is the development of robust methods that establish necessary conditions for the CLT to hold for a broad class of Markov chains (see Theorem~\ref{thm:CLT} above). The strength of our approach is demonstrated in Section~\ref{sec:examples} below, where we derive sharp criteria for the validity of the CLT for all sampling algorithms in~\ref{SimAlg}. These results are based on direct applications of Theorem~\ref{thm:CLT} in Section~\ref{sec:main_results} of this paper.

\smallskip

\paragraph{\textbf{Upper bounds on the convergence rate of ergodic Markov chains}} Two standard ways of establishing upper bounds on the convergence rate to stationarity for ergodic Markov chains are:

\noindent\textit{(I) Drift conditions for upper bounds.} 
The drift conditions are based on the idea that upper bounds can be derived by successful coupling on compact sets. This approach has been widely used since the 1990s, see e.g.~\cite{meynadntweedie,MR1285459,MR1340509}. The modern formulation of drift conditions, which we now briefly recall, was first introduced in~\cite{MR1796485} (for the independence sampler) and later refined in~\cite{MR2071426}.

Let $W:\cX\to[1,\infty)$ and $B\subset \cX$ be a petite set  for the chain $X$. Let $\phi:[1,\infty)\to\RP$ be a concave, increasing function. Assume that for $b\in(0,\infty)$ the following upper-bound drift condition: 
\begin{equation}
\label{eq:upper_drift}
P W(x)\leq W(x) -\phi\circ W(x) + b \mathbbm{1}_{\{x\in B\}}\qquad\text{for all $x\in\cX$.}
\end{equation}
Then  the following upper bounds hold: $\pi(\phi \circ W)<\infty$ and $\|P^n(x,\cdot)-\pi\|_{\TV}\leq CW(x)r_\phi(n)$, where $r_\phi\coloneqq \phi\circ H_\phi^{-1}$ with $H_\phi(u)=\int_1^u \ud s/\phi(s)$, $u\in[1,\infty)$  and some constant $C\in(0,\infty)$ that does not depend on $x\in\cX$ or $n\in\N$ (see  monograph~\cite{MarkovChains} for the state of the art results). Moreover, condition~\eqref{eq:upper_drift} also implies upper bounds on convergence rates in $f$-variation and the Wasserstein distance~\cite[Sec.~20]{MarkovChains}. 

Since this approach relies on transforming the state space and upper bounding the drift of the transformed process, key information may be lost, potentially resulting in poor upper bounds on convergence rates. Analogously, condition~\eqref{eq:upper_drift} provides sufficient conditions for a CLT of an ergodic average to hold~\cite{MR2446322}. However, these conditions may be far from necessary for a poor choice of the Lyapunov function $W$. This natural motivates the development of \textit{new drift conditions} for \textit{lower} bounds and a \textit{sharp transition} for the validity of the CLT.

\noindent\textit{(II) Functional inequalities.} An alternative approach to subexponential ergodicity of Markov chains involves the use of functional inequalities, particularly of the weak Poincar\'e inequalities. This method was initially employed in continuous time~\cite{MR1856277} and, more recently, in discrete time~\cite{MR4524509,MR4783036}. A key advantage of these techniques is that they yield non-asymptotic bounds, providing valuable guidance for tuning parameters to maximize performance. However, to the best of our knowledge, these methods are generally not suited for deriving negative results, such as demonstrating failures of the CLT of ergodic averages. In this sense, the results presented in this paper complement the theory developed through functional inequalities.

\smallskip

\paragraph{\textbf{Lower bounds on the convergence rate of ergodic Markov chains}} The first general conditions for ergodicity in $f$-variation 
were given in the seminal paper~\cite{MR1285459}.
Further results on the failure of uniform and geometric ergodicity for MCMC samplers were established in~\cite{Mengersen96}. These results were later extended in~\cite{MR1996270} to derive necessary conditions for polynomial rates of convergence. However, all these works can only provide an arbitrarily sparse sequence in the lower bound. More precisely, for some $x\in\cX$ and $\alpha\in(0,\infty)$, they show the existence of a sequence $(n_k)_{n\in\N}$ in $\N$  such that $n_k\to\infty$ as $k\to\infty$ and   $n_k^{-\alpha}\leq \|P^{n_k}(x,\cdot)-\pi\|_{\TV}$ for all $k\in\N$. While this demonstrates the failure of geometric ergodicity, the proofs of these results do not control the gap $|n_{k+1}-n_k|$, making it impossible to conclude $C_0n^{-\alpha}\leq \|P^{n}(x,\cdot)-\pi\|_{\TV}$ for a constant $C_0>0$ and all $n\in\N$.

In continuous time, lower bounds on the convergence of Markov processes were studied in~\cite{MR2540073}. This approach compares the tail of the marginal distribution with that of the invariant measure to derive a lower bound on the convergence rate (in the case the tail decay of the invariant measure of the continuous-time process is explicitly known). However, in cases where the invariant measure is not explicitly available (e.g., unadjusted algorithms in Section~\ref{subsec:biased_algorithms} below) this method can only provide a lower bound for an arbitrarily sparse sequence (cf. previous paragraph). 
Our work similarly employs the tail comparison in~\cite{MR2540073} to derive lower bounds on convergence rates in $f$-variation and the Wasserstein distance. However, the main contribution in this paper lies in the application of the discrete-time 
\textbf{L}-drift conditions to  characterise the failure of the CLT for ergodic averages,  
which does not use the tail comparison convergence argument from~\cite{MR2540073}.
Finally, as in~\cite{brešar2024subexponential}, our result on the lower bounds of the tails of the invariant distribution (see Theorem~\ref{thm:invariant} above) make the tail comparison convergence argument in~\cite{MR2540073} applicable to ergodic Markov chains without an explicitly given invariant distributions.

\smallskip

\paragraph{\textbf{L-drift conditions in discrete time}} The main assumption in our results are the recent \textbf{L}-drift conditions~\nameref{sub_drift_conditions},  first introduced in continuous time~\cite{brešar2024subexponential}. \nameref{sub_drift_conditions} serves as a natural \textit{lower bound} analogue to the classical \textit{upper bound} drift condition (see, e.g.,~\cite{MR2071426}). A key aspect of our lower bound drift conditions is that they involve an estimation of  $P(1/V)$ for a given Lyapunov function $V:\cX\to[1,\infty)$, rather than relying solely on $P(V)$ used in the classical upper bound case~\cite{MR2071426}. Similar transformations were previously employed in~\cite{meynadntweedie,Menshikov21} to establish the transience of Markov chains. 
In addition to our main result Theorem~\ref{thm:CLT}, which characterises the failure of the CLT,
our work also yields  lower bounds on the tails of return times to compact sets, the tails of invariant distributions and the rates of convergence to stationarity. Moreover, since the estimates derived  from the \textbf{L}-drift conditions  are non-asymptotic (see Proposition~\ref{prop:lyapunov_return_times} below), it is conceivable that \nameref{sub_drift_conditions} could also yield lower bounds on the \textit{asymptotic variance} of the estimator (cf. non-asymptotic bounds on denoising diffusions in~\cite{brevsar2024non}). This direction is left for future work.

\section{Sampling algorithms}
\label{sec:examples}

In this section we apply our theory from Section~\ref{sec:main_results} to stochastic sampling algorithms in the families~\ref{SimAlg} (see Section~\ref{sec:introduction} above) that exhibit subgeometric ergodicity. 

\smallskip

\paragraph{\textbf{L-drift calculus for sampling algorithms}}
%\label{subsubsec:L_drift_calc}
%\begin{rem}[``\textbf{L}-drift calculus'']
%\label{rem:L_drift_calc}
\textbf{L}-drift conditions~\nameref{sub_drift_conditions} provide novel insights into the structure of sampling algorithms in~\ref{SimAlg},   enabling algorithm selection and  comparison. Let $X$ be a Markov chain associated with a sampling algorithm for a heavy-tailed target distribution $\pi$. Assume that $X$ satisfies \textbf{L}-drift conditions~\nameref{sub_drift_conditions} with  some functions $(V,\varphi,\Psi)$
yielding \textit{sharp} bounds:
\begin{align*}
 \pi(V\geq r) \approx  1/(r\varphi(1/r)\Psi(r)D(r)),\quad\text{as $r\to\infty$,}
\end{align*}
where   $D:[1,\infty)\to[1,\infty)$ is a very slowly  increasing function (e.g. $D=\log\circ\cdots\circ\log$) tending to infinity as $r\to\infty$. 
Intuitively, taking $D\equiv\text{const.}$,  the pair of functions $(\varphi,\Psi)$ for \textit{every algorithm} with $\pi$ as its invariant distribution will thus satisfy
\begin{equation}
\label{eq:L-drift_calc_constraint}
\overline \pi(r)\coloneqq \pi(V\geq r)\approx 1/(r\varphi(1/r)\Psi(r)) \quad\text{as $r\to\infty$.}
\end{equation}
As Sections~\ref{subsec:metropolis} and~\ref{subsec:biased_algorithms} below demonstrate, all sampling algorithms in~\ref{SimAlg} satisfy
\textbf{L}-drift conditions~\nameref{sub_drift_conditions} yielding sharp bounds and, in particular, satisfying~\eqref{eq:L-drift_calc_constraint} for  distributions $\pi$ with polynomial tails (see definition in~\eqref{eq:examples_pi} below). Recall also that, since the Lyapunov function $V$ is unbounded (but bounded on compacts), we have $\pi(\{V\geq r\})\to0$ as $r\to\infty$.

Our task is to chose an algorithm targeting $\pi$ that will satisfy the CLT for all ergodic estimators of probabilities of events under $\pi$ (the same heuristic works for all ergodic estimators of the integrals of functions in $L^2(\pi)$).
By Theorem~\ref{thm:CLT}\ref{thm:CLT(b)}, 
we are looking for an algorithm whose functions $(\varphi,\Psi)$ satisfy~\eqref{eq:L-drift_calc_constraint}
and the integrability condition 
\begin{equation}
\label{eq:L-drift_calc_condition}
r\mapsto \frac{1}{\Psi(G^{-1}_1(\sqrt{r}))}\approx\frac{\overline \pi(G^{-1}_1(\sqrt{r}))}{\sqrt{r}} \in L_{\mathrm{loc}}^1(\infty), 
\end{equation}
where 
$G_1^{-1}$ is  the inverse of the function $G_1:r\mapsto 1/(r\varphi(1/r))$ and,
by~\eqref{eq:L-drift_calc_constraint}, we have $\Psi(r)\approx G_1(r)/\overline\pi(r)$. It is important to note that, for a given target density $\pi$ and the state-space transformation $V$, the growth rate of the function $r\mapsto \overline \pi(G^{-1}_1(\sqrt{r})/\sqrt{r}$ depends \textit{only} on the decay rate of  $r\mapsto r\varphi(1/r)$ as $r\to\infty$. By condition~\eqref{eq:L-drift_calc_condition}, a sampling algorithm for $\pi$ will not satisfy the CLT for the ergodic average of indicators if the function $r\mapsto \overline \pi(G^{-1}_1(\sqrt{r}))/\sqrt{r}$ fails to be integrable at infinity. This implies that a good algorithm is the one with the the slow decay to zero of the function $r\mapsto r\varphi(1/r)$ as $r\to\infty$, which implies fast decay of $\overline \pi\circ G_1^{-1}$. In terms of the stochastic behaviour of the algorithm, by \nameref{sub_drift_conditions}\ref{sub_drift_conditions(i)} and Lemma~\ref{lem:return_times}, this requirement translates into the algorithm having return times to compact sets with thinnest possible tails (i.e., the algorithm targeting $\pi$ should return from the tails as quickly as possible). 

By the constraint in~\eqref{eq:L-drift_calc_constraint}, slow decay of
$r\mapsto r\varphi(1/r)$ implies slow growth of $\Psi(r)$. By the \textbf{L}-drift condition \nameref{sub_drift_conditions}\ref{sub_drift_conditions(ii)}, the slow growth of $\Psi(r)$ corresponds to the large probability of visiting the tail $\{V\geq r\}$. To summarise, a necessary condition for a good performance of a sampling algorithm (in terms of the validity of CLTs and rates of convergence) is that the algorithm makes many excursions of short duration into the tails.

For biased algorithms targeting an approximation $\pi_h$ of $\pi$  (e.g., see Section~\ref{subsec:discretisation_levy_algorithm}), the condition in~\eqref{eq:L-drift_calc_constraint} is of particular importance. This is because an asymptotic mismatch between the tail $\overline \pi(r)$ and 
the right-hand side of~\eqref{eq:L-drift_calc_constraint} results in a large asymptotic bias (the algorithm in this case would be targeting a measure $\pi$ which possesses more moments than its approximate invariant distribution $\pi_h$). Put differently, for biased sampling algorithms,
the probability of the excursions into the tails (given by the growth rate of $\Psi(r)$)
and their durations (given by the decay rate of $r\mapsto r\varphi(1/r)$)
have to be balanced carefully to match the decay rate of the tails of $\pi$, see~\eqref{eq:L-drift_calc_constraint}. For the class of target  distributions $\pi$ in~\eqref{eq:examples_pi} below, the ULA chain satisfies the balance in~\eqref{eq:L-drift_calc_constraint}. However, as shown in Section~\ref{subsec:discretisation_levy_algorithm}, this is not the case in general for ULA-type algorithms with noise of infinite variance.

\smallskip

Throughout Section~\ref{sec:examples}, the target distribution $\pi$ is assumed to have a continuous and strictly positive density (denoted again by) $\pi:\R^d\to(0,\infty)$ on the state space $\cX = \R^d$.
We denote by $|\cdot|$ and $\langle \cdot,\cdot\rangle$ the usual Euclidean norm and inner product on $\R^d$, respectively. 

Section~\ref{subsec:metropolis} focuses on unbiased algorithms in the class~\ref{SimAlg}, incorporating the Metropolis correction. In particular, it analyses the RWM, MALA, SPS and the independence sampler. Section~\ref{subsec:biased_algorithms} analyses unadjusted Langevin-type algorithms with increment distributions that either have finite or infinite variance. Appendix~\ref{A:simulations} explains the details behind the simulations resulting in QQ-plots in Figures~\ref{fig:RWM_light_ex}-\ref{fig:last_MALA_ULA}.

\subsection{Metropolis-adjusted algorithms}
\label{subsec:metropolis}

This sampling paradigm is based on the candidate transition kernel $Q(x,\cdot)$, which generates proposed moves for the Markov chain $X$. We assume that $Q(x,\cdot)$ has a density $q(x,y)$ w.r.t. the Lebesgue measure $\lambda$. If the current state is $x$, a  proposed move to $y$, generated according to the density $q(x,y)$, is then accepted with probability
\begin{equation}
\label{eq:acceptance}
\alpha(x,y) = \begin{cases} \min\{\frac{\pi(y)q(y,x)}{\pi(x)q(x,y)},1\}, &\text{if $\pi(x)q(x,y)>0$, }\\
1, &\text{if $\pi(x)q(x,y)=0$.}\\
\end{cases}
\end{equation}
The Markov transition kernel $P$ for such a Markov chain $X$ is given by
\begin{equation}
\label{eq:Metropolis-adjusted_kernel}
P(x,\ud y) = p(x,y)\lambda(\ud y) + r(x)\delta_x(\ud y),
\end{equation}
where $\delta_x$ is a Dirac delta at $x$ and
$$
p(x,y) \coloneqq \begin{cases}
\alpha(x,y)q(x,y), &\text{$x\neq y$,}\\
0, &\text{$x=y$,}
\end{cases} \qquad \& \qquad r(x) \coloneqq \int_{\R^d} (1-\alpha(x,y))q(x,y)\lambda(\ud y).
$$
It is well-known that for any proposal kernel $q(x,y)$, the chain $X$ is reversible  with respect to $\pi$, i.e. $\pi(x)p(x,y)=\pi(y)p(y,x)$ for all $x,y\in\R^d$, making $\pi$ its invariant distribution. 
Unless otherwise stated, in Section~\ref{subsec:metropolis} we assume that for some $v\in(0,\infty)$, the density of the probability measure $\pi$ takes the following form on the complement of a compact set in $\R^d$,
\begin{equation}
    \label{eq:examples_pi}
    \pi(x) = \frac{\ell(|x|)}{|x|^{v+d}},%\qquad x\in\R^d,
\end{equation}
where the function $\ell:\RP\to\RP$ satisfies
$0<\liminf_{r\to\infty} \ell(r)\leq \limsup_{r\to\infty}\ell(r)<\infty$.

\begin{rem}
\label{rem:invariant}
Our assumption on $\pi$ includes the symmetric multivariate $t$-distribution $t(v)$ (with $v>0$ degrees of freedom) and density on $\R^d$ proportional to 
\begin{equation}
    \label{eq:student_t_def}
x\mapsto (1+|x|^2/v)^{-(v+d)/2}.
\end{equation}
  We stress that the assumption in~\eqref{eq:examples_pi} on the class of target distributions $\pi$  is made for ease of presentation. Our methods can be applied to a more general class of targets $\pi$ satisfying
$r^{-u_-}\leq \pi(\{|x|>r\})\leq r^{-u_+}$ for some fixed $0<u_+\leq u_-$ and all $r\in\RP$ sufficiently large.
\end{rem}

\subsubsection{Random walk Metropolis algorithms}
\label{subsec:RWM}
Consider a spherically symmetric proposals $q(x,y) = q(|x-y|)$ for some function $q:\RP\to\RP$. Under this assumption, the acceptance probability in~\eqref{eq:acceptance} reduces to $\alpha(x,y) = \min\{\pi(y)/\pi(x),1\}$ if $\pi(x)q(x,y)>0$.

\begin{thm}\label{thm:RWM_light}
Assume that the invariant distribution $\pi$ satisfies~\eqref{eq:examples_pi} with $v\in(0,\infty)$, $Q$ has finite variance
(e.g., $Q$ is Gaussian or has compact support)
%(i.e. $\int_\R r^{d+1} q(r)\ud r<\infty$) 
with  the function  $q:\RP\to\RP$ eventually non-increasing. Then the following statements hold.
\begin{myenumi}[label=(\Roman*)]
\item \label{thm:RWM_light_CLT} \textup{\textbf{[CLT-necessary condition]}}  
Fix a measurable function  $g:\R^d\to\R$.
\begin{enumerate}
\item[(Ia)] Let $g(x)\geq |x|^s$  for some $2s\in(0,v)$ and all $|x|$ sufficiently large.
If $2s \in (v-2, v)$, then the CLT  for $S_{n}(g)$ in~\eqref{eq:ergodic_average}  \textbf{fails}.
\item[(Ib)] Let $g$ be bounded. 
If $v\in(0,2)$, then the CLT  for $S_n(g)$ in~\eqref{eq:ergodic_average} \textbf{fails}.
\end{enumerate}
\item  \label{thm:RWM_light_lower_bounds} \textup{\textbf{[Lower bounds]}} Fix $x\in\R^d$ and, for $p\in\RP$, define $f_p(y)\coloneqq \max\{1,|y|^p\}$, $y\in\R^d$.
\begin{enumerate}
\item[(IIa)] For any $p\in[0,v)$ and every $\eps>0$, there exists a constant $c_p\in(0,\infty)$ such that
$$
c_p n^{-\frac{v-p}{2}-\eps}\leq \|P^n(x,\cdot)-\pi\|_{f_p}\quad\text{for all $n\in\N\setminus\{0\}$.}
$$
\item[(IIb)] For any $p\in[1,v)$ and every $\eps>0$, there exists a constant $c_{\cW,p}\in(0,\infty)$ such that
$$
c_{\cW,p} n^{-\frac{v-p}{2p}-\eps}\leq \cW_p(P^n(x,\cdot),\pi)\quad\text{for all $n\in\N\setminus\{0\}$.}
$$
\end{enumerate}
\end{myenumi}
\end{thm}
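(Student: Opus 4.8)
The strategy is to verify the \textbf{L}-drift conditions~\nameref{sub_drift_conditions} for the RWM chain with a Lyapunov function adapted to the polynomial tail of $\pi$, then invoke Theorems~\ref{thm:CLT}, \ref{thm:invariant} and~\ref{thm:f_rate} as black boxes. First I would choose $V(x) \coloneqq (1+|x|^2)^{1/2}$ (or a smooth version thereof), so that $V$ is Lipschitz and continuous with $V\to\infty$, and non-confinement follows from Lemma~\ref{lem:non_confinement} since the Gaussian-type proposal can reach any tail set in one step with positive probability uniformly over starting points in a compact set (and hence, by irreducibility plus Feller, in $n_0$ steps from anywhere). The core computation is the first \textbf{L}-drift inequality: I need $P(1/V) - \varphi(1/V) \leq 1/V$ outside a compact set. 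Writing $P(1/V)(x) - 1/V(x) = \int (1/V(y) - 1/V(x))\alpha(x,y)q(|x-y|)\,\lambda(\ud y)$, one splits the proposal mass into the accepted region $\{\pi(y)\geq\pi(x)\}$ (roughly $\{|y|\leq|x|\}$, moving inward, so $1/V$ decreases — this gives a negative contribution) and the rejected-with-positive-probability region. Because $Q$ has finite variance and $1/V$ is smooth, bounded, with $|\nabla(1/V)|\asymp |x|^{-2}$ and $|\Hessian(1/V)|\asymp|x|^{-3}$, a second-order Taylor expansion shows the net drift of $1/V$ is of order $|x|^{-3}$; more precisely the inward bias dominates and one gets $P(1/V)(x) - 1/V(x) \leq -c|x|^{-3} \asymp -c\,V(x)^{-3}$ for large $|x|$. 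Hence one may take $\varphi(1/r) \asymp r^{-3}$, i.e. $r\varphi(1/r)\asymp r^{-2}$, which is decreasing to $0$ as required, so $\beta = 2$ as claimed in Table~\ref{tab:samplers-short}.

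Next, the second \textbf{L}-drift inequality~\ref{sub_drift_conditions(ii)}. Since RWM is a many-jump sampler, I would verify it via Lemma~\ref{lem:overshoot}: take $\Psi(r) \coloneqq r^{v}$ (the candidate dictated by~\eqref{eq:L-drift_calc_constraint}, $\overline\pi(r)\asymp r^{-v}\asymp 1/(r\cdot r^{-2}\cdot r^{v})$... wait, one needs $\Psi\asymp \overline\pi(r)^{-1}/(r\varphi(1/r)) \asymp r^{-v}\cdot r^{2}\cdot r^{-1}\cdot$ — recompute: $1/\overline\pi \asymp r^v$ and $r\varphi(1/r)\asymp r^{-2}$, so $\Psi(r)\asymp r^v / (r\varphi(1/r)) \cdot$... actually $\overline\pi(r)\approx 1/(r\varphi(1/r)\Psi(r))$ gives $\Psi(r)\approx 1/(r\varphi(1/r)\overline\pi(r))\asymp r^{2}/(r \cdot r^{-v})\cdot$ hmm $= r^{2}\cdot r^{-1}\cdot r^{v} = r^{v+1}$). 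So take $\Psi(r)\asymp r^{v+1}$, which is submultiplicative (polynomial), increasing and differentiable. To apply Lemma~\ref{lem:overshoot} I must check: (a) $P(\Psi\circ V)\geq \Psi\circ V$ outside a compact set, i.e. $\Psi\circ V$ is a submartingale in the tails — this again follows from a Taylor expansion since $\Psi\circ V(x)\asymp|x|^{v+1}$ is convex with growing gradient, and the symmetric part of the proposal combined with the inward acceptance bias needs to be controlled so that the convexity (Jensen) contribution wins; and (b) the incremental overshoot bound~\eqref{eq:incremental_overshoot}, $P(\1{V>r}\Psi\circ V)(x)\leq C\Psi(r)P(x,\{V>r\})$ for $x$ with $V(x)\leq r$ — this is a statement that conditionally on jumping past level $r$, the chain does not overshoot by more than a constant factor, which holds because the proposal has a density with at most polynomially heavy (indeed finite-variance) tails so $\int_{|y|>r}|y|^{v+1}q(|y-x|)\lambda(\ud y)$ is comparable to $r^{v+1}\int_{|y|>r}q(|y-x|)\lambda(\ud y)$ up to constants (using $q$ eventually non-increasing and integrability of $|y|^{v+1+d}q(|y|)$... one must be a little careful here since $\pi$-acceptance further suppresses large jumps). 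With Lemma~\ref{lem:overshoot} in hand, condition~\ref{sub_drift_conditions(ii)} holds with this $\Psi$.

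Finally, I would feed $(V,\varphi,\Psi)$ with $\varphi(1/r)\asymp r^{-3}$ and $\Psi(r)\asymp r^{v+1}$ into the three theorems. For \ref{thm:RWM_light_CLT}: take $h(r) = r^{s}$, compute $G_h(r) = q(1-q)h(r)/(r\varphi(1/r)) \asymp r^{s+2}$, so $G_h^{-1}(t)\asymp t^{1/(s+2)}$ and the integrand in~\eqref{eq:CLT_integral_test} is $\asymp 1/\Psi(t^{1/(2(s+2))}) \asymp t^{-(v+1)/(2(s+2))}$; this fails to be in $L^1_{\mathrm{loc}}(\infty)$ iff $(v+1)/(2(s+2))\leq 1$, i.e. $2s\geq v-3$ — I need to double-check the bookkeeping against the claimed threshold $2s>v-2$, adjusting $\varphi$ by the precise constant/logarithmic corrections (the slowly-varying $\ell$ and the function $D$) so that the boundary comes out at $2s = v-2$; for part (Ib), $h\equiv1$ gives $G_1(r)\asymp r^2$, integrand $\asymp t^{-(v+1)/4}$, non-integrable iff $v\leq 3$, again to be reconciled to $v<2$ by the sharp constants — this reconciliation of exponents against the stated sharp thresholds is the delicate part and the likely main obstacle, since it forces $\varphi$ and $\Psi$ to be pinned down more precisely than "$\asymp$". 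For \ref{thm:RWM_light_lower_bounds}, one applies Theorem~\ref{thm:f_rate} with $f_\star(r) = r^p$ (resp. $f_\star(r)=(r/2)^p$), $h$ chosen so $h\circ V \propto W$ where $W$ is the upper-bound Lyapunov function for RWM (known to give polynomial ergodicity of order $v/2$, cf.~\cite{Roberts07}), whence $P^n(h\circ V)$ grows at most linearly in $n$; the algebra of composing $a_g\circ A_g^{-1}\circ(2v)$ then yields the rate $n^{-(v-p)/2}$ in $f_p$-variation and $n^{-(v-p)/(2p)}$ in $\cW_p$, with the arbitrary $\eps>0$ absorbing the slowly-varying corrections coming from $\ell$ and $D$.
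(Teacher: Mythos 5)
Your overall architecture is the right one and matches the paper's: verify~\nameref{sub_drift_conditions} for $V(x)\asymp|x|$ with $\varphi(1/r)\asymp r^{-3}$, check the $\Psi$-condition via Lemma~\ref{lem:overshoot}, and then feed $(V,\varphi,\Psi)$ into Theorems~\ref{thm:CLT} and~\ref{thm:f_rate}, using $h\circ V\propto W$ with $W(x)\asymp|x|^{v+2-\eps}$ for the convergence-rate part (the paper's Proposition~\ref{prop:drift_metropolis_light} does exactly this, with $V(x)=|x|\vee1$). But there is a genuine gap, and it is not the ``slowly-varying/logarithmic reconciliation'' you defer to the end: your $\Psi$ is simply wrong. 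The constraint~\eqref{eq:L-drift_calc_constraint} with $r\varphi(1/r)\asymp r^{-2}$ and $\overline\pi(r)\asymp r^{-v}$ gives $\Psi(r)\asymp 1/\bigl(r\varphi(1/r)\,\overline\pi(r)\bigr)\asymp r^{2}\cdot r^{v}=r^{v+2}$, not $r^{v+1}$ (you divided by an extra factor of $r$). Worse, $\Psi(r)=r^{v+1}$ cannot satisfy the hypotheses of Lemma~\ref{lem:overshoot} at all: for the finite-variance RWM the normalised drift of $|x|^{u}$ behaves like $\tfrac{u}{2d}(u-2-v)|x|^{-2}$ in the tails (this is the Roberts--Tweedie computation the paper quotes), so $|x|^{u}$ is a strict supermartingale outside compacts for every $u<v+2$; your claimed submartingale property (a) for $|x|^{v+1}$ is false, and no Taylor/convexity argument rescues it. You can also see the inadmissibility abstractly: if $\varphi(1/r)\asymp r^{-3}$ and $\Psi(r)\asymp r^{v+1}$ satisfied~\nameref{sub_drift_conditions}, Theorem~\ref{thm:invariant} would force $\pi(\{V\geq r\})\gtrsim r^{-(v-1)-\eps}$, contradicting the true tail $r^{-v}$. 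The fix is the paper's choice $\Psi(r)=r^{k}$ with $k\in(v+2,v+3)$ (any $k>v+2$, arbitrarily close): then $k>v+2$ gives the submartingale property, $k<v+3$ keeps the overshoot integral finite, and plugging $G_h(r)\asymp r^{s+2}$ into~\eqref{eq:CLT_integral_test} gives non-integrability iff $k\leq 2s+4$, i.e.\ (letting $k\downarrow v+2$) exactly $2s>v-2$, and $h\equiv1$ gives exactly $v<2$ --- the thresholds you could not reconcile come out on the nose, with no slowly-varying corrections needed.

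Two smaller points. First, the overshoot bound~\eqref{eq:incremental_overshoot} is \emph{not} a statement about the proposal alone: with only a finite second moment, $\int_{\{|y|>r\}}|y|^{k}q(|y-x|)\,\ud y$ can be infinite for $k>2$, so the Metropolis acceptance factor $\min\{\pi(y)/\pi(x),1\}\lesssim (|x|/|y|)^{v+d}$ must be used quantitatively (this is precisely why the admissible window for $k$ is bounded above); your parenthetical ``one must be a little careful here'' is where the real work sits in the paper's Claim, which also splits the tail integral at $3r$ and uses eventual monotonicity of $q$. Second, for the first drift inequality you do not need (and should not try to prove) a strictly negative drift of $1/V$; the condition only requires the one-sided bound $P(1/V)-1/V\leq C|x|^{-3}$, which the paper obtains by bounding just the inward contributions using $q(u)\lesssim u^{-(d+2)}$.
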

The main step in the proof of Theorem~\ref{thm:RWM_light} in Appendix~\ref{subsec:RWM_proof} below consists of establishing the \textbf{L}-drift conditions~\nameref{sub_drift_conditions} (for the functions $V(x)=|x|\vee 1$, $\Psi(r) = r^{v+2+\eps}$, $\varphi(1/r) \propto 1/r^3$).  
Then  Theorems~\ref{thm:CLT}  and~\ref{thm:f_rate}  yield parts~\ref{thm:RWM_light_CLT} and~\ref{thm:RWM_light_lower_bounds} of Theorem~\ref{thm:RWM_light}, respectively. 

The lower bounds in Theorem~\ref{thm:RWM_light} and conditions for the failure of the CLT for the RWM are sharp when compared  to the known upper bounds on convergence rates and sufficient conditions for the validity of the CLT.

\begin{rem}
\label{rem:matching_bounds_RWM_finite_variance}
Under the assumptions of Theorem~\ref{thm:RWM_light}, the RWM algorithm was previously studied in~\cite{Roberts07}. By applying their results for fv-RWM, we show that our lower bounds match the known upper bounds, and that our condition for the CLT to fail is sharp.
\begin{myenumi}[label=(\Roman*)]
\item \textbf{[CLT-characterisation]} Let $g:\R^d\to\RP$ satisfy $g(x)=|x|^s$ with $2s\in(0,v)$ for $|x|$ sufficiently large. (Recall that, by~\eqref{eq:examples_pi}, $\pi(g^2)<\infty$ requires $2s<v$.)
Then the CLT holds for $S_{n}(g)$ if  $2s \in (0, v -2)$ (see~\cite[Prop.~5]{Roberts07}) and fails to hold if $2s \in (v -2, v)$ by Theorem~\ref{thm:RWM_light}(Ia). Moreover, for $g:\R^d\to\R$ bounded, the CLT holds for $S_{n}(g)$ if $v>2$ (see~\cite[Prop~5]{Roberts07})  and fails to hold if $v<2$ by Theorem~\ref{thm:RWM_light}(Ib). In particular, our theory implies that if $\pi$ does not have a finite second moment, the ergodic estimators of probabilities of events under $\pi$ do not satisfy the CLT.

\item \textbf{[Matching bounds]} For every $x\in\R^d$ and $\eps>0$ there exist constants $c\in(0,\infty)$ (by Theorem~\ref{thm:RWM_light}\ref{thm:RWM_light_lower_bounds}) and $C\in(0,\infty)$ (by~\cite[Prop~5]{Roberts07}) such that
\begin{equation}
    \label{eq:RWM_mathcin_finte_var}
c n^{-\frac{v}{2}-\eps}\leq \|P^n(x,\cdot)-\pi\|_{\TV}\leq Cn^{-\frac{v}{2}+\eps} \quad\text{for all $n\in\N\setminus\{0\}$.}
\end{equation}
Moreover, optimality in 
$f$-variation and  the Wasserstein distance $\cW_p$ can be deduced by applying the results from~\cite{MR2071426},~\cite{Durmus16} and~\cite{MR4429313} with the drift condition in~\cite[Prop~5]{Roberts07}.
\end{myenumi}
\end{rem}

\begin{figure}[ht]
  \centering
  \begin{subfigure}{0.45\textwidth}
    \includegraphics[width=\textwidth]{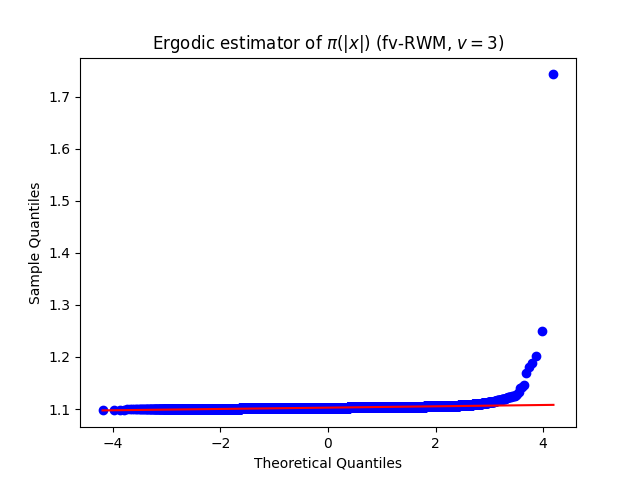}
    \caption{}
    \label{fig:RWM_QQ_light_tail_ex}
  \end{subfigure}
  \hfill
  \begin{subfigure}{0.45\textwidth}
    \includegraphics[width=\textwidth]{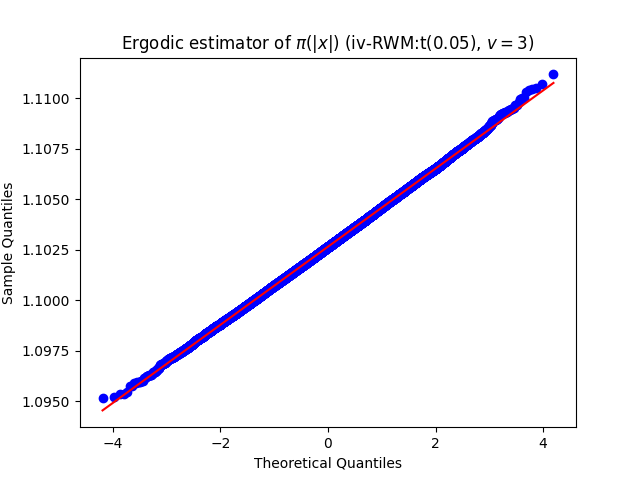}
    \caption{}
    \label{fig:RWM_QQ_light_mean_ex}
  \end{subfigure}
  \caption{QQ-plots of $5\cdot10^4$ ergodic averages  $S_n(g)$ (see~\eqref{eq:ergodic_average} above)
  for the function $g(x)= |x|$   after $n=10^8$ steps of the RWM chains with  Gaussian proposals in panel \ref{fig:RWM_QQ_light_tail_ex} and $t(0.05)$ proposals in panel \ref{fig:RWM_QQ_light_mean_ex} targeting the $t$-distribution $t(3)$ on $\R$.  (See~\eqref{eq:student_t_def} for the definition of the $t$-distribution.)
  The QQ-plots in panels~\eqref{fig:RWM_QQ_light_tail_ex} and~\eqref{fig:RWM_QQ_light_mean_ex} show that the CLT fails in the first and holds in the second case, which agrees with the conclusion of Theorem~\ref{thm:RWM_light}\ref{thm:RWM_light_CLT} since 
  $v-2=1<2=2s<v-0.05 = 2.95$.}
  \label{fig:RWM_light_ex}
\end{figure}

Our theory in Section~\ref{sec:main_results} above proves rigorously that the standard fv-RWM sampler can be substantially improved by replacing finite-variance proposals with infinite-variance proposals. Such
samplers achieve faster convergence rates and preserve the validity of the CLT for a broader class of ergodic averages, facts widely believed to hold based on evidence provided by simulation~\cite{Roberts07,heseparation,MR1796485}.

\begin{thm}
\label{thm:RWM_heavy}
Assume that the invariant distribution $\pi$ satisfies~\eqref{eq:examples_pi} with $v\in(0,\infty)$.
In addition, assume that for some $\eta\in(0,2)$ and all $|x|$ sufficiently large, we have 
$$
q(|x|) = \frac{\ell_q(|x|)}{|x|^{d+\eta}},\quad\text{where $\ell_q$ is slowly varying at infinity  \& $
0<\liminf_{r\to\infty} \ell_q(r)\leq \limsup_{r\to\infty} \ell_q(r) <\infty.
$}
$$
Then the following statements hold.
\begin{myenumi}[label=(\Roman*)]
\item \label{thm:RWM_heavy_CLT} \textup{\textbf{[CLT-necessary condition]}}  
Fix a measurable function  $g:\R^d\to\R$.
\begin{enumerate}
\item[(Ia)] Let $g(x)\geq |x|^{s}$  for some $2s\in(0,v)$ and all $|x|$ sufficiently large.
If $2s \in (v-\eta, v)$, then the CLT  for $S_{n}(g)$ in~\eqref{eq:ergodic_average}  \textbf{fails}.
\item[(Ib)] Let $g$ be bounded. 
If $v\in(0,\eta)$, then the CLT  for $S_n(g)$ in~\eqref{eq:ergodic_average} \textbf{fails}.
\end{enumerate}
\item  \label{thm:RWM_heavy_lower_bounds} \textup{\textbf{[Lower bounds]}} Fix $x\in\R^d$ and, for $p\in\RP$, define $f_p(y)\coloneqq \max\{1,|y|^p\}$, $y\in\R^d$.
\begin{enumerate}
\item[(IIa)] For any $p\in[0,v)$ and every $\eps>0$, there exists a constant $c_p\in(0,\infty)$ such that
$$
c_p n^{-\frac{v-p}{\eta}-\eps}\leq \|P^n(x,\cdot)-\pi\|_{f_p}\quad\text{for all $n\in\N\setminus\{0\}$.}
$$
\item[(IIb)] For any $p\in[1,v)$ and every $\eps>0$, there exists a constant $c_{\cW,p}\in(0,\infty)$ such that
$$
c_{\cW,p} n^{-\frac{v-p}{\eta p}-\eps}\leq \cW_p(P^n(x,\cdot),\pi)\quad\text{for all $n\in\N\setminus\{0\}$.}
$$
\end{enumerate}
\end{myenumi}
\end{thm}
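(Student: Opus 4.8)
\textbf{Proof proposal for Theorem~\ref{thm:RWM_heavy}.}

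The plan is to follow the same strategy already announced for Theorem~\ref{thm:RWM_light}: establish the \textbf{L}-drift conditions~\nameref{sub_drift_conditions} with an explicit triple $(V,\varphi,\Psi)$ and then invoke Theorems~\ref{thm:CLT} and~\ref{thm:f_rate} as black boxes. Throughout I would take $V(x)=|x|\vee1$, which is clearly continuous, Lipschitz and, since the proposal has a density bounded below on compacts and heavy tails, non-confining (Lemma~\ref{lem:non_confinement} applies because $\inf_{x}P^{n_0}(x,\{V\ge u\})>0$). The substantive work is the two inequalities in~\nameref{sub_drift_conditions}\ref{sub_drift_conditions(i)} and~\ref{sub_drift_conditions(ii)}, and the heavy-tailed proposal changes both of them relative to the finite-variance case.

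First I would analyse $P(1/V)$. Writing $P=P_{\text{acc}}+P_{\text{rej}}$, on the rejection part $1/V$ is unchanged, so only accepted moves matter. For $|x|$ large, a proposed increment $z$ with $|z|$ of order $|x|$ or larger is accepted with probability $\approx(\pi(x+z)/\pi(x))\wedge1$; by~\eqref{eq:examples_pi} this is of order $(|x|/|x+z|)^{v+d}\wedge1$, and one integrates $1/|x+z|$ against $q(z)\approx\ell_q(|z|)|z|^{-(d+\eta)}$. The dominant contribution to $P(1/V)-1/V$ comes from moves \emph{towards} the origin (which are always accepted and strictly decrease $1/V$ by an amount that is subdominant) versus moves deep into the tail; a careful but routine estimate should give $P(1/V)(x)-1/V(x)\asymp -c|x|^{-(1+\eta)}$, i.e.\ the decrement is governed by the proposal's tail exponent $\eta$ rather than by its (now infinite) variance. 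Hence I would take $\varphi(1/r)\propto r^{-(1+\eta)}$, equivalently $r\varphi(1/r)\propto r^{-\eta}\to0$ and decreasing, as required; this is the exact analogue of $\varphi(1/r)\propto1/r^{3}$ when $\eta=2$. For~\ref{sub_drift_conditions(ii)} I would use Lemma~\ref{lem:overshoot} (the ``integrable / many-jump'' branch): one checks the submartingale inequality $P(\Psi\circ V)\ge\Psi\circ V$ on $\{V\ge\ell_0\}$ and the incremental-overshoot bound~\eqref{eq:incremental_overshoot} for $\Psi(r)=r^{v+\eta+\eps}$. The natural candidate exponent is read off from~\eqref{eq:L-drift_calc_constraint}: with $\overline\pi(r)\asymp r^{-v}$ and $r\varphi(1/r)\asymp r^{-\eta}$ one needs $\Psi(r)\approx r^{v}/r^{-\eta}=r^{v+\eta}$, and the extra $\eps$ provides the strict submartingale slack; submultiplicativity of polynomials is immediate. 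Verifying $P(\1{V>r}\Psi\circ V)(x)\le C\Psi(r)P(x,\{V>r\})$ for $x$ with $|x|\le r$ is the place the heavy proposal tail is used essentially: because $q$ is regularly varying, the conditional law of $|x+z|$ given a jump past level $r$ has a regularly-varying tail, so its truncated $(v+\eta+\eps)$-moment is comparable to $r^{v+\eta+\eps}$ times the exceedance probability — this is a standard single-big-jump/Potter-bounds computation.

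With $(V,\varphi,\Psi)=(|x|\vee1,\ r^{-(1+\eta)},\ r^{v+\eta+\eps})$ in hand, part~\ref{thm:RWM_heavy_CLT} follows from Theorem~\ref{thm:CLT}: for $g\ge h\circ V$ with $h(r)=r^{s}$ one has $G_h(r)\propto r^{s}/(r\varphi(1/r))\propto r^{s+\eta}$, so $G_h^{-1}(\sqrt t)\propto t^{1/(2(s+\eta))}$ and the integrand $1/\Psi(G_h^{-1}(\sqrt t)/(1-q)^2)\propto t^{-(v+\eta+\eps)/(2(s+\eta))}$; this fails to be integrable at infinity precisely when $(v+\eta)/(2(s+\eta))\le1$, i.e.\ when $2s\ge v-\eta$, and letting $\eps\downarrow0$ gives the stated threshold $2s\in(v-\eta,v)$ for (Ia); taking $h\equiv1$ gives $2s=0$ and the condition $v<\eta$ for (Ib) via Theorem~\ref{thm:CLT}\ref{thm:CLT(b)}. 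For part~\ref{thm:RWM_heavy_lower_bounds} I would feed the same triple into Theorem~\ref{thm:f_rate}: the tail bound $\pi(V\ge r)\gtrsim r^{-v-\eps}$ from Theorem~\ref{thm:invariant}, the choice $h\circ V\propto W$ with $W$ the upper-bound Lyapunov function (so $P^n(h\circ V)$ grows at most linearly in $n$, giving $v(x,n)\asymp n$), $f_\star(r)=r^{p}$ for the $f_p$-variation bound and $f_\star(r)=(r/2)^{p}$ for $\cW_p$, and then a bookkeeping computation of $a_g\circ A_g^{-1}\circ(2v)$ produces the rates $n^{-(v-p)/\eta}$ and $n^{-(v-p)/(\eta p)}$ up to the arbitrarily small $\eps$ coming from $D$ in $L_q$ and from the $\eps$ in $\Psi$.

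The main obstacle I anticipate is the incremental-overshoot estimate~\eqref{eq:incremental_overshoot} and, relatedly, the precise two-sided control of $P(1/V)-1/V$: with infinite-variance proposals one cannot Taylor-expand or use a second-moment bound, so both estimates must be done by splitting the increment into ``small'' (size $o(|x|)$), ``moderate'' (comparable to $|x|$) and ``large'' (much bigger than $|x|$) regimes and handling the acceptance ratio carefully in each — in the large regime the $t$-type target tail $\pi(x+z)/\pi(x)\approx(|x|/|z|)^{v+d}$ interacts with the proposal tail $|z|^{-(d+\eta)}$, and one needs the combined integrand $|z|^{-(d+\eta)}\cdot(|x|/|z|)^{v+d}$ to be integrable, which it is, but getting the sharp constant/exponent requires care. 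A secondary technical point is that $\ell$ and $\ell_q$ are only slowly varying (not asymptotically constant), so all the polynomial identities above must be interpreted up to Potter-bound factors $r^{\pm\eps}$; this is harmless because the statements already carry an arbitrary $\eps>0$, but it needs to be tracked consistently, particularly in the (Ia)/(Ib) boundary case where the threshold is exactly $(v+\eta)/(2(s+\eta))=1$.
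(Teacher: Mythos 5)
Your proposal matches the paper's proof in all essentials: Proposition~\ref{prop:drift_metropolis_heavy} verifies \nameref{sub_drift_conditions} with exactly your triple $V(x)=|x|\vee1$, $\varphi(1/r)\propto r^{-(1+\eta)}$, $\Psi(r)=r^{s}$ with $s\in(v+\eta,v+\eta+1)$, using Lemma~\ref{lem:overshoot} together with the same overshoot estimate (done there by a direct spherical-coordinates computation with two-sided constant bounds on $q$, the submartingale property of $\Psi\circ V$ being imported from the drift computation in~\cite{Roberts07}), and the theorem is then read off from Theorems~\ref{thm:CLT} and~\ref{thm:f_rate} with $h(r)=r^{s}$ and $h(r)=r^{v+\eta-\eps}$, exactly as you describe. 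Two small remarks: since $\liminf_{r\to\infty}\ell_q(r)$ and $\limsup_{r\to\infty}\ell_q(r)$ are finite and positive, $\ell_q$ is comparable to a constant at infinity, so no Potter-bound factors are needed; and your heuristic for $P(1/V)$ has the direction reversed --- moves towards the origin \emph{increase} $1/V$, and it is precisely this inward contribution that must be bounded above by $\varphi(1/V)$ (only this one-sided bound is required, not a two-sided asymptotic $\asymp -c|x|^{-(1+\eta)}$), which is what the paper's estimate provides.
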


The proof of Theorem~\ref{thm:RWM_heavy} can be found in Appendix~\ref{subsec:RWM_proof} below and consists of establishing that the functions $V(x)=|x|\vee 1$, $\Psi(r) = r^{v+\eta+\eps}$, $\varphi(1/r) \propto 1/r^{1+\eta}$ satisfy the \textbf{L}-drift conditions~\nameref{sub_drift_conditions} and applying  Theorem~\ref{thm:CLT} and  Theorem~\ref{thm:f_rate} (with $h(r)=r^{v+\eta-\eps}$) to conclude Theorem~\ref{thm:RWM_heavy}\ref{thm:RWM_heavy_CLT} and Theorem~\ref{thm:RWM_heavy}\ref{thm:RWM_heavy_lower_bounds}, respectively.

Theorem~\ref{thm:RWM_heavy}\ref{thm:RWM_heavy_CLT} implies that taking $\eta>0$ arbitrarily close to zero yields polynomial ergodicity of the iv-RWM algorithm of arbitrarily high polynomial order. In addition, by Theorem~\ref{thm:RWM_heavy}\ref{thm:RWM_heavy_lower_bounds}, small $\eta>0$ makes the window of failure of the CLT arbitrarily small (cf. Figures~\ref{fig:RWM_light_ex} and~\ref{fig:RWM_d_20_ex}).

\begin{rem}
\label{rem:matching_bounds_RWM_infinite_variance}
Under the assumptions of Theorem~\ref{thm:RWM_heavy}, the RWM algorithm was previously studied in~\cite{Roberts07}. Applying  these results demonstrates that our lower bounds match the known upper bounds and that the condition under which the CLT fails is sharp.
\begin{myenumi}[label=(\Roman*)]
\item \textbf{[CLT-characterisation]} Let $g:\RP\to\RP$ satisfy $g(x)=|x|^{s}$ with $2s\in[0,v)$ for $|x|$ sufficiently large. (Recall  from~\eqref{eq:examples_pi} that $\pi(g^2)<\infty$ requires $2s<v$.)
Then the CLT holds for $S_{n}(g)$ if  $2s \in [0, v -\eta)$~\cite[Prop~6]{Roberts07} and fails if $2s \in (v -\eta, v)$ by Theorem~\ref{thm:RWM_heavy}\ref{thm:RWM_heavy_CLT}. Moreover, for $g:\R^d\to\R$ bounded, the CLT holds for $S_{n}(g)$ if $v>\eta$ by~\cite[Prop~6]{Roberts07}  and fails to hold if $v <\eta$ by Theorem~\ref{thm:RWM_heavy}\ref{thm:RWM_heavy_CLT}. For any invariant measure $\pi$ in~\eqref{eq:examples_pi} with $v>0$, taking $\eta<v$ gives rise to the RWM algorithm which satisfies CLT for estimators of probabilities of events under $\pi$.
\item \textbf{[Matching bounds]} For every $x\in\R^d$ and $\eps>0$ there exist constants $c\in(0,\infty)$~by Theorem~\ref{thm:RWM_heavy}\ref{thm:RWM_heavy_lower_bounds} and $C\in(0,\infty)$ by~\cite[Prop~6]{Roberts07} such that
$$
c n^{-\frac{v}{\eta}-\eps}\leq \|P^n(x,\cdot)-\pi\|_{\TV}\leq Cn^{-\frac{v}{\eta}+\eps} \quad\text{for all $n\in\N\setminus\{0\}$.}
$$
Moreover, optimality in 
$f$-variation and  the Wasserstein distance $\cW_p$ can be deduced by applying the results from~\cite{MR2071426},~\cite{Durmus16} and~\cite{MR4429313} with the drift condition in~\cite[Prop~6]{Roberts07}.
For any target distribution $\pi$ with polynomial tails,
taking $\eta$ sufficiently close to zero yields an arbitrarily fast polynomial  rate of convergence to stationarity,  a significant improvement over a  fv-RWM (see the rate in~\eqref{eq:RWM_mathcin_finte_var}).
\end{myenumi}
\end{rem}

\begin{figure}[ht]
  \centering
  \begin{subfigure}[b]{0.45\textwidth}
    \includegraphics[width=\textwidth]{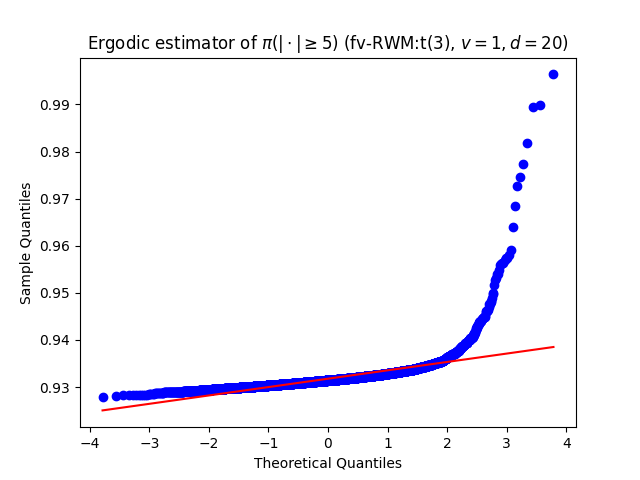}
    \caption{}
    \label{fig:RWM_heavy_tail_ex}
  \end{subfigure}
  \hfill
  \begin{subfigure}[b]{0.45\textwidth}
    \includegraphics[width=\textwidth]{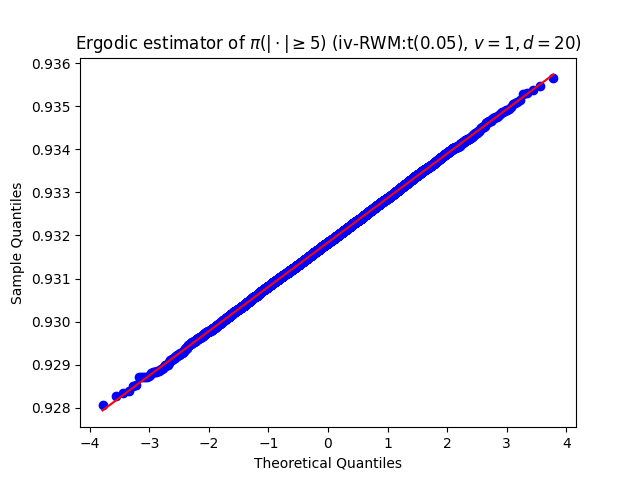}
    \caption{}
    \label{fig:RWM_heavy_mean_ex}
  \end{subfigure}
  \caption{QQ-plots of $5\cdot10^4$ ergodic averages  $S_n(g)$ (see~\eqref{eq:ergodic_average} above)
  for the function $g(x)= \1{|x|\geq 5}$   after $n=10^8$ steps of the RWM chains with  Gaussian proposals in panel \eqref{fig:RWM_heavy_tail_ex} and $t(0.05)$ proposals in panel \eqref{fig:RWM_heavy_mean_ex} targeting the 20-dimensional $t$-distribution $t(1)$.  (See~\eqref{eq:student_t_def} for the definition of the $t$-distribution.)
  The QQ-plots in panels~\eqref{fig:RWM_heavy_tail_ex} and~\eqref{fig:RWM_heavy_mean_ex} show that the CLT fails in the first and holds in the second case, which agrees with the conclusion of Theorem~\ref{thm:RWM_light}\ref{thm:RWM_light_CLT} since 
  $0.05 < v<2$.}
  \label{fig:RWM_d_20_ex}
\end{figure}

\begin{rem}
\label{rem:Why_heavy_tail_proposals_work_Better_for_RWM}\textbf{[Why does iv-RWM outperform fv-RWM?]}
 Our \textbf{L}-drift conditions~\nameref{sub_drift_conditions} provide insight into how infinite-variance proposals improve convergence to stationarity of the RWM chain. Recall that the function \(1/\Psi\) is a proxy for the probability that the process \(V(X)\) exits an interval through its upper boundary rather than its lower boundary, while \(\varphi\) gives an estimate for the amount of time the process spends above a given level. We use the Lyapunov function $V(x) = |x| \vee 1$ for both types of proposals. In the case of the proposal with finite variance, the conditions in~\nameref{sub_drift_conditions} are satisfied with $\Psi(r) = r^{v+2+\eps}$ and $\varphi(1/r) \propto 1/r^3$. In contrast, for proposals with $\eta \in (0,2)$ moments, we have
$\Psi(r) = r^{v+\eta+\eps}$
and $\varphi(1/r) \propto 1/r^{1+\eta}$.
Thus, employing infinite-variance proposals with $\eta$ close to zero improves the convergence to 
the target $\pi$
compared to proposals with finite variance by increasing the tendency of $V(X)$  to explore the tails of the distribution \textit{and} reducing the time needed for the process $V(X)$ to return to the neighbourhood of the origin. The latter improvement is much more significant than the former: when $v$ in~\eqref{eq:examples_pi} is large, the Metropolis acceptance slows this tail exploration, yielding only incremental improvements from heavy-tailed proposals ($r^{v+2}$ vs. $r^{v+\eta}$ in $\Psi(r)$). However, the shorter return times to the origin remain significant for all values of $v$ ($1/r^{1+\eta}$ vs. $1/r^{3}$ in $\varphi(1/r)$). 

By the \textbf{L}-drift calculus at the beginning of Section~\ref{sec:examples}, the validity of the CLT for ergodic estimators is governed by the rate of decay of $r\mapsto r\varphi(1/r)$ at infinity. The discussion in the previous paragraph explains why choosing an algorithm with an infinite variance proposal offers a significant advantage over algorithms with proposals of finite variance for all heavy-tailed targets $\pi$ in~\eqref{eq:examples_pi}. 
\end{rem}

\begin{rem} \textbf{[Non-asymptotic bounds]}
If global control over the model parameters is available, our methods yield \textit{non-asymptotic lower bounds} on convergence to stationarity. In particular, if $V(x)=|x|\vee 1$, $h(u)=u^{k}$ for some  $k\in(v,v+2)$, $P^n(h\circ V(x))\leq h\circ V(x) + c_Vn$ and $\pi(h\circ V\geq r)\geq c_\pi r^{-v/k}$, it follows that for $p\in[1,v)$, we have
$$
c_\pi^{(k-v)/(p(k-v))}2^{-(v-p)/(k-v)}(h\circ V(x)+c_Vn)^{-(v-p)/((k-v)p)}\leq \cW_p(P^n(x,\cdot),\pi) \quad\text{for all $n\in\N\setminus\{0\}$.}
$$
Explicit computation of the constant exceeds the scope of this paper and is left for future work.
An application of these ideas to non-asymptotic bounds for Probabilistic Denoising Diffusion Models is given in~\cite{brevsar2024non}.
\end{rem}

\subsubsection{Metropolis-adjusted Langevin algorithm}
\label{subsec:MALA}
MALA is a widely used unbiased algorithm (see e.g.~\cite{MR4704569} and the references therein). Its proposal is an Euler discretisation of a Langevin diffusion with a target density as its invariant measure. As in Section~\ref{subsec:RWM} on the RWM, we consider a target distribution $\pi$ satisfying~\eqref{eq:examples_pi}.
More precisely,  the candidate transition kernel is given by
\begin{equation}
\label{eq:proposal_MALA_finite}
Q_h(x,\cdot) \coloneqq N(x+h\nabla \log \pi(x),2h),\qquad x\in\R^d,
\end{equation}
where $h>0$ is the step-size of the discretisation, $\nabla \log\pi$ is the gradient of the function $\log \pi$ and $N(\mu,\sigma^2)$ denotes the normal distribution on $\R^d$ with mean $\mu\in\R^d$ and covariance $\sigma^2 \Id$ (we denote by $ \Id\in\R^{d\times d}$ the identity matrix and assume $\sigma^2\in(0,\infty)$).
The MALA chain $X$ has a Metropolis kernel $P$ in~\eqref{eq:Metropolis-adjusted_kernel} with the proposal kernel $Q_h$ in~\eqref{eq:proposal_MALA_finite}.

\begin{thm}
\label{thm:MALA_light}
Let the target density $\pi$ satisfy~\eqref{eq:examples_pi} with $v\in(0,\infty)$ and assume that  $\pi$ is twice continuously differentiable on $\R^d$  with the function  $\ell$ in~\eqref{eq:examples_pi} satisfying $r(\log \ell(r))'\to 0$ as $r\to\infty$. Then the MALA chain has the following properties.
\begin{myenumi}[label=(\Roman*)]
\item \label{thm:MALA_light_CLT} \textup{\textbf{[CLT-necessary condition]}}  
Fix a measurable function  $g:\R^d\to\R$.
\begin{enumerate}
\item[(Ia)] Let $g(x)\geq |x|^s$  for some $2s\in(0,v)$ and all $|x|$ sufficiently large.
If $2s \in (v-2, v)$, then the CLT  for $S_{n}(g)$ in~\eqref{eq:ergodic_average}  \textbf{fails}.
\item[(Ib)] Let $g$ be bounded. 
If $v\in(0,2)$, then the CLT  for $S_n(g)$ in~\eqref{eq:ergodic_average} \textbf{fails}.
\end{enumerate}
\item  \label{thm:MALA_light_lower_bounds} \textup{\textbf{[Lower bounds]}} Fix $x\in\R^d$ and, for $p\in\RP$, define $f_p(y)\coloneqq \max\{1,|y|^p\}$, $y\in\R^d$.
\begin{enumerate}
\item[(IIa)] For any $p\in[0,v)$ and every $\eps>0$, there exists a constant $c_p\in(0,\infty)$ such that
$$
c_p n^{-\frac{v-p}{2}-\eps}\leq \|P^n(x,\cdot)-\pi\|_{f_p}\quad\text{for all $n\in\N\setminus\{0\}$.}
$$
\item[(IIb)] For any $p\in[1,v)$ and every $\eps>0$, there exists a constant $c_{\cW,p}\in(0,\infty)$ such that
$$
c_{\cW,p} n^{-\frac{v-p}{2p}-\eps}\leq \cW_p(P^n(x,\cdot),\pi)\quad\text{for all $n\in\N\setminus\{0\}$.}
$$
\end{enumerate}
\end{myenumi}
\end{thm}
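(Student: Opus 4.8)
The plan is to verify the \textbf{L}-drift conditions~\nameref{sub_drift_conditions} for the MALA chain with the same triple $(V,\varphi,\Psi)$ used for the finite-variance random walk Metropolis in Theorem~\ref{thm:RWM_light}, namely $V(x)=|x|\vee 1$, $\varphi(1/r)=C/r^{3}$ and $\Psi(r)=r^{v+2+\eps}$ for arbitrarily small $\eps>0$ and a large constant $C$, and then to read off~\ref{thm:MALA_light_CLT} and~\ref{thm:MALA_light_lower_bounds} from Theorems~\ref{thm:CLT} and~\ref{thm:f_rate}. First I would dispatch the standing hypotheses of~\nameref{sub_drift_conditions}: ergodicity follows from the polynomial upper-bound drift condition for MALA targeting~\eqref{eq:examples_pi} (since $\nabla\log\pi\to 0$ the MALA proposal is a small perturbation of a Gaussian random walk, so the arguments behind~\cite{Roberts07} apply), non-confinement from Lemma~\ref{lem:non_confinement}, while continuity and Lipschitzness of $V$ and the Feller property are immediate. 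Because $Q_{h}(x,\cdot)=N(x+h\nabla\log\pi(x),2h)$ has Gaussian tails, $P(\Psi\circ V)(x)<\infty$ for all $x$, placing us in the ``many small jumps'' regime, so I would verify~\nameref{sub_drift_conditions}\ref{sub_drift_conditions(ii)} through Lemma~\ref{lem:overshoot}.

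The heart of the matter is a second-order tail expansion of $P(1/V)$ and $P(\Psi\circ V)$. Writing $X_{1}=x+Z$ with $Z\sim N(h\nabla\log\pi(x),2h)$ on acceptance (and $X_{1}=x$ otherwise), the hypothesis $r(\log\ell(r))'\to 0$ gives $\nabla\log\pi(x)=-(v+d)x/|x|^{2}+o(1/|x|)$; Taylor-expanding $|x+Z|^{a}$ in $Z/|x|$ and $1/|x+Z|$ and collecting the inward-drift term $-(v+d)$, the directional second-moment term and the trace term then yields, at the proposal level, $\E_x[|x+Z|^{v+2+\eps}]-|x|^{v+2+\eps}=(v+2+\eps)\,\eps\, h\,|x|^{v+\eps}(1+o(1))>0$ and $\E_x[1/|x+Z|]-1/|x|=(v+3)h\,|x|^{-3}(1+o(1))$. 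The hard part will be transferring these to the Metropolis kernel uniformly in the tails: I would show that the $\pi$-ratio $\log\pi(y)-\log\pi(x)\approx-(v+d)\langle x,y-x\rangle/|x|^{2}$ is cancelled to leading order by the proposal asymmetry $\log q(y,x)-\log q(x,y)\approx(v+d)\langle x,y-x\rangle/|x|^{2}$ (the defining feature of the Langevin proposal), so that the log acceptance ratio is $O(h/|x|^{2})$ and the rejection probability is $O(1/|x|^{2})$ there; hence the rejection corrections to the two displays above are of strictly smaller order than the shown proposal contributions. This is in fact cleaner than for fv-RWM, whose rejections enter at order $1/|x|$; the genuinely technical step is making the ``$o(\cdot)$'' estimates rigorous uniformly over the relevant increments, controlling the $\nabla\log\pi$ remainders via $r(\log\ell(r))'\to 0$, which mirrors the proof of Theorem~\ref{thm:RWM_light}. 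This gives~\nameref{sub_drift_conditions}\ref{sub_drift_conditions(i)} with $\varphi(1/r)=C/r^{3}$ (enlarging $C$ to absorb the compact part, with $r\mapsto r\varphi(1/r)=C/r^{2}$ decreasing to $0$) and the submartingale hypothesis~\eqref{eq:sub_mart_ass} of Lemma~\ref{lem:overshoot} on $\{V\ge\ell_{0}\}$ for $\ell_{0}$ large.

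The remaining hypothesis~\eqref{eq:incremental_overshoot} of Lemma~\ref{lem:overshoot} follows from the sub-Gaussian tails of $Z$: starting from $\{V\le r\}$, on $\{V(X_{1})>r\}$ one has $V(X_{1})=r+R$ with $R$ of order $\sqrt h$ and with all moments finite, so $\E_x[\1{V(X_{1})>r}\,\Psi\circ V(X_{1})]\le C\Psi(r)\,\P_x(V(X_{1})>r)$ because $\Psi$ is polynomial; Lemma~\ref{lem:overshoot} then delivers~\eqref{eq:assumption_heavy_tail_bound}, completing the verification of~\nameref{sub_drift_conditions}. Finally I would feed $(V,\varphi,\Psi)$ into Theorem~\ref{thm:CLT}: with $r\varphi(1/r)\propto r^{-2}$ one has $G_{h}(r)\propto h(r)r^{2}$, so for $h(r)=r^{s}$ (case (Ia), where $g(x)\ge|x|^{s}$) the integrand in~\eqref{eq:CLT_integral_test} is $\asymp t^{-(v+2+\eps)/(2(s+2))}$, which fails to be locally integrable at infinity precisely when $2s>v-2$ (letting $\eps\downarrow 0$), while for $h\equiv1$ (case (Ib), $g$ bounded) it is $\asymp t^{-(v+2+\eps)/4}$, non-integrable when $v<2$; this is~\ref{thm:MALA_light_CLT}. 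For~\ref{thm:MALA_light_lower_bounds} I would apply Theorem~\ref{thm:f_rate} with $h(r)=r^{v+2-\eps}$ (so that $h\circ V$ matches the rate-optimal upper-bound Lyapunov function $W(x)\propto(|x|\vee 1)^{v+2-\eps}$ and $P^{n}(h\circ V)(x)$ is affine in $n$), taking $f_{\star}(r)=r^{p}$ for the $f_{p}$-variation bound and $f_{\star}(r)=(r/2)^{p}$ for the $\cW_{p}$-bound (using that $V$ is $1$-Lipschitz); a routine computation then gives $(a_{g}\circ A_{g}^{-1}\circ(2v))(x,n)\asymp n^{-(v-p)/2}$ up to an arbitrarily small loss in the exponent, which is~(IIa), and its $1/p$-th power is~(IIb). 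Since the only deviation from the proof of Theorem~\ref{thm:RWM_light} lies in the Langevin-drift terms in the two expansions, the matching upper bounds and optimality recorded in Remark~\ref{rem:matching_bounds_RWM_finite_variance} transfer, confirming sharpness.
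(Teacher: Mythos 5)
Your proposal is correct and follows essentially the same route as the paper: it verifies the \textbf{L}-drift conditions~\nameref{sub_drift_conditions} with the identical triple $V(x)=|x|\vee1$, $\varphi(1/r)\propto 1/r^{3}$, $\Psi(r)=r^{v+2+\eps}$ (via Lemma~\ref{lem:overshoot} for~\ref{sub_drift_conditions(ii)}) and then feeds these into Theorems~\ref{thm:CLT} and~\ref{thm:f_rate} with $h(r)=r^{s}$, $h\equiv1$ and $h(r)=r^{v+2-\eps}$, exactly as in Proposition~\ref{prop:drift_MALA} and the proofs of Theorems~\ref{thm:RWM_light} and~\ref{thm:ULA}. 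The only difference is cosmetic: where the paper transfers the ULA drift expansion~\eqref{eq:ULA_sub} to MALA by showing $\alpha(x,\cdot)\to1$ and inserting a $(1+o(1))$ factor, you re-derive the proposal-level expansions and quantify the cancellation between the target log-ratio and the Langevin proposal asymmetry to show the rejection corrections are of strictly smaller order, which is the same mechanism handled slightly more explicitly.
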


The proof of Theorem~\ref{thm:MALA_light} can be found in Appendix~\ref{subsec:MALA_proofs} below. It consists of establishing the \textbf{L}-drift conditions~\nameref{sub_drift_conditions} for the functions $V(x)=|x|\vee 1$, $\Psi(r) = r^{v+2+\eps}$, $\varphi(1/r) \propto 1/r^3$.  
The main step in the verification of~\nameref{sub_drift_conditions} consists of proving that the acceptance probability $\alpha(x,\cdot)\to 1$ as $|x|\to \infty$ and applying the drift conditions for the unadjusted Langevin algorithm, discussed in Section~\ref{subsec:ULA} below.
Then  Theorem~\ref{thm:CLT}  and Theorem~\ref{thm:f_rate}  yield parts~\ref{thm:MALA_light_CLT} and~\ref{thm:MALA_light_lower_bounds} of Theorem~\ref{thm:MALA_light}, respectively. 

\begin{rem}
\label{rem:matching_bounds_MALA}
Under the assumptions of Theorem~\ref{thm:MALA_light}, MALA with heavy-tailed targets was studied in~\cite{Roberts07}. These results demonstrate that our lower bounds in  Theorem~\ref{thm:MALA_light}\ref{thm:MALA_light_lower_bounds}   match the known upper bounds and that the conditions in Theorem~\ref{thm:MALA_light}\ref{thm:MALA_light_CLT}, under which the CLT fails, are sharp.
\begin{myenumi}[label=(\Roman*)]
\item \textbf{[CLT-characterisation]} Let $g:\R^d\to\R$ satisfies $g(x)=|x|^s$ with $2s\in(0,v)$ for $x$ sufficiently large.
Then the CLT holds for $S_{n}(g)$ if  $2s \in (0,v -2)$ by~\cite[Prop.~7]{Roberts07} and fails if $2s \in (v -2, v)$ by Theorem~\ref{thm:MALA_light}(Ia). Moreover, for $g:\R^d\to\R$ bounded, the CLT holds for $S_{n}(g)$  if $v >2$ (see~\cite[Prop.~7]{Roberts07})  and fails if $v<2$ by Theorem~\ref{thm:MALA_light}(Ib). 
\item \textbf{[Matching bounds]} For every $x\in\R^d$ and $\eps>0$ there exist constants $c\in(0,\infty)$~by Theorem~\ref{thm:MALA_light}\ref{thm:MALA_light_CLT} and $C\in(0,\infty)$ by~\cite[Prop~7]{Roberts07} such that
$$
c n^{-\frac{v}{2}-\eps}\leq \|P^n(x,\cdot)-\pi\|_{\TV}\leq Cn^{-\frac{v}{2}+\eps} \quad\text{for all $n\in\N\setminus\{0\}$.}
$$
Moreover, optimality in 
$f$-variation and  Wasserstein distance $\cW_p$ can be deduced by applying the results from~\cite{MR2071426},~\cite{Durmus16} and~\cite{MR4429313} with the drift condition in~\cite[Prop~7]{Roberts07}.
\end{myenumi}
\end{rem}

\begin{rem}
\label{rem:MALA}
Theorem~\ref{thm:MALA_light} rigorously demonstrate that, asymptotically, MALA  performs only as well as the standard fv-RWM. In fact, Theorem~\ref{thm:MALA_light} implies that MALA underperforms (in terms of the speed of convergence to stationarity and the validity of the CLT) when compared to the iv-RWM targeting a heavy-tailed invariant measure $\pi$. 
This poor performance of MALA on heavy-tailed targets is well known to practitioners and is often attributed to the vanishing drift, $\nabla \log \pi(x) \to 0$ as $|x|\to\infty$, which arises when $\pi$ is heavy tailed.  
\end{rem}

\subsubsection{Stereographic Markov Chain Monte Carlo}
\label{subsec:stereographic}

The stereographic projection sampler (SPS), introduced in~\cite{Yang24}, employs the stereographic  projection map to pull back the target distribution $\pi$ on $\R^d$ onto a sphere in $\R^{d+1}$ and then runs a RWM algorithm on the sphere.

More precisely, a stereographic projection is a bijection $\mathrm{SP}:\mathbb{S}^d \setminus \{(\mathbf{0},  1)\} \to \R^d$ given by\footnote{In~\cite{Yang24}, a family of stereographic projections parametrised by the radius $R\in(0,\infty)$ of the sphere in $\R^{d+1}$ are considered.
As we are interested in the convergence rates and the CLT of the SPS, we fix radius $R=1$ throughout.} 
\begin{equation}
   \label{eq:stereographic_projection_def}
\mathrm{SP}(\mathbf{y},z)= \frac{1}{1-z}\mathbf{y}=x\in\R^d
\quad\text{
 with Jacobian
$J_{\mathrm{SP}}(x)\propto(1 +|x|^2)^{d}$,}
\end{equation}
where  $\mathbb{S}^{d}\coloneqq \{(\mathbf{y},z)\in\R^d\times \R:|\mathbf{y}|^2+z^2 = 1\}$ denotes the unit sphere in $\R^{d+1}$ and $(\mathbf{0},  1)\in\mathbb{S}^d\subset \R^d\times \R$ is the North Pole.
Moreover, the inverse $\mathrm{SP^{-1}}:\R^d\to\mathbb{S}^{d-1}\setminus \{(\mathbf{0},  1)\}$ is given by
$$
\mathbf{y} = \frac{2}{|x|^2+1}x\qquad \&\qquad z = \frac{|x|^2-1}{|x|^2+1}.
$$

The pull back target density $\pi_S$  on $\mathbb{S}^d\setminus \{(\mathbf{0},  1)\}$ of the probability measure $\pi(\mathrm{SP}(\cdot))$ (under the change of variables  $x=\mathrm{SP}(\mathbf{y},z)$) is proportional to
\begin{equation}
\label{eq:stereographic_density_prop}
\pi_S(\mathbf{y},z) \propto \pi(\mathrm{SP}(\mathbf{y},z))(|\mathrm{SP}(\mathbf{y},z)|^2+1)^d.
\end{equation}
 At a point $(\mathbf{y},z)\in\mathbb{S}^d$, the SPS algorithm proposes $(\hat{\mathbf{y}},\hat{z})\in\mathbb{S}^d\subset \R^d\times\R$ by sampling a Gaussian random vector in $\R^{d+1}$, centred at $(\mathbf{y},z)$ with covariance matrix $h^2\Id_{d+1}$ (here $\Id_{d+1}$  denotes the identity matrix of dimension $d+1$),  projecting it onto the tangent space  of $\mathbb{S}^d$ at $(\mathbf{y},z)$ and projecting again, via normalisation, onto the sphere. More precisely, the SPS chain $X=(X_n)_{n\in\N}$ is given by  Algorithm~\ref{alg:SPS}.
 
\begin{algorithm}
\caption{Stereographic projection sampler (SPS)~\cite[Alg.~1]{Yang24}}
\label{alg:SPS}
\begin{algorithmic}[1]
\State Let the current state be $X_n = x\in\R^d$
\State Compute the proposal $\hat{X}$:
\begin{itemize}
    \item Set $\mathbf{z} := \mathrm{SP}^{-1}(x)$ \Comment{current state on the sphere $\mathbf{z}=(\mathbf{y},z)\in\mathbb{S}^d\subset \R^d\times\R$}
    \item Sample independently $\tilde{\mathbf{z}}\sim N(0, h^2 \Id_{d+1})$
    \item Set $\mathbf{z}' := \tilde{\mathbf{z}} - \langle\mathbf{z},\tilde{\mathbf{z}}\rangle \mathbf{z}$ and $\hat{\mathbf{z}} := \dfrac{\mathbf{z} + \mathbf{z}'}{|\mathbf{z} + \mathbf{z}'|}$
    \Comment{project onto the tangent space and normalise} 
    \item The proposal $\hat{X} := \mathrm{SP}(\hat{\mathbf{z}})$ \Comment{proposal on the sphere $\hat{\mathbf{z}}=:(\hat{\mathbf{y}},\hat{z})\in\mathbb{S}^d\subset \R^d\times\R$}
\end{itemize}
\State Set $X_{n+1} = \hat{X}$ with probability
$
1 \wedge \frac{\pi(\hat{X}) (1+ |\hat{X}|^2)^d}{\pi(x) (1 + |x|^2)^d}
$; Otherwise, set $X_{n+1} = x$
\end{algorithmic}
\end{algorithm}

In effect, Algorithm~\ref{alg:SPS} is a RWM algorithm on the sphere $\mathbb{S}^d$, sampling from the push-forward density $\pi_S(\mathbf{y},z)\propto \pi(\mathrm{SP}(\mathbf{y},z))(1+|\mathrm{SP}(\mathbf{y},z)|^2)^d$ with a proposal that approximates the increment over a time interval of length $h$ of the Brownian motion on the sphere. 
The proposal kernel is clearly reversible making the acceptance probability $\alpha((\mathbf{y},z),(\hat{\mathbf{y}},\hat{z}))$ for the move $(\mathbf{y},z)\to (\hat{\mathbf{y}},\hat{z})$
 on the sphere $\mathbb{S}^d$ equal to
$$
\alpha((\mathbf{y},z),(\hat{\mathbf{y}},\hat{z})) = \min\left\{1,\frac{\pi_S(\hat{\mathbf{y}},\hat{z})}{\pi_S(\mathbf{y},z)}\right\}=\min\left\{1,\frac{\pi(\mathrm{SP}(\hat{\mathbf{y}},\hat{z}))(1+|\mathrm{SP}(\hat{\mathbf{y}},\hat{z})|^2)^d}{\pi(\mathrm{SP}(\mathbf{y},z))(1+|\mathrm{SP}(\mathbf{y},z)|^2)^d}\right\}.
$$

The key assumption on the target distribution $\pi$ made in~\cite{Yang24} is
\begin{equation}
   \label{eq:SPS_light_tail} 
   \sup_{x\in\R^d}\pi(x)(|x|^2+1)^d<\infty.
\end{equation}
Under~\eqref{eq:SPS_light_tail},
the acceptance probability $\alpha((\mathbf{y},z),(\hat{\mathbf{y}},\hat{z}))$ is bounded away from zero on the product 
$\left(\mathbb{S}^d\setminus\{(\mathbf{0},1)\}\right)\times U$, where $U\subset\mathbb{S}^d$ is a neighbourhood of the equator $\mathbb{S}^{d-1}\times \{0\}$ in $\mathbb{S}^d$.
Since the probability  that the proposal chain returns to a neighbourhood of the equator  $\mathbb{S}^{d-1}\times \{0\}\subset\mathbb{S}^d$ in a single steps is bounded away from zero uniformly in the starting position on the sphere $\mathbb{S}^d$ (see proof of~\cite[Thm~2.1]{Yang24}), under~\eqref{eq:SPS_light_tail} the entire state space of the SPS chain is small, making
the Markov chain $X$ uniformly ergodic~\cite[Thm~2.1]{Yang24}. 

The SPS chain $X$ has not been analysed for $\pi$ with heavy tails admitting fewer than $d$ moments (i.e., for $\pi$ satisfying~\eqref{eq:examples_pi} with $v\in(0,d)$).
The following theorem shows that the SPS in this regime
becomes polynomially ergodic, identifies its rate of convergence to stationarity and gives a criterion for the CLT to hold. Theorem~\ref{thm:stereographic}
demonstrates that our theory based on \textbf{L}-drift conditions can be applied to resolve this open problem from~\cite{Yang24} and that its resolution is very different in nature from the case when $\pi$ has lighter tails satisfying~\eqref{eq:SPS_light_tail} (where uniform ergodicity of the SPS sampler holds and    the estimators $S_n(g)$, given in~\eqref{eq:ergodic_average} above,  satisfy the CLT for  \textit{all} $g\in L^2(\pi)$).

\begin{thm}
\label{thm:stereographic}
Let the target distribution $\pi$ satisfy~\eqref{eq:examples_pi} with $v\in(0,d)$.  Then the SPS chain $X$, defined by Algorithm~\ref{alg:SPS} above, has the following properties.
\begin{myenumi}[label=(\Roman*)]
\item \label{thm:sps_CLT} \textup{\textbf{[CLT-characterisation]}}  
Fix a measurable function  $g:\R^d\to\R$.
\begin{enumerate}
\item[(Ia)] Let $g(x)=|x|^{s}$  for some $2s\in(0,v)$ and all $|x|$ sufficiently large.
Then the CLT \textbf{holds} for $S_{n}(g)$ if $2s\in(0,v-d/2)$ and \textbf{fails} if $2s\in (v-d/2,v)$. 
\item[(Ib)] Let $g$ be bounded. 
Then the CLT \textbf{holds} for $S_n(g)$ in~\eqref{eq:ergodic_average}  if $v\in(d/2,\infty)$ and \textbf{fails} if $v\in(0,d/2)$.
\end{enumerate}
\item  \label{thm:sps_lower_bounds} \textup{\textbf{[Matching bounds]}} Fix $x\in\R^d$ and, for $p\in\RP$, define $f_p(y)\coloneqq \max\{1,|y|^p\}$, $y\in\R^d$.
\begin{enumerate}
\item[(IIa)] For any $p\in[0,v)$ and every $\eps>0$, there exists constants $c_p,C_p\in(0,\infty)$ such that
$$
c_pn^{-\frac{v-p}{d-v}-\eps}\leq \|P^n(x,\cdot)-\pi\|_{f_p}\leq C_pn^{-\frac{v-p}{d-v}+\eps} \quad\text{for all $n\in\N\setminus\{0\}$.}
$$
\item[(IIb)] For any $p\in[1,v)$ and every $\eps>0$, there exists constants $c_{\cW,p},C_{\cW,p}\in(0,\infty)$ such that
$$
c_{\cW,p}n^{-\frac{v-p}{p(d-v)}-\eps}\leq \cW_p(P^n(x,\cdot),\pi)\leq C_{\cW,p}n^{-\frac{v-p}{p(d-v)}+\eps} \quad\text{for all $n\in\N\setminus\{0\}$.}
$$
\end{enumerate}
\end{myenumi}
\end{thm}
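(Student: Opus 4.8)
The plan is to follow the same two‑pronged template used for the other Metropolis‑adjusted samplers in Section~\ref{sec:examples}, with the extra task of producing the upper bounds from scratch (since~\cite{Yang24} only treats the uniformly ergodic regime~\eqref{eq:SPS_light_tail}, which fails here because $v<d$). Concretely: (i) verify the $\mathbf L$‑drift conditions~\nameref{sub_drift_conditions} for the SPS chain with explicit power functions $(V,\varphi,\Psi)$ and feed them into Theorems~\ref{thm:CLT} and~\ref{thm:f_rate} to get the CLT failure in~\ref{thm:sps_CLT} and the lower bounds in~\ref{thm:sps_lower_bounds}; (ii) verify the classical upper‑bound drift condition~\eqref{eq:upper_drift} for the SPS chain with a suitable $W$ and concave $\phi$, and invoke~\cite{MR2071426} (together with~\cite{Durmus16,MR4429313} for the Wasserstein case and~\cite{MR2446322} for the CLT) to get the matching upper bounds and the validity of the CLT in the complementary range. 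The right coordinate throughout is the reciprocal of the distance of $\mathrm{SP}^{-1}(x)$ to the North Pole $(\mathbf 0,1)$: for large $|x|$ the point $\mathrm{SP}^{-1}(x)$ sits near $(\mathbf 0,1)$ with $1-z=2/(1+|x|^2)$, and the push‑forward density $\pi_S$ in~\eqref{eq:stereographic_density_prop} behaves like $|x|^{d-v}\to\infty$ precisely because $v<d$. Thus the SPS is a random walk Metropolis on $\mathbb S^d$ targeting a density with an integrable but unbounded spike at the Pole; the chain's ``tail'' is a neighbourhood of the Pole which it reaches, and leaves, by single jumps, so $P(\Psi\circ V)=\infty$. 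I would take $V$ to be a sufficiently small power of $1+|x|^2$ — small enough that $r\mapsto r\varphi(1/r)$ is decreasing to $0$, which constrains the exponent — and then read off $\varphi$ and $\Psi$ as the corresponding power functions via the ``non‑integrable case'' recipe of Section~\ref{subsubsec:Identifyin_Psi}.

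For the $\mathbf L$‑drift conditions themselves, everything reduces to two elementary Gaussian estimates once the SPS proposal is written in tangent‑plane coordinates at the current point. First, for~\nameref{sub_drift_conditions}\ref{sub_drift_conditions(ii)} I would bound $P(x,\{V\ge r\})$ from below, uniformly over $x\in\{V\ge\ell\}$: the event $\{V(\hat X)\ge r\}$ corresponds to the proposal landing in a spherical cap around the Pole whose surface area is comparable to $1/\Psi(r)$, the proposal density with respect to surface measure is bounded below on that cap, and the acceptance ratio for a move toward the Pole is bounded below (because $\pi_S$ increases toward the Pole when $v<d$); hence $\P_x(T^{(r)}<S_{(\ell)})\ge P(x,\{V\ge r\})\ge C_{\ell}/\Psi(r)$. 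Second, for~\nameref{sub_drift_conditions}\ref{sub_drift_conditions(i)} I would check $P(1/V)-\varphi(1/V)\le 1/V$ outside a compact set: since $1/V$ is bounded this is a direct computation, and the only term matching the order of the chosen $\varphi(1/V)$ comes from the rare one‑step ``escape'' from the Pole region (the Gaussian step in the ambient space systematically overshoots the tiny current latitude gap and lands at a moderate latitude, where it is accepted with the small probability encoded by $\varphi$); every other event changes $1/V$ by $O(1/V)$ and has probability at most $O(1/\Psi\circ V)$, which is of lower order, so the inequality holds after fixing the constant in $\varphi$ large enough. Irreducibility, aperiodicity, the Feller property, positive Harris recurrence and the non‑confinement $\limsup_n V(X_n)=\infty$ follow in the usual way from minorisation on compact sets together with Lemma~\ref{lem:non_confinement}, or simply from Harris recurrence once the upper‑bound drift below is in place.

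The upper‑bound drift~\eqref{eq:upper_drift} is the mirror image of the $\varphi$‑computation: with $W$ a suitable power of $|x|$ and a concave $\phi$, it quantifies the polynomial rate at which $W(X)$ decreases while the chain is near the Pole, and it yields the upper bounds in~\ref{thm:sps_lower_bounds}, the CLT in the stated range via~\cite{MR2446322}, and positive Harris recurrence. Finally, Theorem~\ref{thm:CLT} applied with $h$ satisfying $h\circ V\propto g$ (respectively $h\equiv1$ for bounded $g$) gives the failure of the CLT in the complementary range, and Theorem~\ref{thm:f_rate} applied with $h\circ V\propto W$ — so that~\eqref{eq:upper_drift} forces $P^n(h\circ V)$ to grow at most linearly in $n$ — gives the lower bounds, the $\eps$ in the exponents absorbing both the slack in the exponent of $V$ (which must stay below the critical value) and the slowly‑varying factor $D$ from Theorem~\ref{thm:invariant}. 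I expect the \emph{main obstacle} to be precisely this pair of uniform‑in‑$x$ one‑step estimates near the Pole — the cap probability and the acceptance ratio along the overshooting proposals — because they feed simultaneously the lower‑bound exponent (through $\Psi$ and $\varphi$) and, in mirrored form, the upper‑bound exponent (through $\phi\circ W$), and it is the exact matching of the two that pins down the sharp polynomial rate and the sharp CLT threshold; the analysis of the sphere geometry also has to be carried out carefully in the regime where the fixed step size $h$ is much larger than the latitude gap of the current state.
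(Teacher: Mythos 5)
Your proposal is correct and follows essentially the same route as the paper: the paper verifies the \textbf{L}-drift conditions on the sphere with $V_\gamma(\mathbf y,z)=\max\{(1-z)^{-\gamma},1\}$ for small $\gamma$ (your ``small power of $1+|x|^2$''), $\varphi(1/r)\propto r^{-(d-v)/(2\gamma)}$ from the accepted escape towards the equator and $\Psi(r)=r^{d/(2\gamma)}$ via the single-jump bound $\P_x(T^{(r)}<S_{(\ell)})\ge P(x,\{V_\gamma>r\})$, then applies Theorems~\ref{thm:CLT} and~\ref{thm:f_rate}, and obtains the matching upper bounds and the positive CLT range from the classical drift~\eqref{eq:upper_drift} with $\gamma$ close to $d/2$ and $\phi$ a suitable power, exactly as you outline. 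The two uniform-in-$x$ one-step estimates you flag as the main obstacle are precisely what the paper isolates in Lemma~\ref{lem:proposal_tail_SPS} (the two-sided $(1-a)^{d/2}$ bound on the proposal's latitude tail and the uniform lower bound on returning near the equator).
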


The proof of Theorem~\ref{thm:stereographic}, in Appendix~\ref{appendix:proofs_SPS} below, works with the chain on the sphere $\mathbb{S}^{d}\subset\R^d\times\R$. The lower bounds and the failure of the CLT follow from the \textbf{L}-drift conditions~\nameref{sub_drift_conditions} (with functions $V(\mathbf{y},z)=\max\{(1-z)^{-\gamma}, 1\}$, $\Psi(r) = r^{d/(2\gamma)}$, $\varphi(1/r) \propto 1/r^{(d-v)/(2\gamma)}$
for some parameter $\gamma>0$) via Theorem~\ref{thm:CLT}\ref{thm:CLT(a)} (with $h(r)=r^s$) and  Theorem~\ref{thm:f_rate} (with $h(r)=r^{d/2-\eps}$). For upper bounds and the sufficient conditions for the CLT, we use the well-known drift condition in~\eqref{eq:upper_drift} above (see~\cite{MR2071426,MarkovChains}) with functions $V$ (with $\gamma=d/2-\eps$) and $\phi(r) = r^{1-(d-v)/2d}$.

\begin{figure}[ht]
  \centering
  \begin{subfigure}[B]{0.45\textwidth}
    \includegraphics[width=\textwidth]{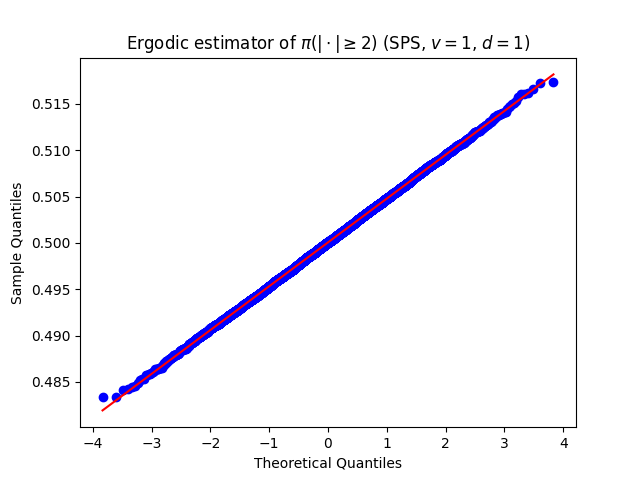}
    \caption{}
    \label{fig:SPS_dim1_ex}
  \end{subfigure}
  \hfill
  \begin{subfigure}[B]{0.45\textwidth}
    \includegraphics[width=\textwidth]{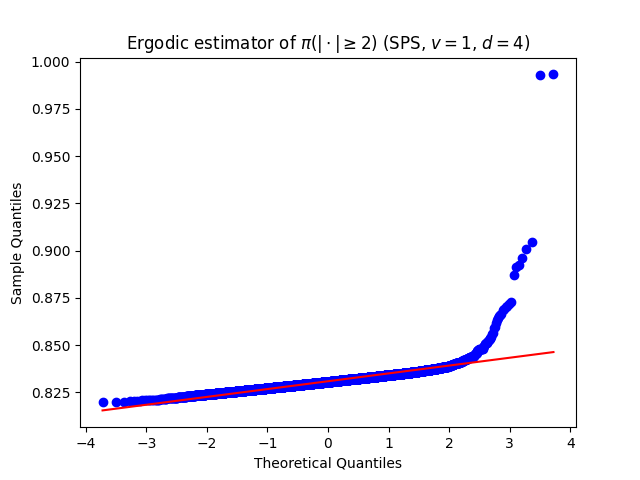}
    \caption{}
    \label{fig:SPS_dim4_ex}
  \end{subfigure}
  \caption{QQ-plots of $5\cdot10^3$ ergodic averages  $S_n(g)$ (see~\eqref{eq:ergodic_average} above)
  for the function $g(x)= \mathbbm{1}_{[2,\infty)}(|x|)$  after $n=6\cdot10^6$ steps of the SPS   targeting $t$-distribution $t(1)$ in~\eqref{eq:student_t_def} in dimension $d=1$ (resp. $d=4$).  The QQ-plot in panel~\eqref{fig:SPS_dim1_ex} (resp.~\eqref{fig:SPS_dim4_ex}) shows that CLT holds (resp. fails), which agrees with  Theorem~\ref{thm:stereographic}\ref{thm:sps_CLT} since $v=1>1/2=d/2$ (resp. $v=1<2=d/2$). }
  \label{fig:SPS_ex}
\end{figure}

\begin{rem}
\label{rem:stereographic}
\begin{myenumi} [label=(\roman*)]
\item When targeting a $d$-dimensional heavy-tailed distribution satisfying $\pi(g_d)<\infty$ for $g_d(x)\coloneqq |x|^d$ (i.e., under assumption~\eqref{eq:SPS_light_tail}), the Stereographic projection sampler is the optimal choice, as it offers uniform ergodicity and preserves the CLT for all estimators based on any function $g\in L^2(\pi)$. However, if the target distribution possesses only $v$ moments for $v\in(0,d)$ (which invalidates assumption~\eqref{eq:SPS_light_tail} and occurs in applications~\cite{MR2655663,MR3780427}),  then a iv-RWM with proposals admitting at most $\eta$ moments, where $\eta\in(0,\min\{2,d-v\})$, converges at a faster rate and the CLT holds for a broader class of estimators. Moreover, as a function of the dimension $d$, the validity of the CLT for the SPS  targeting a distribution with a fixed tail decay is illustrated by Figure~\ref{fig:SPS_ex}.
\item If the assumption in~\eqref{eq:SPS_light_tail} is not satisfied (i.e., $\pi$ has fewer than $d$ moments), the uniform ergodicity for the SPS sampler no longer holds~\cite[Thm~2.1]{Yang24}. This is due to the fact that, in this case, the density on the the pull-back measure $\pi_S$ on the sphere explodes at the North Pole, making the Metropolis correction reject the vast majority of the proposed moves back to the equator once the sampler reaches the vicinity of the North Pole. In terms of the $\mathbf{L}$-drift conditions~\nameref{sub_drift_conditions}, this is reflected in the slow decay of $\varphi(1/r)=1/r^{(d-v)/(2\gamma)}$ at infinity, leading to lengthier return times and  slower ergodicity.  As in the uniformly ergodic case, reaching the tails is still based on the single-jump phenomenon (see Section~\ref{subsubsec:Identifyin_Psi} above) with the growth rate of $\Psi(r) = r^{d/(2\gamma)}$ at infinity remaining independent of the tail decay of the target $\pi$. 

\item Theorem~\ref{thm:stereographic} demonstrates that the convergence rate of the SPS sampler deteriorates sharply once the target $\pi$ possess fewer than $d$ moments.  
When the target has more than $d$ moments, SPS outperforms every other algorithm studied in the present paper. However when fewer than $d-2$ moments of $\pi$ are finite, the convergence rate of the SPS sampler is the slowest in the class~\ref{SimAlg} (outperforming only  the independence sampler with a poorly tuned proposal).  
The reason for this sharp decline in performance lies in the function
$
\Psi(r)=r^{d/(2\gamma)}
$
in the \textbf{L}-drift condition~\nameref{sub_drift_conditions}:  
the exponent $d/(2\gamma)$ grows with the dimension $d$, making $\Psi$ increase more rapidly and,  as indicated by the inequality in~\eqref{eq:assumption_heavy_tail_bound}, consequently making the chain explore the radial tails increasingly slowly as the dimension grows. Consequently, the sampler needs progressively higher-order moments of the target to achieve good performance.

\begin{comment}
A natural remedy is to adjust the proposal in order to make the tail exploration dimension-free.  
One convenient and simple choice is the product kernel $Q$ consisting of $Q_1$ on the radial and $Q_2$ for the angular component given by
$$
Q_{1}(z) = \mathbf 1_{[-1,1]} N(z,h), 
\qquad 
Q_{2}(y) \text{\quad SPS proposal on $\mathbb{S}^{d-1}$},
\qquad 
z \in [-1,1],\; y \in \mathbb S^{d-1},
$$
which maps \((y,z)\in\mathbb S^{d}\) to
$$
\bigl(Q_{2}(y)\, (1-|Q_{1}(z)|),\; Q_{1}(z)\bigr)\in \mathbb S^{d}.
$$
This modification decouples the radial and angular moves, making tail exploration independent of $d$ and yielding uniform ergodicity whenever the target distribution has at least a finite first moment.
\end{comment}

\item Polynomial ergodicity for the SPS sampler, targeting a heavy-tailed distribution without $d$ moments, is fundamentally different than the polynomial ergodicity of vanilla samplers such as RWM, ULA and MALA. These samplers (including the iv-RWM  studied in Section~\ref{subsec:RWM} above) exhibit the many-small-jumps phenomenon (see Section~\ref{subsubsec:Identifyin_Psi} above) to reach the tails. This is due to the Metropolis correction frequently rejecting large moves away from the origin. As noted in   Section~\ref{subsec:RWM}, the iv-RWM  improves the performance, compared to the other vanilla samplers with light-tailed proposals, because of the faster return from the tails, reflected in the slower growth at zero of the function $\varphi$ in the \textbf{L}-drift condition.
In contrast, the slower convergence rate of SPS for target distributions without $d$ moments is due to the slow return from the tails, reflected in the \textit{slow} growth of the function $r\mapsto\varphi(r)$ at zero. 

\item For the SPS chain we adopt the slowly growing Lyapunov function $V(\mathbf{y},z)=\max\{1,(1-z)^{-\gamma}\}$ (with small $\gamma>0$) as $z\uparrow1$. This is necessary, because our drift estimates depend on evaluating an upper bound on $P(1/V)$ uniformly on $\{r<V<r/(1-q)^2\}$. As the growth rate of $V$ determines the width of these sublevel sets on the sphere, choosing a smaller $\gamma$ narrows this band, yielding sharper bounds. No such adjustment is needed for RWM, MALA, or ULA, whose transition kernels $P(x,\cdot)$ vary only mildly as $|x|\to\infty$. In contrast, the kernels of both the independence sampler with an exponential target (see Section \ref{subsec:independence_sampler}) and the SPS chain vary rapidly in the tails. In particular, for SPS the Metropolis–Hastings rejection probability tends to one as $|x|\to\infty$.  In these situations a slowly growing Lyapunov function is indispensable for obtaining sharp bounds via the \textbf{L}-drift condition~\nameref{sub_drift_conditions}.
\end{myenumi}
\end{rem}

\subsubsection{Independence sampler}
\label{subsec:independence_sampler}
In some sense the simplest Metropolis-adjusted chain considered in this paper is the independence sampler with proposal distribution independent of the current state, i.e., $q(z,y) = q(y)$ (a property not satisfied by any algorithm in~\ref{SimAlg}).
A well-known result~\cite[Thm~2.1]{Mengersen96} of Mengersen and Tweedie states that
$$
\text{if for some $\beta>0$, we have}\quad  q(y)/\pi(y)\geq \beta \quad \text{for all $y\in\R^d$, then $X$ is uniformly ergodic.} 
$$

In this paper, we investigate cases where uniform ergodicity fails. Consider a spherically symmetric density $\pi$, let $0<v<k<\infty$ and pick constants $c_\pi,c_q>0$. Following~\cite{MR4524509}, we consider cases with polynomial~\eqref{eq:independence_poly} and exponential~\eqref{eq:independence_exp} tails, starting with the former:
\begin{align}
\label{eq:independence_poly}\pi(x) = c_\pi|x|^{-(v+d)} \quad \&\quad q(z,y) = c_q|y|^{-(k+d)}%\quad 
\text{ for $z\in\R^d$ and $|x|, |y|$ sufficiently large.}
\end{align}

\begin{thm}

\label{thm:independence_poly}
Let $X$ be an independence sampler with the invariant measure $\pi$ and the proposal $q$ satisfy~\eqref{eq:independence_poly}. Then the following statements hold.
\begin{myenumi}[label=(\Roman*)]
\item \label{thm:ind_poly_CLT} \textup{\textbf{[CLT-necessary condition]}}  
Fix a measurable function  $g:\R^d\to\R$.
\begin{enumerate}
\item[(Ia)] Let $g(x)\geq |x|^{s}$  for some $2s\in(0,v)$ and all $|x|$ sufficiently large   .
If $2s \in (2v-k, v)$, then the CLT  for $S_{n}(g)$ in~\eqref{eq:ergodic_average}  \textbf{fails}.
\item[(Ib)] Let $g$ be bounded. 
If $2v\in(0,k)$, then the CLT  for $S_n(g)$ in~\eqref{eq:ergodic_average} \textbf{fails}.
\end{enumerate}
\item  \label{thm:ind_poly_lower_bounds} \textup{\textbf{[Lower bounds]}} Fix $x\in\R^d$ and, for $p\in\RP$, define  $f_p(y)\coloneqq \max\{1,|y|^p\}$, $y\in\R^d$.
\begin{enumerate}
\item[(IIa)] For any $p\in[0,v)$ and every $\eps>0$, there exists a constant $c_p\in(0,\infty)$ such that
$$
c_p n^{-\frac{v-p}{k-v}-\eps}\leq \|P^n(x,\cdot)-\pi\|_{f_p} \quad\text{for all $n\in\N\setminus\{0\}$.}
$$
\item[(IIb)] For any $p\in[1,v)$ and every $\eps>0$, there exists a constant $c_{\cW,p}\in(0,\infty)$ such that
$$
c_{\cW,p} n^{-\frac{v-p}{p(k-v)}-\eps}\leq \cW_p(P^n(x,\cdot),\pi) \quad\text{for all $n\in\N\setminus\{0\}$.}
$$
\end{enumerate}
\end{myenumi}
\end{thm}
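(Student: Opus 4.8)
The plan is to follow the template used for the other members of~\ref{SimAlg}: first establish the \textbf{L}-drift conditions~\nameref{sub_drift_conditions} for a well-chosen triple $(V,\varphi,\Psi)$, then read off part~\ref{thm:ind_poly_CLT} from Theorem~\ref{thm:CLT} and part~\ref{thm:ind_poly_lower_bounds} from Theorem~\ref{thm:f_rate}. The structural object is the importance weight $w(x)\coloneqq\pi(x)/q(x)\propto|x|^{k-v}$, which for $k>v$ is increasing in $|x|$ and unbounded, so on the complement of a compact set the acceptance probability of the independence sampler equals $\alpha(x,y)=\min\{1,w(y)/w(x)\}=\min\{1,(|y|/|x|)^{k-v}\}$; in particular every proposal with $|y|\geq|x|$ is accepted. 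Because the kernel varies rapidly in the tails (as for the stereographic sampler), I would work with the slowly growing Lyapunov function $V(x)=(|x|\vee 1)^{\gamma}$ with a free parameter $\gamma>0$, and let $\gamma\downarrow 0$ at the very end; this is precisely what produces the sharp thresholds $2v-k$ and $k/2$.

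For~\nameref{sub_drift_conditions}\ref{sub_drift_conditions(i)}, I would write $P(1/V)(x)-1/V(x)=\int\bigl(1/V(y)-1/V(x)\bigr)\alpha(x,y)q(y)\,\lambda(\ud y)$ and split over $\{|y|>|x|\}$, where the integrand is negative and of order $|x|^{-(k+\gamma)}$, and $\{|y|\leq|x|\}$, where, using $\alpha(x,y)\approx(|y|/|x|)^{k-v}$ and that $\int_{|y|\leq|x|}|y|^{k-v}q(y)\,\lambda(\ud y)$ stays bounded as $|x|\to\infty$ (because $k>v$), one gets an upper bound of order $|x|^{-(k-v)}$. Thus $\varphi(1/r)\propto r^{-(k-v)/\gamma}$ works, and the monotonicity requirement that $r\mapsto r\varphi(1/r)=r^{1-(k-v)/\gamma}$ decrease to zero forces $\gamma<k-v$, fine for $\gamma$ small. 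For~\nameref{sub_drift_conditions}\ref{sub_drift_conditions(ii)}: since $\Psi\circ V(x)=|x|^{k}$ and $\int_{|y|>|x|}|y|^{k}q(y)\,\lambda(\ud y)=\infty$, the sampler sits in the non-integrable ``single large jump'' regime of Section~\ref{subsubsec:Identifyin_Psi}, and for any $x$ with $V(x)<r$ (so $|x|<r^{1/\gamma}$) every proposal with $|y|\geq r^{1/\gamma}$ is accepted, whence $\P_x(T^{(r)}<S_{(\ell)})\geq P(x,\{V\geq r\})\geq\int_{|y|\geq r^{1/\gamma}}q(y)\,\lambda(\ud y)\gtrsim r^{-k/\gamma}$, uniformly; so $\Psi(r)=r^{k/\gamma}$ works. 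The standing hypotheses (irreducibility, aperiodicity, positive Harris recurrence, ergodicity in total variation) hold for the polynomial independence sampler by the known stability theory (cf.~\cite{MR3843830,MR4524509}), and the non-confinement $\limsup_n V(X_n)=\infty$ follows from Lemma~\ref{lem:non_confinement} with $n_0=1$, since $\inf_x P(x,\{V\geq u\})>0$ for every $u\geq 1$.

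With $(V,\varphi,\Psi)$ in hand, for case~(Ia) I would apply Theorem~\ref{thm:CLT}\ref{thm:CLT(a)} with $h(r)=r^{s/\gamma}$ (so $h\circ V\leq g$): then $G_h(r)\propto r^{(s+k-v)/\gamma-1}$ and the integrand in the test~\eqref{eq:CLT_integral_test} behaves like $t\mapsto t^{-k/(2(s+k-v-\gamma))}$, which is \emph{not} locally integrable at infinity iff $2s\geq 2v+2\gamma-k$; taking $\gamma$ small (depending on $s$) this covers every $2s>2v-k$, while $2s<v$ is forced by $g\in L^2(\pi)$. Case~(Ib) is the same computation with $h\equiv1$ via Theorem~\ref{thm:CLT}\ref{thm:CLT(b)}: the test fails integrability iff $2v+2\gamma\leq k$, i.e. (for $\gamma$ small) whenever $2v<k$. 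For case~(IIa) I would apply Theorem~\ref{thm:f_rate}\ref{assumption:f_convergence} with $f_\star(r)=r^{p/\gamma}$ and $h(r)=r^{(k-\eps')/\gamma}$ for a small $\eps'>0$; a direct drift estimate (giving $P(h\circ V)(x)-h\circ V(x)\approx-c|x|^{v-\eps'}$, bounded above on compact sets) yields the needed $P^n(h\circ V)(x)\leq h\circ V(x)+bn$. Since $L_q(r)\propto r^{1+v/\gamma}D(r)$, the machinery of Theorem~\ref{thm:f_rate} delivers a lower bound of order $n^{\delta/(1+\delta)}$ with $\delta=(p-v-\gamma)/(k-\eps'-p)$, and letting $\gamma,\eps'\downarrow0$ gives $\delta/(1+\delta)\to(p-v)/(k-v)$, i.e. $n^{-(v-p)/(k-v)-\eps}$. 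Case~(IIb) is identical via Theorem~\ref{thm:f_rate}\ref{assumption:Wasserstein_convergence}, with $f_\star(r)=(r/2)^{p}$ and the extra $1/p$-power in~\eqref{eq:main_result_Wasserstein}, using that $V$ is Lipschitz for $\gamma\leq1$.

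The main obstacle is the sharp, uniform-in-the-tails estimate of $P(1/V)(x)-1/V(x)$: this forces a careful analysis of the acceptance region $\{w(y)\geq w(x)\}=\{|y|\geq|x|\}$ and of the bounded near-origin contribution of the proposal density $q$, and, the delicate point, it reveals that the monotonicity of $r\mapsto r\varphi(1/r)$ required by~\nameref{sub_drift_conditions}\ref{sub_drift_conditions(i)} forces $\gamma<k-v$, so the \textbf{L}-drift conditions must be set up for the whole family $\{(|x|\vee 1)^{\gamma}\}_{\gamma\in(0,k-v)}$ and the sharp exponents $2v-k$, $k/2$ emerge only in the limit $\gamma\downarrow0$. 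A secondary subtlety is that, for part~\ref{thm:ind_poly_lower_bounds}, one needs a bound on $P^n(h\circ V)$ that is linear in $n$ even though the near-critical test weight $h\circ V\propto|x|^{k-\eps'}$ has infinite $\pi$-mass; this is handled by keeping the growth exponent strictly below $k$.
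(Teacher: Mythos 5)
Your verification of~\nameref{sub_drift_conditions} and your treatment of (Ia), (Ib) and (IIa) are correct and follow the same template as the paper's (largely omitted) argument: verify the \textbf{L}-drift conditions for the independence kernel, identify the single-jump regime for $\Psi$, obtain the linear-in-$n$ bound for $P^n(h\circ V)$ with $h\circ V\propto|x|^{k-\eps'}$, and then invoke Theorems~\ref{thm:CLT} and~\ref{thm:f_rate}. Your choice of the slowly growing family $V(x)=(|x|\vee1)^{\gamma}$, $\varphi(1/r)\propto r^{-(k-v)/\gamma}$, $\Psi(r)=r^{k/\gamma}$ with $\gamma\downarrow0$ is in fact what the omitted proof needs: the true drift of $1/V$ for $V(x)=|x|\vee1$ is of exact order $|x|^{-(k-v)}$, so the triple quoted in the paper after Theorem~\ref{thm:independence_exponential} cannot be taken literally (e.g.\ for $v\le1$ it violates~\nameref{sub_drift_conditions}\ref{sub_drift_conditions(i)}), and the sharp thresholds $2v-k$, $k/2$ and the exponent $(v-p)/(k-v)$ in (IIa) emerge only in the slow-growth limit — your computations of $G_h$, of the integral test, and of the $f_p$-variation rate all check out. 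This is essentially the route of Proposition~\ref{prop:drift_independence} transported from exponential to polynomial scale, with a power family in place of the logarithmic Lyapunov function; the difference is cosmetic.

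The genuine gap is (IIb). Theorem~\ref{thm:f_rate}\ref{assumption:Wasserstein_convergence} prescribes $f_\star(r)=(r/2)^p$ \emph{evaluated at the drift Lyapunov function}, so the mass entering the Wasserstein bound is measured by $V^p=(|x|\vee1)^{\gamma p}$ rather than $|x|^p$. Running your own computation with $h(r)=r^{(k-\eps')/\gamma}$ gives $a_g(r)\propto r^{-\theta}$ with $\theta=(\gamma+v-\gamma p)/(k-\eps'-\gamma p)$, hence a lower bound of order $n^{-E}$ with
\begin{equation*}
E=\frac{1}{p}\cdot\frac{\gamma+v-\gamma p}{\,k-\eps'-v-\gamma\,}\;\longrightarrow\;\frac{v}{p(k-v)}\quad(\gamma,\eps'\downarrow0),
\end{equation*}
and $E=(v+1-p)/(p(k-v-1))$ at $\gamma=1$ (when admissible); both exceed the claimed exponent $(v-p)/(p(k-v))$ for every $p\ge1$. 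The sharp tail information forces $\gamma\downarrow0$, while the weight $V^p$ degenerates in that same limit, so no single $\gamma$ makes your "identical'' application of Theorem~\ref{thm:f_rate}\ref{assumption:Wasserstein_convergence} deliver (IIb). The repair is to decouple the Lipschitz weight from the drift Lyapunov function: apply Lemma~\ref{lem:lower_bound_wasserstein} directly with $W(x)=|x|\vee1$ (Lipschitz constant $1$) and $g(r)=r^{k-\eps'-p}$, so that $H\propto|x|^{k-\eps'}$ still satisfies $P^nH\le H+Cn$, and feed in the tail bound from Theorem~\ref{thm:invariant} proved with your small-$\gamma$ Lyapunov function, namely $\pi(\{|\cdot|\ge u\})\ge c\,u^{-v-\gamma-\eps}$, which gives $\int_{\{g\circ W\ge r\}}(W(x)/2)^p\,\pi(\ud x)\ge c\,r^{-(v+\gamma+\eps-p)/(k-\eps'-p)}$. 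This yields the exponent $\frac{v+\gamma+\eps-p}{p(k-\eps'-v-\gamma-\eps)}\to\frac{v-p}{p(k-v)}$, i.e.\ the stated (IIb). Since the paper's appendix proof of the exponential case stops at the $f_p$-variation bound and never addresses its Wasserstein claim, this step is genuinely missing from your proposal rather than a detail you may defer to the reference.
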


In the case $\pi$ has exponential tails, subgeometric rates hold under the following assumption:
\begin{equation}
\label{eq:independence_exp}
\pi(x) = c_\pi\exp(-v|x|) \quad \&\quad q(z,y) = c_q\exp(-k|y|)
\text{ for $z\in\R^d$ and $|x|, |y|$ sufficiently large.}
\end{equation}

\begin{thm}
\label{thm:independence_exponential}
Let $X$ be an independence sampler with the invariant measure $\pi$ and the proposal $q$ satisfying~\eqref{eq:independence_exp} with $k>v$. Then the following statements hold.
\begin{myenumi}[label=(\Roman*)]
\item \label{thm:ind_exp_CLT} \textup{\textbf{[CLT-necessary condition]}}  
Fix a measurable function  $g:\R^d\to\R$.
\begin{enumerate}
\item[(Ia)] Let $g(x)\geq \exp(s|x|)$  for some $s\in(0,v)$ and all $|x|$ sufficiently large   .
If $2s \in (2v-k, v)$, then the CLT  for $S_{n}(g)$ in~\eqref{eq:ergodic_average}  \textbf{fails}.
\item[(Ib)] Let $g$ be bounded. 
If $2v\in(0,k)$, then the CLT  for $S_n(g)$ in~\eqref{eq:ergodic_average} \textbf{fails}.
\end{enumerate}
\item  \label{thm:ind_exp_lower_bounds} \textup{\textbf{[Lower bounds]}} Fix $x\in\R^d$ and, for $p\in\RP$, define $f_p(y)\coloneqq \exp(p|y|)$, $y\in\R^d$.
\begin{enumerate}
\item[(IIa)] For any $p\in[0,v)$ and every $\eps>0$, there exists a constant $c_p\in(0,\infty)$ such that
$$
c_p n^{-\frac{v-p}{k-v}-\eps}\leq \|P^n(x,\cdot)-\pi\|_{f_p} \quad\text{for all $n\in\N\setminus\{0\}$.}
$$
\item[(IIb)] For any $p\in[1,v)$ and every $\eps>0$, there exists a constant $c_{\cW,p}\in(0,\infty)$ such that
$$
c_{\cW,p} n^{-\frac{v-p}{p(k-v)}-\eps}\leq \cW_p(P^n(x,\cdot),\pi) \quad\text{for all $n\in\N\setminus\{0\}$.}
$$
\end{enumerate}
\end{myenumi}
\end{thm}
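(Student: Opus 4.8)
The plan is to verify the \textbf{L}-drift conditions~\nameref{sub_drift_conditions} for the independence-sampler chain $X$ and then invoke Theorem~\ref{thm:CLT} for the failure of the CLT and Theorem~\ref{thm:f_rate} for the lower bounds, following the template used for the algorithms in~\ref{SimAlg} above (the exponential case parallels the polynomial one in Theorem~\ref{thm:independence_poly} after the change of scale $|x|\leftrightarrow\log|x|$). Concretely I would take the Lipschitz Lyapunov function $V(x)=\max\{1,|x|\}$, a rate function $\varphi$ with $\varphi(1/V(x))\asymp e^{-(k-v)|x|}$ (so $\varphi(t)\asymp e^{-(k-v)/t}$ for small $t$, enlarged near $t=1$ so that $r\mapsto r\varphi(1/r)$ is decreasing on $[1,\infty)$ — a change affecting only the compact set in~\nameref{sub_drift_conditions}\ref{sub_drift_conditions(i)}), and $\Psi(r)=e^{kr}$ (differentiable, increasing, submultiplicative). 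By~\eqref{eq:acceptance} and~\eqref{eq:independence_exp} the acceptance probability is $\alpha(x,y)=\min\{1,e^{(k-v)(|y|-|x|)}\}$ for $|x|,|y|$ large; in particular any proposal with $|y|\geq|x|$ is accepted outright.

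First I would record the standing assumptions: $\pi$-irreducibility, aperiodicity and Fellerness are immediate from the everywhere-positive proposal density, while positive Harris recurrence and ergodicity follow from $\pi$ being an invariant probability law, equivalently from the elementary upper-bound drift $PW\leq W+b$ for $W(x)=e^{(k-\eps)|x|}$ with any $\eps\in(0,k-v)$ (using $\int_{|y|\geq|x|}q(y)e^{(k-\eps)|y|}\,\ud y\leq\int_{\R^d}c_q e^{-\eps|y|}\,\ud y<\infty$), cf.~\eqref{eq:upper_drift}. Non-confinement, $\P_x(\limsup_n V(X_n)=\infty)=1$, follows from Lemma~\ref{lem:non_confinement}: for each level $u$, a starting point with $V\geq u$ rejects with probability $1-a(\cdot)$ bounded below uniformly (the acceptance probability $a$ is continuous, vanishes at infinity and is $<1$ everywhere), whereas from the compact set $\{V<u\}$ a proposal with $|y|>u$ is accepted with probability bounded below; hence $\inf_x P(x,\{V\geq u\})>0$. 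Condition~\nameref{sub_drift_conditions}\ref{sub_drift_conditions(ii)} is the single-jump mechanism: for $x$ with $V(x)\in[\ell+m,r]$ one has $|x|\leq r$, so a proposal with $|y|>r$ exceeds $|x|$, is accepted with probability one, and has $q$-mass $\gtrsim r^{d-1}e^{-kr}\geq c\,e^{-kr}=c/\Psi(r)$; thus $\P_x(T^{(r)}<S_{(\ell)})\geq P(x,\{V>r\})\geq c/\Psi(r)$, while for $V(x)>r$ the event has probability one.

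The main obstacle is condition~\nameref{sub_drift_conditions}\ref{sub_drift_conditions(i)}. Here I would write $P(1/V)(x)-1/V(x)=\int\alpha(x,y)q(y)\bigl(1/V(y)-1/V(x)\bigr)\,\ud y$ and show that, as $|x|\to\infty$, this quantity is of order $a(x)\asymp e^{-(k-v)|x|}$: the positive contribution comes from accepted inward moves, which land near the origin where $1/V$ is of order one and hence contribute $\Theta(e^{-(k-v)|x|})$, whereas $a(x)/V(x)$ and the integral over $\{|y|\geq|x|\}$ are of strictly smaller orders $e^{-(k-v+1)|x|}$ and $e^{-(k+1)|x|}$. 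Consequently $P(1/V)(x)-1/V(x)\asymp e^{-(k-v)|x|}$, and taking the constant in $\varphi$ large enough — legitimate since $e^{(k-v)|x|}\bigl(P(1/V)(x)-1/V(x)\bigr)$ is bounded on all of $\R^d$ — together with $b>0$ small enough establishes~\nameref{sub_drift_conditions}\ref{sub_drift_conditions(i)}. The delicate points are bounding $\int_{\{|y|<|x|\}}\alpha(x,y)q(y)/V(y)\,\ud y$ uniformly in $x$ (using that $q$ is bounded near the origin and that the accepted-move density $\propto\min\{1,e^{(k-v)(|y|-|x|)}\}q(y)$ is tested against the bounded function $1/V$) and the bookkeeping on the compact set together with the monotonicity adjustment of $\varphi$.

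Granted~\nameref{sub_drift_conditions}, the conclusions are mechanical. For part~(I) I take $h$ eventually equal to $e^{sr}$ (so that $g\geq h\circ V$) in part~(Ia), or $h\equiv1$ in part~(Ib); a direct computation gives $G_h(r)\asymp e^{(s+k-v)r}/r$, hence $G_h^{-1}(t)\asymp(\log t)/(s+k-v)$ and $1/\Psi\bigl(G_h^{-1}(\sqrt t)\bigr)\asymp t^{-k/(2(s+k-v))}$, so the integral test~\eqref{eq:CLT_integral_test} fails precisely when $k\leq 2(s+k-v)$, i.e.\ (up to slowly varying corrections) $2s>2v-k$; since $\pi(g^2)<\infty$ forces $2s<v$, Theorem~\ref{thm:CLT} yields the window $2s\in(2v-k,v)$ in~(Ia) and, with $s=0$, the condition $2v<k$ in~(Ib). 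For part~(II) I apply Theorem~\ref{thm:f_rate} with the \emph{known} tail $\pi(\{V\geq r\})=\pi(\{|x|\geq r\})\asymp r^{d-1}e^{-vr}$ (so $L_q(r)=e^{vr}$ is admissible in~\eqref{eq:main_result_invariant}), with $f_\star(r)=e^{pr}$ so that $f_\star\circ V=f_p$ (for $\cW_p$ one uses the $f_\star$ of Theorem~\ref{thm:f_rate}\ref{assumption:Wasserstein_convergence} and $L_V=1$), and with $h$ such that $h\circ V\propto W=e^{(k-\eps)|x|}$, whence $P^n(h\circ V)(x)\leq C\bigl(W(x)+bn\bigr)$, linear in $n$. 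Then $a_g(r)\asymp r^{-(v-p)/(k-\eps-p)}$, the function $A_g$ is increasing with exponent $(k-\eps-v)/(k-\eps-p)>0$, and $\tfrac12(a_g\circ A_g^{-1}\circ(2v))(x,n)\asymp n^{-(v-p)/(k-\eps-v)}$; letting $\eps\downarrow0$ gives the stated lower bounds $n^{-(v-p)/(k-v)-\eps}$ in $f_p$-variation and $n^{-(v-p)/(p(k-v))-\eps}$ in $\cW_p$.
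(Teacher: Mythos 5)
Your treatment of parts (Ia), (Ib) and (IIa) is essentially the paper's own argument: you verify~\nameref{sub_drift_conditions} with exactly the functions used in Proposition~\ref{prop:drift_independence} ($V(x)=|x|\vee1$, $\Psi(r)=e^{kr}$, $\varphi(1/r)\propto e^{-(k-v)r}$, single-jump verification of~\ref{sub_drift_conditions(ii)} via $\P_x(T^{(r)}<S_{(\ell)})\geq P(x,\{V>r\})$), then run the integral test of Theorem~\ref{thm:CLT} and apply Theorem~\ref{thm:f_rate} with $h_\eps(r)=e^{(k-\eps)r}$ and the linear-in-$n$ bound $P^n(h_\eps\circ V)\leq h_\eps\circ V+C_\eps n$. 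The extra care you take (non-confinement via Lemma~\ref{lem:non_confinement}, the monotonicity adjustment of $\varphi$ near $t=1$, matching lower bounds on the drift) only fills in details the paper leaves implicit; substituting the explicitly known tail of $\pi$ for the theorem's $L_q$ is harmless, since for small $q$ the $L_q$ of~\eqref{eq:def_L_eps_q} is itself bounded by $e^{(v+\eps)r}$ and yields the same exponents, which is how the paper phrases it.

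The genuine gap is part (IIb). Your parenthetical ``for $\cW_p$ one uses the $f_\star$ of Theorem~\ref{thm:f_rate}\ref{assumption:Wasserstein_convergence} and $L_V=1$'' does not produce the claimed exponent in the exponential-tail case. In part~\ref{assumption:Wasserstein_convergence} the theorem forces $f_\star(r)=(r/2)^p$ (polynomial) and the Lipschitz structure of the Euclidean metric, so with $h=h_\eps$ one gets $g(r)=h(r/2)/f_\star(r/2)$, $g^{-1}(r)\asymp \tfrac{2}{k-\eps}\log r$, hence $a_g(r)\lesssim (\log r)^p\, r^{-2(v+\eps')/(k-\eps)}$ and $A_g(r)\asymp r^{1-2v/k}$ up to $\eps$'s; this even fails to increase to infinity unless $k>2v$, and when $k>2v$ the resulting bound in~\eqref{eq:main_result_Wasserstein} is of order $n^{-2v/(p(k-2v))-\eps}$, not $n^{-(v-p)/(p(k-v))-\eps}$. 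The ``divide the $f$-variation exponent by $p$'' pattern from the polynomial-tail examples relies on the weight $f_\star\circ V$ being comparable to a power of a Euclidean-Lipschitz function; here the weight $f_p(y)=e^{p|y|}$ is not Lipschitz for the Euclidean distance, so it cannot be transported into $\cW_p$ through the Kantorovich--Rubinstein step of Lemma~\ref{lem:lower_bound_wasserstein}. Indeed, for the Euclidean metric every $1$-Lipschitz test function grows at most linearly, so $\cW_1\lesssim\|P^n(x,\cdot)-\pi\|_{c(1+|\cdot|)}$, which the standard upper-bound drift for $W=e^{(k-\eps)|\cdot|}$ shows decays roughly like $n^{-v/(k-v)+\eps}$; this is asymptotically smaller than the claimed lower bound for $p\geq1$, so no admissible choice of $a_g$ in Theorem~\ref{thm:f_rate}\ref{assumption:Wasserstein_convergence} with Euclidean-Lipschitz $V$ can deliver (IIb). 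To obtain the stated exponent one must instead apply Lemma~\ref{lem:lower_bound_wasserstein} with an exponentially growing $W$ (equivalently, a metric such as $d(x,y)=\bigl|e^{|x|}-e^{|y|}\bigr|$ under which such a $W$ is Lipschitz), which gives $a(r)\asymp r^{-(v-p)/(k-\eps-p)}$ and then the exponent $-(v-p)/(p(k-v))$ as $\eps\downarrow0$. Note that the paper's own proof in Appendix~\ref{subsec:independence_proof} also stops at the $f_p$-variation bound and never carries out the Wasserstein step, so this part cannot be dismissed as ``mechanical''; as written, your derivation of (IIb) does not go through.
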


The proof of Theorem~\ref{thm:independence_exponential} is in Appendix~\ref{subsec:independence_proof} below and consists of proving that the functions $V(x)=|x|\vee 1$, $\Psi(r) = \exp( kr)$, $\varphi(1/r)\propto \exp(-r(k-v))$ satisfy the \textbf{L}-drift conditions~\nameref{sub_drift_conditions} and using $h(r)=\exp((k-\eps)r)$ in Theorem~\ref{thm:f_rate}. The proof of Theorem~\ref{thm:independence_poly}, using functions $V(x)=|x|\vee 1$, $\Psi(r)=r^k$, $\varphi(1/r) = r^{-k/v}$ in the \textbf{L}-drift conditions~\nameref{sub_drift_conditions}, is completely analogous to the proof of Theorem~\ref{thm:independence_exponential} and is omitted for brevity.

\begin{rem}
\label{rem:independence}
The independence sampler has been extensively studied.
\begin{myenumi}
\item[(I)] \textbf{[CLT-characterisation]} The necessary and sufficient condition for the validity of the CLT for the independence sampler was classified in~\cite[Thm~2]{MR3843830}. In the context of Theorems~\ref{thm:independence_exponential} and~\ref{thm:independence_poly} it states that the CLT holds for $g(x) = \exp(s|x|)$ (respectively $g(x) = |x|^{s}$) the CLT holds if and only if $2s\leq 2v-k$, which suggests that our results for the failure of the CLT are sharp.    
\item[(II)] \textbf{[Matching bounds]} Based on~\cite{Roberts07,MR1890063}, we conclude that our bounds are optimal in the total variation distance. For every $x\in\R^d$ and $\eps>0$ there exist constants $c\in(0,\infty)$ by Theorem~\ref{thm:independence_exponential} and $C\in(0,\infty)$ by~\cite[Proposition~28]{MR4524509} such that
$$
c n^{-\frac{v}{k-v}-\eps}\leq \|P^n(x,\cdot)-\pi\|_{\TV}\leq C n^{-\frac{v}{k-v}+
\eps} \quad\text{for all $n\in\N\setminus\{0\}$.}
$$
Moreover, optimality in 
$f$-variation and Wasserstein distance can be deduced by applying the results from~\cite{MR2071426} with a drift condition from~\cite{Roberts07}. Furthermore, the upper bounds in $L^2$ distance have been established in~\cite[Prop.~29 and~30]{MR4524509}, and the rate matches the decay in total variation.
\end{myenumi}
\end{rem}

\subsection{Unadjusted  algorithms}
\label{subsec:biased_algorithms}

This section studies biased sampling algorithms (i.e., without the Metropolis correction) with noise of finite (Section~\ref{subsec:ULA}) and infinite (Section~\ref{subsec:discretisation_levy_algorithm}) variance. 

\subsubsection{Unadjusted Langevin algorithm}
\label{subsec:ULA}
Assume that $\pi$ is a twice continuously differentiable positive density on $\R^d$. The unadjusted Langevin algorithm is a time-discretisation of the Langevin diffusion
\begin{equation*}
\ud Y_t = \nabla \log \pi(Y_t)\ud t + \sqrt{2}\ud B_t,
\end{equation*}
where $B$ is a standard $d$-dimensional Brownian motion and $\nabla$ is the gradient of the function $\log\pi$.
The Euler-Maruyama discretisation scheme yields the ULA Markov chain with the transition law given by
\begin{equation}
\label{eq:ULA}
X_{k+1}^{(h)} = X_k^{(h)} + h \nabla \log \pi(x) + \sqrt{2h} Z_{k+1},\quad\text{for $k\in\N$,}
\end{equation}
where $(Z_k)_{k\in\N}$ is an i.i.d. sequence of standard  $d$-dimensional Gaussian random vectors (i.e., with zero mean and identity covariance matrix) and $h\in(0,\infty)$ is a step size. 
The main advantage of ULA over MALA is that the process moves at every time step, allowing for rapid exploration of the state space. However, its drawback is that the discretisation introduces bias, so that if the ULA chain $X^{(h)}$ is ergodic, its stationary distribution $\pi_h$ is typically different from $\pi$. 

The main result of the present section is as follows: for a heavy-tailed target distribution $\pi$, despite  the absence of the accept/reject mechanism,
ULA does \textit{not}  converge (to its invariant distribution $\pi_h$) at a faster rate than MALA or fv-RWM converge to the target $ \pi$. Moreover, for the  family of estimators $S_n(g)$, where $g(x) = |x|^s$ for large $|x|$ with $s\geq 0$, the asymptotic variance of $S_n(g)$ is finite for the same values of $s\in[0,v/2)$ as for  the fv-RWM  and MALA (recall that $v>0$ in~\eqref{eq:examples_pi} determines the tail decay of $\pi$). Figure~\ref{fig:last_MALA_ULA} gives an example of the failure of the CLT for both ULA and MALA. Finally, our result also shows that,  for any fixed $h>0$, the tail behaviour of the  invariant distribution $\pi_h$ of the ULA chain $X^{(h)}$ matches that of the target $\pi$. It turns out that such matching of the tails of $\pi_h$ and $\pi$ 
does not hold in general for all unadjusted algorithms targeting a heavy-tailed distribution $\pi$ (see Section~\ref{subsec:discretisation_levy_algorithm} below for examples, including Euler-Maruyama schemes for L\'evy-driven SDEs).  
%$\nabla \log \pi(x)\to 0$ as $|x|\to\infty$.

\begin{thm}
\label{thm:ULA}
Let the target density $\pi$ in~\eqref{eq:examples_pi} be twice continuously differentiable with $\ell$ satisfying 
$r(\log \ell(r))'\to 0$ as $r\to\infty$. Then for any $h>0$, the  ULA chain $X^{(h)}$ with Markov transition kernel $Q_h$ in~\eqref{eq:proposal_MALA_finite} associated with $\pi$, admits an invariant measure $\pi_h$ and the following statements hold.

\begin{myenumi}[label=(\Roman*)]
\item \label{thm:ULA_CLT} \textup{\textbf{[CLT-necessary condition]}}  
Fix a measurable function  $g:\R^d\to\R$.
\begin{enumerate}
\item[(Ia)] Let $g(x)\geq |x|^{s}$  for some $2s\in(0,v)$ and all $|x|$ sufficiently large.
If $2s \in (v-2, v)$, then the asymptotic variance  for $S_{n}(g)$ in~\eqref{eq:ergodic_average}  is \textbf{infinite}.
\item[(Ib)] Let $g$ be bounded. 
If $v\in(0,2)$, then the asymptotic variance for $S_n(g)$ in~\eqref{eq:ergodic_average} is \textbf{infinite}.
\end{enumerate}
\item  \label{thm:ULA_lower_bounds} \textup{\textbf{[Lower bounds]}} Fix $x\in\R^d$ and, for $p\in\RP$, define $f_p(y)\coloneqq \max\{1,|y|^p\}$, $y\in\R^d$.
\begin{enumerate}
\item[(IIa)] For any $p\in[0,v)$ and every $\eps>0$, there exists a constant $c_p\in(0,\infty)$ such that
$$
c_p n^{-\frac{v-p}{2}-\eps}\leq \|Q_h^n(x,\cdot)-\pi_h\|_{f_p}\quad\text{for all $n\in\N\setminus\{0\}$.}
$$
\item[(IIb)] For any $p\in[1,v)$ and every $\eps>0$, there exists a constant $c_{\cW,p}\in(0,\infty)$ such that
$$
c_{\cW,p} n^{-\frac{v-p}{2p}-\eps}\leq \cW_p(Q_h^n(x,\cdot),\pi_h)\quad\text{for all $n\in\N\setminus\{0\}$.}
$$
\end{enumerate}
\item \label{thm:ULA_invariant} 
\textup{\textbf{[Tails of invariant measure $\pi_h$]}}
For any $\eps>0$, there exist $c,C\in(0,\infty)$ satisfying  
$$
cr^{-v-\eps}\leq \pi_h(|\cdot|\geq r) \leq Cr^{-v+\eps}, \quad \text{for all $r\in[1,\infty)$.}
$$ 
Thus, for any $h>0$, the tail decay of the invariant distribution $\pi_h$ of $X^{(h)}$ matches asymptotically the tail decay of the target $\pi$ in~\eqref{eq:examples_pi}.
\end{myenumi}
\end{thm}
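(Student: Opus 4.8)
The plan is to derive all three parts of Theorem~\ref{thm:ULA} by exhibiting functions $(V,\varphi,\Psi)$ satisfying the \textbf{L}-drift conditions~\nameref{sub_drift_conditions} for the ULA chain $X^{(h)}$ and then invoking Theorems~\ref{thm:CLT},~\ref{thm:f_rate} and~\ref{thm:invariant}. Concretely, I would take
\[
V(x)=|x|\vee 1,\qquad \varphi(u)=c_\varphi\,u^{3},\qquad \Psi(r)=r^{\,v+2+\eps},
\]
for an arbitrarily small $\eps>0$ and a large enough constant $c_\varphi>0$ (so that $r\mapsto r\varphi(1/r)=c_\varphi/r^{2}$ decreases to $0$, as required). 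As a preliminary step, since the Gaussian increments in~\eqref{eq:ULA} yield a transition kernel with a strictly positive continuous density, $X^{(h)}$ is $\psi$-irreducible (w.r.t. Lebesgue measure), aperiodic and Feller, with every compact set petite. Using the expansion $\nabla\log\pi(x)=-(v+d)\,x/|x|^{2}+o(1/|x|)$ as $|x|\to\infty$ --- valid because the assumption $r(\log\ell(r))'\to0$ forces the slowly varying factor $\ell$ to contribute only to the error term --- a second-order Taylor computation shows that $W(x)\coloneqq(|x|\vee1)^{\,v+2-\eps}$ obeys the upper-bound drift condition~\eqref{eq:upper_drift} with $\phi(w)\propto w^{\,1-2/(v+2-\eps)}$ on a compact petite set $B$. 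Standard Meyn--Tweedie theory~\cite{MarkovChains,MR2071426} then yields positive Harris recurrence, the existence of the invariant law $\pi_h$, ergodicity in total variation, the upper bound $\pi_h(\{|\cdot|\ge r\})\le Cr^{-v+\eps}$ in part~\ref{thm:ULA_invariant} (from $\pi_h(|\cdot|^{v-\eps})<\infty$ and Markov's inequality), the matching upper bounds on the convergence rates in part~\ref{thm:ULA_lower_bounds}, and the linear majorant $Q_h^{n}(W)(x)\le W(x)+bn$ needed later as $v(x,n)$ in Theorem~\ref{thm:f_rate}.

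The substantive step is the verification of~\nameref{sub_drift_conditions}. Non-confinement follows from Lemma~\ref{lem:non_confinement}, since $\inf_{x\in\R^{d}}Q_h(x,\{V\ge u\})>0$ for every $u\ge1$ (Gaussian tails handle $|x|$ large, continuity handles $|x|$ in a compact set). For~\nameref{sub_drift_conditions}\ref{sub_drift_conditions(i)}, I would expand $Q_h(1/V)(x)=\E_x[1/(|X_1^{(h)}|\vee1)]$ for large $|x|$: from $|X_1^{(h)}|^{2}=|x|^{2}+2\sqrt{2h}\langle x,Z\rangle+2h\langle x,\nabla\log\pi(x)\rangle+|h\nabla\log\pi(x)+\sqrt{2h}Z|^{2}$ and a Taylor expansion of $u\mapsto u^{-1/2}$, the negative radial drift $2h\langle x,\nabla\log\pi(x)\rangle\approx-2h(v+d)$ and the $O(1)$ quadratic-variation term combine into $Q_h(1/V)(x)=1/|x|+h(v+3)/|x|^{3}+o(1/|x|^{3})$, so $Q_h(1/V)-\varphi(1/V)\le 1/V$ holds outside a compact set once $c_\varphi>h(v+3)$; the term $-b\,\mathbbm{1}\{V\le\ell_0\}$ is incorporated by taking $b$ large (here $1/V\le1$, so $Q_h(1/V)$ is bounded). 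For~\nameref{sub_drift_conditions}\ref{sub_drift_conditions(ii)} I am in the ``integrable/many-jump'' regime, since the Gaussian increments have all moments and hence $Q_h(\Psi\circ V)<\infty$; I would therefore apply Lemma~\ref{lem:overshoot}. The submartingale hypothesis~\eqref{eq:sub_mart_ass} follows from the analogous expansion
\[
\E_x\big[|X_1^{(h)}|^{p}\big]=|x|^{p}\Big(1+\tfrac{ph}{|x|^{2}}(p-2-v)+o(|x|^{-2})\Big),
\]
which for $p=v+2+\eps$ has a strictly positive leading correction, so $\Psi\circ V$ is a submartingale outside a compact set. The incremental-overshoot hypothesis~\eqref{eq:incremental_overshoot} is the genuinely delicate estimate: from $\{V\le r\}$, on $\{|X_1^{(h)}|>r\}$ the overshoot $(|X_1^{(h)}|-r)_+$ is dominated, up to an $O(1)$ shift, by $\sqrt{2h}|Z|$, and splitting according to whether $r-|x|$ is below or above a fixed constant --- using in the latter case that the conditional expectation of $(\sqrt{2h}|Z|)^{v+2+\eps}$, given it exceeds a level $t\le r$, is of order $t^{\,v+2+\eps}$ by Gaussian concentration --- yields $Q_h(\mathbbm{1}\{V>r\}\,\Psi\circ V)(x)\le C\Psi(r)\,Q_h(x,\{V>r\})$. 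Lemma~\ref{lem:overshoot} then delivers~\eqref{eq:assumption_heavy_tail_bound}, completing~\nameref{sub_drift_conditions}.

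With $(V,\varphi,\Psi)$ in place the three parts follow by substitution. For part~\ref{thm:ULA_CLT}(Ia), apply Theorem~\ref{thm:CLT}\ref{thm:CLT(a)} with $h(r)=r^{s}$: then $G_h(r)\propto r^{s+2}$ and the integrand in~\eqref{eq:CLT_integral_test} is of order $t^{-(v+2+\eps)/(2(s+2))}$, which lies outside $L^{1}_{\mathrm{loc}}(\infty)$ precisely when $2s\ge v-2+\eps$; since $\pi(g^{2})<\infty$ needs $2s<v$, letting $\eps\downarrow0$ gives failure of the CLT whenever $2s\in(v-2,v)$. For part~\ref{thm:ULA_CLT}(Ib), apply Theorem~\ref{thm:CLT}\ref{thm:CLT(b)} ($h\equiv1$): here $G_1(r)\propto r^{2}$ and the integrand is of order $t^{-(v+2+\eps)/4}$, non-integrable when $v\le 2-\eps$, giving the claim for $v\in(0,2)$. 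For part~\ref{thm:ULA_lower_bounds}, apply Theorem~\ref{thm:f_rate}\ref{assumption:f_convergence} with $f_\star(r)=r^{p}$, $h(r)=r^{\,v+2-\eps}$ (so that $h\circ V\propto W$, as prescribed by the heuristic in Section~\ref{subsec:applying}) and $v(x,n)=W(x)+bn$; since $L_q(r)$ from~\eqref{eq:def_L_eps_q} is of order $r^{\,v+\eps}D(r)$, unwinding the composition $a_g\circ A_g^{-1}\circ(2v)$ produces the rate $n^{-(v-p)/2-\eps}$, and part~\ref{assumption:Wasserstein_convergence} (legitimate because $V$ is $1$-Lipschitz and $\pi_h,Q_h^{n}(x,\cdot)\in\cP_p$ by the drift condition) produces $n^{-(v-p)/(2p)-\eps}$. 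Finally, Theorem~\ref{thm:invariant} with the same $L_q$ gives $\pi_h(\{V\ge r\})\ge c_q r^{-v-\eps}$, which together with the upper bound from the first paragraph establishes part~\ref{thm:ULA_invariant}. The main obstacle is the incremental-overshoot bound~\eqref{eq:incremental_overshoot}; by contrast the Taylor expansions of $Q_h(1/V)$ and $Q_h(\Psi\circ V)$, though somewhat lengthy, are routine once $\nabla\log\pi(x)=-(v+d)x/|x|^{2}+o(1/|x|)$ is available.
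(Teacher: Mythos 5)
Your proposal is correct and follows essentially the same route as the paper: the paper's Proposition~\ref{prop:drift_ULA} establishes exactly your two drift computations (the upper-bound drift for $V_s$ with $s\in(0,v+2)$, the expansion giving $\varphi(1/r)\propto r^{-3}$ for $s=-1$, and the overshoot claim feeding Lemma~\ref{lem:overshoot} to get $\Psi(r)=r^{s}$, $s>v+2$), after which Theorems~\ref{thm:CLT},~\ref{thm:f_rate} and~\ref{thm:invariant} are applied with the same choices of $h$, $f_\star$ and $L_q$ as you indicate, and the upper tail bound for $\pi_h$ comes from the same classical drift argument. The only point where the paper is more explicit than your sketch is the truncation of the Gaussian integrals to the ball $B(x,|x|^{1/4})$ in the Taylor expansions and the annulus comparison in the overshoot estimate, but these are exactly the routine steps you flag.
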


The proof of Theorem~\ref{thm:ULA} is  in Appendix~\ref{subsec:ULA_proofs} below. It consists of showing that the functions $V(x)=|x|\vee 1$, $\Psi(r) = r^{v+2+\eps}$, $\varphi(1/r) \propto 1/r^{3}$ satisfy the \textbf{L}-drift conditions~\nameref{sub_drift_conditions} and applying Theorem~\ref{thm:CLT}\ref{thm:CLT(a)} and  Theorem~\ref{thm:f_rate} (with $h(r)=r^{v+2-\eps}$) to get parts~\ref{thm:ULA_CLT} and~\ref{thm:ULA_lower_bounds} in Theorem~\ref{thm:ULA}, respectively.  Theorem~\ref{thm:invariant}, together with the classical upper bounds on the tail of the invariant distribution of an ergodic Markov chain (see e.g.~\cite[Thm~16.1.12]{MarkovChains}),
implies Theorem~\ref{thm:ULA}\ref{thm:ULA_invariant}.
\begin{figure}[ht]
  \centering
  \begin{subfigure}[B]{0.45\textwidth}
    \includegraphics[width=\textwidth]{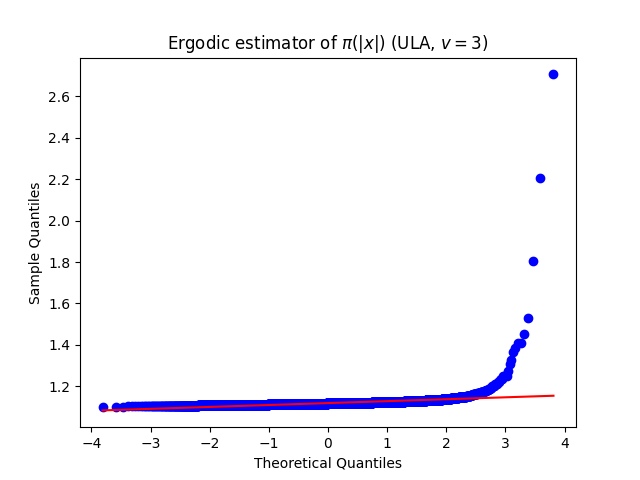}
    \caption{}
    \label{fig:ULA_mean_ex}
  \end{subfigure}
  \hfill
  \begin{subfigure}[B]{0.45\textwidth}
    \includegraphics[width=\textwidth]{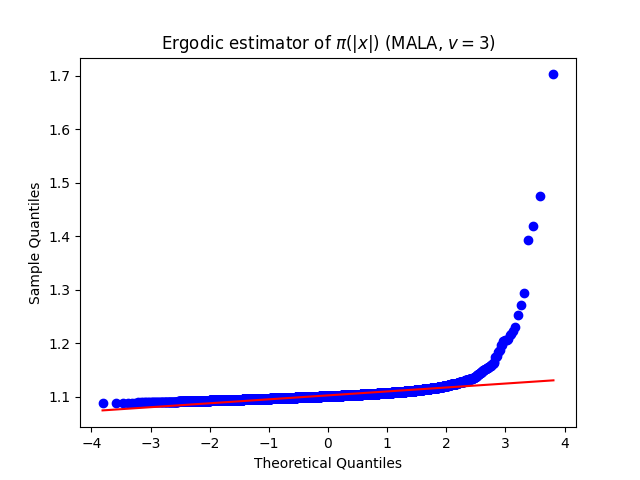}
    \caption{}
    \label{fig:MALA_mean_ex}
  \end{subfigure}
  \caption{QQ-plots of $10^4$ ergodic averages  $S_n(g)$ (see~\eqref{eq:ergodic_average} above)
  for the function $g(x)=|x|$ after $n=5\cdot10^7$ steps of the ULA chain (resp. MALA chain)  targeting the $t$-distribution $t(3)$ on $\R$ in panel~\eqref{fig:ULA_mean_ex} (resp.~\eqref{fig:MALA_mean_ex}). (See~\eqref{eq:student_t_def} for the definition of the $t$-distribution.)  
  The QQ-plots in panels~\eqref{fig:ULA_mean_ex} and~\eqref{fig:MALA_mean_ex} show that the CLTs fail, which agrees with the conclusion of Theorems~\ref{thm:ULA} and~\ref{thm:MALA_light}  since 
   $v-2=1<2=2s$.}
  \label{fig:last_MALA_ULA}
\end{figure}

\begin{rem}
\label{rem:matching_bounds_ULA}
Under the assumptions of Theorem~\ref{thm:ULA}, we can obtain matching upper bounds on the convergence rate and cases where CLT holds by applying the drift conditions from~\cite{MarkovChains}.  By combining these results, we demonstrate that our lower bounds match the known upper bounds and that the condition under which the CLT fails is sharp.
\begin{myenumi}[label=(\Roman*)]
\item \textbf{[CLT-characterisation]} Let $g:\RP\to\RP$ satisfies $g(x)=|x|^{s}$ with $2s\in[0,v)$ for all $|x|$ sufficiently large.
Then the asymptotic variance for the ergodic average $S_{n}(g)$  is finite  if  $2s \in (0, v -2)$ (by the drift condition in Proposition~\ref{prop:drift_ULA} below and~\cite{MarkovChains}) and infinite if $2s \in (v -2, v)$  (by Theorem~\ref{thm:ULA}\ref{thm:ULA_CLT}). Moreover, for $g:\R^d\to\R$ bounded, the the asymptotic variance for $S_{n}(g)$ is finite if $v >2$ (see~\cite{MR2446322})  and infinite if $v <2$ by Theorem~\ref{thm:ULA}\ref{thm:ULA_CLT}. 
\item \textbf{[Matching bounds]} For every $x\in\R^d$ and $\eps>0$ there exist constants $c\in(0,\infty)$ (by Theorem~\ref{thm:ULA}\ref{thm:ULA_lower_bounds}) and $C\in(0,\infty)$ (by the drift condition in Proposition~\ref{prop:drift_ULA} and~\cite{MR2071426}), such that
$$
c n^{-\frac{v}{2}-\eps}\leq \|Q_h^n(x,\cdot)-\pi\|_{\TV}\leq Cn^{-\frac{v}{2}+\eps} \quad\text{for all $n\in\N\setminus\{0\}$.}
$$
Moreover, optimality in 
$f$-variation and $\cW_p$-Wasserstein distance can be deduced by applying the results from~\cite{MR2071426},~\cite{Durmus16} and~\cite{MR4429313} with the drift condition in Proposition~\ref{prop:drift_ULA} of Appendix~\ref{sec:examples_proofs} below.
\end{myenumi}
\end{rem}

\begin{rem}
\label{rem:ULA}
As stated in Theorem~\ref{thm:ULA}\ref{thm:ULA_invariant}, the ULA chain $X^{(h)}$ with a fixed (but arbitrary) step size $h>0$ converges to the distribution $\pi_h$, whose tail decay matches that of the target  $\pi$. However, by Theorem~\ref{thm:ULA}\ref{thm:ULA_lower_bounds}, the rate of convergence to $\pi_h$ is the same as the Metropolised alternatives such as the RWM or MALA (see Theorems~\ref{thm:RWM_light} and~\ref{thm:MALA_light}). Moreover, for estimators $S_n(g)$ with infinite  asymptotic variance in the RWM or MALA, by Theorem~\ref{thm:ULA}\ref{thm:ULA_CLT} the CLT also fails for the ULA chain. 
Moreover, our assumptions permit the use of a centred function 
$
g = \tilde{g} - \pi_h(\tilde{g}) + \pi(\tilde{g}),
$
for which the asymptotic variance also infinite if and only if it is infinite for $\tilde g$.
\end{rem}

\subsubsection{Unadjusted algorithms with increments of infinite variance}
\label{subsec:discretisation_levy_algorithm}
The improved performance of the RWM algorithm when using a proposal distribution without a second moment (see Section~\ref{subsec:RWM} above) motivates an unadjusted ULA-type algorithm with heavy-tailed (instead of Gaussian) increments. Such algorithms would encompass Euler-Maruyama schemes for the solutions of L\'evy-driven SDEs,  continuous time processes with heavy-tailed invariant distributions that are known to  exhibit exponentially ergodicity, see e.g.~\cite{Majka21,MR4580902} for theoretical results and~\cite{simsekli2020fractional,nguyen2019non} for applications in machine learning.

For simplicity of exposition, let
%In this paper we are interested in discretisation of $Y$, which yields 
the Markov chain $X=(X_n)_{n\in\N}$ have a transition law governed by the dynamics
\begin{equation}
\label{eq:alpha_stable_ULA}
X_{k+1} = X_k + hb(X_k) + Z^{(h)}_{k+1},\quad\text{for $k\in\N$,}
\end{equation}
for some fixed parameter $h>0$ and a continuous vector field $b:\R^d\to\R^d$, where 
$(Z_k^{(h)})_{k\in\N}$ is an 
 i.i.d. sequence in $\R^d$ with mean zero and density $p_{\alpha,h}$ satisfying
\begin{equation}
    \label{eq:p_alpha_assumption}
0<\liminf_{r\to\infty}r^\alpha \int_{\{|x|>r\}}p_{\alpha,h}(x)\ud x\leq \limsup_{r\to\infty}r^\alpha \int_{\{|x|>r\}}p_{\alpha,h}(x)\ud x<\infty,\quad\text{for some $\alpha\in(1,2)$.}
\end{equation}
We further assume that 
$p_{\alpha,h}$ converges weakly to a delta measure at the origin $0\in\R^d$ as $h\to0$.

If $q_\alpha$ is an $\alpha$-stable  density on $\R^d$ (possibly non-isotropic), taking
$p_{\alpha,h}=h^{1/\alpha}q_\alpha$ in~\eqref{eq:alpha_stable_ULA} yields a
standard Euler-Maruyama chain $X$,
\begin{equation}
\label{eq:alpha_stable_ULA_disc}
X_{k+1} = X_k + hb(X_k) + h^{1/\alpha} Z_{k+1},\quad\text{for $k\in\N$,}
\end{equation}
where the sequence $(Z_k)_{k\in\N}$ is i.i.d. with law $q_\alpha$.
For an $\alpha$-stable L\'evy process $L$ with $L_1\sim q_\alpha$,
there exists a vector field 
$b_\pi:\R^d\to\R^d$, given in terms of $\pi$,  such that 
the process $Y$,  satisfying the following L\'evy-driven SDE 
$$
\ud Y_t = b_\pi(Y_t)\ud t + \ud L_t\quad \text{(for an isotropic $\alpha$-stable law $q_\alpha$ on $\R^d$),}
$$
converges to $\pi$ (see e.g.~\cite[Eq.~(4)]{Majka21} for the definition of $b_\pi$). For $b=b_\pi$, the chain $X$ defined in~\eqref{eq:alpha_stable_ULA_disc} above is the Euler-Maruyama discretisation of the continuous-time process $Y$.

%where  $(Z_k)_{k\in\N}$ is an i.i.d. sequence of centred $\alpha$-stable random vectors and $h>0$ is a step size.

\begin{thm}
\label{thm:levy_driven_discretisation}
Let the chain $X$ satisfy the recursion in~\eqref{eq:alpha_stable_ULA} with an arbitrary continuous vector field $b:\R^d\to\R^d$, some $h>0$ and $p_{\alpha,h}$ satisfying~\eqref{eq:p_alpha_assumption}. Then the following statements hold.
\begin{myenumi}[label=(\alph*)]
    \item \label{thm:levy_driven_discretisation:invariant} Assume $|b(x)|\leq C(|x|+1)$ for all $x\in\R^d$ and some constant $C>0$. If $0<h<1/(2C)$ and the chain $X$ in~\eqref{eq:alpha_stable_ULA} admits an invariant distribution $\pi_h$, then for any $\eps>0$ there exists a constant $C_{\pi,h}\in(0,\infty)$ satisfying 
    $$\pi_h(|\cdot|>r)\geq C_{\pi,h} r^{-\alpha-\epsilon}\qquad\text{for all $r\in[1,\infty)$. }$$
    \item \label{thm:levy_driven_discretisation:transient} If $\liminf_{|x|\to\infty}|b(x)|/|x|=\infty$, then the chain $X=(X_n)_{n\in\N}$ is transient: $$\P_x\left(\lim_{n\to\infty}|X_n|=\infty\right)=1\qquad\text{for all $x\in\R^d$.}
    $$
\end{myenumi}
\end{thm}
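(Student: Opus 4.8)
\textit{Plan.} The two parts have rather different flavours, so I would treat them separately: part~\ref{thm:levy_driven_discretisation:invariant} by a short direct argument based on invariance and the single-large-jump heuristic, and part~\ref{thm:levy_driven_discretisation:transient} via a bounded superharmonic (Lyapunov) function, in the spirit of the transformations of $P(1/V)$ used to prove transience in~\cite{meynadntweedie,Menshikov21}. For part~\ref{thm:levy_driven_discretisation:invariant}, note first that since $\pi_h$ is a probability measure there is $r_0\ge1$ with $\pi_h(\{|x|\le r_0\})\ge 1/2$, and the sublinear growth of $b$ gives $K:=\sup_{|x|\le r_0}|x+hb(x)|\le r_0+hC(r_0+1)<\infty$. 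Writing the transition step as $X_1=x+hb(x)+Z^{(h)}_1$ and using $\pi_h=\pi_hP$,
\[
\pi_h(\{|y|\ge r\})=\int P(x,\{|y|\ge r\})\,\pi_h(\ud x)\ \ge\ \int_{\{|x|\le r_0\}}\P\bigl(|Z^{(h)}_1|\ge r+|x+hb(x)|\bigr)\,\pi_h(\ud x)\ \ge\ \tfrac12\,\P\bigl(|Z^{(h)}_1|\ge r+K\bigr),
\]
because $\{z:|x+hb(x)+z|\ge r\}\supseteq\{z:|z|\ge r+|x+hb(x)|\}$. By the lower bound in~\eqref{eq:p_alpha_assumption} there are $c_\alpha>0$ and $R_\star$ with $\P(|Z^{(h)}_1|\ge s)\ge c_\alpha s^{-\alpha}$ for $s\ge R_\star$, so $\pi_h(\{|y|\ge r\})\ge \tfrac{c_\alpha}{2}(r+K)^{-\alpha}\ge c\,r^{-\alpha}$ once $r$ is large. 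As the heavy tails of $Z^{(h)}_1$ also force $\pi_h(\{|y|>R\})>0$ for every $R$ (otherwise $\pi_h$ would be compactly supported, which is inconsistent with $\pi_h=\pi_hP$), the estimate extends to all $r\ge1$ after shrinking the constant, giving the claim (in fact with the sharper exponent $\alpha$). A route fully within the framework of this paper is to instead verify~\nameref{sub_drift_conditions} with the slowly growing Lyapunov function $V(x)=\log(|x|\vee\mathrm{e})$, $\Psi(r)\propto\mathrm{e}^{\alpha r}$, $\varphi(1/r)\propto 1/r^{2}$ and invoke Theorem~\ref{thm:invariant}; there the factor $(1-q)^2$ in $L_q$ is precisely what produces the $\epsilon$-loss in the stated exponent.

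For part~\ref{thm:levy_driven_discretisation:transient}, I would first record that the transition kernel $P(x,\ud y)=p_{\alpha,h}(y-x-hb(x))\,\ud y$ is $\psi$-irreducible and aperiodic w.r.t.\ Lebesgue measure and, $b$ being continuous, a $T$-chain, so that compact sets are petite. Next, fix $\gamma\in(0,\alpha)$; since $\liminf_{|x|\to\infty}|b(x)|/|x|=\infty$, choose $M>3/h$ and $R_0$ with $|b(x)|\ge M|x|$ for $|x|\ge R_0$, and set $g(x):=(|x|\vee R_0)^{-\gamma}$, a bounded nonnegative function vanishing at infinity. The core estimate is that $g$ is superharmonic far out: for $|x|\ge R_0$, on $\{|Z^{(h)}_1|\le|x|\}$ the reverse triangle inequality yields $|X_1|\ge h|b(x)|-|x|-|Z^{(h)}_1|\ge (hM-2)|x|\ge|x|$, whence $g(X_1)\le (hM-2)^{-\gamma}g(x)$ with $(hM-2)^{-\gamma}<1$, while on $\{|Z^{(h)}_1|>|x|\}$ one has $g(X_1)\le R_0^{-\gamma}$ and, by the upper bound in~\eqref{eq:p_alpha_assumption}, $\P(|Z^{(h)}_1|>|x|)\le C_\alpha|x|^{-\alpha}$; since $|x|^{-\alpha}=|x|^{-(\alpha-\gamma)}g(x)=o(g(x))$ this gives $Pg(x)\le g(x)$ for all $|x|\ge R_1$, some $R_1>R_0$. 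With the compact (hence petite) set $A:=\{|x|\le R_1\}$, the process $(g(X_{n\wedge\tau_A}))_n$ is a bounded nonnegative supermartingale under $\P_{x_0}$, so optional stopping and Fatou give $R_1^{-\gamma}\,\P_{x_0}(\tau_A<\infty)\le g(x_0)$, i.e.\ $\P_{x_0}(\tau_A<\infty)\le (R_1/|x_0|)^{\gamma}<1$ whenever $|x_0|>R_1$. This is exactly the hypothesis of the drift criterion for transience~\cite[Ch.~8]{meynadntweedie} (equivalently: $g$ is bounded and superharmonic off the petite set $A$, and $\{g<\inf_A g\}=\{|x|>R_1\}$ is accessible), whence $X$ is transient. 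Transience of a $\psi$-irreducible $T$-chain makes every compact set uniformly transient, so for each $j\in\N$ and each $x\in\R^d$ one has $\sum_n\mathbbm{1}\{|X_n|\le j\}<\infty$ $\P_x$-a.s.; intersecting over $j$ yields $\P_x(|X_n|\to\infty)=1$ for all $x\in\R^d$.

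The two expectation estimates and the verification of the standing regularity (irreducibility, Feller/$T$-chain) are routine. The step requiring the most care is the passage in part~\ref{thm:levy_driven_discretisation:transient} from the superharmonic estimate to $\P_x(|X_n|\to\infty)=1$ for \emph{every} starting point: this rests on the transience dichotomy for $\psi$-irreducible chains together with the fact that, for $T$-chains, transience upgrades to uniform transience of all compact sets, so some care is needed to invoke these correctly rather than arguing pointwise. One must also check that the bound $Pg\le g$ genuinely persists for all sufficiently large $|x|$ despite the heavy-tailed noise occasionally pulling the chain back towards the origin — this is exactly the point at which the choice $\gamma<\alpha$ enters, the ``bad'' event having probability only of order $|x|^{-\alpha}=o(g(x))$.
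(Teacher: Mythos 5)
\textbf{Part~\ref{thm:levy_driven_discretisation:invariant}.} Your direct stationarity argument is correct and is a genuinely different route from the paper's. The paper verifies~\nameref{sub_drift_conditions} with $V(x)=\log(|x|\vee\mathrm{e})$, $\varphi(1/r)\propto 1/r^{2}$, $\Psi(r)=\exp(\alpha r)$ and then invokes Theorem~\ref{thm:invariant}; that is where the $\eps$-loss and the hypotheses $|b(x)|\le C(|x|+1)$, $h<1/(2C)$ enter. Your one-step use of $\pi_h=\pi_h P$ together with the inclusion $\{z:|x+hb(x)+z|\ge r\}\supseteq\{z:|z|\ge r+|x+hb(x)|\}$ needs only boundedness of $b$ on the ball $\{|x|\le r_0\}$ (continuity suffices) and yields the sharper exponent $-\alpha$, with no step-size restriction at all. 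That is a legitimate simplification and strengthening of part~\ref{thm:levy_driven_discretisation:invariant}, and you correctly identified the paper's \textbf{L}-drift route as the alternative.

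\textbf{Part~\ref{thm:levy_driven_discretisation:transient}.} Your Lyapunov estimate is sound and is the power-law analogue of the paper's: the paper proves $P(1/V)\le 1/V$ off a compact set with $V(x)=\log|x|\vee1$, while you prove $Pg\le g$ off a compact set with $g(x)=(|x|\vee R_0)^{-\gamma}$, $\gamma\in(0,\alpha)$; your point that $\gamma<\alpha$ makes the event $\{|Z^{(h)}_1|>|x|\}$, of probability $O(|x|^{-\alpha})$, negligible relative to $g(x)$ is exactly the right bookkeeping, and the optional-stopping bound $\P_{x_0}(\tau_A<\infty)\le (R_1/|x_0|)^{\gamma}$ is correct. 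The weak point is the final packaging. You conclude via the transience theory for $\psi$-irreducible $T$-chains, declaring Lebesgue-irreducibility (and aperiodicity) ``routine''. Under the stated hypotheses this is not justified: assumption~\eqref{eq:p_alpha_assumption} only constrains the tails of the density $p_{\alpha,h}$, which need not be a.e.\ positive, and for an arbitrary continuous superlinear $b$ there is no automatic common irreducibility measure (nor can you fall back on the paper's standing assumptions of Section~\ref{subsec:General_Assumptions}, which presuppose positive Harris recurrence and are thus incompatible with a transient chain). The $T$-chain part is harmless (the kernel is in fact strong Feller by $L^1$-continuity of translations), but the $\psi$-irreducibility on which the dichotomy and the ``every compact set is uniformly transient'' upgrade rest is an unverified hypothesis. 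The paper avoids this entirely: it combines the supermartingale property of $1/V(X)$ off a compact set with non-confinement via Lemma~\ref{lem:non_confinement} (which does hold here, since $\inf_x\P_x(|X_1|\ge u)>0$ for every $u$ by the tail assumption) and a transience criterion that requires no irreducibility, namely~\cite[Lemma~10]{Menshikov21}. Your own estimate is precisely the input that criterion consumes, so the repair is to route the conclusion through non-confinement plus that lemma (or the elementary iteration it packages), rather than through the $\psi$-irreducible machinery of~\cite{meynadntweedie}.
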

\begin{rem}
\label{rem:discretisation_Levy_SDE}
\begin{myenumi}[label=(\roman*)]
 \item Note that any Lipschitz vector field  $b$ in~\eqref{eq:alpha_stable_ULA} satisfies the condition $|b(x)|\leq C(1+|x|)$  assumed in  Theorem~\ref{thm:levy_driven_discretisation}\ref{thm:levy_driven_discretisation:invariant}. The corresponding chain $X$ will admit an invariant distribution if $b$ is inward pointing and unbounded~\cite{MR2071426}. 
 Important examples include discretisations of L\'evy-driven OU processes (extensively studied in queueing, see e.g.~\cite{MR3910024} and the references therein). 
 \item Theorem~\ref{thm:levy_driven_discretisation} is proved  in Appendix~\ref{subsec:proof_for_levy_driven} below. The proof of  Theorem~\ref{thm:levy_driven_discretisation}\ref{thm:levy_driven_discretisation:invariant} consists of establishing that the functions $V(x)=\log (|x| \vee \mathrm{e})$, $\Psi(r) = \exp(\alpha r)$ and $\varphi(1/r) \propto 1/r^2$ satisfy the \textbf{L}-drift conditions~\nameref{sub_drift_conditions} and applying Theorem~\ref{thm:invariant} from Section~\ref{sec:main_results}. The proof of Theorem~\ref{thm:levy_driven_discretisation}\ref{thm:levy_driven_discretisation:transient} relies on showing that the process $1/V(X)$ is a supermartingale, which implies transience by a version of the Foster-Lyapunov criterion  (see e.g.~\cite{Menshikov21,meynadntweedie}).
\item It has been shown that, in continuous time, replacing Langevin diffusion with a Lévy-driven SDE with an appropriate drift may lead to an exponentially ergodic continuous-time process converging to a heavy-tailed measure~\cite{Majka17,Majka21,MR4580902}. For example~\cite[Thm~1.1]{Majka21} states that a  L\'evy-driven SDE with an isotropic $\alpha$-stable driver converges exponentially fast to an invariant distribution with at least $\alpha$ moments. In particular, this includes  stationary distributions that possess the second moment.

Theorem~\ref{thm:levy_driven_discretisation} rigorously establishes the following previously unknown property of the Euler-Maruyama discretisation scheme for L\'evy-driven SDEs: depending on the growth rate of the drift, the Euler-Maruyama chain either converges to an invariant distribution with an infinite  $(\alpha+\eps)$-moment (for any arbitrarily small $\eps>0$) or is transient (i.e., does not converge at all). In particular, such discrete-time schemes exhibit a theoretical obstruction: in contrast to the continuous-time limit, for every small $h>0$, the scheme can only approximate target distributions $\pi$ that do not possess a finite second moment.

This fact makes Theorem~\ref{thm:levy_driven_discretisation} a heavy-tailed analogue to~\cite[Thm~3.2]{MR1440273}, which states that the Euler-Maruyama discretisation of a Langevin diffusion targeting a probability measure with lighter than Gaussian tails necessarily yields a transient chain. 
Theorem~\ref{thm:levy_driven_discretisation} motivates the development of new discretisation schemes, focused on processes with heavy-tailed increments, that preserve stability and exponential ergodicity of the underlying continuous-time dynamics.
\end{myenumi}
\end{rem}

\subsection{Discussion of the results for algorithms in~\texorpdfstring{\ref{SimAlg}}{(SIM-ALG)}}
\label{subsec:examples_discussion}
The fact that uniform (resp. geometric) ergodicity fails for vanilla sampling algorithms such as RWM and MALA  when targeting distributions with  unbounded support (resp. subexponential tail-decay) has been proved nearly three decades ago in~\cite{Mengersen96}. Motivated primarily by sampling problems in Bayesian statistics and machine learning, in the intervening decades, this result has been followed up by extensive research into subgeometric ergodicity of sampling algorithms in~\cite{MR1796485,MR2071426,MR1890063,MR4524509} and the development of novel approaches to sampling from  heavy-tailed target distributions~\cite{MR1730651,MR1730652,Yang24,MR4580902,heseparation,MR4723893}. 

The idea of employing heavy-tailed  proposal in sampling algorithms  (see Section~\ref{subsec:RWM} above) dates back to Stramer and Tweedie~\cite{MR1730651,MR1730652}, where a one-dimensional RWM chain was studied.  A multidimensional variant, motivated by sampling problems in Bayesian statistics, was analysed in~\cite{Roberts07}. More recently,  similar ideas have received attention in machine learning literature, where L\'evy driven SDEs and related sampling algorithms with $\alpha$-stable proposals~\cite{MR4580902,heseparation}
(see alos Section~\ref{subsec:discretisation_levy_algorithm} above)
are being  used for sampling from heavy-tailed target distributions. 

Despite extensive research in this area, fundamental  questions regarding  sampling algorithms for heavy-tailed target distributions remained unresolved. In particular, prior to our work in the present paper, the failure of the CLT had only been rigorously established for the independence sampler~\cite{MR3843830}, with numerical conjectures provided for other algorithms~\cite{Roberts07}. Furthermore, although L\'evy-driven SDEs have been proven to rapidly converge to heavy-tailed targets~\cite{Majka17,MR4580902}, the severe impact of discretisation on bias, described in Section~\ref{subsec:discretisation_levy_algorithm}, was previously unknown. Additionally, the behaviour of the recently introduced stereographic projection sampler~\cite{Yang24} had not been analysed in cases involving  heavy-tailed targets in $d$ dimensions with $d$ large (possessing fewer than $d$ moments). By addressing these significant theoretical gaps, Section~\ref{sec:examples}  offers a comprehensive quantitative characterization of the behaviour of sampling algorithms exhibiting subgeometric ergodicity.

The key takeaways for Section~\ref{sec:examples} are as follows: when the heavy-tailed target $\pi$ admits $d$-moments, the recent stereographic sampler~\cite{Yang24} is optimal, as it exhibits uniform ergodicity and preserves CLTs (see Section~\ref{subsec:stereographic}). However, if $\pi$ admits fewer than $d$ moments, which is not uncommon in high-dimensional problems when $d$ is large, 
SPS is outperformed by all  other algorithms in~\ref{SimAlg} (but not by the independence sampler).
A robust alternative, which performs well for all polynomial target distributions, is RWM with infinite-variance proposals. It outperforms 
other samplers in~\ref{SimAlg} (except the SPS when it is uniformly ergodic, which is equivalent to $\pi$ having at least $d$ moments).

Gradient-based methods, such as MALA or ULA, do not offer a significant advantage in terms of the CLT and the rate of convergence 
compared to the classical RWM. This fact is not surprising since the drift vanishes
 $\nabla \log \pi \to 0$ asymptotically at infinity. Although L\'evy-driven SDEs exhibit exponential ergodicity towards heavy-tailed targets in continuous time~\cite{Majka17,MR4580902}, thus clearly preserving CLTs for all $L^2$ estimators, their discretisations encounter substantial fundamental  challenges. As shown in Section~\ref{subsec:discretisation_levy_algorithm}, even if an appropriate fractional gradient targeting the correct measure is efficiently computable, the standard Euler-Maruyama discretisation either targets a distribution without the second moment or results in transient behaviour. Consequently, using L\'evy-driven SDEs with heavy-tailed invariant measures for sampling heavy-tailed target distributions requires the development of novel discretisation schemes. 

\begin{rem}[\textbf{L}-drift calculus for~\ref{SimAlg}]
Note that by the \textbf{L}-drift calculus heuristic, given  at the beginning of Section~\ref{sec:examples}, the performance of the algorithms can be described by the decay of the function 
$\overline \pi\circ G_1^{-1}(\sqrt{r})$ 
given in~\eqref{eq:L-drift_calc_constraint}-\eqref{eq:L-drift_calc_condition}. This is consistent with the findings for~\ref{SimAlg} targeting distribution $\pi$ in the class~\eqref{eq:examples_pi} possessing $v>0$ moments:
$\overline \pi\circ G_1^{-1}(\sqrt{r})\approx r^{-v/2}$ (RWM, MALA and ULA); 
$\overline \pi\circ G_1^{-1}(\sqrt{r})\approx r^{-v/\eta}$ (iv-RWM with finite $\eta\in(0,2)$-moment); $\overline \pi\circ G_1^{-1}(\sqrt{r})\approx r^{-v/(d-v)}$ (SPS).

The integrability of the function $r\mapsto\overline \pi\circ G_1^{-1}(\sqrt{r})/\sqrt{r}$ at infinity holds if and only if the CLT for bounded estimators holds. 
Moreover, the exponent of the leading term in $\overline \pi\circ G_1^{-1}(\sqrt{r})/\sqrt{r}$ as $r\to\infty$ matches the  exponent of the leading term in the rate of convergence (see all the theorems in Section~\ref{sec:examples}. This suggests a powerful heuristic for heavy-tailed sampling algorithms: the \textit{faster} the function $\overline \pi\circ G_1^{-1}(\sqrt{r})/\sqrt{r}$ decays as $r\to\infty$, the \textit{better} the performance of the associated algorithm. In particular, if $d-v>2$, we have $v/(d-v)<v/2<v/\eta$, demonstrating that the decay rate of functions $\overline \pi\circ G_1^{-1}(\sqrt{r})/\sqrt{r}$ for algorithms in~\ref{SimAlg} matches their performance. 
\end{rem}

\begin{rem}[matching bound in $f$-variation and Wasserstein distance]
\label{rem:f_var_Wasserstein_proofs}
Upper bounds corresponding to our lower bounds can be obtained by combining the drift condition derived in~\cite[Prop.~5]{Roberts07} with~\cite[Thm~2.8]{MR2071426}. This approach yields matching upper bounds for $f$-variation and Wasserstein $\cW_1$ distance. Alternatively, upper bounds on  Wasserstein $\cW_1$ distance may also be obtained by applying~\cite[Thm~3]{Durmus16}. Obtaining upper bounds on $\cW_p$ from bounds on $\cW_1$ is straightforward in the context of this paper and can be easily achieved by employing the techniques from~\cite{MR4429313}.
\end{rem}

\section{Proofs of the main results of Section~\ref{sec:main_results}}
\label{sec:main_proofs}

\subsection{Return time estimates, petite sets and the failure of the CLT under~\texorpdfstring{\nameref{sub_drift_conditions}}{L(V,phi,Psi)}}
\label{subsec:return_times}
%In the remainder of the paper, we consider the process 
%$\kappa \coloneqq V(X)$ under Assumption~\nameref{sub_drift_conditions}. 
The key estimate required in the proofs of our main theorems, stated in Section~\ref{sec:main_results} above, is given in Proposition~\ref{prop:lyapunov_return_times}, which essentially bounds from below the tail of  the return time $S_{(\ell)}=\inf\{n\in\N: V(X_n)<\ell\}$ of the process $X$ into the set $\{V< \ell\}$.

\begin{prop}~\label{prop:lyapunov_return_times}
Let Assumption~\nameref{sub_drift_conditions} hold with some $\ell_0\in[1,\infty)$.
Then for every $\ell_0'\in[\ell_0,\infty)$,  any measurable  $h:\RP\to\RP$, which is continuous, positive and non-decreasing on $[\ell_0',\infty)$,
any  $q\in(0,1)$, $\ell\in(\ell_0',\infty)$ and $m\in(0,\infty)$,  for all
$x\in\{\ell+m\leq V\}$ and  all times $t\geq G_h(\ell+m)$
we have 
% and $q\in(0,1)$,  inequalities~\eqref{eq:Lyapunov_lower_bound} and~\eqref{eq:Lyapunov_squared_bound} hold for
%all times $t\geq h(\ell)(q(1-q)/\ell\varphi(1/\ell))$:
\begin{align}
\label{eq:Lyapunov_lower_bound}
\P_x\left(\sum_{j=0}^{S_{(\ell)}} h\circ V(X_j)\geq t\right) \geq & q\1{V(x)<G_h^{-1}(t)/(1-q)^2}\frac{C_{\ell,m}}{\Psi(G_h^{-1}(t)/(1-q)^2)}\\\nonumber
&+q\1{V(x)\geq G_h^{-1}(t)/(1-q)^2}.
\end{align}
The constant $C_{\ell,m}\in(0,1)$ is 
given in Assumption~\nameref{sub_drift_conditions}\ref{sub_drift_conditions(ii)}
and the function $G_h^{-1}$ in~\eqref{eq:Lyapunov_lower_bound} is the inverse of the function $G_h$  defined in~\eqref{eq:G_h}.
\end{prop}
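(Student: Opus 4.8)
The plan is to prove the estimate in two steps matching the two indicator terms on the right of~\eqref{eq:Lyapunov_lower_bound}. Throughout write $r_0:=G_h^{-1}(t)$ and $R:=r_0/(1-q)^2=G_h^{-1}(t)/(1-q)^2$. On $[\ell_0',\infty)$ the function $G_h$ is continuous, strictly increasing (it is the product of the non-decreasing positive $h$ and the strictly increasing positive $r\mapsto 1/(r\varphi(1/r))$) and tends to $\infty$, so $G_h^{-1}$ is well defined on $[G_h(\ell_0'),\infty)$; since $t\ge G_h(\ell+m)$ we get $r_0\ge \ell+m>\ell>\ell_0'\ge\ell_0$, hence also $R>\ell+m$. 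I will also use, as noted in the footnote to~\nameref{sub_drift_conditions}\ref{sub_drift_conditions(i)}, that $\varphi$ is increasing.

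\textbf{Step 1 (start above level $R$).} First I would show that $\P_y\big(\sum_{j=0}^{S_{(\ell)}}h\circ V(X_j)\ge t\big)\ge q$ whenever $V(y)\ge R$. Let $S_{(r_0)}:=\inf\{n\in\N:V(X_n)<r_0\}$. On $\{n<S_{(r_0)}\}$ we have $V(X_n)\ge r_0>\ell_0$, so by the first \textbf{L}-drift inequality $\E[1/V(X_{n+1})-1/V(X_n)\mid\mathcal F_n]\le\varphi(1/V(X_n))\le\varphi(1/r_0)$, the last step because $\varphi$ is increasing and $1/V(X_n)\le 1/r_0$. Hence $N_n:=1/V(X_{n\wedge S_{(r_0)}})-(n\wedge S_{(r_0)})\varphi(1/r_0)$ is a bounded supermartingale. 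Choose the integer $N:=\lfloor t/h(r_0)\rfloor$, which satisfies $N+1\ge t/h(r_0)$ and $N\varphi(1/r_0)\le (t/h(r_0))\varphi(1/r_0)=q(1-q)/r_0$, the last identity being the definition of $G_h$ evaluated at $r_0$. From $\E[N_N]\le\E[N_0]=1/V(y)\le(1-q)^2/r_0$, splitting according to whether $S_{(r_0)}\le N$ (on which $N_N=1/V(X_{S_{(r_0)}})-S_{(r_0)}\varphi(1/r_0)\ge 1/r_0-q(1-q)/r_0=(1-q+q^2)/r_0$) or $S_{(r_0)}>N$ (on which $N_N\ge -N\varphi(1/r_0)\ge -q(1-q)/r_0$), one gets the linear inequality $(1-q+q^2)p-q(1-q)(1-p)\le(1-q)^2$ with $p:=\P_y(S_{(r_0)}\le N)$, which simplifies to $p\le 1-q$. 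Thus $\P_y(S_{(r_0)}>N)\ge q$, and on $\{S_{(r_0)}>N\}$ one has $V(X_j)\ge r_0$ for $j=0,\dots,N$, hence $h\circ V(X_j)\ge h(r_0)$, while $r_0>\ell$ forces $S_{(\ell)}\ge S_{(r_0)}>N$; therefore $\sum_{j=0}^{S_{(\ell)}}h\circ V(X_j)\ge (N+1)h(r_0)\ge t$. This gives the $q\1{V(x)\ge R}$ term.

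\textbf{Step 2 (start at level in $[\ell+m,R)$).} Here I would apply the strong Markov property at $T^{(R)}$. On $\{T^{(R)}<S_{(\ell)}\}$ the summands with index $j<T^{(R)}$ are non-negative, and $V(X_{T^{(R)}})>R>\ell$ gives $S_{(\ell)}>T^{(R)}$, so conditionally on $\mathcal F_{T^{(R)}}$ the quantity $\sum_{j=T^{(R)}}^{S_{(\ell)}}h\circ V(X_j)$ has the law of $\sum_{j=0}^{S_{(\ell)}}h\circ V(X_j)$ started from $X_{T^{(R)}}$; since $V(X_{T^{(R)}})>R$, Step~1 bounds this conditional probability below by $q$. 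Combining with~\nameref{sub_drift_conditions}\ref{sub_drift_conditions(ii)} applied with $r=R>\ell+m$ and $x\in\{\ell+m\le V\}$ yields
\[
\P_x\Big(\sum_{j=0}^{S_{(\ell)}}h\circ V(X_j)\ge t\Big)\ \ge\ q\,\P_x(T^{(R)}<S_{(\ell)})\ \ge\ q\,C_{\ell,m}/\Psi(R),
\]
which is the $q\1{V(x)<R}C_{\ell,m}/\Psi(G_h^{-1}(t)/(1-q)^2)$ term. Since every $x\in\{\ell+m\le V\}$ falls into exactly one of the two cases, the two steps together give~\eqref{eq:Lyapunov_lower_bound}.

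\textbf{Expected main obstacle.} The substance is Step~1: the point is to recognise that, through the definition of $G_h$, the threshold $t$ on the modulated sum corresponds to needing roughly $t/h(r_0)\approx q(1-q)/(r_0\varphi(1/r_0))$ steps spent above level $r_0$, and that the first \textbf{L}-drift condition is exactly the statement that controls the one-step increment of $1/V$ near level $r_0$ by $\varphi(1/r_0)$, making $1/V(X_{n\wedge S_{(r_0)}})-(n\wedge S_{(r_0)})\varphi(1/r_0)$ a supermartingale whose optional-stopping identity, together with the $(1-q)^2$ gap between the starting level $R$ and $r_0$, produces the probability $q$. The rest is routine: invertibility/monotonicity of $G_h$, bookkeeping with the integer $N=\lfloor t/h(r_0)\rfloor$, and the elementary linear inequality.
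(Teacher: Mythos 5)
Your proof is correct and is in substance the same argument as the paper's: what you do inline in Step~1 (the supermartingale $1/V(X_{n\wedge S_{(r_0)}})-(n\wedge S_{(r_0)})\varphi(1/r_0)$, optional stopping at the deterministic time $N=\lfloor t/h(r_0)\rfloor$, and the $(1-q)^2$ gap yielding probability $q$ of staying above $r_0$ long enough) is exactly the content of the paper's maximal inequality (Proposition~\ref{prop:maximal}) and of Lemma~\ref{lem:return_times}, and your Step~2 (strong Markov at $T^{(R)}$ plus~\nameref{sub_drift_conditions}\ref{sub_drift_conditions(ii)} with $r=R$) matches the paper's conditioning at $\rho_{r_q}$ and its use of the same assumption. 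The only difference is that you reconstruct these auxiliary estimates directly for the chain rather than quoting them as separate lemmas.
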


The proof of Proposition~\ref{prop:lyapunov_return_times}, given in Section~\ref{subsec:proofs_return_convergence} below,
follows~\cite{brešar2024subexponential} and rests on certain estimates for discrete-time semimartingales in Section~\ref{subsec:return_times_compact} below. 
Substituting $t$ with $\sqrt{t}$ in~\eqref{eq:Lyapunov_lower_bound}
 yields 
 $\P_x\left(\sum_{j=0}^{S_{(\ell)}} h\circ V(X_j)\geq \sqrt{t}\right) \geq qC_{\ell,m}/\Psi(G_{h}^{-1}(\sqrt{t})/(1-q)^2)$, implying 
 the following lower bound on the tail of the double sum for starting points 
$x\in\{\ell+m\leq V\}$ and  all times $t\geq G_h(\ell+m)$:
\begin{align}
   \label{eq:Lyapunov_squared_bound}
\P_x\left(\sum_{j,i=0}^{S_{(\ell)}} h\circ V(X_j)\cdot h\circ V(X_i)\geq t\right) \geq& q\frac{C_{\ell,m}}{\Psi(G_{h}^{-1}(\sqrt{t})/(1-q)^2)}.
\end{align}
The inequality in~\eqref{eq:Lyapunov_squared_bound} is crucial in establishing that the CLT fails in the proof of Theorem~\ref{thm:CLT}. 

As we are interested in estimating probabilities of events via ergodic averages of sampling algorithms,
in the context of the present paper, it is natural to consider discontinuous functions $h$ in~\eqref{eq:Lyapunov_lower_bound}
 (e.g. an indicator function $h$ of half-line). This is the reason behind the introduction of an additional parameter $\ell_0'\in[\ell_0,\infty)$ in Proposition~\ref{prop:lyapunov_return_times}, not present in~\cite{brešar2024subexponential}.

A non-empty measurable set $B\in\cB(\cX)$ is  \textit{petite} (for the Markov chain $X$) if there exist a probability measure $a$ on $\cB(\N\setminus\{0\})$ and a finite measure $\nu_a$ on $\cB(\cX)$ with  $\nu_a(\cX)>0$, satisfying
\begin{equation}
\label{eq:petite}
\sum_{j=1}^\infty \P_x(X_j\in \cdot )a(j) \geq \nu_a(\cdot) \quad \text{for all $x\in B$}.
\end{equation}

The following lemma shows that, under Assumptions~\nameref{sub_drift_conditions}, every petite set for $X$ belongs to a sublevel set of the Lyapunov function $V$, making it bounded. The proof of Lemma~\ref{lem:bounded_petite_sets}, given in Section~\ref{subsec:proofs_return_convergence} below, follows the ideas in~\cite{brešar2024subexponential}. 

\begin{lem}[Under $\mathbf{L}$-drift condition, petite sets are bounded]
\label{lem:bounded_petite_sets}
Let~\nameref{sub_drift_conditions} hold. Assume that a set $B\in\cB(\cX)$ is petite for the process $X$. Then there exists $r_0\in(1,\infty)$ such that $B\subset \{V\leq r_0\}$.
\end{lem}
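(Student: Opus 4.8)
The statement is that, under the $\mathbf{L}$-drift conditions~\nameref{sub_drift_conditions}, any petite set $B$ is contained in a sublevel set $\{V\leq r_0\}$ for some finite $r_0$. The natural strategy is by contradiction: suppose $B$ is petite but $B\cap\{V>r\}\neq\emptyset$ for every $r\in(1,\infty)$, so we may pick a sequence $(x_n)_{n}\subset B$ with $V(x_n)\to\infty$. The idea is that from such a high starting point the chain is very unlikely to return quickly to any fixed compact region, whereas the petite-set inequality~\eqref{eq:petite} forces a uniform lower bound on the mass the chain places on some fixed finite reference measure $\nu_a$ (after randomising the time index by $a$). These two facts are in tension once we localise $\nu_a$ on a compact set, which is possible because $\nu_a$ is a finite nonzero measure and $V$ is continuous (hence $\{V\le \ell\}$ exhausts $\cX$ up to $\nu_a$-null sets for $\ell$ large).

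\textbf{Key steps.} First I would fix the data of petiteness: a probability measure $a$ on $\N\setminus\{0\}$ and a finite measure $\nu_a$ with $\nu_a(\cX)>0$ satisfying~\eqref{eq:petite} on $B$. Choose $\ell\in(\ell_0,\infty)$ large enough that $\nu_a(\{V<\ell\})>0$; write $\delta\coloneqq\nu_a(\{V<\ell\})>0$. Then for every $x\in B$,
\[
\sum_{j=1}^\infty \P_x(X_j\in\{V<\ell\})\,a(j)\ \ge\ \delta.
\]
Next, truncate the sum: pick $J\in\N$ with $\sum_{j>J}a(j)<\delta/2$, so that $\sum_{j=1}^J \P_x(X_j\in\{V<\ell\})a(j)\ge\delta/2$, and hence there exists $j\le J$ with $\P_x(X_j\in\{V<\ell\})\ge\delta/(2J)$. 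In particular $\P_x(S_{(\ell)}\le J)\ge\delta/(2J)$ for every $x\in B$, where $S_{(\ell)}=\inf\{n:V(X_n)<\ell\}$ as in~\nameref{sub_drift_conditions}. The contradiction will come from showing the left-hand side tends to $0$ along $(x_n)$ with $V(x_n)\to\infty$. For this I would invoke Proposition~\ref{prop:lyapunov_return_times} (equivalently the return-time lower bound it yields) with $h\equiv 1$: the tail bound there gives, for $x$ with $V(x)$ large, an \emph{upper} bound on $\P_x(S_{(\ell)}<\infty$ occurring fast$)$ via the fact that $\P_x(S_{(\ell)}\le J)\le \P_x(S_{(\ell)}\le J,\ T^{(r)}\ \text{not reached first})+\P_x(T^{(r)}<S_{(\ell)})$ style decompositions; more directly, since $r\mapsto r\varphi(1/r)$ decreases to $0$ and $\Psi$ is increasing, $G_1^{-1}(J)$ is a fixed finite number, so once $V(x)>G_1^{-1}(J)/(1-q)^2$ the indicator term in~\eqref{eq:Lyapunov_lower_bound} forces $\P_x(S_{(\ell)}\ge \text{something}>J)\ge q$. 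Choosing $J$ below that threshold — or, cleanly, noting $\P_x(S_{(\ell)}\le J)\le 1-q$ for all $x$ with $V(x)$ large enough — contradicts $\P_{x_n}(S_{(\ell)}\le J)\ge\delta/(2J)>0$ once $q$ is chosen close to $1$ and $n$ is large. A little care is needed to match the "time $t=J$'' in the double-sum/return-time statement with the integer $J$: since $h\equiv1$, $\sum_{j=0}^{S_{(\ell)}}h\circ V(X_j)=S_{(\ell)}+1$, so the event $\{S_{(\ell)}+1\ge t\}$ with $t=G_1^{-1}$-threshold translates directly into a lower bound on $\P_x(S_{(\ell)}\ge J)$, i.e. an upper bound $1-q$ on $\P_x(S_{(\ell)}<J)$.

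\textbf{Main obstacle.} The delicate point is the quantifier juggling: the petite inequality gives a lower bound $\delta/(2J)$ that depends on the (fixed) data $(a,\nu_a,\ell)$, while Proposition~\ref{prop:lyapunov_return_times} gives an upper bound $1-q$ on $\P_x(S_{(\ell)}\le J)$ that holds \emph{uniformly} over $\{V(x)>G_1^{-1}(J/\ldots)/(1-q)^2\}$ — but $G_1^{-1}$ depends on $J$, which depends on $a$. One must check the order of choices is consistent: first fix $a,\nu_a,\ell,\delta$; then fix $J$ from $a$; then observe $G_1^{-1}(\text{const depending on }J)$ is a finite number; then choose $q\in(0,1)$ with $1-q<\delta/(2J)$; then any $x$ with $V(x)$ exceeding the (now fixed) threshold $G_1^{-1}(\cdot)/(1-q)^2$ violates the petite bound. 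Since $B$ meets $\{V>r\}$ for all $r$, such $x\in B$ exists, giving the contradiction. The only mild subtlety beyond bookkeeping is ensuring $\{V<\ell\}$ has positive $\nu_a$-mass for some admissible $\ell$ (i.e. $\ell>\ell_0$), which holds because $\nu_a$ is nonzero and finite and $\bigcup_{\ell}\{V<\ell\}=\cX$, and one is free to enlarge $\ell$; no other properties of $\nu_a$ are used. This is exactly the argument sketched for the continuous-time analogue in~\cite{brešar2024subexponential}, specialised to discrete time.
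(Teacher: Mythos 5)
Your proposal is correct and follows essentially the same route as the paper's proof: both fix the petiteness data $(a,\nu_a)$, choose a sublevel set $\{V\le\ell\}$ (resp.\ $\{V<\ell\}$) charged by $\nu_a$, truncate $a$ at a finite time, and then invoke Proposition~\ref{prop:lyapunov_return_times} with $h\equiv 1$ to get a uniform bound $\P_x(S_{(\ell)}<t)\le 1-q$ once $V(x)\ge G_1^{-1}(t)/(1-q)^2$, contradicting~\eqref{eq:petite}; your ordering of the choices ($\ell$, then $J$, then $t$, then $q$, then the threshold) is exactly the consistency the argument needs. The only difference is cosmetic: the paper bounds every term $\P_x(V(X_j)\le\ell_1)$ for $j\le t_1$ by $c/2$ and takes $q\in(1-c/2,1)$, while you extract a single index $j\le J$ and therefore need the slightly more demanding choice $1-q<\delta/(2J)$, which still works.
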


Let a set $D\in \cB(\cX)$ 
satisfy $\pi(D)>0$, where $\pi$ denotes the invariant distribution of $X$.
Then $D$ is an $(m,\nu_m)$-\textit{small set} if 
for $m\in\N$ and a probability measure $\nu_m$ on $\cB(\cX)$, there exists $\eps\in(0,1)$ such that
\begin{equation}
\label{eq:small}
P^m(x,\cdot)\geq \eps\nu_m(\cdot) \quad \text{for all $x\in D$.}
\end{equation}
 Under our standing assumptions, by~\cite[Thm~5.2.1]{meynadntweedie}, there exists an $(m,\nu_m)$-small set $D$ for the chain $X$ with the probability measure $\nu_m(\cdot)=\pi(\cdot\cap D)/\pi(D)$ 
supported in $D$.

Recall that for a set $B\in\cB(\cX)$ and $k\in\N$ we denote by $\tau_B(k) = \inf\{n\geq k:X_n\in B\}$ the first return time to $B$ after time $k$.

\begin{prop}
\label{prop:CLT}
Let $X$ be an ergodic Markov chain with an invariant measure $\pi$ and $g:\cX\to\RP$ a measurable function satisfying~\eqref{eq:CLT_necessary}. 
Let $D$ be an arbitrary $(m,\nu_m)$-small set 
and define the corresponding probability measure 
\begin{equation}
\label{eq:mu_m}
\mu_m(\cdot) \coloneqq \int_{\cX}(1-\eps\1{x\in D})^{-1}(P^m(x,\cdot)-\eps\1{x\in D}\nu_m(\cdot))\nu_m(\ud x)\quad\text{on $\cB(\cX)$.}
\end{equation}
If we have
$$
\E_{\mu_m}\left[\left(\sum_{k=0}^{\tau_D(0)} g(X_k)\right)^2\right] = \infty,
\qquad\text{then the CLT  for the $S_{n}(g)$ \textbf{fails}.}
$$
\end{prop}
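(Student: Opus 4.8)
The plan is to derive the failure of the CLT from the classical characterisation of the asymptotic variance via excursions of the Nummelin-split chain (see~\cite[Sec.~2]{Chen99} and~\cite[Thm~17.3.6]{meynadntweedie}). First I would recall the construction: since $D$ is $(m,\nu_m)$-small with minorisation constant $\eps\in(0,1)$, one performs the Nummelin splitting of the $m$-skeleton chain $X^{(m)}=(X_{mk})_{k\in\N}$. The split chain lives on $\cX\times\{0,1\}$; whenever the first coordinate lies in $D$ one flips an independent coin with success probability $\eps$, and on success the chain regenerates, i.e.\ the next state is drawn from $\nu_m$, independently of the past. This produces an i.i.d.\ sequence of regeneration times, and the associated regeneration cycles are i.i.d. The classical result states that the CLT for $S_n(g)$ holds with finite asymptotic variance if and only if the sum of $g$ over one regeneration cycle has a finite second moment, where the cycle is started from the regeneration (split) measure. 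The key point is to identify that measure: a cycle starts at a regeneration, which by construction means the post-flip state is $\nu_m$-distributed; the remaining part of the cycle until the next regeneration is governed by the ``off-atom'' dynamics, whose one-step law from a point $x\in D$ is precisely the normalised residual kernel $(1-\eps\1{x\in D})^{-1}(P^m(x,\cdot)-\eps\1{x\in D}\nu_m(\cdot))$. Integrating this against $\nu_m(\ud x)$ gives exactly the measure $\mu_m$ in~\eqref{eq:mu_m}. Hence the cycle, in terms of the original (unskeletonised) chain $X$, begins with an initial segment of at most $m$ steps distributed so that the state at the start of the next $m$-block is $\mu_m$-distributed, and then continues until $X$ next hits $D$, i.e.\ for $\tau_D(0)$ further steps.

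Next I would carry out the reduction to the stated second-moment condition. Write the contribution of one regeneration cycle to $\sum g(X_k)$ as a sum $W$ over the steps of the cycle. One splits $W$ into the ``pre-$\mu_m$'' piece (a bounded number $\le m$ of terms of the form $g(X_k)$) and the ``post-$\mu_m$'' piece $W' \coloneqq \sum_{k=0}^{\tau_D(0)} g(X_k)$ under $\P_{\mu_m}$. Since $g\in L^2(\pi)$, and since $\pi$ is, up to normalisation, the expected occupation measure of a cycle (so that $\E$ of $\sum_{\text{cycle}} g(X_k)^2$ is a finite multiple of $\pi(g^2)<\infty$), the ``pre-$\mu_m$'' piece has finite second moment — more carefully, the $m$ pre-regeneration steps are dominated by the occupation measure of a full cycle and thus contribute a finite second moment. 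Therefore $\E[W^2]=\infty$ if and only if $\E_{\mu_m}[(W')^2]=\infty$, which is the hypothesis of the proposition. Applying the classical CLT characterisation then yields that the asymptotic variance is infinite, hence no CLT of the form~\eqref{eq:CLT} can hold.

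The main obstacle, and the step requiring the most care, is the precise matching of the measure $\mu_m$ in~\eqref{eq:mu_m} with the law of the state of the original chain $X$ at the beginning of a regeneration cycle, and the bookkeeping needed to pass between the $m$-skeleton (on which the splitting is performed) and the original chain (in terms of which $\tau_D(0)$ and the sum $\sum_{k=0}^{\tau_D(0)} g(X_k)$ are expressed). One must check that (i) the $\le m$ ``leftover'' steps from the skeletonisation genuinely contribute only a finite second moment — which follows because their occupation measure is absolutely continuous with respect to that of a whole cycle, whose $g^2$-integral is a finite multiple of $\pi(g^2)$ — and (ii) the inequality $\E_{\mu_m}[(W')^2]=\infty \Rightarrow \E[W^2]=\infty$ goes in the direction needed (failure of a finite bound), which is the easy direction since $W \ge$ the post-regeneration part pointwise after conditioning. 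A minor additional point is positivity: one should note $\pi(D)>0$ and $\eps<1$ guarantee $\mu_m$ is a well-defined probability measure and regenerations occur infinitely often almost surely, so the cycle structure is non-degenerate. Once these matching and domination arguments are in place, the conclusion is immediate from~\cite{Chen99}.
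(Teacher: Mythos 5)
Your route is essentially the paper's: split the $m$-skeleton via Nummelin, invoke the excursion criterion of~\cite{Chen99}, identify $\mu_m$ with the law of the chain one skeleton step into a cycle, and use $g\ge 0$ to dominate $\sum_{k=0}^{\tau_D(0)}g(X_k)$ by the full cycle sum. Two steps in your plan need repair before this goes through.

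First, the criterion you quote is not the classical one: the excursion characterisation in~\cite{Chen99} (and in Meyn--Tweedie) concerns the \emph{centred} cycle sums $\sum_k\bigl(g(X_k)-\pi(g)\bigr)$, whereas your hypothesis and your stated ``if and only if'' involve the uncentred sum. You therefore need a bridging argument showing that infiniteness of the uncentred second moment forces infiniteness of the centred one. The paper supplies this by noting that positive recurrence and the occupation identity in~\cite{Chen99} give $\tilde\E_{\nu_m^\star}\bigl[\sum_{k=0}^{m\tau^\star+m-1}g(X_k)\bigr]=\pi(g)\,\tilde\E_{\nu_m^\star}[m\tau^\star+m-1]<\infty$, and then expanding the square; without some such step the appeal to the ``classical result'' does not apply to your uncentred quantity.

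Second, your identification of $\mu_m$ is only conditional, and the bookkeeping matters. Starting a cycle from the split measure $\nu_m^\star$, the state at the beginning of the next $m$-block has law $\mu_m$ only on the event $\{Y_0=0\}$ that the regeneration coin fails at time zero (an event of probability $1-\eps\nu_m(D)>0$); unconditionally its law is $\nu_m P^m$. Moreover, a cycle does not end ``when $X$ next hits $D$'' but at the next regeneration, which may be strictly later; what saves you is the containment $\tau_D(m)\le m\tau^\star$ on $\{Y_0=0\}$ (regeneration can only occur at a skeleton visit to $D$), so that, by the Markov property and $g\ge0$, $\tilde\E_{\nu_m^\star}\bigl[\bigl(\sum_{k=0}^{m\tau^\star+m-1}g(X_k)\bigr)^2\bigr]\ge(1-\eps\nu_m(D))\,\E_{\mu_m}\bigl[\bigl(\sum_{k=0}^{\tau_D(0)}g(X_k)\bigr)^2\bigr]$. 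Once you argue on $\{Y_0=0\}$ in this way, your discussion of the ``pre-$\mu_m$ piece'' having finite second moment, and the two-sided ``iff'' reduction, become unnecessary: only the one-sided domination above is needed, and it is exactly what the conditioning delivers.
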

 
 The  proof of Proposition~\ref{prop:CLT} reduces the characterisation in~\cite[Thm~2.3]{Chen99} for the failure of the CLT, stated via Nummelin's split chain, to the assumption $\E_{\mu_m}\left[\left(\sum_{k=0}^{\tau_D(0)} g(X_k)\right)^2\right] = \infty$ in Proposition~\ref{prop:CLT}, which depends on the original chain $X$ only. The  measure $\mu_m$ on $\cB(\cX)$ in Proposition~\ref{prop:CLT} is the $m$-step transition of the  first coordinate of the split chain, conditional on the second coordinate being zero at time zero.

\begin{proof}[Proof of Proposition~\ref{prop:CLT}]
Recall that $D$ is an $(m,\nu_m)$-small set for the chain $X=(X_n)_{n\in\N}$
and some $m\in\N\setminus\{0\}$. Without loss of generality, we assume $\nu_m(D)>0$.
%Let $\mathcal{I}=\{0,1\}$ and $\cB(\cI)$ be the trivial $\sigma$-algebra on $\cI$. 
%Define a $\{0,1\}$-valued sequence of variables $Y=(Y_n)_{n\in\N}$. 
By possibly enlarging the probability space, Nummelin's splitting technique~\cite{MR776608} 
constructs the 
sequences 
$$
(X_{nm},Y_n)_{n\in\N}\quad \text{and}\quad (X_{(n-1)m+1},\dots,X_{nm},Y_n)_{n\in\N\setminus
\{0\}},
$$
where $Y_i$, $i\in\N$, are Bernoulli random variables taking values in
$\{0,1\}$.
More precisely, following~\cite[Ch.~II]{Chen99},
the \textit{split chain} 
$\Phi = (\Phi_n)_{n\in\N}$, $\Phi_n\coloneqq (X_{nm},Y_n)$, with a state space $\cX\times \{0,1\}$ is given by the transition kernel $\tilde{P}$ in~\cite[Ch.~I,~p.~10,~Eq.~(2.4)]{Chen99}.
The split chain is constructed in such a way that the projection of $\Phi$ onto the first coordinate is a Markov chain on $\cX$ with transition kernel $P^m$, where $P$ is the kernel of the original chain $X$.
Moreover, the set $\alpha \coloneqq D\times {1}$ is an atom of $\Phi$. Indeed, by~\cite[Ch.~I,~Eqs~(2.17) \& (2.3)]{Chen99},
we have $\tilde P((x,1),\cdot) = \nu_m^\star(\cdot)$ for all $x\in D$, where $\nu_m^\star=\nu_m\otimes \mathrm{Ber}(\eps\nu_m(D))$ and $\mathrm{Ber}(\eps\nu_m(D))$ is the law of a Bernoulli random variable with success probability $\eps \nu_m(D)$. Furthermore,  by~\cite[Ch.~I,~p.~10,~Eq.~(2.4)]{Chen99}, for any $x\in \cX$ we have
\begin{equation}
\label{eq:split_transition_projection}
\tilde P((x,0),A\times \{0,1\})= (P^m(x,A)-\eps\1{x\in D}\nu_m(A))/(1-\eps\1{x\in D}),\quad\text{$A\in\cB(\cX)$.}
\end{equation} 

Note that the regeneration time $\tau^\star\coloneqq\inf\{n\geq 0:\Phi_n\in \alpha\}$ of the split chain
possesses the first moment since $\Phi$
is positive recurrent.
Recall that $g:\cX\to\RP$ satisfies $\pi(g^2)<\infty$. Thus, by~\cite[Thm(iii)~2.3]{Chen99}, we have 
\begin{equation}
\label{eq:CLT_chen_condition}
\tilde \E_{\nu_m^{\star}}\left[\left(\sum_{k=0}^{m\tau^\star+m-1} \left(g(X_k)-\pi(g)\right)\right)^2\right]=\infty\quad\text{$\implies$} \quad\text{CLT fails for $S_n(g)$}. 
\end{equation}
Note that
\begin{align*}
&\tilde \E_{\nu_m^{\star}}\left[\left(\sum_{k=0}^{m\tau^\star+m-1} \left(g(X_k)-\pi(g)\right)\right)^2\right] =  \E_{\nu_m^{\star}}\left[\left(\sum_{k=0}^{m\tau^\star+m-1} g(X_k)- \pi(g)(m\tau^\star+m-1)\right)^2\right] \\
&= \tilde \E_{\nu_m^{\star}}\left[\left(\sum_{k=0}^{m\tau^\star+m-1} g(X_k)\right)^2 + \pi(g)^2(m\tau^\star+m-1)^2 - 2\pi(g)\sum_{k=0}^{m\tau^\star+m-1} g(X_k)\right].
\end{align*}
By~\cite[Ch.~II., Eq.~(2.17)]{Chen99} and the fact that $\E_{\nu_m^\star}[\tau^\star]<\infty$, the following equality of expectations holds:  $\E_{\nu_m^\star}\left[\sum_{k=0}^{m\tau^\star+m-1}g(X_k)\right]=\pi(g)\E_{\nu_m^\star}[m\tau^\star+m-1]<\infty$.
Hence, by~\eqref{eq:CLT_chen_condition}, the following implication holds:
\begin{equation}
\label{eq:CLT_condition}    
\tilde \E_{\nu_m^{\star}}\left[\left(\sum_{k=0}^{m\tau^\star+m-1} g(X_k)\right)^2\right]=\infty\quad\implies \quad\text{CLT fails for $S_n(g)$}.
\end{equation}

Recall that $\tau_D(m)=\inf\{j\geq m:X_j\in D\}\geq m$ and note that $$
\{\tau_D(m)>k\} \cap \{Y_0=0\} =  \{Y_0=0\}\cap \{m\tau^\star>k\} \quad\text{for $k<m$.}
$$ 
Pick arbitrary $n\in\N$ and
let $k\in[m(n+1),m(n+1)+m-1]\cap\N$. 
Then, we have
\begin{align*}
\{\tau_D(m) > k\}\cap \{Y_0=0\} 
&=\{Y_0=0\} \cap_{j=m}^{k} \{X_j\notin D\}\\
&\subseteq \{Y_0=0\} \cap_{j=1}^{n+1}\{X_{jm}\notin D\} \\
&\subseteq \{Y_0=0\} \cap_{j=0}^{n+1}\{(X_{jm},Y_k)\notin D\times \{1\}\} = \{Y_0=0\}\cap \{m\tau^\star > k\}.
\end{align*}
Since $n\in\N$ was arbitrary,  we conclude that 
$$\{\tau_D(m)>k\}\cap \{Y_0=0\}\subset \{m\tau^\star >k\}\cap \{Y_0=0\}\quad\text{ for all $k\in\N$,}$$
implying that $\tau_D(m)\leq m\tau^\star$ on the event $\{Y_0=0\}$.
Thus we obtain \begin{align}
\label{eq:lower_bound_first_step}
\tilde \E_{\nu_m^\star}\left[\left(\sum_{k=0}^{\tau_D(m)}g(X_k)\right)^2\1{Y_0=0}\right] 
&\leq\tilde \E_{\nu_m^{\star}}\left[\left(\sum_{k=0}^{m\tau^\star+m-1} g(X_k)\right)^2 \1{Y_0=0}\right],
\end{align}
where the inequality holds since $g\geq0$ and, on the event $\{Y_0=0\}$, we have $m\tau^\star\geq \tau_D(m)$. Moreover, 
by~\eqref{eq:split_transition_projection}
we have
\begin{equation}
\label{eq:X_m_law_conditional}
\mu_m = \tilde \P_{\nu_m^\star}(X_m\in\cdot|Y_0=0)=\int_{\cX}(1-\eps\1{x\in D})^{-1}(P^m(x,\cdot)-\eps\1{x\in D}\nu_m(\cdot))\nu_m(\ud x).
\end{equation}
Since $\nu_m^\star=\nu_m\otimes \mathrm{Ber}(\eps\nu_m(D))$, it holds that  
$\tilde \P_{\nu_m^\star}(Y_0=0)= 
\mathrm{Ber}(\eps\nu_m(D))(0)=
(1-\eps\nu_m(D))>0$.  
Thus, we obtain
\begin{align*}
\tilde \E_{\nu_m^\star}\left[\left(\sum_{k=0}^{\tau_D(m)}g(X_k)\right)^2\frac{\1{Y_0=0}}{(1-\eps\nu_m(D))}\right]
&\geq \tilde \E_{\nu_m^\star}\left[\left(\sum_{k=m}^{\tau_D(m)}g(X_k)\right)^2\frac{\1{Y_0=0}}{(1-\eps\nu_m(D))}\right] \quad\text{(since $g\geq0$)} \\
&= \tilde \E_{\nu_m^\star}\left[\left(\sum_{k=m}^{\tau_D(m)}g(X_k)\right)^2\Bigg \vert \{Y_0=0\}\right]  \\
& =\tilde \E_{\nu_m^\star}\left[\tilde \E_{\nu_m^\star}\left[\left(\sum_{k=m}^{\tau_D(m)}g(X_k)\right)^2 \Bigg \vert X_m,\{Y_0=0\}\right] \Bigg \vert \{Y_0=0\}\right] \\
& =\tilde \E_{\nu_m^\star}\left[ \E_{X_m}\left[\left(\sum_{k=m}^{\tau_D(m)}g(X_k)\right)^2 \right] \Bigg \vert \{Y_0=0\}\right]  
\\
&= \E_{\mu_m} \left[\left(\sum_{k=0}^{\tau_D(0)}g(X_k)\right)^2\right]\qquad\text{ (by~\eqref{eq:X_m_law_conditional}).}
\end{align*}
The last equality, together with the assumption in Proposition~\ref{prop:CLT}, the inequality in~\eqref{eq:lower_bound_first_step}  and the implication in~\eqref{eq:CLT_condition}, conclude the proof.
\end{proof}

Another key technical ingredient in our proofs is the following fact, which states that the chain $X$, when started in a sufficiently large sublevel set $\{V\leq \ell\}$, reaches its complement in one step.

\begin{prop}
\label{prop:escape_sublevel}
Let Assumption~\nameref{sub_drift_conditions} hold. 
Let $\ell>0$ satisfy $\pi(\{V\leq \ell\})>0$. Then
$$\int_{B}P(x,\{V>\ell\})\pi(\ud x)>0.$$
\end{prop}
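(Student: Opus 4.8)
The plan is to argue by contradiction, using the invariance of $\pi$ together with the non-confinement built into Assumption~\nameref{sub_drift_conditions}, namely that $\limsup_{n\to\infty} V(X_n) = \infty$ holds $\P_x$-a.s.\ for every $x\in\cX$. Set $A \coloneqq \{V \le \ell\}$, so $\pi(A) > 0$ by hypothesis, and suppose for contradiction that $\int_{A} P(x,\{V>\ell\})\,\pi(\ud x) = 0$. Since $P(x,\cdot)$ is a probability measure, this gives $P(x, A) = 1$ for $\pi$-a.e.\ $x \in A$; write $A_0 \coloneqq \{x \in A : P(x, A) = 1\}$, so that $\pi(A\setminus A_0) = 0$. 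The aim is to deduce that, started from $\pi$, the chain $X$ remains in $A$ at all times with positive probability, which is incompatible with non-confinement.

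First I would show by induction on $n$ that $\P_\pi(X_0 \in A, \dots, X_n \in A) = \pi(A)$ for every $n\in\N$; the case $n=0$ is immediate. For the inductive step, the Markov property gives $\P_\pi(X_0 \in A, \dots, X_{n+1} \in A) = \E_\pi[\mathbbm{1}\{X_0\in A, \dots, X_n \in A\}\, P(X_n, A)]$, and since $X_n$ has marginal law $\pi$ by stationarity we have $\P_\pi(X_n \in A \setminus A_0) \le \pi(A\setminus A_0) = 0$; hence $P(X_n, A) = 1$ $\P_\pi$-a.s.\ on the event $\{X_0\in A,\dots,X_n\in A\}$, and the expectation collapses to $\P_\pi(X_0\in A,\dots,X_n\in A) = \pi(A)$ by the inductive hypothesis. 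Letting $n\to\infty$ and using continuity of probability from above yields $\P_\pi(X_n \in A \text{ for all } n\in\N) = \pi(A) > 0$. On the other hand, integrating non-confinement against $\pi$ gives $\P_\pi(\limsup_{n\to\infty} V(X_n) = \infty) = \int_\cX \P_x(\limsup_{n\to\infty} V(X_n)=\infty)\,\pi(\ud x) = 1$, while the event $\{X_n \in A \text{ for all } n\} \subseteq \{\sup_{n\in\N} V(X_n) \le \ell\}$ is disjoint from $\{\limsup_{n\to\infty} V(X_n) = \infty\}$. Therefore $\pi(A) \le 1 - 1 = 0$, contradicting $\pi(A)>0$, and the proposition follows.

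The argument is short, and the only delicate point is the inductive step: one must rule out ``leakage'' of the stationary trajectory into the $\pi$-null set $A\setminus A_0$ of bad states (those from which $P(\cdot, A) < 1$), and this is exactly what stationarity provides, since the time-$n$ marginal of $X$ under $\P_\pi$ is again $\pi$ and so charges no $\pi$-null set. An essentially equivalent route: the contradiction hypothesis makes $A$ absorbing modulo $\pi$-null sets, so by $\psi$-irreducibility of $X$ its complement is $\pi$-null, i.e.\ $\pi(A)=1$; then $\P_\pi(X_n\in A)=1$ for every $n$, hence $\sup_{n\in\N} V(X_n)\le\ell$ $\P_\pi$-a.s., again contradicting non-confinement.
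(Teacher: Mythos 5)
Your proof is correct, and it takes a genuinely different route from the one in the paper. You argue by contradiction purely measure-theoretically: if $\int_{\{V\le\ell\}}P(x,\{V>\ell\})\,\pi(\ud x)=0$, then $P(x,\{V\le\ell\})=1$ for $\pi$-a.e.\ $x$ in the sublevel set, and your induction (using only invariance of $\pi$ and the Markov property, with stationarity preventing the trajectory from hitting the $\pi$-null set of "bad" states at any finite time) shows $\P_\pi\bigl(X_n\in\{V\le\ell\}\ \forall n\bigr)=\pi(\{V\le\ell\})>0$, which contradicts the non-confinement property $\P_x(\limsup_n V(X_n)=\infty)=1$ integrated against $\pi$. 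The paper instead works at the level of marginal measures: it defines $n_0$ as the first $n$ for which $\pi_B P^n$ charges $B^{\rc}$ (finite by non-confinement, essentially your argument applied along the subsequence of $n$-step kernels), and then rules out $n_0\ge 2$ by a topological support comparison, using $\pi_B P^{n_0-1}\le \pi/\pi(B)$, the restriction $(\cdot)_B$, and the closedness of $\{V\le\ell\}$ coming from continuity of $V$. Your version buys robustness and transparency: it needs no topology, no supports, no continuity of $V$ or Feller property, only that $\pi$ is invariant and the chain is non-confined, and in fact it proves the slightly stronger intermediate statement that under the contradiction hypothesis the chain started in the sublevel set never leaves it. The paper's version is more compact on the page but leans on support manipulations (in particular the nonemptiness of $\mathrm{supp}(\pi_B P^{n_0-1})\setminus\mathrm{supp}(\pi_B)$) that require a little extra justification. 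Your closing remark via $\psi$-irreducibility is also fine, but note that your main argument does not need irreducibility at all, which is a small additional advantage.
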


\begin{proof}%[Proof of Proposition~\ref{prop:escape_sublevel}] 
Denote $B\coloneqq \{V\leq \ell\}$  and recall $\pi(B)>0$. Since 
%By the non-confinement assumption
$\P_x(\limsup_{n\to\infty}V(X_n)=\infty)=1$ for all $x\in\cX$, there exists  $n_0\in\N\setminus\{0\}$ satisfying 
\begin{equation}
\label{eq:minimal_exit_pi}
n_0= \inf\{n\in\N:\int_B P^n(x,B^c)\pi_B(\ud x)>0\}, \quad \text{where $\pi_B(\cdot) \coloneqq \pi(\cdot \cap B)/\pi(B)$.}
\end{equation}
Then $\supp(\pi_B P^{n_0-1})\subset B$. Assume
 $n_0\geq 2$. Then $\emptyset \neq \supp (\pi_B P^{n_0-1})\setminus \supp(\pi_B)\subset B$.
However, since $\pi P^{n_0-1}=\pi$, we must have $\supp(\pi_B P^{n_0-1})\subset \supp(\pi P^{n_0-1})$ and $(\pi P^{n_0-1})_B=\pi_B$. Thus, 
$$\supp(\pi_B P^{n_0-1}) = \supp((\pi_B P^{n_0-1})_B) \subseteq \supp((\pi P^{n_0-1})_B)= \supp(\pi_B),$$
contradicting $\supp(\pi_B P^{n_0-1}) \setminus \supp(\pi_B) \neq \emptyset$.
It follows that $n_0=1$ and the proposition follows. 
\end{proof}

\begin{proof}[Proof of Theorem~\ref{thm:CLT}]
\underline{Part~\ref{thm:CLT(a)}} Let $D$ be an $(m,\nu_m)$-small set for the chain $X$.  We may in addition assume that $\nu_m(D)=1$ (see for example~\cite[Thm~5.2.1]{meynadntweedie}). Then, for any $k\in\N\setminus\{0\}$, the inequality in~\eqref{eq:small} implies
\begin{equation}
\label{eq:eps_k}
P^{km}(x,\cdot)\geq \eps^k \nu_m(\cdot) \quad \text{for all $x\in D$.}
\end{equation}
Thus, $D$ is also a $(km,\nu_m)$-small set for $X$ (cf.~\cite[Lem.~9.1.7]{MarkovChains}).
Moreover, by  the definition of a petite set in~\eqref{eq:petite},  the $(m,\nu_m)$-small set $D$ is also petite (with $a$ charging the singleton $\{m\}$ only). 
Thus, by Lemma~\ref{lem:bounded_petite_sets} above, there exists $\ell\in(\ell_0',\infty)$, such that $D\subset \{V\leq \ell\}$
(the level $\ell_0'\in(0,\infty)$ is as in Proposition~\ref{prop:lyapunov_return_times}, ensuring that  $h$ is positive and non-decreasing on $[\ell_0',\infty)$).

By the non-confinement assumption $\limsup_{n\to\infty}V(X_n)=\infty$  in~\nameref{sub_drift_conditions}, the set $\{V>\ell+1\}$ is accessible: $\P_x(\tau_{\{V>\ell+1\}}(0)<\infty)=1$ for all $x\in\cX$. 
Since $X$ is aperiodic, by~\cite[Thm~9.3.10]{MarkovChains}, there exists $n_0\in\N$  such that $P^{n}(x,\{V>\ell+1\})>0$ for all $n\geq n_0$. Moreover, since $\{V>\ell+1\}$ is open in $\cX$, the Feller continuity of $P$ implies that the function
$x\mapsto P^{n}(x,\{V>\ell+1\})$ is lower semicontinuous for any $n\in\N$~\cite[Prop.~12.1.8]{MarkovChains}. 
Thus, for every $x\in D$ there exists $n_x\in\N$ such that for any $n\geq n_x$ there exists a neighbourhood 
$U_{x,n}\subset \cX$ of $x$ satisfying
\begin{equation}
    \label{eq:lower_semi_inf}
c\coloneqq \inf\{P^{n}(x',\{V>\ell+1\}):x'\in U_{x,n}\}>0. 
\end{equation}

Since $\nu_m$ is a probability measure satisfying $\nu_m(D)=1$, we have 
$\supp(\nu_m)\cap D\neq \emptyset$. Pick $x\in\supp(\nu_m)\cap D$ and $k\in\N$ satisfying
$\nu_m(U_{x,km})>0$ and~\eqref{eq:lower_semi_inf}.
Since $D\subset \{V\leq \ell\}$ and $\nu_m(D)=1$, we have $\nu_m(\{V>\ell+1\})=0$. 
Thus, the definition in~\eqref{eq:mu_m} of the measure $\mu_{km}$ (with the small set condition in~\eqref{eq:eps_k}) and the inequality in~\eqref{eq:lower_semi_inf}
imply
\begin{align}
\nonumber \mu_{km}(\{V>\ell+1\}) &= \int_{\cX}\left(P^m(x,\{V>\ell+1\})-\eps^k\1{x\in D}\nu_m(\{V>\ell+1\}\right)\frac{\nu_m(\ud x)}{1-\eps^k\1{x\in D}}\\
\nonumber &= \int_{\cX} P^m(x,\{V>\ell+1\})\frac{\nu_m(\ud x)}{1-\eps^k\1{x\in D}}\\
\label{eq:lower_CLT_exit}&\geq \nu_m(U_{x,km})\inf\{P^{n}(x',\{V>\ell+1\}):x'\in U_{x,n}\}= c\nu_m(U_{x,km})>0.
\end{align}

Recall that  $h:\RP\to\RP$ is non-decreasing on $[\ell_0',\infty)$ and $\ell>\ell_0'$.
Proposition~\ref{prop:lyapunov_return_times} yields
\begin{align}
\nonumber \P_{\mu_{km}}\left(\left(\sum_{j=0}^{\tau_{D}(0)} h\circ V(X_j)\right)^2\geq t\right)&\geq \P_{\mu_{km}}\left(\sum_{j=0}^{\tau_{D}(0)} h\circ V(X_j)\geq \sqrt{t}, V(X_{0})> \ell+1\right)\\
\nonumber& =
\E_{\mu_{km}}\left[\1{V(X_{0})> \ell+1}\cdot\P_{X_{0}}\left(\sum_{j=0}^{\tau_{D}(0)} h\circ V(X_j)\geq \sqrt{t} \right)\right]\\
\nonumber&\geq  \E_{\mu_{km}}\left[\1{V(X_{0})> \ell+1}\cdot\P_{X_{0}}\left(\sum_{j=0}^{S_{(\ell)}} h\circ V(X_j)\geq \sqrt{t} \right)\right]\\
\label{eq:lower_CLT}&\geq c\nu_m(U_{x,km})q C_{\ell,m}/\Psi(G_{h}^{-1}(\sqrt{t})/(1-q)^2), 
\end{align}
for all $t$ sufficiently large,
where $G_{h}^{-1}$ is the inverse of the increasing function in~\eqref{eq:G_h}. The second inequality in the above display follows from $D\subset \{V\leq \ell\}$ (implying $S_{(\ell)}=\inf\{n\in\N: V(X_n)<\ell\}\leq \inf\{n\in\N: X_n\in D\} =\tau_D(0)$) and the third inequality is a consequence of~\eqref{eq:Lyapunov_squared_bound} above, which is implied by Proposition~\ref{prop:lyapunov_return_times}, and the inequality in~\eqref{eq:lower_CLT_exit}. For any $g\geq h\circ V$, inequality~\eqref{eq:lower_CLT} 
yields
$$
\P_{\mu_{km}}\left(\left(\sum_{j=0}^{\tau_{D}(0)} g(X_j)\right)^2\geq t\right)\geq
c\nu_m(U_{x,km})q C_{\ell,m}/\Psi(G_{h}^{-1}(\sqrt{t})/(1-q)^2)
$$
for all $t$ sufficiently large, implying
$\E_{\mu_{km}}\left[\left(\sum_{j=0}^{\tau_D(0)} g(X_j)\right)^2\right] = \infty$.
An application of Proposition~\ref{prop:CLT} (with the function $g$) concludes the proof of Part~\ref{thm:CLT(a)}.

\noindent \underline{Part~\ref{thm:CLT(b)}} Let $g:\cX\to\R$ be a bounded measurable function. By~\cite[Ch.II,~Thm~4.1(i)]{Chen99}, the CLT for $S_n(g)$ fails if there exists $A\in\cB(\cX)$, such that $\pi(A)>0$ and 
$\E_\pi[\tau_A(1)]=\infty$,
where $\tau_A(1)$ is the first return time of $X$ to the set $A$ defined in~\eqref{eq:tau_k}.
Thus, by~\cite[Prop.~5.16(1)]{MR776608}, the the CLT for the ergodic average $S_n(g)$ fails if the chain $X$ is not recurrent of degree 2 (see definition~\cite[Def.~5.5]{MR776608}).
Hence, by~\cite[Def.~5.5]{MR776608},
the CLT for  $S_n(g)$ fails  if for some $B\in\cB(\cX)$ with $\pi(B)>0$ we have:
\begin{equation}
\label{eq:CLT_bounded_con}
\int_{B} \E_x[\tau_B(1)^2]\pi(\ud x) = \infty.
\end{equation}

We now prove that~\eqref{eq:CLT_bounded_con} holds
under assumption~\eqref{eq:CLT_integral_test} for a suitable set $B$.
Theorem~\ref{thm:modulated_moments} implies that for any $\ell\geq \ell_0$ (where $\ell_0$ is given in~\nameref{sub_drift_conditions}), $\ell_0'=\ell_0$, any $m\in(0,1)$ and any measurable  $B\subseteq \{V\leq \ell\}$ there exist  a constant $C\in(0,\infty)$ such that 
for all $x\in B$ we have
\begin{equation}
\label{eq:squared_return_time}
\E_x\left[\tau_B(1)^2\right]
=
\int_0^\infty \P_x\left(\tau_B(1)\geq \sqrt{r}\right)\ud r
\geq \int_{r_0}^\infty \frac{CP(x,\{V\geq \ell+m\})}{\Psi(G_{1}^{-1}(\sqrt{r})/(1-q)^2)}\ud r,
\end{equation}
where $r_0\coloneq G_1(\ell+m))\in(0,\infty)$ and $G_{1}^{-1}$ is the inverse of the increasing function $G_1$ in~\eqref{eq:G_h}. 
Fix $B\coloneqq \{V\leq \ell\}$ for some 
 $\ell\in[\ell_0',\infty)$  satisfying $\pi(B)>0$. 
%By the non-confinement in~\nameref{sub_drift_conditions}, the set $B^c = \{V>\ell\}$ is accessible. Since $\pi$ is a maximal irreducibility measure for $X$~\cite[Def.~9.2.2 and Cor.~11.2.6]{MarkovChains}, we get $\pi(\{V>\ell\})>0$, implying
%$0<\pi(B)<1$.
By Proposition~\ref{prop:escape_sublevel}, there exists $m>0$ such that $\int_{B} P(x,\{V\geq \ell+m\})\pi(\ud x)>0$. Thus the inequality in~\eqref{eq:squared_return_time}, the assumption $r\mapsto 1/\Psi(G_1^{-1}(\sqrt{r})/(1-q)^2)\notin L^{1}_{ \mathrm{loc}}(\infty)$ in~\eqref{eq:CLT_integral_test} and Fubini's theorem imply~\eqref{eq:CLT_bounded_con}.
\end{proof}

Note that in the proof of Theorem~\ref{thm:CLT}\ref{thm:CLT(b)}
it was easer to verify that the second moment  of the return time is infinite (cf.~\eqref{eq:squared_return_time}), when starting from a bounded set $B$, than verifying directly that $\E_\pi[\tau_A(1)]=\infty$ when the process starts from (the tails of) $\pi$. The latter approach is also feasible but would require lower bounds on the tail of $\pi$, which will only by established below. 

The proof of Theorem~\ref{thm:modulated_moments} rests on an application of Proposition~\ref{prop:lyapunov_return_times}.

\begin{proof}[Proof of Theorem~\ref{thm:modulated_moments}]
Let $\ell_0\in[1,\infty)$ be as in Assumption~\nameref{sub_drift_conditions}
and pick $\ell_0'\in(\ell_0,\infty)$ and a function $h:\RP\to\RP$, continuous and non-decreasing on $[\ell_0',\infty)$. Then
Proposition~\ref{prop:lyapunov_return_times} implies that 
the inequality in~\eqref{eq:Lyapunov_lower_bound} holds for any $\ell \geq \ell_0'$, $m\in(0,\infty)$, $q\in(0,1)$, all $r\in(G_h(\ell+m),\infty)$, where $G_h$ is given in~\eqref{eq:G_h}, and some constant $C_{\ell,m}>0$.

Fix a set $B\in\cB(\cX)$, contained in $\{V\leq \ell\}$, and recall  the return time $\tau_B(k)= \inf\{n\geq k:  X_n\in B\}$, for any $k\in\N$, of the chain $X$ to the set $B$.
The Markov property
yields the following lower bound for any starting point $x\in\cX$ and $r\geq0$:
\begin{align}
\nonumber \P_x\left(\sum_{k=0}^{\tau_B(1)}h\circ V(X_k) \geq r\right) &\geq \P_x\left(\sum_{k=0}^{\tau_B(1)}h\circ V(X_k) \geq r,V(X_1)>\ell+m\right) \\
\nonumber &\geq \E_x\left[\1{V(X_1) > \ell+m}\cdot \P_{X_1}\left(\sum_{k=0}^{\tau_B(0)}h\circ V(X_k) \geq r\right)\right]\\
\label{eq:modulated_moments_bound}&\geq \E_x\left[\1{V(X_1) > \ell+m}\cdot \P_{X_1}\left(\sum_{k=0}^{S_{(\ell)}}h\circ V(X_k) \geq r\right)\right].
\end{align}
The inequality in~\eqref{eq:modulated_moments_bound} follows from the fact that, since $B\subset \{V\leq \ell\}$, 
we have $S_{(\ell)}\leq\tau_B(0)$ $\P_x$-a.s. for all 
$x\in\cX$. 
Note that for any $x\in \{V>\ell+m\}$ the inequality in~\eqref{eq:Lyapunov_lower_bound} implies 
\begin{equation}
\label{eq:lower_excursion_modulated_moments}
\P_{x}\left(\sum_{k=0}^{S_{(\ell)}}h\circ V(X_k) \geq r\right) \geq qC_{\ell,m}/\Psi(G_h^{-1}(r)/(1-q)^2), \quad \text{for $r\in(G_h(\ell+m),\infty)$},
\end{equation}
since $C_{\ell,m}\in(0,1)$ and the function $\Psi$ maps into $[1,\infty)$. The inequalities in~\eqref{eq:modulated_moments_bound} and~\eqref{eq:lower_excursion_modulated_moments} imply the lower bound in  Theorem~\ref{thm:modulated_moments}.
%Part~\ref{thm:modulated_moments_b} is a special case of %part~\ref{thm:modulated_moments_a} for the function $h \equiv 1$.
\end{proof}

\subsection{Lower bounds on the tails of the invariant measure}
\label{subsec:lower_bound_invariant}

The following result characterises the failure of integrability with respect to the stationary distribution $\pi$ of $X$.

\begin{prop}
\label{prop:criteria_for_infinite_moments}
Let Assumption~\nameref{sub_drift_conditions} hold. Then for every $q\in(0,1)$ and a non-decreasing function $h:[1,\infty)\to[1,\infty)$, the following implication holds:
\begin{equation}
\label{eq:invariant_infinite_moment_condition}
 r\mapsto 1/\Psi(G_h^{-1}(r)/(1-q)^2)\notin L^{1}_{ \mathrm{loc}}(\infty) \quad
\implies \quad
\int_{\cX} h \circ V(x)\pi( \ud x) = \infty,
%\quad \text{if}\quad 
\end{equation}
where $G_h$ is the increasing function in~\eqref{eq:G_h}. 
\end{prop}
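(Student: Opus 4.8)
The plan is to derive $\pi(h\circ V)=\infty$ from the standard excursion (Kac-type) representation of $\pi$, feeding into it the modulated-moment lower bound of Theorem~\ref{thm:modulated_moments} and the one-step escape estimate of Proposition~\ref{prop:escape_sublevel}.

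\emph{Step 1 (a bounded reference set and the excursion representation).} Fix $q\in(0,1)$ and the non-decreasing $h\colon[1,\infty)\to[1,\infty)$, and put $\ell_0'\coloneqq\ell_0$ with $\ell_0$ as in~\nameref{sub_drift_conditions}. Since $V$ is continuous with values in $[1,\infty)$ and $\pi$ is a probability measure, one can choose $\ell\in[\ell_0,\infty)$ such that $B\coloneqq\{V\le\ell\}$ has $\pi(B)>0$. As $X$ is Harris recurrent, $B$ is accessible, and the excursion representation of the invariant measure (Kac's moment formula; see, e.g.,~\cite[Ch.~11]{MarkovChains}, or the split-chain argument used in the proof of Proposition~\ref{prop:CLT}) gives
\[
\pi(h\circ V)=\int_B\E_x\Big[\sum_{k=0}^{\tau_B(1)-1}h\circ V(X_k)\Big]\pi(\ud x).
\]
Because $X_{\tau_B(1)}\in B\subseteq\{V\le\ell\}$ and $h$ is non-decreasing, $h\circ V(X_{\tau_B(1)})\le h(\ell)<\infty$, hence $\sum_{k=0}^{\tau_B(1)-1}h\circ V(X_k)\ge\sum_{k=0}^{\tau_B(1)}h\circ V(X_k)-h(\ell)$. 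Thus it suffices to show that $\int_B\E_x\big[\sum_{k=0}^{\tau_B(1)}h\circ V(X_k)\big]\pi(\ud x)=\infty$.

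\emph{Step 2 (apply Theorem~\ref{thm:modulated_moments} and integrate).} Assuming first that $h$ is continuous, Theorem~\ref{thm:modulated_moments} applied to $B$ (with $\ell_0'=\ell_0$) yields, for every $m\in(0,\infty)$, a constant $C_{\ell,m}\in(0,1)$ with
\[
\P_x\Big(\sum_{k=0}^{\tau_B(1)}h\circ V(X_k)\ge r\Big)\ge\frac{qC_{\ell,m}\,P(x,\{V\ge\ell+m\})}{\Psi\big(G_h^{-1}(r)/(1-q)^2\big)}\qquad\text{for all $r>G_h(\ell+m)$ and all $x\in\cX$.}
\]
Since $\{V\ge\ell+m\}\uparrow\{V>\ell\}$ as $m\downarrow0$, monotone convergence together with Proposition~\ref{prop:escape_sublevel} (applicable as $\pi(B)>0$) produces some $m>0$ with $\kappa\coloneqq\int_B P(x,\{V\ge\ell+m\})\pi(\ud x)>0$. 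Tonelli's theorem and the displayed bound (valid for $r>G_h(\ell+m)$) then give
\[
\int_B\E_x\Big[\sum_{k=0}^{\tau_B(1)}h\circ V(X_k)\Big]\pi(\ud x)=\int_0^\infty\!\!\int_B\P_x\Big(\sum_{k=0}^{\tau_B(1)}h\circ V(X_k)\ge r\Big)\pi(\ud x)\,\ud r\ \ge\ qC_{\ell,m}\kappa\int_{G_h(\ell+m)}^\infty\frac{\ud r}{\Psi\big(G_h^{-1}(r)/(1-q)^2\big)},
\]
and the last integral equals $+\infty$ exactly because the non-negative function $r\mapsto1/\Psi(G_h^{-1}(r)/(1-q)^2)$ is not in $L^1_{\mathrm{loc}}(\infty)$, which is the hypothesis of~\eqref{eq:invariant_infinite_moment_condition}. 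With Step~1 this yields $\pi(h\circ V)=\infty$.

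\emph{Step 3 (removing continuity of $h$) and the main obstacle.} Continuity of $h$ was used only through Theorem~\ref{thm:modulated_moments}. For merely non-decreasing $h$ one replaces it by $\bar h(r)\coloneqq\int_{r-1}^{r}h(u)\,\ud u$ (extending $h$ by $h(1)$ on $[0,1)$), which is continuous, non-decreasing, $[1,\infty)$-valued, and satisfies $h(r-1)\le\bar h(r)\le h(r)$. Since $r\mapsto r\varphi(1/r)$ is decreasing, $G_{\bar h}(r)\ge G_h(r-1)$, hence $G_{\bar h}^{-1}(r)\le G_h^{-1}(r)+1$, and submultiplicativity of $\Psi$ yields a constant $C''\in(0,\infty)$ with $\Psi(G_{\bar h}^{-1}(r)/(1-q)^2)\le C''\,\Psi(G_h^{-1}(r)/(1-q)^2)$; consequently $r\mapsto1/\Psi(G_{\bar h}^{-1}(r)/(1-q)^2)$ is again not in $L^1_{\mathrm{loc}}(\infty)$. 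Applying Steps~1--2 to the continuous $\bar h$ gives $\pi(\bar h\circ V)=\infty$, and $\bar h\le h$ forces $\pi(h\circ V)\ge\pi(\bar h\circ V)=\infty$. The substantive input, Theorem~\ref{thm:modulated_moments}, is already available, so the only delicate points are this continuity reduction — propagating the $L^1_{\mathrm{loc}}(\infty)$ hypothesis through the inversion $G_h\mapsto G_h^{-1}$ and the composition with $\Psi$ using the monotonicity and submultiplicativity assumptions — and pinning down the precise excursion representation of $\pi$ invoked in Step~1; everything else is a direct chaining of Theorem~\ref{thm:modulated_moments}, Proposition~\ref{prop:escape_sublevel} and Kac's formula.
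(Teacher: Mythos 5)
Your proof is correct and follows essentially the same route as the paper: the tail bound of Theorem~\ref{thm:modulated_moments}, combined with Proposition~\ref{prop:escape_sublevel} (plus monotone convergence to pick $m$) and Tonelli, shows that $\int_B\E_x\bigl[\sum_{k=0}^{\tau_B(1)}h\circ V(X_k)\bigr]\pi(\ud x)$ diverges, and the return-time characterisation of $\pi$-integrability (\cite[Lem.~11.2.4]{MarkovChains} in the paper, the equivalent Kac representation in your write-up) then forces $\pi(h\circ V)=\infty$. Your Step~3 — mollifying $h$ to the continuous minorant $\bar h$ and transferring the non-integrability hypothesis through $G_{\bar h}^{-1}\le G_h^{-1}+1$ and the submultiplicativity of $\Psi$ — is a correct extra precaution that the paper silently skips when it applies Theorem~\ref{thm:modulated_moments} to a merely non-decreasing $h$.
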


The proof of Proposition~\ref{prop:criteria_for_infinite_moments} combines the estimates from Theorem~\ref{thm:modulated_moments} with a characterization of the invariant measure in terms of return times (see~\cite[Lem.~11.2.4]{MarkovChains}).

\begin{proof}[Proof of Proposition~\ref{prop:criteria_for_infinite_moments}]
By
the standard assumptions in the paper, stated in the first paragraph of Section~\ref{sec:main_results} (above
in~\nameref{sub_drift_conditions}), the process $X$ is irreducible and recurrent. For every $B\in\cB(\cX)$, such that $\pi(B)>0$,  by~\cite[Lem.~11.2.4]{MarkovChains} we have: for any  measurable function $h:[1,\infty) \to[1,\infty)$,  
\begin{equation}
\label{eq:pi_return_times_criteria}
\int_\cX h\circ V(x)\pi(\ud x) <\infty  \>\implies\>   \int_{B}\E_x\left[\sum_{k=1}^{\tau_B(1)}h\circ V(X_k)\right]\pi(\ud x)<\infty,
\end{equation}
where $\tau_B(k) = \inf\{n\geq k:X_n\in B\}$ for $k\in\N$.

Theorem~\ref{thm:modulated_moments} implies that for a non-decreasing $h:[1,\infty)\to[1,\infty)$, $\ell\geq \ell_0'=\ell_0$ (where $\ell_0$ is given in~\nameref{sub_drift_conditions}), any $m\in(0,1)$ and any measurable  $B\subset \{V\leq \ell\}$ there exist  $C\in(0,\infty)$ such that 
for all $x\in B$ we obtain
\begin{equation}
\label{eq:pi_return_times_estimate}
\E_x\left[\sum_{k=0}^{\tau_B(1)} h\circ V(X_k)\right]
=
\int_0^\infty \P_x\left(\sum_{k=0}^{\tau_B(1)} h\circ V(X_k)\ud s\geq r\right)\ud r
\geq \int_{r_0}^\infty \frac{CP(x,\{V\geq \ell+m\})}{\Psi(G_{h}^{-1}(r)/(1-q)^2)}\ud r
\end{equation}
for  $r_0\coloneq G_h(\ell+m))\in(0,\infty)$, where $G_{h}^{-1}$ is the inverse of the increasing function $G_h$ in~\eqref{eq:G_h}. 

Fix $B\coloneqq \{V\leq \ell\}$ for some 
 $\ell\in[\ell_0',\infty)$  satisfying $\pi(B)>0$. 
Thus, by Proposition~\ref{prop:escape_sublevel} there exists $m>0$ such that $\int_{B} P(x,\{V\geq \ell+m\})\pi(\ud x)>0$, which, together with the inequality in~\eqref{eq:pi_return_times_estimate}, the assumption $r\mapsto 1/\Psi(G_h^{-1}(r)/(1-q)^2)\notin L^{1}_{ \mathrm{loc}}(\infty)$ and Fubini's theorem implies $$\int_B h\circ V(x)\pi(\ud x)+\int_{B}\E_x\left[\sum_{k=1}^{\tau_B(1)}h\circ V(X_k)\right]\pi(\ud x)=\int_{B}\E_x\left[\sum_{k=0}^{\tau_B(1)}h\circ V(X_k)\right]\pi(\ud x)=\infty.$$
Since $h\circ V\leq h(\ell)$ on $B$, we get $\int_{B}\E_x\left[\sum_{k=1}^{\tau_B(1)}h\circ V(X_k)\right]\pi(\ud x)=\infty$.
By the implication in~\eqref{eq:pi_return_times_criteria}, the proposition follows. All that remains is to establish the claim. 
\end{proof}

It is well known that a finite moment of a distribution can be translated into an upper bound on its tail decay via Markov's inequality. In contrast, an infinite moment does not imply a  lower bound on the tail decay of that distribution.  However, the infiniteness of expectations for a sufficiently large class of functions does translate into a lower bound on the tail decay of the distribution. 
The following proposition, which is of independent interest, presents such a result for any distribution $\pi$, not necessarily depending on a Markov chain $X$ satisfying 
\textbf{L}-drift conditions~\nameref{sub_drift_conditions}.

\begin{prop}
\label{prop:integral_test}
Let $\pi$ be a measure on the space $(\cX,\cB(\cX))$ and $W:\cX\to[1,\infty)$.
Let $A:\RP \to\RP$ be differentiable, eventually increasing and submultiplicative with $\lim_{r\to\infty} A(r) = \infty$, and let $B:\RP\to\RP$ be eventually decreasing with $\lim_{r\to0}B(r)= 0$. Moreover, assume that for some $c\in(0,\infty)$ and any eventually increasing continuous $h:\RP\to\RP$ the following implication holds:
\begin{equation}
\label{eq:invariant_infinite_moment_condition_1}
\frac{1}{A\circ (cB_h^{-1})}\notin L^1_\mathrm{loc}(\infty) \implies \int_{\cX} h\circ W(x)\pi(\ud x) = \infty,
\end{equation}
where $B_h^{-1}$ is the inverse of the function $B_h=h/B$ in the neighbourhood of infinity. Then, for any increasing function $d:[1,\infty)\to[1,\infty)$, satisfying $\lim_{r\to\infty}d(r) = \infty$, there exists a constant $c_{\pi,d}\in(0,1)$ such that
\begin{equation}
\label{eq:integral_test_lower_bound}
\pi(\{W\geq r\})\geq \frac{c_{\pi,d}}{A(cr)B(r)d(r)} \quad \text{holds for all $r\in[1,\infty)$.}
\end{equation}
\end{prop}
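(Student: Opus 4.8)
The plan is to argue by contradiction. Suppose, for a given increasing function $d:[1,\infty)\to[1,\infty)$ with $\lim_{r\to\infty}d(r)=\infty$, that the lower bound in~\eqref{eq:integral_test_lower_bound} fails for every constant $c_{\pi,d}\in(0,1)$. Then there is a sequence $r_n\to\infty$ along which $\pi(\{W\geq r_n\})A(cr_n)B(r_n)d(r_n)\to 0$; more precisely, by a diagonal/thinning argument we may extract a rapidly increasing sequence $(r_n)$ such that $\pi(\{W\ge r_n\})\le \epsilon_n/(A(cr_n)B(r_n)d(r_n))$ with $\sum_n\epsilon_n<\infty$ and $\epsilon_n\downarrow 0$ fast. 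The idea is then to \emph{build} an eventually increasing continuous function $h$ such that $\int_\cX h\circ W\,\ud\pi<\infty$ (contradicting the right-hand side of~\eqref{eq:invariant_infinite_moment_condition_1}) while simultaneously $1/(A\circ(cB_h^{-1}))\notin L^1_{\mathrm{loc}}(\infty)$ (so the left-hand side of~\eqref{eq:invariant_infinite_moment_condition_1} holds). Since the implication~\eqref{eq:invariant_infinite_moment_condition_1} is assumed, this is the desired contradiction.

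First I would construct $h$ piecewise on the intervals $[r_n,r_{n+1})$. A natural candidate is to let $h$ grow just fast enough that the layer-cake sum $\sum_n h(r_n)\,\pi(\{W\ge r_n\})$ converges: since $\pi(\{W\ge r_n\})$ is controlled by $\epsilon_n/(A(cr_n)B(r_n)d(r_n))$, taking $h$ comparable to $\sqrt{d}$ times $A(c\cdot)B$ on a suitable scale — e.g. $h(r_n)\asymp A(cr_n)B(r_n)d(r_n)^{1/2}$, interpolated monotonically and continuously in between — makes $\int h\circ W\,\ud\pi\lesssim\sum_n \epsilon_n d(r_n)^{-1/2}<\infty$. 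One must be careful to arrange $h$ to be genuinely eventually increasing and continuous; this is routine given that $A$ is eventually increasing, $B$ is eventually decreasing but the product $A(c\cdot)B(\cdot)$ can be made increasing after passing to a slowly-varying correction, or one simply defines $h$ directly as an increasing interpolation of the prescribed values $h(r_n)$, which is always possible since those values can be chosen increasing. Then I must compute $B_h=h/B$, invert it near infinity to get $B_h^{-1}$, and check that $1/(A\circ(cB_h^{-1}))$ fails to be locally integrable at infinity. By design $B_h(r_n)=h(r_n)/B(r_n)\asymp A(cr_n)d(r_n)^{1/2}$, so $B_h^{-1}$ at the value $A(cr_n)d(r_n)^{1/2}$ is $\asymp r_n$, and hence $A(c\,B_h^{-1}(A(cr_n)d(r_n)^{1/2}))\asymp A(cr_n)$, which is $\asymp$ the argument divided by $d(r_n)^{1/2}$; since $d^{1/2}\to\infty$, the integrand $1/(A\circ(cB_h^{-1}))(t)$ decays only like $d(\cdot)^{1/2}/t$ along this sequence, whose integral diverges. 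Here the submultiplicativity of $A$ is used to pass between $A(cr)$ and $A$ of comparable arguments without losing the divergence.

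The main obstacle I anticipate is the bookkeeping around inverting $B_h$ and making the non-integrability estimate rigorous: one must control $B_h^{-1}$ not just along the sparse sequence $(A(cr_n)d(r_n)^{1/2})_n$ but on the intervals between consecutive such values, which requires choosing the interpolation of $h$ (equivalently of $B_h$) carefully enough that $B_h$ is strictly increasing and its inverse is tractable, while still keeping $\int h\circ W\,\ud\pi$ finite. A secondary technical point is the reduction ``failure for all constants $c_{\pi,d}$ implies a summable-error sequence $(r_n,\epsilon_n)$'' — this needs the monotonicity of $r\mapsto \pi(\{W\ge r\})$ and of the target function $1/(A(cr)B(r)d(r))$ (at least eventually), plus the fact that $d\to\infty$ gives the room to absorb the $\epsilon_n$. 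Once the function $h$ is in hand with both properties, invoking the hypothesised implication~\eqref{eq:invariant_infinite_moment_condition_1} closes the argument. I would finish by remarking that the constant $c_{\pi,d}$ produced is necessarily strictly positive and can be taken in $(0,1)$ after trivial rescaling, and that the bound holds for all $r\in[1,\infty)$ (not just large $r$) by further shrinking $c_{\pi,d}$ to absorb the behaviour on the compact range where $A,B,d$ are bounded away from $0$ and $\infty$.
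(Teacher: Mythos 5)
Your overall strategy is the paper's: argue by contradiction and build an eventually increasing continuous $h$ with $\int_\cX h\circ W\,\ud\pi<\infty$ while $1/(A\circ(cB_h^{-1}))\notin L^1_{\mathrm{loc}}(\infty)$, contradicting~\eqref{eq:invariant_infinite_moment_condition_1}. However, the concrete construction you sketch has two genuine gaps, both located exactly at the delicate point of the proof. First, prescribing the \emph{values} $h(r_n)\asymp A(cr_n)B(r_n)d(r_n)^{1/2}$ and interpolating monotonically does not work in general: since $B$ decays (possibly very fast), these values need not be increasing at all (e.g.\ $A(r)=r$, $B(r)=e^{-r}$ gives $h(r_n)\to 0$), and your fallback of ``choosing the values increasing'' changes the very quantities on which both of your estimates rest. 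Even when the values happen to be increasing, your bound $\int h\circ W\,\ud\pi\lesssim\sum_n\epsilon_n d(r_n)^{-1/2}$ does not follow from a monotone interpolation: on $\{r_n\le W<r_{n+1}\}$ the function $h\circ W$ can be as large as $h(r_{n+1})$, while the only tail bound available there is $\pi(\{W\ge r_n\})$, so the relevant product is $\epsilon_n\,A(cr_{n+1})B(r_{n+1})d(r_{n+1})^{1/2}/\bigl(A(cr_n)B(r_n)d(r_n)\bigr)$, which is uncontrolled because $A$ and $d$ may grow arbitrarily fast along the sparse sequence $(r_n)$. One must localise all of the growth of $h$ to a short interval immediately to the right of each $r_n$, where the tail bound of matching scale is in force; this is precisely what the paper achieves by defining $h$ through its derivative, $h'(r)=A(c(r_n+1))\,d(r_n)^{1/2}B(r)$ on $[r_n,r_n+1)$ and a negligible slope on $[r_n+1,r_{n+1})$ (see~\eqref{eq:def_h_prime} and the estimate~\eqref{eq:finite_H_pi}, which also uses the spacing condition~\eqref{eq:growth_of_r_n}).

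Second, your non-integrability argument for $1/(A\circ(cB_h^{-1}))$ is only a pointwise statement along the sparse sequence $t_n=A(cr_n)d(r_n)^{1/2}$; as you yourself note, this does not give divergence of the integral without control on the intervals in between, and your plan contains no mechanism to obtain it. The paper resolves exactly this by the change of variables $r=B_h(u)$, which turns the integral into $\int (h/B)'(u)/A(cu)\,\ud u$ and reduces everything to the pointwise bound $(h/B)'\ge h'/B=A(c(r_n+1))d(r_n)^{1/2}$ on the unit intervals $[r_n,r_n+1)$; the factor $B$ deliberately built into $h'$ cancels here, and $A(cu)\le A(c(r_n+1))$ gives $\int_{r_n}^{r_n+1}(h/B)'/A(cu)\,\ud u\ge d(r_n)^{1/2}$, whence divergence (see~\eqref{eq:final_lower_bound_infinite}). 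This cancellation is the key trick missing from your construction; with a value-based interpolation the analogous lower bound on $(h/B)'$ over a fixed-length interval can fail precisely when $B$ decays quickly. A minor further point: extracting $r_n\to\infty$ from ``failure for every constant'' tacitly assumes $\pi(\{W\ge r\})>0$ for all $r$ (otherwise the witnesses may stay bounded); this degenerate case needs a separate short argument, whereas the paper sidesteps it by negating instead the statement ``$\pi(\{W\ge r\})\ge 1/(A(cr)B(r)d(r))$ for all large $r$'', which only requires arbitrarily large witnesses with constant one and no $\epsilon_n$ bookkeeping.
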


\begin{proof}
Note that the statement in~\eqref{eq:integral_test_lower_bound}
is equivalent to the following: 
\begin{equation*}
%\label{eq:integral_test_lower_bound_1}
\exists r_0\in(0,\infty)\text{ such that, }\quad
1/L_{c,d}(r)\leq\pi(\{V\geq r\})\quad
\text{ for all $r\in[r_0,\infty)$,}
\end{equation*}
where $L_{c,d}(r)\coloneqq A(cr)d(r)B(r)$. 

The proof is by contradiction.
Assume  that \textit{for every} $r\in(0,\infty)$ there exists $r'\in(r,\infty)$ satisfying
$1/L_{c,d}(r')>\pi(\{W\geq r'\})$.
We pick $r_0>1$  and recursively define an increasing sequence $(r_n)_{n\in\N}$, satisfying 
$r_{n}>d^{-1}(\exp(n-1)d(1))r_{n-1}$ and $1/L_{c,d}(r_n)>\pi(\{W\geq r_n\})$ for all $n\in\N\setminus\{0\}$. In particular, we have
\begin{equation}
\label{eq:growth_of_r_n}
    d(r_n)>\exp(n-1)\quad\&\quad 
    r_n>d^{-1}(2d(1))^{n-1}r_0
    \qquad\text{for all $n\in\N\setminus\{0\}$.}
\end{equation}
Using the sequence $(r_n)_{n\in\N}$, we construct a non-decreasing  function $h:[1,\infty)\to[1,\infty)$, 
satisfying 
\begin{equation}
    \label{eq:and}
\int_\cX h \circ W(x)\pi(\ud x)<\infty\qquad
\text{\textbf{and}} \qquad
\frac{1}{A\circ (cB_h)}\notin L^1_\mathrm{loc}(\infty).
\end{equation}
The proposition then follows from the  implication in assumption~\eqref{eq:invariant_infinite_moment_condition_1}.

Define the function 
$\mu:\RP\to\RP$ by 
$\mu(r)\coloneqq1$ for $r\in[0,r_0)$ and
$\mu(r) \coloneqq 1/L_{c,d}(r_n)$ for $r\in [r_n,r_{n+1})$, $n\in\N$.  
Since the function $r\mapsto \pi(\{W\geq r\})$ is non-increasing, we have  
$\pi(\{W\geq r\})\leq \mu(r)$ for all $r\in\RP$. 
Define  a differentiable function $h:[1,\infty)\to[1,\infty)$ by 
 $h(r) = 1$ 
for $r\in[1,r_1)$. On  $[r_n,r_{n+1})$, for $n\in\N\setminus\{0\}$, set % we define the derivative of $h$ by 
\begin{equation}
\label{eq:def_h_prime}   
h'(r) = \begin{cases}
A(c(r_n+1))d(r_n)^{1/2}B(r),& r\in [r_n,1+r_{n});\\
1/(r_n(r_{n+1}-r_n)),& r\in [1+r_n,r_{n+1}).
\end{cases}
\end{equation}
Since $A$ is differentiable, increasing and submultiplicative (i.e.
$A(c(r_n+1))\leq C_A A(cr_n)A(c)$
for all $r_n\in[1,\infty)$, some constant $C_A\in(0,\infty)$ and $c\in(0,\infty)$ in~\eqref{eq:invariant_infinite_moment_condition_1}) and $B$ is decreasing,
we have
$$
h'(r)\mu(r) \leq \begin{cases}
C_AA(c)/d(r_n)^{1/2},&
r\in[r_n,r_n+1); \\
1/(r_n(r_{n+1}-r_n)),& r\in[r_n+1,r_{n+1}).
\end{cases}
$$
The identity
$1+\int_{r_1}^{W(x)}h'(r)\ud r=h(W(x))$ for all $x\in\cX$
and Fubini's theorem
imply the equality
$\int_{\cX} h(W(x))\pi(\ud x)=1+\int_{r_1}^\infty h'(r)\pi(\{W\geq r\})\ud r$.
Recall 
$\pi(\{W\geq r\})\leq \mu(r)$ for $r\in\RP$
and note
\begin{align}
\label{eq:finite_H_pi}
\nonumber\int_{\cX} h(W(x))\pi(\ud x) &=1+\int_{r_1}^\infty h'(r)\pi(\{W\geq r\})\ud r 
 \leq 1+ \int_{r_1}^\infty h'(r)\mu(r)\ud r\\
&\nonumber=1+\sum_{n=1}^\infty \left(\int_{r_n}^{1+r_n}h'(r)\mu(r)\ud r + \int_{1+r_n}^{r_{n+1}}h'(r)\mu(r)\ud r \right)\\
&\leq1+C_AA(c)\sum_{n=1}^{\infty}d(r_n)^{-1/2} + \sum_{n=1}^{\infty} 1/r_n<\infty,
\end{align}
where the final inequality follows from~\eqref{eq:growth_of_r_n},
which makes both sums in~\eqref{eq:finite_H_pi} clearly finite (for the second sum recall that $d^{-1}(2d(1))>1$).

Recall that the function $r\mapsto B_h(r)=h(r)/B(r)$ is increasing on $[r',\infty)$ for some $r'\in(1,\infty)$ and denote by $B_h^{-1}:[r',\infty)\to\RP$ its inverse. By substituting $r = B_h(u)$, for some $n_0\in\N$ we obtain:
\begin{align}
\nonumber
\int_{r'}^\infty \frac{1}{A(cB_h^{-1}(r))}\ud r & = \int_{B_h^{-1}(r')}^{\infty} \frac{(h(u)/B(u))'}{A(cu)}\ud u \geq \sum_{n=n_0}^\infty\int_{r_n}^{r_n+1} \frac{(h(u)/B(u))'}{A(cu)}\ud u\\
&\nonumber\geq\sum_{n=n_0}^\infty \int_{r_n}^{1+r_n}\frac{B(u)A(c(r_n+1))d(r_n)^{1/2}}{A(cu)B(u)}\ud u \\
\label{eq:final_lower_bound_infinite}
&\geq \sum_{n=n_0}^\infty\int_{r_n}^{1+r_n}d(r_n)^{1/2}\ud u =  \sum_{n=n_0}^\infty d(r_n)^{1/2} = \infty.
\end{align}
The first inequality follows from the definition of $h'$ given in~\eqref{eq:def_h_prime} above and the fact that the function $1/B$ is continuous and increasing and hence almost everywhere differentiable with a non-negative derivative. The second inequality in the previous display follows from the fact that $A$ is increasing. The divergence of the sum is a consequence of the inequality in~\eqref{eq:growth_of_r_n}.

Both conditions in~\eqref{eq:and} hold: the first is given by~\eqref{eq:finite_H_pi} and the second follows from the inequality in~\eqref{eq:final_lower_bound_infinite}. 
This contradiction with assumption~\eqref{eq:invariant_infinite_moment_condition_1} implies the proposition.
\end{proof}

Combining Propositions~\ref{prop:criteria_for_infinite_moments} and~\ref{prop:integral_test}  yields the lower bounds on the tails of the invariant measure $\pi$. This result will be crucial for the study of the tails of the unadjusted (biased) algorithms.

\begin{proof}[Proof of Theorem~\ref{thm:invariant}]
Apply Propositions~\ref{prop:criteria_for_infinite_moments} and~\ref{prop:integral_test} with $W=V$, $\Psi(r)=A(r)$, $c=1/(1-q)^2$ and $B(r) = 1/(r\phi(1/r))$.
\end{proof}

\subsection{Lower bounds on the rates of convergence in \texorpdfstring{$f$}{f}-variation  and the Wasserstein distance}

The following lemma is a simple generalisation of the well known result~\cite[Thm~3.6]{MR2540073}  (see~\cite[Lem.~5.1]{brešar2024subexponential} for a continuous-time version of Lemma~\ref{lem:lower_bound_f_convergence_rate}). Its proof is in Section~\ref{subsec:proofs_return_convergence} below.

\begin{lem}
\label{lem:lower_bound_f_convergence_rate}
Let $X$ be  a Markov chain with an invariant measure $\pi$ on the state space $\cX$. Let functions $H,f,G:\cX \to [1,\infty)$ be such that $G = H/f$, $\pi(f)<\infty$ and~\ref{lem:lower_bound_f_convergence_rate_a} \& \ref{lem:lower_bound_f_convergence_rate_b} hold.
\begin{myenumi}[label=(\alph*)]
    \item \label{lem:lower_bound_f_convergence_rate_a}
There exists a function $a:[1,\infty)\to(0,1]$ such that the function $A(r)\coloneqq ra(r)$ is increasing, $\lim_{r\uparrow\infty}A(r)=\infty$ and $\int_{\{G\geq r\}}f(x)\pi(\ud x)\geq a(r)$ for all $r\in[1,\infty)$.
\item \label{lem:lower_bound_f_convergence_rate_b} There exists a function $v:\cX\times\N\to[1,\infty)$, increasing in the second argument and satisfying  $P^nH(x) \leq v(x,n)$ for all $x\in\cX$ and $n\in\N$.
\end{myenumi}
Then the following bound holds for every $n\in\N\setminus\{0\}$ and $x\in\cX$:
$$
\left(a\circ A^{-1}\circ (2v)\right)(x,n)/2\leq \|\pi(\cdot)-P^n(x,\cdot)\|_{f}.
$$
\end{lem}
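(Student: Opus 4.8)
The plan is to exploit the duality description of the $f$-variation: for any probability measures $\mu,\nu$ and any $g$ with $|g|\le f$ we have $\|\mu-\nu\|_f\ge |\mu(g)-\nu(g)|$. I would take $g=G\wedge r=\min\{G,r\}$ for a well-chosen threshold $r=r(x,n)$, noting that $0\le G\wedge r\le r$ is not quite bounded by $f$, so instead I would use the test function $g = \min\{H,\,rf\}/(\text{something})$; more cleanly, since $G=H/f$, the function $g\coloneqq f\cdot(G\wedge r)=H\wedge(rf)$ satisfies $|g|\le rf$, which is the wrong normalisation. The right move, following~\cite[Thm~3.6]{MR2540073}, is to compare the tail $\pi(\{G\ge r\})$ weighted by $f$ against the $f$-mass that $P^n(x,\cdot)$ can place on $\{G\ge r\}$, controlled via $P^nH(x)$ and Markov's inequality. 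Concretely: on $\{G\ge r\}$ one has $H=fG\ge rf$, so $\int_{\{G\ge r\}}f\,\ud P^n(x,\cdot)\le \tfrac1r\int_{\{G\ge r\}}H\,\ud P^n(x,\cdot)\le \tfrac1r P^nH(x)\le v(x,n)/r$.

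First I would fix $x\in\cX$ and $n\ge 1$ and set $r\coloneqq A^{-1}(2v(x,n))$, which is well-defined for $n\ge1$ because $v\ge1$, $A$ is increasing with $\lim A=\infty$, and one checks $2v(x,n)\ge 2\ge A(1)$ (using $a\le 1$ so $A(1)=a(1)\le 1$ — here I would insert the elementary observation that $A^{-1}$ is applied to a value $\ge A(1)$, possibly shrinking to $r\ge1$). Then by hypothesis~\ref{lem:lower_bound_f_convergence_rate_a}, $\int_{\{G\ge r\}}f\,\ud\pi\ge a(r)$. Next, the test function: take $g\coloneqq f\cdot\mathbbm{1}\{G\ge r\}$, which satisfies $|g|\le f$, so
\begin{equation*}
\|\pi-P^n(x,\cdot)\|_f\ \ge\ \pi(g)-P^n(x,\cdot)(g)\ =\ \int_{\{G\ge r\}}f\,\ud\pi\ -\ \int_{\{G\ge r\}}f\,\ud P^n(x,\cdot)\ \ge\ a(r)-\frac{v(x,n)}{r}.
\end{equation*}
Since $r=A^{-1}(2v(x,n))$ means $ra(r)=A(r)=2v(x,n)$, we get $v(x,n)/r=a(r)/2$, hence the bound is $a(r)-a(r)/2=a(r)/2=(a\circ A^{-1}\circ(2v))(x,n)/2$, which is exactly the claim.

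The routine points to nail down are: (i) that $A^{-1}$ is legitimately applied, i.e. $2v(x,n)$ lies in the range of $A$ — this needs $2v(x,n)\ge\lim_{r\downarrow1}A(r)$, which holds since $A(1)=a(1)\le1\le 2\le 2v(x,n)$, and $A$ increasing with limit $\infty$ covers $[A(1),\infty)$; (ii) measurability of $\{G\ge r\}$ and finiteness of $\pi(f)$ so that $\pi(g)$ is well-defined and the subtraction is legitimate (both given); (iii) that $\int_{\{G\ge r\}}H\,\ud P^n(x,\cdot)\le P^nH(x)$ since $H\ge1>0$, giving the Markov estimate above. I expect essentially no serious obstacle here — the only mild care is ensuring $r\ge1$ and that the argument of $A^{-1}$ is in its domain, which is handled by the normalisations $a\le1$, $f\ge1$, $v\ge1$ built into the hypotheses; the continuous-time analogue is~\cite[Lem.~5.1]{brešar2024subexponential}, and the discrete-time proof is a line-by-line transcription.
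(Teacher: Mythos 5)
Your argument is correct and coincides with the paper's proof: the lower bound via the test function $f\cdot\mathbbm{1}\{G\geq r\}$, the Markov-type estimate $\int_{\{G\geq r\}}f\,\ud P^n(x,\cdot)\leq P^nH(x)/r\leq v(x,n)/r$, and the choice $r=A^{-1}(2v(x,n))$ balancing the two terms are exactly the steps used there. Your additional checks that $2v(x,n)$ lies in the range of $A$ (via $a\leq 1$, $v\geq 1$) are a harmless elaboration of a point the paper leaves implicit.
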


The next lemma provides lower bounds on the $L^p$-Wasserstein distance between $\pi$ and $P^n(x,\cdot)$. Its proof is based on the combination of the idea in~\cite[Thm~3.6]{MR2540073} and the Kantorovich-Rubinstein duality~\cite[Prop.~7.29]{Villani09}, see also~\cite[Thm~1.2]{MR4429313} for a continuous time version of Lemma~\ref{lem:lower_bound_wasserstein}. Its proof can be found in Section~\ref{subsec:proofs_return_convergence} below.

\begin{lem}
\label{lem:lower_bound_wasserstein}
  Let $X$ be  a Markov chain with an invariant measure $\pi$ on a metric space $(\cX,d)$ and assume that for some $p\in[1,\infty)$, we have $\pi,P^n(x,\cdot)\in\cP_p(\cX)$ for all $x\in\cX$ and $n\in\N$. Let $g:[1,\infty)\to[1,\infty)$ be continuous and increasing, $g(1)=1$ and $\lim_{r\to\infty}g(r)=\infty$. Let the function $W:\cX \to [1,\infty)$ satisfy $|W(x)-W(y)|\leq L_W d(x,y)$ for all $x,y\in\cX$ and some constant $L_W\in(0,\infty)$. Define  $H\coloneqq 2^{-p}W^p\cdot g\circ (2W)$ and assume that~\ref{lem:lower_bound_f_convergence_rate_a} \& \ref{lem:lower_bound_f_convergence_rate_b} hold.
\begin{myenumi}[label=(\alph*)]
    \item \label{lem:lower_bound_Wasserstein_convergence_rate_a}
There exists a function $a:[1,\infty)\to(0,1]$, such that the function $A(r)\coloneqq ra(r)$ is increasing, $\lim_{r\uparrow\infty}A(r)=\infty$ and $\int_{\{g\circ W\geq r\}} (W(x)/2)^p\ud x\geq a(r)$ for all $r\in[1,\infty)$.
\item \label{lem:lower_bound_Wasserstein_convergence_rate_b} There exists a function $v:\cX\times\N\to[1,\infty)$, increasing in the second argument and satisfying  $P^n(H)(x) \leq v(x,n)$ for all $x\in\cX$ and $n\in\N$.
\end{myenumi}
Then the following bound holds for every $n\in\N\setminus\{0\}$ and $x\in\cX$:
$$
(a\circ A^{-1}\circ (2^{p} v))(x,n)^{1/p}/(2L_W)\leq \cW_p(\pi,P^n(x,\cdot)).
$$
\end{lem}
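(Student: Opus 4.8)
The plan is to mirror the proof of Lemma~\ref{lem:lower_bound_f_convergence_rate}, replacing the dual formulation of $f$-variation (testing against functions bounded by $f$) with the Kantorovich--Rubinstein duality for $\cW_p$. The core idea in both cases is a tail-comparison argument: since $\pi$ has heavy tails (encoded by assumption~\ref{lem:lower_bound_Wasserstein_convergence_rate_a}) but $P^n(x,\cdot)$ cannot put too much mass far out (bounded by~\ref{lem:lower_bound_Wasserstein_convergence_rate_b} via the $v(x,n)$-control on $P^nH$), the two measures must differ, and the discrepancy is quantified by integrating a suitable test function against $\pi - P^n(x,\cdot)$.

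First I would recall that, by Kantorovich--Rubinstein duality (\cite[Prop.~7.29]{Villani09}) in the form adapted to $L^p$-Wasserstein, for any $1$-Lipschitz function $\psi:\cX\to\R$ we have $|\pi(\psi) - P^n(x,\cdot)(\psi)| \le \cW_1(\pi,P^n(x,\cdot)) \le \cW_p(\pi,P^n(x,\cdot))$. The natural choice of test function is built from $W$: since $W$ is $L_W$-Lipschitz, $\psi \coloneqq W/L_W$ is $1$-Lipschitz; more usefully, I would truncate, taking $\psi_R(y) \coloneqq (W(y)\wedge R)/L_W$ for a threshold $R$ to be optimised. Then $\pi(\psi_R) - P^n(x,\cdot)(\psi_R) \ge \frac{1}{L_W}\big(\pi(W\wedge R) - P^n(x,W\wedge R)\big)$, and the right-hand side is bounded below by splitting at the level set $\{g\circ W \ge r\}$: on that set $W$ is large, contributing a term controlled below by the integral $\int_{\{g\circ W\ge r\}}(W/2)^p\,\ud x \ge a(r)$ from~\ref{lem:lower_bound_Wasserstein_convergence_rate_a} (after the elementary inequality relating $W\wedge R$ to $(W/2)^p$ on a region where $W$ is bounded), while Markov's inequality applied to $P^nH(x)\le v(x,n)$ with $H = 2^{-p}W^p\cdot g\circ(2W)$ bounds the mass $P^n$ assigns to $\{g\circ W \ge r\}$ by roughly $v(x,n)/(\text{growth of }H\text{ there})$. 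Choosing $r = A^{-1}(2^p v(x,n))$ makes the $\pi$-contribution dominate the $P^n$-leakage by a factor of two, exactly as in Lemma~\ref{lem:lower_bound_f_convergence_rate}, yielding a lower bound proportional to $a(r)^{1/p}$ after taking the $p$-th root (the Wasserstein distance controls $\cW_1$, but the $p$-th power enters through $H$ containing $W^p$, so the bound appears under a $1/p$ exponent).

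The main bookkeeping is to track how the factor $2^{-p}$ in $H$, the factor $1/2$ in the truncation of $W$, and the Lipschitz constant $L_W$ combine into the stated constant $1/(2L_W)$ and the argument $2^p v$ of $A^{-1}$. Concretely: Markov gives $P^n(x,\{H \ge t\}) \le v(x,n)/t$; on $\{g\circ W \ge r\}$ we have (using $W\ge 1$) $H \ge 2^{-p}g\circ W \cdot W^p \ge$ something growing like $r$ times a power of $W$, and one arranges that the relevant threshold for applying Markov is calibrated so that $P^n$-mass on the far tail is at most half of $a(r)$ when $r = A^{-1}(2^p v(x,n))$; meanwhile $\pi$ puts at least $a(r)$-worth of $(W/2)^p$-weighted mass there, half of which survives, giving $\cW_p^p \gtrsim a(r)/(2^p L_W^p)$ up to the precise constant.

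**The hard part will be** the truncation analysis: one must choose the Lipschitz test function and the cutoff level $R$ so that the contribution of $\pi$ on the far tail is genuinely captured — the subtlety is that $\psi_R = (W\wedge R)/L_W$ is linear in $W$, whereas assumption~\ref{lem:lower_bound_Wasserstein_convergence_rate_a} is phrased in terms of $\int (W/2)^p$, so one needs to relate $\pi(W\mathbbm{1}\{g\circ W\ge r\})$ to $\int_{\{g\circ W\ge r\}}(W/2)^p\,\ud x$; this forces $R$ to scale with the level set threshold and requires care that $W\wedge R \ge$ (a constant multiple of) $((W/2)^p)^{1/p} = W/2$ precisely on the region contributing to $a(r)$, which is where the exponents $p$ and $1/p$ and the Jensen-type step enter. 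Everything else is a routine adaptation of the $f$-variation argument, using $\cW_p \ge \cW_1$ and the duality in place of the variational definition of $\|\cdot\|_f$.
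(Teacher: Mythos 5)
Your overall architecture (level-set/hinge test function, Markov's inequality through $H=2^{-p}W^p\cdot g\circ(2W)$ on the $P^n$ side, calibration $r=A^{-1}(2^pv(x,n))$) matches the paper's, but the duality step you chose creates a genuine gap. You pass to $\cW_1$ via the classical Kantorovich--Rubinstein bound $|\pi(\psi)-P^n(x,\cdot)(\psi)|\le \cW_1\le\cW_p$ with a test function $\psi_R=(W\wedge R)/L_W$ that is \emph{linear} in $W$. Assumption~\ref{lem:lower_bound_Wasserstein_convergence_rate_a}, however, only lower-bounds a $p$-th moment, $\int_{\{g\circ W\ge r\}}(W/2)^p\,\pi(\ud x)\ge a(r)$, and a $p$-th-moment lower bound does not yield a first-moment lower bound of order $a(r)^{1/p}$: if the set $\{g\circ W\ge r\}$ has $\pi$-mass of order $a(r)/M^p$ concentrated where $W/2\approx M$, then $\int_{\{g\circ W\ge r\}}(W/2)\,\ud\pi\approx a(r)/M^{p-1}$, which is arbitrarily smaller than $a(r)^{1/p}$ as $M\to\infty$. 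Your proposed fix --- truncating at a cutoff $R$ scaled with the threshold so that $W\wedge R\gtrsim W/2$ on the contributing region --- does not repair this, because nothing forces the mass realising $a(r)$ to sit at values of $W$ below any controlled level $R$; truncation can destroy essentially all of the $p$-th-moment lower bound. So the linear-functional route cannot produce the stated $a(\cdot)^{1/p}/(2L_W)$ bound, and the ``$1/p$ exponent enters through $H$'' bookkeeping you sketch does not survive scrutiny: with a linear comparison functional the exponent simply is not there.

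The paper sidesteps this by \emph{not} dropping to $\cW_1$. It applies the $\cW_p$-version of the Lipschitz-functional estimate (cited as \cite[Prop.~7.29]{Villani09}, and provable directly from the coupling definition plus Minkowski's inequality): for $f$ Lipschitz with constant $L_f$, one has $\cW_p(\mu_1,\mu_2)\ge\bigl|\,(\int|f|^p\ud\mu_1)^{1/p}-(\int|f|^p\ud\mu_2)^{1/p}\bigr|/L_f$. Applied to the hinge function $f_s(x)=(W(x)-g^{-1}(s)/2)\1{W(x)\ge g^{-1}(s)/2}$ (which is $L_W$-Lipschitz, vanishes off the moderate tail, and satisfies $f_s\ge W/2$ on $\{g\circ W\ge s\}$), this compares $p$-th moments directly: assumption~\ref{lem:lower_bound_Wasserstein_convergence_rate_a} gives $\int f_s^p\,\ud\pi\ge a(s)$, while Markov plus assumption~\ref{lem:lower_bound_Wasserstein_convergence_rate_b} gives $\int f_s^p\,\ud P^n(x,\cdot)\le 2^pv(x,n)/s$, and the $1/p$ root appears exactly where the statement needs it. If you replace your duality step by this $L^p$-norm comparison (keeping your hinge construction rather than the plain truncation $(W\wedge R)/L_W$, which does not vanish on the bulk and so gives no sign control there), the rest of your argument goes through as in the $f$-variation case.
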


The following lemma facilitates the applications of Lemmas~\ref{lem:lower_bound_f_convergence_rate} and~\ref{lem:lower_bound_wasserstein} in the context of the \textbf{L}-drift conditions~\nameref{sub_drift_conditions}. Its proof is in Section~\ref{subsec:proofs_return_convergence} below.

\begin{lem}
\label{cor:f_invariant_lower_bound}
Let  Assumption~\nameref{sub_drift_conditions} hold. Let $f_\star:[1,\infty)\to(0,\infty)$  be almost everywhere differentiable.  Consider an increasing continuous $g:[1,\infty)\to[1,\infty)$, satisfying $\lim_{r\to\infty}g(r)=\infty$. Then, for every $q\in(0,1)$, the function $L_q$ in~\eqref{eq:def_L_eps_q} and the constant   $c_q\in(0,\infty)$ in~\eqref{eq:main_result_invariant} we have
\begin{equation}
\label{eq:lower_bound_tail_f}
\int_{\{g\circ V\geq r\}} f_\star\circ V(x)\pi(\ud x) \geq c_q f_\star(g^{-1}(r))/L_q(g^{-1}(r))
\quad \text{for all $r\in[1,\infty)$.}
\end{equation}
\end{lem}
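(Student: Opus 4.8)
The plan is to reduce the claimed lower bound on the "$f_\star$-weighted tail" $\int_{\{g\circ V\ge r\}}f_\star\circ V\,\ud\pi$ to the pointwise tail bound on $\pi(\{V\ge \cdot\})$ already supplied by Theorem~\ref{thm:invariant}. First I would observe that, since $g$ is increasing and continuous with $g(1)=1$ and $\lim_{r\to\infty}g(r)=\infty$, the sublevel/superlevel sets transform cleanly: $\{x:g\circ V(x)\ge r\}=\{x:V(x)\ge g^{-1}(r)\}$ for every $r\ge 1$ (with the convention $g^{-1}(r)=1$ for $r\le 1$). Hence
\[
\int_{\{g\circ V\ge r\}} f_\star\circ V(x)\,\pi(\ud x)=\int_{\{V\ge g^{-1}(r)\}} f_\star\circ V(x)\,\pi(\ud x),
\]
and it suffices to prove, writing $s\coloneqq g^{-1}(r)\in[1,\infty)$, the bound $\int_{\{V\ge s\}} f_\star\circ V\,\ud\pi \ge c_q f_\star(s)/L_q(s)$.

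The key step is then a one-sided comparison: on the event $\{V\ge s\}$ we would like to bound $f_\star\circ V$ from below by a constant multiple of $f_\star(s)$. This is immediate if $f_\star$ is non-decreasing, but the statement only assumes $f_\star$ is (a.e.) differentiable, so I would instead argue by crudely discarding most of the mass and keeping only a thin shell. Concretely, fix $s\ge 1$ and note
\[
\int_{\{V\ge s\}} f_\star\circ V(x)\,\pi(\ud x)\ \ge\ \int_{\{s\le V< s'\}} f_\star\circ V(x)\,\pi(\ud x)\ \ge\ \Big(\inf_{u\in[s,s']}f_\star(u)\Big)\,\pi(\{s\le V< s'\})
\]
for any $s'>s$; choosing $s'$ appropriately (e.g. $s'=s/(1-q)^{?}$ or simply $s'=s+1$, whichever matches the monotonicity structure used elsewhere in the paper) and using Theorem~\ref{thm:invariant} to lower bound $\pi(\{s\le V<s'\})=\pi(\{V\ge s\})-\pi(\{V\ge s'\})$ — here one must be slightly careful that the two tail terms do not cancel, which is handled by the slowly varying buffer $D$ in the definition of $L_q$ in~\eqref{eq:def_L_eps_q}, or alternatively by simply keeping the whole tail and using $\pi(\{V\ge s\})\ge c_q/L_q(s)$ directly together with the local behaviour of $f_\star$ near $s$. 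The cleanest route, and the one I expect the paper intends, is: since the statement is used in Theorem~\ref{thm:f_rate} only with $f_\star$ chosen to be a power ($f_\star(r)=(r/2)^p$) or $f_\star\ge 1$ constant-like, one simply invokes $f_\star\circ V(x)\ge \kappa f_\star(s)$ on $\{V\ge s\}$ for some $\kappa\in(0,1]$ (true whenever $f_\star$ is regularly varying / eventually monotone, which covers all applications) and concludes
\[
\int_{\{V\ge s\}} f_\star\circ V(x)\,\pi(\ud x)\ \ge\ \kappa f_\star(s)\,\pi(\{V\ge s\})\ \ge\ \kappa c_q\, f_\star(s)/L_q(s),
\]
absorbing $\kappa$ into the constant. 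Finally, substituting back $s=g^{-1}(r)$ gives exactly~\eqref{eq:lower_bound_tail_f}.

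The main obstacle is the regularity of $f_\star$: with only a.e.\ differentiability assumed, $f_\star$ need not be monotone, so the inequality $f_\star\circ V\ge (\text{const})\,f_\star(g^{-1}(r))$ on the tail $\{V\ge g^{-1}(r)\}$ is not automatic. I would resolve this by noting that in every invocation of this lemma (Theorem~\ref{thm:f_rate}\ref{assumption:f_convergence} and~\ref{assumption:Wasserstein_convergence}) $f_\star$ is in fact non-decreasing and regularly varying, and either (i) state and use the extra hypothesis that $f_\star$ is eventually non-decreasing — harmless for all applications — or (ii) replace $f_\star$ by $\underline f_\star(r)\coloneqq\inf_{u\ge r}f_\star(u)$, which is non-decreasing, satisfies $\underline f_\star\le f_\star$, and for regularly varying $f_\star$ is comparable to $f_\star$ up to a constant; the displayed inequality then holds with $\underline f_\star$ and hence, up to that constant, with $f_\star$. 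Everything else is a routine change of variables $x\mapsto g^{-1}(r)$ together with the already-established Theorem~\ref{thm:invariant}.
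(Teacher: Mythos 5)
Your proposal is correct and follows essentially the same route as the paper: rewrite $\{g\circ V\ge r\}=\{V\ge g^{-1}(r)\}$, bound $f_\star\circ V$ from below by $f_\star(g^{-1}(r))$ on this set, and invoke Theorem~\ref{thm:invariant} to bound $\pi(\{V\ge g^{-1}(r)\})$. The monotonicity issue you flag is genuine but is equally present in the paper's own argument, which represents $f_\star\circ V$ via the fundamental theorem of calculus, applies Fubini, and then discards the integral of $f_\star'$ over $[g^{-1}(r),\infty)$ --- a step that tacitly requires $f_\star'\ge 0$ (as holds for the power-type $f_\star$ used in Theorem~\ref{thm:f_rate}) --- so your added eventual-monotonicity hypothesis merely makes explicit what the paper uses implicitly, at the harmless cost of a constant $\kappa$ in place of the exact $c_q$.
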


We now establish the lower bounds on the convergence in $f$-variation and $L^p$-Wasserstein distance in Theorem~\ref{thm:f_rate}.

\begin{proof}[Proof of the Theorem~\ref{thm:f_rate}]
\noindent \underline{f-variation.} By the assumption in~\ref{assumption:f_convergence} of  Theorem~\ref{thm:f_rate}, a differentiable  $f_\star:[1,\infty)\to[1,\infty)$ and continuous  $h,g:[1,\infty)\to[1,\infty)$ satisfy $g = h/f_\star$ on $[1,\infty)$, with $g$ increasing and $\lim_{r\to\infty}g(r)=\infty$. Moreover, there exists a function
$v:\cX\times\N\to[1,\infty)$, increasing in the second argument,
such that 
$P^n(x,h\circ V)\leq v(x,n)$
for all $x\in\cX$ and $n\in\N$.
By Lemma~\ref{cor:f_invariant_lower_bound}, for any $q\in(0,1)$ 
the inequality in~\eqref{eq:lower_bound_tail_f} holds with the constant $c_q\in(0,\infty)$ from~\eqref{eq:main_result_invariant}.

Any function $a_g:[1,\infty)\to \RP$, satisfying the assumption   in  Theorem~\ref{thm:f_rate} (i.e. the inequality 
$a_g(r)\leq  c_q f_\star(g^{-1}(r))/L_q(g^{-1}(r))$,  $r\in[1,\infty)$, and that the function $A_g(r)= ra_g(r)$ is increasing with $\lim_{r\to\infty} A_g(r)=\infty$), by~\eqref{eq:lower_bound_tail_f} also satisfies Assumption~\ref{lem:lower_bound_f_convergence_rate_a} of Lemma~\ref{lem:lower_bound_f_convergence_rate} with 
$f\coloneqq f_\star\circ V$ and
$G\coloneqq g\circ V$.
As observed in the previous paragraph, the functions 
$H\coloneqq h\circ V$ and $v(x,t)$
satisfy the condition in 
Assumption~\ref{lem:lower_bound_f_convergence_rate_b} of Lemma~\ref{lem:lower_bound_f_convergence_rate}:
$P^n(x,H)\leq v(x,n)$ 
for all $x\in\cX$ and $n\in\N$.
An application of Lemma~\ref{lem:lower_bound_f_convergence_rate} 
concludes the proof of the theorem in the case of $f$-variation.

\noindent\underline{$L^p$-Wasserstein distance.} By assumption in~\ref{assumption:Wasserstein_convergence} of Theorem~\ref{thm:f_rate}, continuous  $h,g:[1,\infty)\to[1,\infty)$ satisfy $h(r)= g(2r)f_\star(r)= g(2r)r^p/2^p$, $r\in[1,\infty)$, with $g$ increasing and $\lim_{r\to\infty}g(r)=\infty$. Thus, $H\coloneqq h\circ V=2^{-p}V^p\cdot   g\circ (2V)$, where $V$ is Lipshitz with a constant $L_V$.
Moreover, there exists a function
$v:\cX\times\N\to[1,\infty)$, increasing in the second argument,
such that 
$P^n(x,h\circ V)\leq v(x,n)$
for all $x\in\cX$ and $n\in\N$.
By Lemma~\ref{cor:f_invariant_lower_bound}, for any $q\in(0,1)$ and the constant $c_q\in(0,\infty)$ in~\eqref{eq:main_result_invariant},  the inequality in~\eqref{eq:lower_bound_tail_f} holds.
%for all $r\in[1,\infty)$.

Any function $a_g:[1,\infty)\to \RP$, satisfying 
the assumption in  Theorem~\ref{thm:f_rate} (i.e. the inequality 
$a_g(r)\leq  c_q (g^{-1}(r)/2)^p/L_q( g^{-1}(r))$, $r\in[1,\infty)$, and that the function $r\mapsto ra_g(r)$ is increasing with $\lim_{r\to\infty} ra_g(r)=\infty$), by~\eqref{eq:lower_bound_tail_f} also satisfies Assumption~\ref{lem:lower_bound_Wasserstein_convergence_rate_a} of Lemma~\ref{lem:lower_bound_wasserstein} with 
$W=V$ and
$g$.
As observed in the previous paragraph, the functions 
$H\coloneqq h\circ V$ and $v(x,t)$
satisfy the condition in 
Assumption~\ref{lem:lower_bound_Wasserstein_convergence_rate_b} of Lemma~\ref{lem:lower_bound_wasserstein}:
$P^n(x,H)\leq v(x,n)$ 
for all $x\in\cX$ and $n\in\N$.
An application of Lemma~\ref{lem:lower_bound_wasserstein} 
concludes the proof of the theorem.
\end{proof}

\section{Convergence rates and the verification of assumptions in the \textbf{L}-drift condition: technical proofs}
\label{sec:return_times_convergence}
This section contains technical proofs of some propositions and lemmas from Section~\ref{sec:main_proofs}.

\subsection{Return times to compact sets for discrete-time semimartingales}
\label{subsec:return_times_compact}

This section develops a general theory for the analysis of return times of discrete-time semimartingales to bounded sets. The results in this section are discrete-time analogues
of the results in~\cite[Sec.~4]{brešar2024subexponential}.
Throughout this section we fix a probability space $(\Omega,\cF,\P)$ with 
 a filtration $(\cF_n)_{n\in\N}$ and denote $u\wedge v\coloneqq \min\{u,v\}$ for any $u,v\in\R$.
We begin with an elementary maximal inequality. 

\begin{prop}[Maximal inequality]
\label{prop:maximal}
Let $(\cF_n)_{n\in\N}$ be a filtration and $\xi = (\xi_n)_{n\in\N}$ an $(\cF_n)$-adapted process taking values in $[0,1]$. 
Define an $(\cF_n)$-stopping time
$\tau_r\coloneqq\inf\{n\in\N:\xi_n> r\}$ 
(with convention $\inf \emptyset =\infty$)  
and assume that, for some $r\in(0,1]$
and a locally bounded measurable function 
$f:\N\times[0,1] \to \RP$, the process 
 %$W=(W_t)_{t\in\RP}$,
$(\xi_{n\wedge \tau_r} - \sum_{j=0}^{n\wedge \tau_r-1} f(j,\xi_j))_{n\in\N}$
is an $(\cF_n)$-supermartingale. 
Then, for any $t\in \N$, we have $$\P\left(\sup_{j\in[0,t)\cap\N}\xi_j> r\Big\vert\cF_0\right)\leq r^{-1}\left(\xi_0 + \E\left[\sum_{j=0}^{t\wedge \tau_r-1} f(j,\xi_j)\Big\vert\cF_0\right]\right)\quad\text{a.s.}$$
\end{prop}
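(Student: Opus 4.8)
The plan is to recognise this as a Doob-type maximal inequality and to reduce it to the optional stopping (tower-property) inequality applied at the deterministic time $t$ to the supermartingale furnished by the hypothesis.

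First I would identify the event in question. By the definition of $\tau_r=\inf\{n\in\N:\xi_n>r\}$ one has
$$\left\{\sup_{j\in[0,t)\cap\N}\xi_j>r\right\}=\{\tau_r<t\}=\{t\wedge\tau_r<t\},$$
and on this event $\xi_{t\wedge\tau_r}=\xi_{\tau_r}>r$ (the first time the process strictly exceeds $r$); since $\xi$ takes values in $[0,1]\subset\RP$, this yields the pointwise lower bound $\xi_{t\wedge\tau_r}\ge r\,\1{\tau_r<t}$.

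Next I would write $M_n\coloneqq \xi_{n\wedge\tau_r}-\sum_{j=0}^{n\wedge\tau_r-1}f(j,\xi_j)$ (empty sum $=0$), which is exactly the $(\cF_n)$-supermartingale assumed in the statement, and note that $M_0=\xi_0$ because $0\wedge\tau_r=0$. Iterating the supermartingale inequality via the tower property gives $\E[M_t\mid\cF_0]\le M_0=\xi_0$ a.s., i.e.
$$\E\big[\xi_{t\wedge\tau_r}\,\big|\,\cF_0\big]\le \xi_0+\E\Big[\textstyle\sum_{j=0}^{t\wedge\tau_r-1}f(j,\xi_j)\,\Big|\,\cF_0\Big]\quad\text{a.s.}$$
There are no integrability issues here: $\xi$ is bounded by $1$, and since $f$ is locally bounded on $\N\times[0,1]$ the sum is dominated by the deterministic constant $\sum_{j=0}^{t-1}\sup_{x\in[0,1]}f(j,x)<\infty$. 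Combining the two displays, taking conditional expectations in $\xi_{t\wedge\tau_r}\ge r\,\1{\tau_r<t}$ gives $r\,\P(\tau_r<t\mid\cF_0)\le \xi_0+\E[\sum_{j=0}^{t\wedge\tau_r-1}f(j,\xi_j)\mid\cF_0]$, and dividing by $r>0$ together with the identification of the event in the first step yields the claim.

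I do not expect a genuine obstacle here: the argument is the standard maximal-inequality proof. The only points requiring a little care are the index bookkeeping around the empty sum and the identity $M_0=\xi_0$, and noting that the process appearing in the hypothesis is already the stopped (hence supermartingale) object, so the conditional optional-stopping inequality is applied legitimately at the fixed time $t$.
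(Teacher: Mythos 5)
Your proposal is correct and follows essentially the same route as the paper's proof: identify $\{\sup_{j<t}\xi_j>r\}=\{\tau_r<t\}$, apply the supermartingale inequality to the stopped process at the fixed time $t$ conditionally on $\cF_0$ (with integrability secured by the boundedness of $\xi$ and of $f$ on compacts), and then use $\xi_{t\wedge\tau_r}\geq r$ on $\{\tau_r<t\}$ together with Markov/Chebyshev-type bounding. No gaps; the bookkeeping around the empty sum and $M_0=\xi_0$ is exactly as in the paper.
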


\begin{proof}
Pick any $t\in\N$ and consider an  $(\cF_n)$-stopping time $\tau_r \wedge t$, bounded above by $t$. Note that $\sup_{0\leq u<\tau_r\wedge t} \xi_u\leq r$ (with convention $\sup\emptyset=-\infty$). Since $f$ is bounded on the compact set $[0,t]\times [0,r]$, there exists a constant $C\in(0,\infty)$ such that $\sup_{0\leq u < \tau_r\wedge t}f(u,\xi_u)\leq C$ a.s. Thus
\begin{equation}\label{eq:maximal_ineq_upper_bound}
0\leq \E\left[\sum_{j=0}^{\tau_r \wedge t-1} f(j,\xi_j) \Big\vert \cF_0\right]\leq Ct<\infty\quad\text{a.s.}
\end{equation}

Since $(\xi_{n\wedge \tau_r} - \sum_{j=0}^{n\wedge \tau_r} f(j,\xi_j))_{n\in\N}$
is an $(\cF_n)$-supermartingale, we obtain: $\xi_{\tau_r \wedge t} -\sum_{j=0}^{t\wedge \tau_r} f(j,\xi_j)$ is integrable and
$\E[\xi_{\tau_r \wedge t} -\sum_{j=0}^{t\wedge \tau_r} f(j,\xi_j)\vert \cF_0] \leq \xi_0$. The inequality in~\eqref{eq:maximal_ineq_upper_bound}
 and the fact that $\xi$ is non-negative imply the following:
\begin{align}
\nonumber
\E[\xi_{\tau_r \wedge t}\vert \cF_0]&=
\E\left[\xi_{\tau_r \wedge s} -\sum_{j=0}^{\tau_r \wedge t-1} f(j,\xi_j)\Big\vert \cF_0\right]+\E\left[\sum_{j=0}^{\tau_r \wedge t-1} f(j,\xi_j) \Big\vert \cF_0\right]
\\
&\leq \xi_0 + \E\left[\sum_{j=0}^{\tau_r \wedge t-1} f(j,\xi_j) \Big\vert \cF_0\right].
\label{eq:maximal_supermart_inequality}
\end{align}
Moreover, by the definition of $\tau_r$ in the proposition we have $\{\sup_{u\in[0,t)\cap\N}\xi_u> r\} = \{\tau_r < t\}$
a.s. Furthermore, on the event $\{\tau_r < t\}$ we have $\xi_{\tau_r\wedge t} = \xi_{\tau_r} \geq r$ a.s. Thus, by~\eqref{eq:maximal_supermart_inequality},
we have
\begin{align*}
\P\left(\sup_{u\in[0,t)\cap\N}\xi_u > r\Big\vert\cF_0\right) &=  \P\left(\tau_r < t\Big\vert\cF_0\right)
\leq r^{-1}\E[\xi_{\tau_r \wedge t}\mathbbm1\{\tau_r <t\}\vert \cF_0]\leq r^{-1}\E[\xi_{\tau_r \wedge t}\vert \cF_0]
\\&\leq r^{-1}\left(\xi_0 + \E\left[\sum_{j=0}^{\tau_r \wedge t-1} f(j,\xi_j)\Big\vert \cF_0\right]\right),
\end{align*}
implying the proposition.
\end{proof}

Proposition~\ref{prop:maximal} will be applied in the proof of Lemma~\ref{lem:return_times}. To state the lemma, consider an $(\cF_n)$-adapted  process $\kappa \coloneqq (\kappa_n)_{n \in \N}$  taking values in $[1,\infty)$.
Let $\cT$ denote the set of all $\N\cup\{\infty\}$-valued stopping times with respect to the filtration $(\cF_n)_{n\in \N}$.
For any $\ell,r \in (1,\infty)$ and a stopping time $T \in \cT$, define the first entry times (after $T$) into the intervals $[1,\ell)$ and $(r,\infty)$ by
\begin{align}
\label{eq::lambda}
    \lambda_{\ell,T} \coloneqq  T + \inf\{s \in \N: T < \infty,~ \kappa_{T+s} < \ell\}, \\
    \rho_{r,T} \coloneqq  T + \inf\{s \in \N: T < \infty, ~\kappa_{T+s} > r\},
    \label{eq::rho}
\end{align}
respectively. If $T = 0$, for simplicity we write $\lambda_\ell \coloneqq \lambda_{\ell,0}$ and $\rho_r \coloneqq \rho_{r,0}$.

%The following Lemma is the central tool of our approach to lower bounds. 

\begin{lem}
\label{lem:return_times}
Let $\kappa = (\kappa_n)_{n\in\N}$ be  $[1,\infty)$-valued $(\cF_n)$-adapted process satisfying $\limsup_{n\to\infty}\kappa_n =\infty$ a.s.
Suppose that there exist a level $\ell\in(1,\infty)$ and a non-decreasing continuous function $\varphi:(0,1]\to\RP$, such that 
the process 
$$
\left(1/\kappa_{(\rho_{r_q}+t)\wedge \lambda_{r,\rho_{r_q}}} - \sum_{j=\rho_{r_q}}^{(\rho_{r_q} + t) \wedge \lambda_{r,\rho_{r_q}}-1} \varphi(1/\kappa_{j})\right)_{t\in\N}\quad\text{is an $(\cF_{\rho_{r_q}+t})$-supermartingale}
$$
for every $q\in(0,1)$ and $r\in(\ell,\infty)$, where $r_q \coloneqq r/(1-q)^2$. 
Pick any $q\in(0,1)$ and
a function $f:\RP \to \RP$, non-decreasing on $[\ell,\infty)$.
Recall by~\eqref{eq:G_h} the function $G_f(r)=q(1-q)f(r)/(r\varphi(1/r))$. 
Then, for every $r\in(\ell,\infty)$, the following inequality holds:
\begin{align}
\label{eq:return_lower_bound}
\P\left(\sum_{j=0}^{\lambda_{\ell}} f(\kappa_j)\geq
G_f(r)
\Big\vert \cF_0\right)
\geq q\P(\rho_{r_q}<\lambda_{\ell}\vert \cF_0)\quad\text{a.s.}
%\\
% \label{eq:return_squared}
% \P\left(\sum_{j,i=0}^{\lambda_{\ell}} f(\kappa_j)f(\kappa_i)\geq (f(r)q(1-q)/(r\varphi(1/r)))^2\Big\vert \cF_0\right)&\geq q\P(\rho_{r_q}<\lambda_{\ell}\vert \cF_0)\quad\text{a.s.}
\end{align}
\end{lem}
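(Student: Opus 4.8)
The plan is to prove Lemma~\ref{lem:return_times} by first reducing the event $\{\rho_{r_q}<\lambda_\ell\}$ (the chain reaches level $r_q = r/(1-q)^2$ before returning below $\ell$) to a lower bound on the running-sum functional $\sum_{j=0}^{\lambda_\ell} f(\kappa_j)$. The key intuition is that once the chain reaches the high level $r_q$, it must spend many steps above level $r$ before it can descend below $\ell$, and during that sojourn each increment of the sum $\sum f(\kappa_j)$ is at least $f(r)$ while the decrement of $1/\kappa$ is controlled by $\varphi(1/\kappa)$. First I would restrict to the event $\{\rho_{r_q}<\lambda_\ell\}$, which is $\cF_{\rho_{r_q}}$-measurable in the sense needed; on this event, $\kappa_{\rho_{r_q}}>r_q$, so $1/\kappa_{\rho_{r_q}}<(1-q)^2/r$. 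I then apply the maximal inequality in Proposition~\ref{prop:maximal} to the $[0,1]$-valued process $\xi_t \coloneqq 1/\kappa_{(\rho_{r_q}+t)\wedge\lambda_{r,\rho_{r_q}}}$ shifted to start at time $\rho_{r_q}$, conditioning on $\cF_{\rho_{r_q}}$: the hypothesis of the lemma says exactly that this stopped process minus the compensator $\sum \varphi(1/\kappa_j)$ is a supermartingale, so Proposition~\ref{prop:maximal} applies with threshold $\tilde r$ chosen appropriately.

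The core estimate is this: on $\{\rho_{r_q}<\lambda_\ell\}$, I want to show that with conditional probability at least $q$ the chain stays above level $r$ for at least $N\coloneqq G_f(r)/f(r) = q(1-q)/(r\varphi(1/r))$ steps after time $\rho_{r_q}$ (roughly), which forces $\sum_{j=0}^{\lambda_\ell} f(\kappa_j)\ge N f(r) = G_f(r)$ because $f$ is non-decreasing and $\kappa_j>r\ge\ell$ on that stretch. To make the probability bound, I apply Proposition~\ref{prop:maximal} to $\xi$ over the time window $[0,N)$ with level $\tilde r = (1-q)/r$ (so that $\xi_t>\tilde r$ means $\kappa<r/(1-q)$, i.e. the chain has dropped close to level $r$). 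The maximal inequality gives
$$
\P\Bigl(\sup_{0\le t<N}\xi_t>\tilde r\,\Big|\,\cF_{\rho_{r_q}}\Bigr)\le \tilde r^{-1}\Bigl(\xi_0+\E\Bigl[\textstyle\sum_{j=0}^{N\wedge\tau-1}\varphi(1/\kappa_j)\,\Big|\,\cF_{\rho_{r_q}}\Bigr]\Bigr).
$$
On the relevant stretch $\kappa_j\in[r, r/(1-q)]$, monotonicity of $\varphi$ (hence of $r\mapsto r\varphi(1/r)$ being decreasing, equivalently $\varphi$ increasing) gives $\varphi(1/\kappa_j)\le \varphi(1/r)$, so the expectation term is bounded by $N\varphi(1/r)$. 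Plugging in $\xi_0<(1-q)^2/r$, $\tilde r=(1-q)/r$ and $N = q(1-q)/(r\varphi(1/r))$, the right-hand side becomes $\le (1-q) + q = 1$... I need to be slightly more careful: the bound is $\tfrac{r}{1-q}\bigl(\tfrac{(1-q)^2}{r} + \tfrac{q(1-q)}{r\varphi(1/r)}\varphi(1/r)\bigr) = (1-q)+q = 1$, which is vacuous, so I must sharpen by using the strict inequality $\xi_0<(1-q)^2/r$ or choosing $N$ slightly smaller — more cleanly, one chooses the window length and threshold so the bound is $\le 1-q$, giving complementary probability $\ge q$. This bookkeeping (choosing the precise constants so the final probability is exactly $q$) is the routine but fiddly part; the structure follows~\cite[Sec.~4]{brešar2024subexponential}.

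Finally I would assemble: on the event $\{\rho_{r_q}<\lambda_\ell\}\cap\{\sup_{0\le t<N}\xi_t\le\tilde r\}$, the chain satisfies $\kappa_{\rho_{r_q}+t}\ge r\ge\ell>1$ for all $t\in[0,N)$, hence $\lambda_\ell>\rho_{r_q}+N$ and $\sum_{j=0}^{\lambda_\ell} f(\kappa_j)\ge \sum_{j=\rho_{r_q}}^{\rho_{r_q}+N-1} f(\kappa_j)\ge N f(r)\ge G_f(r)$ (using that $G_f(r)=q(1-q)f(r)/(r\varphi(1/r)) = Nf(r)$ with the chosen $N$, up to the adjustment above). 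Taking conditional expectations and using the tower property over $\cF_0\subseteq\cF_{\rho_{r_q}}$ together with the probability lower bound $q$ on the inner event yields
$$
\P\Bigl(\textstyle\sum_{j=0}^{\lambda_\ell} f(\kappa_j)\ge G_f(r)\,\Big|\,\cF_0\Bigr)\ge q\,\P(\rho_{r_q}<\lambda_\ell\mid\cF_0)\quad\text{a.s.},
$$
which is~\eqref{eq:return_lower_bound}. The main obstacle is the delicate constant-chasing in the application of Proposition~\ref{prop:maximal} — one must choose the auxiliary threshold and the number of steps $N$ so that the maximal inequality returns a bound strictly below $1$ (namely $\le 1-q$), which is exactly why the factors $(1-q)$ and $(1-q)^2$ appear in $G_f$ and in $r_q$; getting these to line up, and handling the measurability of $\rho_{r_q}$ and the strong Markov-type shifting of the supermartingale hypothesis, is where the care is needed.
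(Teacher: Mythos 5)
Your proposal follows essentially the same route as the paper: apply Proposition~\ref{prop:maximal} to $\xi_t=1/\kappa_{\rho_{r_q}+t}$ stopped at $\lambda_{r,\rho_{r_q}}$, deduce that with conditional probability at least $q$ the chain stays at or above $r$ for $q(1-q)/(r\varphi(1/r))$ steps, lower-bound the sum by $f(r)$ times that sojourn length using monotonicity of $f$ on $[\ell,\infty)$ and $\lambda_\ell\geq\lambda_{r,\rho_{r_q}}$ on $\{\rho_{r_q}<\lambda_\ell\}$, and finish with the tower property over $\cF_0\subseteq\cF_{\rho_{r_q}}$.

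The only point you left unresolved, the ``vacuous bound equal to $1$'', is not a real obstacle and needs no further tuning of $N$: the issue is solely your choice of threshold $\tilde r=(1-q)/r$. Take the threshold to be $1/r$ (i.e.\ measure the sojourn above level $r$ itself, which is all that is needed since $f$ is non-decreasing on $[\ell,\infty)$ and $r>\ell$); then with $\xi_0\leq 1/r_q=(1-q)^2/r$, $t=q(1-q)/(r\varphi(1/r))$ and $\varphi(1/\kappa_j)\leq\varphi(1/r)$ on the stretch, Proposition~\ref{prop:maximal} gives
\begin{equation*}
\P\bigl(\lambda_{r,\rho_{r_q}}<\rho_{r_q}+t\,\big\vert\,\cF_{\rho_{r_q}}\bigr)\leq r\bigl(\xi_0+\varphi(1/r)\,t\bigr)\leq (1-q)^2+q(1-q)=1-q,
\end{equation*}
so the complementary probability is at least $q$, exactly as required; this is precisely the computation in the paper's proof. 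With that correction your argument is complete and coincides with the paper's.
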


\begin{rem}
\noindent (I) The  assumption $\limsup_{n\to\infty}\kappa_n =\infty$ a.s.
in Lemma~\ref{lem:return_times}
implies $\P(\rho_r<\infty)=1$ for all $r\in[1,\infty)$, making $\kappa_{\rho_{r_q}}$ well defined. The main step in the proof of inequality~\eqref{eq:return_lower_bound} in Lemma~\ref{lem:return_times} consists of establishing the following: after reaching the level $r_q$, with probability at least $q$, 
the process $\kappa$ spends more than $q(1-q)/(r\varphi(1/r))$ units of time before returning below the level $r$. %The inequality in~\eqref{eq:main_lower_bound} in Corollary~\ref{cor:return_times}
%follows from~\eqref{eq:return_lower_bound}  via %standard submartingale arguments.

\noindent (II) Note that  $f\equiv1$ in Lemma~\ref{lem:return_times} yields a lower bound on the probability  
$\P(
\lambda_{\ell} \geq G_1(r)\vert\cF_0)$.
Moreover, if it also holds $r\varphi(1/r)\to0$ as $r\to\infty$,
then $t=G_1(r)\to\infty$ as $r\to\infty$ and 
the bound in~\eqref{eq:return_lower_bound} 
yields a lower bound on the tail probability 
$\P(\lambda_{\ell} \geq t\vert\cF_0)$
as $t\to\infty$.\\
\noindent (III) 
Lemma~\ref{lem:return_times} does not require a Markovian structure for $\kappa$ and is in particular independent of Assumption~\nameref{sub_drift_conditions}.
\end{rem}

\begin{proof}[Proof of Lemma~\ref{lem:return_times}]
%\underline{\it Inequality~\eqref{eq:return_lower_bound}}
Pick $q\in (0,1)$ and $r\in(\ell,\infty)$.
We start by 
showing that, once the process $\kappa$ reaches the level $r_q=r/(1-q)^2$, with probability at least $q$ it takes 
$q(1-q)/(r\varphi(1/r))$ units of time for $\kappa$ to return to the interval $[1,r)$. More precisely, we now
establish the following inequality: 
\begin{equation}
\label{eq:r_squared_time_to_come_back}
\P(\lambda_{r,\rho_{r_q}} \geq \rho_{r_q } + q(1-q)/(r\varphi(1/r))\vert \cF_{\rho_{r_q}}) \geq q\quad\text{a.s.}
\end{equation}

By the non-confinement assumption $\limsup_{n\to\infty}\kappa_n =\infty$, we have $\rho_{r_q}<\infty$ a.s. Define the process $(\xi_t)_{t\in\N}$ by  $\xi_t \coloneqq 1/\kappa_{\rho_{r_q} + t}$. Note that
$\tau_{1/r}=\inf\{n>0:\xi_n > 1/r\}=\lambda_{r,\rho_{r_q}}-\rho_{r_q}$
by~\eqref{eq::lambda}
and
hence
$t\wedge\tau_{1/r}=((\rho_{r_q}+t)\wedge \lambda_{r,\rho_{r_q}})-\rho_{r_q}$.
Moreover, by the definition of $\tau_{1/r}$, we have 
$\{\sup_{j\in[0,t)\cap\N}\xi_j> 1/r\} = \{\tau_{1/r} < t\}$
a.s. for any $t\in\N$.
% and let $f(u) \coloneqq Cu^2$ for $u\in\RP$. 
By assumption, the process $(\xi_{t\wedge\tau_{1/r}} - \int_0^{t\wedge\tau_{1/r}-1} \varphi(\xi_u)\ud u)_{t\in\N}$ is an $(\cF_{\rho_{r_q}+t})$-supermartingale.  
Moreover, since $\varphi:(0,1]\to\RP$ is non-decreasing and continuous,
it has a unique extension (via its right-limit at $0$) to a continuous function $\varphi:[0,1]\to\RP$.
Applying Proposition~\ref{prop:maximal} (with a continuous function $f(u,s)=\varphi(s)$) to $\xi$  and the stopping time $\tau_{1/r}$ yields
\begin{align*}
    \P(\lambda_{r,\rho_{r_q }} < \rho_{r_q }+t\vert \cF_{\rho_{r_q }}) &= \P(\tau_{1/r}< t\vert \cF_{\rho_{r_q}} )=\P(\sup_{0\leq u < t} \xi_u > 1/r\vert \cF_{\rho_{r_q}}) \\
    &\leq r\left( \xi_0 + \E\left[\sum_{j=0}^{t\wedge\tau_{1/r}-1} \varphi(\xi_j)\Big\vert \cF_{\rho_{r_q}}\right] \right)\\
   & \leq  r\left( \xi_0 + \varphi(1/r)\E\left[t\wedge\tau_{1/r}\vert \cF_{\rho_{r_q}}\right] \right)
    \\
 %   &= r\left(1/\kappa_{\rho_{r_q}} +\E\left[ \sum_{j=\rho_{r_q}}^{(\rho_{r_q}+t)\wedge \lambda_{r,\rho_{r_q}}-1}\varphi(1/\kappa_j)\Big\vert \cF_{\rho_{r_q }}\right]\right) \\
    &\leq r\left(1/r_q +\varphi(1/r)t
    %\E\left[(\rho_{r_q}+t)\wedge \lambda_{r,\rho_{r_q}}-1- \rho_{r_q}\vert \cF_{\rho_{r_q }}\right]
    \right)  = (1-q)^2 + r\varphi(1/r)t,
    \quad\text{ $t\in\N$,}
\end{align*}
where the second inequality holds by the following facts: $\varphi$ is a non-decreasing function and
the inequality $\xi_u\leq 1/r$ holds on the event $\{u<t\wedge\tau_{1/r}\}$. The third inequality is a consequence of the fact 
%((\rho_{r_q}+t)\wedge \lambda_{r,\rho_{r_q}})-1-\rho_{r_q}=
$\tau_{1/r}\wedge t\leq t$, while the last equality follows from the definition of $r_q$.
%Note that, even though at time $\lambda_{r,\rho_{r_q}}$ the process $\kappa$ could jump to $0$, this does not affect the integral in the second inequality.
By taking complements, we get 
$$\P(\lambda_{r,\rho_{r_q }} \geq \rho_{r_q } + t\vert \cF_{\rho_{r_q}}) \geq 1 - ((1-q)^2 + r\varphi(1/r)t).$$
Setting $t = q(1-q)/ (r\varphi(1/r))$,
we obtain~\eqref{eq:r_squared_time_to_come_back}.

Note that on the event $\{\lambda_{r,\rho_{r_q }} \geq \rho_{r_q} + q(1-q) /(r\varphi(1/r))\}$, 
for any  function $f$, non-decreasing on $[\ell,\infty)\supset [r,\infty)$, 
we have $f(\kappa_{\rho_{r_q} + t}) \geq f(r)$ for all $t\in[0,q(1-q) /(r\varphi(1/r))]\cap\N$. Since $r>\ell$,
on the event 
$\{\rho_{r_q } < \lambda_{\ell}\}$, the inequality 
$\lambda_{\ell}\geq \lambda_{r,\rho_{r_q }}$
holds,
implying 
the following inclusion:
\begin{equation}
\label{eq:key_inclusion}
\left\{\sum_{j=0}^{\lambda_{\ell}} f(\kappa_j) \geq f(r) q(1-q) /(r\varphi(1/r))\right\}\supset \{\rho_{r_q } < \lambda_{\ell}\}\cap\{\lambda_{r,\rho_{r_q }} \geq \rho_{r_q } +q(1-q)/ (r\varphi(1/r))\}.
\end{equation}
By the inequality in~\eqref{eq:r_squared_time_to_come_back}, we thus obtain the inequality in~\eqref{eq:return_lower_bound}:
\begin{align*}
    \P\Bigg( \sum_{j=0}^{\lambda_{\ell}} &f(\kappa_j) \geq f(r)q(1-q) /(r\varphi(1/r))\Big \vert\cF_0\Bigg) \\ %\label{eq:inequality_length_maximum}
    &\geq \E\left[\mathbbm{1}\{\rho_{r_q } < \lambda_{\ell}\}\P\left(\lambda_{r,\rho_{r_q }} > \rho_{r_q } + q(1-q)/(r\varphi(1/r))\vert \cF_{\rho_r}\right)  \big\vert\cF_0\right] 
    \geq q\P(\rho_{r_q }<\lambda_{\ell}\vert\cF_0)\quad\text{a.s.}
\end{align*}
\end{proof}

\subsection{From return times to convergence}
\label{subsec:proofs_return_convergence}

The  proof of Proposition~\ref{prop:lyapunov_return_times} is based on Lemma~\ref{lem:return_times} and requires showing that  Assumption~\nameref{sub_drift_conditions} 
implies the assumptions of Lemma~\ref{lem:return_times} for the process $\kappa\coloneqq V(X)$.

\begin{proof}[Proof of Proposition~\ref{prop:lyapunov_return_times}]
%The proof relies on applying Lemma~\ref{lem:return_times} to the process $\kappa = V(X)$. 
Consider the process $\kappa =V(X)$. 
Recall the definition of the return time $\lambda_{\ell,T}$ and the first-passage time $\rho_{r,T}$ after $T$ (where $\ell,r\in(0,\infty)$ and $T$ an $(\cF_t)$-stopping time) for the process $\kappa$ in~\eqref{eq::lambda} and~\eqref{eq::rho}, respectively. As in Section~\ref{subsec:return_times_compact}, we  denote $\lambda_\ell=\lambda_{\ell,0}$
and 
$\rho_r=\rho_{r,0}$.

By the non-confinement property in Assumption~\nameref{sub_drift_conditions} we have  
$\P_x(\limsup_{n\to\infty}\kappa_n=\infty)=1$
for every $x\in\cX$.
Thus
we obtain $S_{(\ell)}=\lambda_\ell$ and 
$T^{(r)}=\rho_r<\infty$.
Moreover, by~\nameref{sub_drift_conditions}\ref{sub_drift_conditions(i)}, there exists $\ell_0\in[1,\infty)$ such that, for every $r\in(\ell_0,\infty)$ and any $q\in(0,1)$, the process 
$$
\left(1/\kappa_{(\rho_{r_q}+n)\wedge \lambda_{r,\rho_{r_q}}} - \sum_{j=\rho_{r_q}}^{(\rho_{r_q} + n) \wedge \lambda_{r,\rho_{r_q}}-1} \varphi(1/\kappa_{u})\ud u\right)_{n\in\N}
$$
is an $(\cF_{\rho_{r_q}+n})$-supermartingale under $\P_x$ for every $x\in\cX$, where $r_q = r/(1-q)^2$ and $\varphi$ is increasing and continuous.
Thus, by inequality~\eqref{eq:return_lower_bound}  in Lemma~\ref{lem:return_times},  for any $\ell_0'\in[\ell_0,\infty)$, $\ell\in(\ell_0',\infty)$ and $r\in(\ell,\infty)$, any $q\in(0,1)$ and  function $h:\RP\to\RP$, non-decreasing on $[\ell_0',\infty)$, 
we have
\begin{align}
\label{eq:Markov_lower_bound}
\P_x\left(\sum_{k=0}^{\lambda_\ell} h(\kappa_k) \geq G_h(r)\right)\geq q\P_x(\rho_{r_q}<\lambda_\ell)\quad\text{for all $x\in\cX$,}
\end{align}
where, by~\eqref{eq:G_h},  $G_h(r)=q(1-q)h(r)/(r\varphi(1/r))$.
Recall that by~\nameref{sub_drift_conditions}\ref{sub_drift_conditions(i)} the function $G_h$ is continuous and
increasing  on $[\ell_0',\infty)$
(and thus 
invertible on $(\ell_0',\infty)$), with inverse $G_h^{-1}$ defined on $(G_h(\ell_0'),\infty)$.
For any $m\in(0,\infty)$ and $t>G_h(\ell+m)$, set
$r\coloneqq G_h(t)^{-1}>\ell+m$ and note  $r_q=G_h(t)^{-1}/(1-q)^2>\ell+m$. 
The inequality in~\eqref{eq:Lyapunov_lower_bound} 
follows from~\eqref{eq:Markov_lower_bound}
and inequality~\eqref{eq:assumption_heavy_tail_bound} in~\nameref{sub_drift_conditions}\ref{sub_drift_conditions(ii)} for all 
$x\in\{\ell+m\leq V\}$, since on the subset $x\in\{r_q\leq V\}$
we have 
$\P_x(\rho_{r_q}<\lambda_\ell)=\P_x(T^{(r_q)}<S_{(\ell)})=1$ by definition.
\end{proof}

The bound in~\eqref{eq:Lyapunov_lower_bound} of Proposition~\ref{prop:lyapunov_return_times} plays a key role in the proof of Lemma~\ref{lem:bounded_petite_sets}, which states that under the \textbf{L}-drift conditions~\nameref{sub_drift_conditions} all petite sets are bounded.

\begin{proof}[Proof of Lemma~\ref{lem:bounded_petite_sets}]
Let $B$ be an arbitrary petite set with a probability measure $a$ on $\cB(\N\setminus\{0\})$ and a non-zero measure $\nu_a$ on $\cB(\cX)$ such that~\eqref{eq:petite} holds. Since $\cup_{\ell=1}^\infty \{V\leq \ell\} = \cX$ and, by~\eqref{eq:petite}, $1\geq \nu_a(\cX)>0$, there exists $\ell_1\in(1,\infty)$ such that $c \coloneqq\nu_a(\{V\leq \ell_1\})\in(0,1]$. 
%Recall that $\kappa =V(X)$ and the definition of return time $\lambda_\ell$ (for $\ell\in[1,\infty)$) in~\eqref{eq::lambda}. 
By Proposition~\ref{prop:lyapunov_return_times} (with $h\equiv1$), there exist $\ell'\in[\ell_1,\infty)$, such that for every $q\in(0,1)$ and $x\in\cX$ we have
\begin{equation}
\label{eq:return_bound_petite}
\P_x(S_{(\ell')} \geq t) \geq q\quad\text{ for all $t\in(G_1(\ell'+1),\infty)$ and 
$x\in\{V\geq G_1^{-1}(t)/(1-q)^2\}$,}
\end{equation}
 where $G_1^{-1}:(G_1(\ell'+1),\infty)\to(\ell'+1,\infty)$ is the inverse of the function $G_1$ in~\eqref{eq:G_h} (with $h\equiv 1$). Since $a$ is a probability measure on $\cB(\N\setminus\{0\})$, there exists $t_1\in(G_1(\ell'+1),\infty)$ with $a([t_1,\infty)\cap\N)<c/2$.

Pick 
$q\in(1-c/2,1)$ and
define $r_0\coloneqq G_1^{-1}(t_1)/(1-q)^2$. 
Since $G_1^{-1}$ is increasing, we have
$r_0>G_1^{-1}(t_1)>\ell'+1\geq \ell_1$. Moreover,
since the return times satisfy $S_{(\ell')}\leq S_{(\ell_1)}$,
for any  $x\in \{V\geq r_0\}$, the inequality in~\eqref{eq:return_bound_petite} yields
$\P_x(S_{(\ell_1)}<t_1)\leq\P_x(S_{(\ell')}<t_1)<1-q<c/2$.

For  $x\in \{V\geq r_0\}$ and $t\in[0, t_1]\cap\N$,  we have $$\P_x(V(X_t)\leq \ell_1)\leq \P_x(S_{(\ell_1)}<t)\leq \P_x(S_{(\ell_1)}<t_1)<c/2.$$ 
Since $a([t_1,\infty)\cap\N)<c/2$, by~\eqref{eq:petite} the following inequalities hold for all $x\in B
\cap \{V\geq r_0\}$,
$$
c =\nu_a(\{V\leq \ell_1\})\leq\sum_{j=0}^\infty \P_x(V(X_j)\leq \ell_1)a(j) \leq \sum_{j=0}^{t_1}\P_x(V(X_j)\leq \ell_1)a(j) + a([t_1,\infty)) <c,
$$ 
implying $B\cap\{V\geq r_0\}=\emptyset$. Put differently, $B\subset\{V<r_0\}$ and the lemma follows.
\end{proof}

The proofs of Lemmas~\ref{lem:lower_bound_f_convergence_rate} and~\ref{lem:lower_bound_wasserstein} rest on the following idea from~\cite{MR2540073}: the comparison a the tails of the invariant measure and the marginal law of the process yields a good lower bound for the convergence rate. The continuous time analogues of   Lemmas~\ref{lem:lower_bound_f_convergence_rate} and~\ref{lem:lower_bound_wasserstein} can be found in~\cite{brešar2024subexponential} and~\cite{MR4429313}, respectively.

\begin{proof}[Proof of Lemma~\ref{lem:lower_bound_f_convergence_rate}]
It follows from the definition of $f$-variation distance and Markov inequality that, for every $n\in\N$ and every $r\geq 1$, one has the lower bound
\begin{align*}
\|\pi(\cdot)-P^n(x,\cdot)\|_{f} &\geq \int_{\{G\geq r\}}f(x)\pi(\ud x) - \E_{x}[f(X_n)\mathbbm{1}\{G(X_n)\geq r\}] \\
&\geq a(r) - \frac{1}{r}\E_{x}[f(X_n)G(X_n)\mathbbm{1}\{G(X_n)\geq r\}] \\
&\geq a(r) - \frac{1}{r}\E_{x}[H(X_n)] \geq a(r) - \frac{v(x,n)}{r}.
\end{align*}
Let $r=r(n)$ be the unique solution to the equation $ra(r) = 2v(x,n)$. Put differently we have 
$r(n)=A^{-1}(2v(x,n))$ for all $n\in \N$ and $v(x,n)/r(n)=a(r(n))/2$.
Thus we obtain
$$
a(r(n))-v(x,n)/r(n) = a(r(n))/2=a(A^{-1}(2v(x,n)))/2,
$$
which, combined with the previous display, concludes the proof.
\end{proof}

\begin{proof}[Proof of Lemma~\ref{lem:lower_bound_wasserstein}]
For any $s\in(2,\infty)$, define the Lipschitz function  $f_{s}:\cX\to \RP$ by 
$$
f_{s}(x) \coloneqq (W(x)-g^{-1}(s)/2)\1{W(x)\geq g^{-1}(s)/2},\qquad \text{ $x\in\cX$.}
$$
If $W(x)\geq g^{-1}(s)$, then $f_s(x)\geq W(x)/2$, implying 
\begin{equation}
\label{eq:Wasserstein_bound}
\int_{\cX}f_{s}(x)^p\pi(\ud x)\geq \int_{\{g\circ W\geq s\}} (W(x)/2)^p\pi(\ud x) \geq a(s).
\end{equation}
Recall $H=2^{-p}W^p\cdot g\circ (2W)$. By Markov's inequality and Assumption~\ref{lem:lower_bound_Wasserstein_convergence_rate_b}  in Lemma~\ref{lem:lower_bound_wasserstein}, we get
\begin{align}
\nonumber \int_\cX f_s(y)^p P^n (x,\ud y) &= P^n((W-g^{-1}(s)/2)^p\1{g\circ (2W)\geq s})(x)\leq  P^n(W^p\1{g\circ (2W)\geq s})(x)\\
& \leq \frac{1}{s}P^n(W^p \cdot g\circ (2 W))(x)
=\frac{2^p}{s}P^n(H)(x)\leq \frac{2^pv(x,n)}{s}.
\label{eq:Wasserstein_growth} 
\end{align}

By the Kantorovich-Rubinstein theorem~\cite[Prop.~7.29]{Villani09} we have
\begin{equation}
\label{eq:Kantorovich-Rubinstein}
\cW_p(\mu_1,\mu_2)\geq \left |\left(\int_{\cX} |f(x)|^p\mu_1(\ud x)\right)^{1/p}-\left(\int_{\cX} |f(x)|^p\mu_2(\ud x)\right)^{1/p}\right|/L_f,
\end{equation}
for all $\mu_1,\mu_2\in \cP_p(\cX)$ and any Lipschitz function $f:\cX\to\R$ with Lipschitz constant $L_f$.
The inequalities in~\eqref{eq:Wasserstein_bound} and~\eqref{eq:Wasserstein_growth}, together with the bound in~\eqref{eq:Kantorovich-Rubinstein} applied to $f_s$, imply
$$
\cW_p(\pi,P^n(x,\cdot))\geq  \left(a(s)^{1/p}-\left(\frac{2^p v(x,n)}{s}\right)^{1/p}\right)/L_W
$$
Let $s=s(n)$ be the unique solution of the equation $s(n)a(s(n))=2^pv(x,n)$. It follows that $s(n) = A^{-1}(2^p v)(x,n)$ for all $n\in\N$ and $(2^p v(x,s(n))/s(n))^{1/p} = a(s(n))^{1/p}/2$.
This yields
$$
 a(s(n))^{1/p}-\left(\frac{2^p v(x,n)}{s(n)}\right)^{1/p}= a(s(n))^{1/p}/2\geq  \left(a\circ A^{-1} \circ (2^pv)\right)(x,n)^{1/p}/2,
$$
which completes the proof.
\end{proof}

\begin{proof}[Proof of Lemma~\ref{cor:f_invariant_lower_bound}]
Recall that $g:[1,\infty)\to[1,\infty)$ is continuous, increasing with $\lim_{r\to\infty}g(r)=\infty$.
Pick $q\in(0,1)$. Then, by Theorem~\ref{thm:invariant}, there exists a constant $c_q\in(0,\infty)$ such that 
$$\pi(\{V\geq g^{-1}(r)\})\geq c_q/L_q(g^{-1}(r))\quad\text{for all $r\in[1,\infty)$.}
$$
Using this inequality and the almost everywhere differentiability of $f_\star$, for $r\in[1,\infty)$
we obtain  
\begin{align*}
\int_{\{g\circ V\geq r\}}  f_\star\circ V(x)\pi(\ud x) &= \int_{\{V\geq g^{-1}(r)\}} \left(f_\star(1) + \int_1^{V(x)} f_\star'(y)\ud y\right)\pi(\ud x) \\
&= f_\star(1)\pi(\{V\geq g^{-1}(r)\}) + \int_1^\infty  f_\star'(y) \pi(\{V\geq \max\{g^{-1}(r), y\}\})\ud y\\
&\geq  f_\star(1)\pi(\{V\geq g^{-1}(r)\}) + \int_1^{g^{-1}(r)} f_\star'(y) \pi({\{V\geq g^{-1}(r)\}}) \ud y\\
&=  f_\star(g^{-1}(r))\pi(\{V\geq g^{-1}(r)\})\geq c_qf_\star(g^{-1}(r))/L_q(g^{-1}(r)).\qedhere
\end{align*}
\end{proof}

\subsection{Verifying assumptions of main results}
\label{subsec:verifying}
The non-confinement assumption, which is stated as $\limsup_{n\to\infty}V(X_n)=\infty$ a.s. in the \textbf{L}-drift condition~\nameref{sub_drift_conditions}, is implied by the assumption that for every $u\in\RP$, there exists $n\in\N$ such that $\inf_{x\in \cX} P^n(x,\{V\geq u\})>0$. The latter assumption holds for all algorithms $X$ studied in this paper, since $\{V\leq u\}$ is compact and the chain $X$ will leave it in a fixed number of steps with positive probability.

\begin{proof}[Proof of Lemma~\ref{lem:non_confinement}]
Pick arbitrary $r\in(1,\infty)$.
It suffices to prove that 
$\{V(X_n)\geq r\}$ occur infinitely often. 
By assumption there exist $n_r\in\N$ and $c_r>0$ such that $\inf_{x\in\cX} P^{n_r}(x,\{V\geq r\})>c_r$, implying
\begin{equation}
\label{eq:Levy_Borel_Cantelli_Ass}
    \P_x( V(X_{n_r+n})\geq r|\cF_n)=P^{n_r}(X_n,\{V\geq r\})>c_r\quad\text{a.s. for all $n\in\N$ and $x\in\cX$,}    
\end{equation}
where the $\sigma$-algebra $\cF_n$ is generated by $X_0,\ldots,X_n$. By~\eqref{eq:Levy_Borel_Cantelli_Ass}, the events $E_{k}\coloneqq \{V(X_{k n_r})\geq r\}$ and $\sigma$-algebras $\cF_{(k-1)n_r}$ satisfy 
$\P_x(E_k|\cF_{(k-1)n_r})\geq c_r$ a.s. for all $k\in\N\setminus\{0\}$.
Thus we obtain $\sum_{k=1}^\infty \P_x(E_k|\cF_{(k-1)n_r})=\infty$ a.s. By
L\'evy's extension of the Borel-Cantelli lemmas~\cite[Sec.~12.15]{MR1155402} we get $\sum_{k=1}^\infty \1{E_k}=\infty$ almost surely, concluding the proof of the claim.
\end{proof} 

Lemma~\ref{lem:overshoot} represents a key technical step in verifying the assumption on the function $\Psi$ in the \textbf{L}-drift condition~\nameref{sub_drift_conditions}. Its proof is elementary and requires controlling the overshoot (via assumption~\eqref{eq:incremental_overshoot})  and applying a submartingale inequality (which holds by assumption~\eqref{eq:sub_mart_ass}).

\begin{proof}[Proof of Lemma~\ref{lem:overshoot}]
Let $\ell_0>0$ be as in assumption~\eqref{eq:sub_mart_ass}. Choose $r_0>\ell_0$ such that  assumption~\eqref{eq:incremental_overshoot} holds and pick $0<\ell<r_0<r<\infty$. For  $i\in\N$ define the events 
$$
B_i\coloneqq \{V(X_i)\leq r,V(X_{i-1})\leq r,\dots,V(X_0)\leq r\} \quad\&\quad A_i \coloneqq \{V(X_i)>r\}\cap B_{i-1},
$$
and set $B_{-1}\in \cF_0$ such that $\P_x(B_{-1})=1$ for all $x\in\cX$.
For any $x\in\cX$ we have
$\{T^{(r)} = i\} = A_i$, $\P_x$-a.s., implying $\sum_{i=0}^\infty\P_x(A_i\cap \{S_{(\ell)}> T^{(r)}\})=\P_x(S_{(\ell)}> T^{(r)})$. For any $k\in\N\setminus\{0\}$ we obtain
% The third equality involves applying Markov property at time $k$, where $\nu_k(\cdot)=\P_x(X_{k-1}\in\cdot|B_{k-1})$ is the law of $X_{k-1}$ on the event $B_{k-1}$. Moreover, the inequality holds from the assumption of the lemma and the last equality holds since $\mathbbm{1}\{X_k>r\}\cdot \mathbbm{1}_{B_{k-1}}  = \mathbbm{1}_{A_k}$,
% \begin{align*}
% \E_x[\Psi\circ V(X_{k})\mathbbm{1}_{A_k}] &= \E_x[\E_x[\Psi\circ V(X_{k})\mathbbm{1}_{A_k}|\cF_{k-1}]]\\
% & =\E_x[\mathbbm{1}_{B_{k-1}} \E_{X_{k-1}}[\Psi\circ V(X_{k})\mathbbm{1}\{X_k>r\}]] \\
% & \leq \E_x[\mathbbm{1}_{B_{k-1}} C\Psi(r) \P_{X_{k-1}}(V(X_k)>r)]\\
% & = C\Psi(r) \E_x[\mathbbm{1}_{B_{k-1}}  \P_x(V(X_k)>r|\cF_{k-1})]=  C\Psi(r)\P_x(A_k).
% \end{align*}
\begin{align}
\nonumber
\E_x[\Psi\circ V(X_{k})\mathbbm{1}_{A_k}\mathbbm{1}\{S_{(\ell)}> T^{(r)}\}] &= \E_x[\E_x[\Psi\circ V(X_{k})\mathbbm{1}_{B_{k-1}}\mathbbm{1}\{V(X_k)>r\}\mathbbm{1}\{S_{(\ell)}> k\}|\cF_{k-1}]]\\
\nonumber
& =\E_x[\mathbbm{1}_{B_{k-1}} \mathbbm{1}\{S_{(\ell)}> k-1\}\E_{X_{k-1}}[\Psi\circ V(X_{k})\mathbbm{1}\{X_k>r\}]] \\
\nonumber
& \leq \E_x[\mathbbm{1}_{B_{k-1}}\mathbbm{1}\{S_{(\ell)}> k-1\} C\Psi(r) \P_{X_{k-1}}(V(X_k)>r)]\\
\nonumber
& = C\Psi(r) \E_x[\mathbbm{1}_{B_{k-1}} \mathbbm{1}\{S_{(\ell)}> k-1\} \P_x(V(X_k)>r|\cF_{k-1})]\\
\nonumber
& = 
C\Psi(r) \E_x[\mathbbm{1}_{B_{k-1}}\mathbbm{1}\{V(X_k)>r\}\mathbbm{1}\{S_{(\ell)}> k\}]\\
\label{eq:final_inequality_first_passage_event}
& =  C\Psi(r)\P_x(A_k\cap \{S_{(\ell)}> k\}),
\end{align}
where the second equality follows from the Makov property (the $\sigma$-algebra $\cF_n$ is generated by $X_0,\ldots,X_n$) and the $\P_x$-a.s. equality $\mathbbm{1}\{X_k>r\}\mathbbm{1}\{S_{(\ell)}> k\}=\mathbbm{1}\{X_k>r\}\mathbbm{1}\{S_{(\ell)}> k-1\}$. The inequality follows from assumption~\eqref{eq:incremental_overshoot} in the lemma since on $B_{k-1}$ we have $V(X_{k-1})\leq r$  $\P_s$-a.s.
% Since for every $x\in\cX$ we have $\limsup_{n\to\infty}V(X_n) = \infty$, $\P_x$-a.s., we have $\P_x(T^{(r)}<\infty)=1$, and thus $\{T^{(r)} = k\} = A_k$, $\P_x$-a.s., implying $\sum_{k=1}^\infty\P_x(A_k\cap \{S_{(\ell)}> T^{(r)}\})=\P_x(S_{(\ell)}> T^{(r)})$. By~\eqref{eq:final_inequality_first_passage_event}, we conclude that
By definition the events $A_k=\{T^{(r)} = k\}$, $k\in\N$, are $\P_x$-a.s. pairwise disjoint, implying:
\begin{align}
\nonumber
\E_x[\Psi\circ V(X_{T^{(r)}})\mathbbm{1}\{S_{(\ell)}> T^{(r)}\}] & = \sum_{k = 1}^{\infty}\E_x[\Psi\circ V(X_{k})\mathbbm{1}_{A_k}\mathbbm{1}\{S_{(\ell)}> T^{(r)}\}] \\
\label{eq:overshoot}
& \leq C\Psi(r)\sum_{k=1}^\infty\P_x(A_k\cap \{S_{(\ell)}> T^{(r)}\})=C\Psi(r) \P_x(S_{(\ell)}> T^{(r)}).
\end{align}

By assumption~\eqref{eq:sub_mart_ass}, picking $\ell\in(\ell_0,\infty)$  makes  the process $\Psi\circ V(X_{\cdot\wedge S_{(\ell)}\wedge T^{(r)}})$ a submartingale. 
Since $\Psi\circ V(X_{S_{(\ell)}\wedge T^{(r)}})\mathbbm{1}\{S_{(\ell)}<T^{(r)}\}\leq \Psi(\ell)$ $\P_x$-a.s., Fatou's lemma and  
inequality~\eqref{eq:overshoot} imply
\begin{align*}
\Psi\circ V(x)&\leq \liminf_{n\to\infty}\E_x[\Psi\circ V(X_{n\wedge S_{(\ell)}\wedge T^{(r)}}))]\leq \E_x[\Psi\circ V(X_{S_{(\ell)}\wedge T^{(r)}})]\\
&\leq \Psi(\ell) + \E_x[\Psi\circ V(X_{T^{(r)}})\mathbbm{1}\{S_{(\ell)}> T^{(r)}\}]\leq \Psi(\ell) + C\Psi(r)\P_x(S_{(\ell)}> T^{(r)}).
\end{align*}
By possibly enlarging the constant $C>0$ in assumption~\eqref{eq:overshoot} and using the fact that $\Psi$ is increasing, for any $m>0$  we have
$$
\P_x(T^{(r)}<S_{(\ell)})\geq (\Psi\circ V(x)-\Psi(\ell))/(C\Psi(r))\geq (\Psi(\ell+m)-\Psi(\ell))/(C\Psi(r))
$$
for any 
 $r\in(\max\{r_0,\ell+m\},\infty)$ and  $x\in\{V\geq \ell+m\}$.
 Setting $C_{\ell,m}\coloneqq (\Psi(\ell+m)-\Psi(\ell))/C$
completes the proof of the lemma.
\end{proof}

\appendix

\section{Proofs of the theorems on sampling algorithms in Section~\ref{sec:examples}}
\label{sec:examples_proofs}
This appendix is devoted to the proofs of the examples presented in Section~\ref{sec:examples}. We give the proofs for Metropolis-adjusted and unadjusted algorithms together, as their arguments do not differ significantly and often complement each other. For instance, drift conditions for MALA can easily be obtained from those for ULA by observing that the acceptance rate satisfies $\alpha(x,\cdot)\to 1$ as $|x|\to\infty$. The proofs involve verifying the \textbf{L}-drift condition~\nameref{sub_drift_conditions} and then using the theory developed in Section~\ref{sec:main_results}. Since the algorithm chains studied in this paper are well-known to be Feller continuous, with Lebesgue measure as an irreducible measure and satisfy non-confinement by Lemma~\ref{lem:non_confinement}, verifying the \textbf{L}-drift condition~\nameref{sub_drift_conditions} reduces to finding the functions $(V,\varphi,\Psi)$ such that the drift conditions hold.

\subsection{Random walk Metropolis algorithms}
\label{subsec:RWM_proof}
The proofs of Theorems~\ref{thm:RWM_light} and~\ref{thm:RWM_heavy} for the RWM sampler require the verification of~\textbf{L}-drift conditions~\nameref{sub_drift_conditions}. 
Proposition~\ref{prop:drift_metropolis_light} deals with the case when the proposal $q$ has finite variance. Proposition~\ref{prop:drift_metropolis_heavy} establishes~\nameref{sub_drift_conditions} for heavy-tailed proposals. The proofs of the proposition have similar structure, with the main difference being that two-sided  bounds are needed for heavy-tailed proposals, while in the case of finite variance an upper bound on the proposal density is sufficient. 

\begin{prop}
\label{prop:drift_metropolis_light}
 Let the assumptions of Theorem~\ref{thm:RWM_light} hold.
Then, the $\mathbf{L}$-drift condition~\nameref{sub_drift_conditions} is satisfied by the functions $V(x)=|x|\vee 1$, $\Psi(r) = r^{s}\vee 1$ for any $s\in(v+2,v+3)$, and $\varphi(1/r)=C/r^{3}$ where $C\in(0,\infty)$ is some constant.
\end{prop}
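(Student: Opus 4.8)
The plan is to verify the three parts of Assumption~\nameref{sub_drift_conditions} for the RWM chain with finite-variance proposal, targeting $\pi$ as in~\eqref{eq:examples_pi}, using $V(x)=|x|\vee 1$. The non-confinement and ergodicity/Feller hypotheses are standard (as noted at the start of Appendix~\ref{sec:examples_proofs}, they follow from Lemma~\ref{lem:non_confinement} and known facts about RWM), so the work reduces to the inequalities on $\varphi$ and $\Psi$. The two genuinely analytic tasks are: (a) producing an upper bound on $P(1/V)(x) - 1/V(x)$ of order $1/|x|^3$ outside a compact set, and (b) verifying the heavy-tail visiting probability~\eqref{eq:assumption_heavy_tail_bound} with $\Psi(r)=r^s\vee 1$, $s\in(v+2,v+3)$, which by the discussion in Section~\ref{subsubsec:Identifyin_Psi} I would do through Lemma~\ref{lem:overshoot} (the ``many small jumps'' case, since a finite-variance proposal gives $P(\Psi\circ V)<\infty$).

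For part~\ref{sub_drift_conditions(i)}: write $1/V(x) = 1/|x|$ for $|x|\ge 1$ and expand $\mathbb E[1/|x+Z|\,\alpha(x,x+Z)] + (1-\text{acc. prob.})/|x|$, where $Z$ has the proposal law. Splitting the proposal mass into the region where the acceptance ratio $\pi(x+Z)/\pi(x)$ is essentially $1$ (moves roughly orthogonal to $x$, or inward) versus the rejected outward moves, and Taylor-expanding $1/|x+z| = 1/|x| - \langle x,z\rangle/|x|^3 + O(|z|^2/|x|^3)$, one finds the first-order term $\langle x,z\rangle/|x|^3$ integrates against a symmetric proposal in a way that, after accounting for the asymmetric acceptance, produces a net contribution of order $1/|x|^3$ (times a finite second moment of $q$). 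The key point is that because $\pi$ is polynomially heavy, the acceptance probability on ``sideways'' moves tends to $1$, so the leading $1/|x|$ terms cancel and the residual is $O(1/|x|^3)$; crucially one needs the bound to hold as an inequality $P(1/V) - \varphi(1/V) \le 1/V$ with $\varphi(1/r) = C/r^3$, and one checks $r\mapsto r\varphi(1/r) = C/r^2$ is decreasing with limit $0$ as required. Here the hypothesis that $q$ is eventually non-increasing and has finite variance is what controls the tail of the increment and makes the $O(|z|^2/|x|^3)$ remainder integrable.

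For part~\ref{sub_drift_conditions(ii)} I would invoke Lemma~\ref{lem:overshoot} with this $\Psi$. This requires checking~\eqref{eq:sub_mart_ass}, i.e. $P(\Psi\circ V)(x)\ge \Psi\circ V(x)$ for $|x|$ large: since $\Psi\circ V(x)=|x|^s$ with $s>v+2>2$ is convex, and the accepted increment has (conditionally) mean essentially pointing inward but with spread of order $1$, one gets $\mathbb E[|x+Z|^s\alpha] + r(x)|x|^s \ge |x|^s$ because the convexity gain from the fluctuations of $Z$ dominates the (vanishing, since $\pi$ is heavy-tailed) inward bias — this is exactly the ``positive drift of $\Psi\circ V$'' phenomenon described after~\eqref{eq:Psi_integrable_case}. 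Then one checks the incremental-overshoot bound~\eqref{eq:incremental_overshoot}: $P(\mathbbm 1\{V>r\}\Psi\circ V)(x) \le C\Psi(r)P(x,\{V>r\})$ for $|x|\le r$, which holds because a finite-variance proposal cannot overshoot the level $r$ by more than a factor that is bounded in $L^1$ uniformly in $r$ (one splits according to whether $|x+Z|\in(r,2r)$ or $|x+Z|\ge 2r$; on the first piece $\Psi\circ V(x+Z)\le 2^s\Psi(r)$ directly, and on the second the polynomial tail of $q$ beats $|x+Z|^s$ after using $s<v+3$ — wait, here the relevant constraint is that $q$ has a finite second moment and decays fast enough; since $q$ is finite-variance the tail integral $\int_{|z|>r}|z|^s q(z)\,dz$ is negligible against $\Psi(r)\int_{|z|>r}q(z)\,dz$ for $r$ large). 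The main obstacle I expect is this overshoot estimate combined with keeping all constants uniform in $r$ and $x\in\{V\le r\}$ — in particular handling the Metropolis rejection $r(x)$ carefully, since $\alpha(x,\cdot)$ can be small for large outward moves, which actually helps (rejection keeps the chain from overshooting) but must be quantified. Once Lemma~\ref{lem:overshoot} applies, it delivers~\eqref{eq:assumption_heavy_tail_bound} with some $C_{\ell,m}$, completing the verification. Finally, I would remark that the admissible range $s\in(v+2,v+3)$ rather than a single value is what feeds the $\eps$ losses in Theorem~\ref{thm:RWM_light}; taking $s=v+2+\eps$ recovers the statement in the text.
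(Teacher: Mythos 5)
Your division of labour --- verify \nameref{sub_drift_conditions}\ref{sub_drift_conditions(i)} directly from the kernel and obtain \nameref{sub_drift_conditions}\ref{sub_drift_conditions(ii)} from Lemma~\ref{lem:overshoot} with $\Psi(r)=r^s\vee 1$, $s\in(v+2,v+3)$ --- matches the paper. For the $\varphi$-part, however, your argument is genuinely different from the paper's: the paper discards the outward moves ($|y|>|x|$, which contribute negatively to $P(1/V)-1/V$), bounds the acceptance by one on inward moves, and estimates the resulting one-sided integral in polar coordinates using the pointwise bound $q(t)\le \overline k/\max\{t^{d+2},1\}$ (this is where finite variance plus eventual monotonicity enter). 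Your route keeps both sides and exploits the symmetry of $q$: the first-order contributions of inward moves (accepted with probability one) and outward moves (accepted with probability $1-O(|z|/|x|)$) cancel, and what remains --- the acceptance correction, the second-order Taylor term, and the region $|z|\gtrsim|x|$ --- is $O(|x|^{-3})$. This cancellation is in fact essential: the one-sided inward integral alone is of exact order $|x|^{-2}$ (in $d=1$ with Gaussian $q$ it equals $|x|^{-2}\int_0^\infty u\,q(u)\,\ud u\,(1+o(1))$), so a bound of order $|x|^{-3}$ cannot be read off from it, and your two-sided expansion is what actually delivers $\varphi(1/r)=C/r^{3}$. The one piece you should write out is $|z|\gtrsim|x|$, where the Taylor expansion fails and one uses that $1/V(x+z)$ is non-negligible only when $x+z$ lands in a bounded set, an event of probability controlled by the pointwise tail bound on $q$; this is where ``eventually non-increasing'' is genuinely used.

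The genuine gap is in your verification of the overshoot condition~\eqref{eq:incremental_overshoot}. The band $V(X_1)\in(r,2r)$ is indeed trivial, but for the far piece you compare $\int_{|z|>r}|z|^s q(z)\,\ud z$ with $\Psi(r)\int_{|z|>r}q(z)\,\ud z$, i.e.\ you work with the proposal rather than with the Metropolis kernel. Under the stated hypotheses this breaks down: $q$ is only assumed to have a finite second moment, and since $s>v+2>2$ the integral $\int_{|z|>r}|z|^s q(z)\,\ud z$ may be infinite (e.g.\ $q(|z|)\asymp |z|^{-(d+2)}(\log|z|)^{-2}$); even the finiteness of $P(\Psi\circ V)$, which you invoke to place yourself in the many-small-jumps regime, already uses the acceptance factor. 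The correct estimate --- and the paper's --- keeps $\min\{1,\pi(y)/\pi(x)\}\asymp(|x|\vee 1)^{v+d}|y|^{-(v+d)}$ on \emph{both} sides: one upper-bounds $\E_x[\Psi\circ V(X_1)\1{V(X_1)\ge r}]$ by $C(|x|\vee1)^{v+d}\int_{\mathbb{S}^{d-1}}\int_r^\infty t^{s-v-1}q(|x-t\xi|)\,\ud t\,\omega_{d-1}(\ud\xi)$, lower-bounds $\P_x(V(X_1)\ge r)$ by the contribution of the band $t\in(r,3r)$ carrying the same factor $(|x|\vee1)^{v+d}t^{-v-1}$, and compares the part beyond $3r$ with the band using the eventual monotonicity of $q$; it is the exponent $-(v+d)$ coming from the acceptance ratio, together with $s<v+3\ (\le v+d+2)$, that makes these integrals finite and the comparison uniform over $x\in\{V\le r\}$, including $|x|\ll r$, where both sides are single-large-jump quantities. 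Your sketch notes that rejection ``helps'' but never uses it quantitatively, and without it the inequality cannot be established at this level of generality. Finally, for the submartingale input~\eqref{eq:sub_mart_ass} the paper simply cites \cite[Proof of Prop.~5]{Roberts07}, which gives $\lim_{|x|\to\infty}\bigl(P(\Psi\circ V)(x)-\Psi\circ V(x)\bigr)/\bigl(\Psi\circ V(x)|x|^{-2}\bigr)=\frac{s}{2d}(s-2-v)>0$; your convexity-versus-drift heuristic is the right mechanism, but the inward bias is itself produced by the asymmetric acceptance, and the threshold $s>v+2$ only emerges from that computation, so this step also needs to be carried out rather than asserted.
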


The proofs in this section will depend on spherical symmetry of the proposal $q$ and the invariant measure $\pi$. Thus, the following useful identity, based on the law of cosines (see also~\cite[Eq.~(79)]{Roberts07}), will be used 
\begin{equation}
\label{eq:law_of_cosines}    
|x-\xi t|=(|x|^2+t^2-2t|x|\langle x/|x|,\xi\rangle)^{1/2},\quad \text{for $t\in\RP$, $x\in\R^d$ and $\xi\in\mathbb{S}^{d-1}$.}
\end{equation}

\begin{proof}
Consider a function $V(x)=|x|\vee 1$ and denote by $B(R)=\{z\in \R^d:|z|\leq R\}$ the ball with radius $R\in(0,\infty)$, centred at the origin. It follows that for all $x$ with $|x|$ sufficiently large, we have
\begin{align}
\nonumber P(1/V)(x)-1/V(x) &= \int_{\R^d} (1/V(y)-1/V(x)) \min\{\pi(y)/\pi(x),1\} q(|x-y|)\ud y\\
\nonumber &\leq \int_{B(|x|)\setminus B(1)}(1/|y|-1/|x|)q(|x-y|)\ud y + \int_{B(1)}q(|x-y|)\ud y\\
\label{eq:RW_finite_initial}&\leq \int_{\mathbb{S}^{d-1}}\int_1^{|x|}(1/t-1/|x|)q(| x-\xi t|) t^{d-1}\ud t\omega_d (\ud \xi)+ C|x|^{-3},
\end{align}
for some $C\in(0,\infty)$, where $\mathbb{S}^{d-1}$ denotes the unit sphere in $\R^d$ and $\omega_d$ denotes the surface measure on $\mathbb{S}^{d-1}$ with total area $C_d>0$.
Recall that $q$ admits the second moment. Thus, there exists a constant $\overline k\in(0,\infty)$, such that $q(|x|)\leq \overline k/\max\{|x|^{d+2},1\}$ for all $x\in\R^d$.
Then, it follows from~\eqref{eq:RW_finite_initial}, the identity in~\eqref{eq:law_of_cosines} and the triangular inequality $|x-t\xi|\geq |x|-t$ that
\begin{align*}
P(1/V)(x)-1/V(x)-C|x|^{-3}&\leq\frac{C_d}{|x|}\int_1^{|x|}\frac{|x|-t}{t}\frac{\overline k t^{d-1}}{\max\{(|x|-t)^{2+d},1\}}\ud t\\
&\leq \frac{C_d}{|x|}\int_{1}^{|x|}\frac{\overline k t^{d-2}}{\max\{(|x|-t)^{1+d},1\}}\ud t\\
&= \frac{C_d}{|x|}\int_{|x|/2}^{|x|}\frac{t^{d-2}}{\max\{(|x|-t)^{1+d},1\}}\ud t+\frac{C_d}{|x|}\int_{1}^{|x|/2}\frac{t^{d-2}}{(|x|-t)^{1+d}}\ud t\\
 &\leq C'|x|^{d-3}\int_1^{|x|/2}\frac{\ud u}{u^{d+1}}+ C''|x|^{-(d+2)}\int_1^{|x|/2}t^{d-2}\ud t\leq C'''|x|^{-3}, 
\end{align*}
for some constants $C_d,C',C'',C'''\in(0,\infty)$.
The inequalities in the above display and in~\eqref{eq:RW_finite_initial} imply that $P(1/V)-1/V\leq \varphi(1/V)$ on the complement of a compact set, where $\varphi(1/r)=\tilde C/r^{3}$, for some constant $\tilde C\in(0,\infty)$. 

Let $\Psi(r) = r^s\vee 1$ for $s\in(2+v,3+v)$, $0<\ell<x<r$ and recall $S_{(\ell)} = \inf\{n\in\N:|X_n|\leq \ell\}$ and $T^{(r)} = \inf\{n\in\N:|X_n|\geq r\}$.  Since $\Psi\circ V(x)=|x|^s$ for all large $|x|$, by~\cite[Proof of Prop.~5]{Roberts07} it holds that
$$
\lim_{|x|\to\infty} \frac{P(\Psi\circ V)(x)-\Psi\circ V(x)}{\Psi\circ V(x)|x|^{-2}} = \frac{s}{2d}(s-2-v)>0.
$$
Thus, there exists $\ell_0>0$ such that 
$P(\Psi \circ V)(x)\geq\Psi \circ V(x)$
for all $x\in \{V\geq\ell_0\}$.
Hence Lemma~\ref{lem:overshoot}, together with the Claim below, implies that $\Psi$ satisfies~\nameref{sub_drift_conditions}\ref{sub_drift_conditions(ii)}.

\noindent\textbf{Claim}. There exists $C_\Psi\in(0,\infty)$ such that for all  large $r\in(1,\infty)$ and every $|x|\leq r$ we have 
$$\E_x[\Psi\circ V(X_{1})|\{V(X_1)\geq r\}]=\E_x[\Psi\circ V(X_{1})\1{V(X_1)\geq r}]/\P_x(V(X_1)\geq r)\leq C_\Psi/\Psi(r).$$

\noindent \underline{Proof of Claim.} Recall that $q$ is eventually  non-increasing (i.e., $r\mapsto q(r)$ is a non-increasing function for $r$ large).
Pick $x$ with $|x|<r$. From the fact that  $c_\pi(|x|\vee 1)^{-d-v}\leq \pi(x)\leq C_\pi (|x|\vee 1)^{-d-v}$ for all $x\in\R^d$ (see assumption~\eqref{eq:examples_pi}),  we obtain a constant $C_1>0$ satisfying
\begin{align*}
    \P_x(V(X_1)\geq r) &= \int_{\mathbb{S}^{d-1}}\int_r^\infty t^{d-1}\left(\frac{\pi(t\xi)}{\pi(x)}\wedge 1\right) q(|x-t\xi|)\ud t\otimes\omega_{d-1}(\ud \xi)\\
    &\geq \int_{\mathbb{S}^{d-1}}\int_r^{3r} t^{d-1}\left(\frac{\pi(t\xi)}{\pi(x)}\wedge 1\right) q(|x-t\xi|)\ud t\otimes\omega_{d-1}(\ud \xi)
    \\
    %&~~+\int_{\mathbb{S}^{d-1}}\int_{3r}^{\infty} t^{d-1}\left(\frac{\pi(t\xi)}{\pi(x)}\wedge 1\right) q(|x-t\xi|)\ud t\otimes\omega_{d-1}(\ud \xi) \\
    &\geq C_1(|x|\vee 1)^{v+d}\int_{\mathbb{S}^{d-1}}\int_r^{3r} t^{-v-1} q(|x-t\xi|)\ud t\otimes\omega_{d-1}(\ud \xi).%\\
  % &~~+C_1|x|^{v+d}\int_{3r}^{\infty} t^{-v-1} q(|x-t\xi|)\ud t\otimes\omega_{d-1}(\ud \xi) \\
 %   &\geq C_2|x|^{d+v}\int_{\mathbb{S}^{d-1}}\ud\omega_{d-1}\int_r^{3r} t^{-v-1} q(|x-t\xi|)\ud t, 
\end{align*}
Moreover, since $s<v+3$ and the proposal density $q(|\cdot|)$ admits the second moment, the function  $t\mapsto t^{s-v-1}q(|x-t\xi|)$ is integrable. 
Moreover, since $q$ is eventually non-increasing,
there exists $c>0$ such that for all large enough $r>0$ we have
$$
(3r)^s\int_{\mathbb{S}^{d-1}}\int_r^{3r} t^{-v-1} q(|x-t\xi|)\ud t\otimes\omega_{d-1}(\ud \xi)\geq c\int_{\mathbb{S}^{d-1}}\int_{3r}^{\infty} t^{s-v-1}q(|x-t\xi|)\ud t\otimes\omega_{d-1}(\ud \xi).
$$ 
Thus, there exist constants $C_3,C_4,C_5\in(0,\infty)$ such that
\begin{align*}
\nonumber\E_x[\Psi\circ V(X_{1})\1{V(X_1)\geq r}] &= \int_{\mathbb{S}^{d-1}}\int_r^\infty t^s\left(\frac{\pi(t\xi)}{\pi(x)}\wedge 1\right) q(|x-t\xi|) t^{d-1}\ud t\otimes \ud\omega_{d-1} \\
&= \int_{\mathbb{S}^{d-1}}\int_r^{3r} t^s\left(\frac{\pi(t\xi)}{\pi(x)}\wedge 1\right)q(|x-t\xi|) t^{d-1}\ud t\otimes\omega_{d-1}(\ud \xi)\\
&~~~+ \int_{\mathbb{S}^{d-1}}\int_{3r}^{\infty} t^s\left(\frac{\pi(t\xi)}{\pi(x)}\wedge 1\right)q(|x-t\xi|)t^{d-1}\ud t\otimes\omega_{d-1}(\ud \xi)\\
&\leq C_3 (3r)^s(|x|\vee 1)^{v+d}\int_{\mathbb{S}^{d-1}}\int_r^{3r} t^{-v-1} q(|x-t\xi|)\ud t\otimes\omega_{d-1}(\ud \xi)\\
&~~~+ C_3(|x|\vee 1)^{v+d}\int_{\mathbb{S}^{d-1}}\int_{3r}^{\infty} t^{s-v-1}q(|x-t\xi|)\ud t\otimes\omega_{d-1}(\ud \xi)\\
&\leq C_4 (3r)^s(|x|\vee 1)^{v+d}\int_{\mathbb{S}^{d-1}}\int_r^{3r} t^{-v-1} q(|x-t\xi|)\ud t\otimes\omega_{d-1}(\ud \xi)\\
&\leq C_5 \Psi(r) \P_x(V(X_1)\geq r),\qquad \text{for $r$ sufficiently large,}
\end{align*}
which completes the proof.
\end{proof}

\begin{prop}
\label{prop:drift_metropolis_heavy} Let the assumptions of Theorem~\ref{thm:RWM_heavy} hold. Then, the $\mathbf{L}$-drift condition~\nameref{sub_drift_conditions} is satisfied by the functions $V(x)=|x|\vee 1$, $\Psi(r) = r^{s}\vee 1$ for any $s\in(v+\eta,v+\eta+1)$, and $\varphi(1/r)=C/r^{1+\eta}$ where $C\in(0,\infty)$ is some constant.
\end{prop}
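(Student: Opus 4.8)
The plan is to verify the \textbf{L}-drift condition~\nameref{sub_drift_conditions} for the functions $V(x)=|x|\vee 1$, $\varphi(1/r)=C/r^{1+\eta}$ and $\Psi(r)=r^s\vee 1$ with $s\in(v+\eta,v+\eta+1)$, following the same two-part structure as the proof of Proposition~\ref{prop:drift_metropolis_light}, but now using \emph{two-sided} bounds on the proposal density $q$ rather than just an upper bound. Concretely, write $q(|z|)\asymp |z|^{-(d+\eta)}$ for $|z|$ large (from the slowly varying assumption on $\ell_q$, with the slowly varying factor absorbed into $\varepsilon$-room), and $\pi(x)\asymp |x|^{-(d+v)}$ for $|x|$ large (from~\eqref{eq:examples_pi}). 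The law-of-cosines identity~\eqref{eq:law_of_cosines} will again be used to convert the $d$-dimensional integrals into one-dimensional radial integrals against the surface measure on $\mathbb{S}^{d-1}$.

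First I would establish the $\varphi$-inequality $P(1/V)-1/V\leq \varphi(1/V)$ on the complement of a compact set. As in~\eqref{eq:RW_finite_initial}, split $P(1/V)(x)-1/V(x)$ into the contribution from $\{1\le |y|\le |x|\}$ (where $1/|y|-1/|x|\ge 0$ and the acceptance ratio is $\le 1$) plus a small contribution from $B(1)$. For the main term one gets, up to constants, $|x|^{-1}\int_1^{|x|}\frac{|x|-t}{t}\,q(|x-t\xi|)\,t^{d-1}\,\ud t$; using $q(|x-t\xi|)\lesssim (|x|-t)^{-(d+\eta)}\wedge 1$ and splitting the $t$-integral at $|x|/2$, the near-$|x|$ piece contributes $\lesssim |x|^{-1}\cdot |x|^{d-1}\int_1^{|x|/2}u^{-(1+\eta)}\,\ud u\lesssim |x|^{-1}$ when $\eta>0$ — wait, this gives only $|x|^{-1}$, not $|x|^{-(1+\eta)}$; the point is that the genuinely dominant contribution is the \emph{far} piece $t\in[1,|x|/2]$, where $|x-t\xi|\asymp |x|$, giving $\lesssim |x|^{-1}\cdot |x|^{-(d+\eta)}\int_1^{|x|/2}t^{d-2}(|x|-t)\,\ud t\lesssim |x|^{-(1+\eta)}$, and a more careful treatment of the near piece (using the local integrability of $q$, i.e. $\eta<1$ or rather the fact that the singularity $(|x|-t)^{-(d+\eta)}$ is integrated against $t^{d-1}(|x|-t)$ which kills one power) shows it is also $O(|x|^{-(1+\eta)})$. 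Hence $\varphi(1/r)=C/r^{1+\eta}$ works, and $r\mapsto r\varphi(1/r)=C/r^\eta$ is decreasing to $0$ on $[1,\infty)$, as required by~\nameref{sub_drift_conditions}\ref{sub_drift_conditions(i)}.

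Second I would verify the $\Psi$-inequality~\eqref{eq:assumption_heavy_tail_bound} via Lemma~\ref{lem:overshoot}. For the submartingale condition~\eqref{eq:sub_mart_ass}, I would invoke the asymptotics of~\cite[Proof of Prop.~6]{Roberts07}, which (for heavy-tailed proposals with $\eta$ moments) give $\bigl(P(\Psi\circ V)(x)-\Psi\circ V(x)\bigr)/\bigl(\Psi\circ V(x)|x|^{-\eta}\bigr)\to c(s-v-\eta)\cdot(\text{positive const})$ as $|x|\to\infty$; since $s>v+\eta$ this limit is strictly positive, so $P(\Psi\circ V)\ge\Psi\circ V$ on $\{V\ge\ell_0\}$ for some $\ell_0$. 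The incremental-overshoot bound~\eqref{eq:incremental_overshoot} is established by an analogue of the \textbf{Claim} in the proof of Proposition~\ref{prop:drift_metropolis_light}: for $|x|\le r$, lower-bound $\P_x(V(X_1)\ge r)$ by restricting the proposal to the annulus $\{r\le |y|\le 3r\}$ (here the two-sided bound $q(|x-t\xi|)\gtrsim (|x|+t)^{-(d+\eta)}\asymp r^{-(d+\eta)}$ is essential, since a heavy-tailed proposal really does reach the annulus with comparable probability), obtaining $\P_x(V(X_1)\ge r)\gtrsim (|x|\vee1)^{v+d}\int_{\mathbb S^{d-1}}\int_r^{3r}t^{-(v+1)}q(|x-t\xi|)\,\ud t\,\omega(\ud\xi)$; then split $\E_x[\Psi\circ V(X_1)\mathbbm 1\{V(X_1)\ge r\}]$ into the annulus part ($\le (3r)^s$ times the same integral) and the tail part $\int_{3r}^\infty t^{s}(\cdot)$, and control the tail part by the annulus part using that $t\mapsto t^{s-v-1}q(|x-t\xi|)$ is integrable (which needs $s<v+\eta+1$, exactly the upper constraint on $s$) together with monotonicity of $q$ in the tail. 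This yields $\E_x[\Psi\circ V(X_1)\mathbbm 1\{V(X_1)\ge r\}]\le C_\Psi\Psi(r)\P_x(V(X_1)\ge r)$, i.e.~\eqref{eq:incremental_overshoot}, and Lemma~\ref{lem:overshoot} then delivers~\eqref{eq:assumption_heavy_tail_bound}.

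The main obstacle I anticipate is the careful bookkeeping of the slowly varying factor $\ell_q$ in $q$ and of $\ell$ in $\pi$: these prevent exact power-law identities, so all the radial integral estimates must be carried out with $\limsup/\liminf$ bounds and the resulting slowly varying corrections absorbed into an arbitrarily small exponent (this is why $\Psi(r)=r^s$ is allowed for the whole open interval $s\in(v+\eta,v+\eta+1)$ rather than a single value, and why the final theorem statements carry an $\eps$). A secondary technical point is ensuring the far-field versus near-field split in the $\varphi$-estimate genuinely produces the exponent $1+\eta$ uniformly; this requires being slightly more delicate than in the finite-variance case because the proposal tail $(|x|-t)^{-(d+\eta)}$ decays more slowly, but the extra factor $t^{d-1}(|x|-t)/|x|$ in the integrand still provides enough decay. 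Everything else is a routine adaptation of the finite-variance argument and of the cited computations from~\cite{Roberts07}.
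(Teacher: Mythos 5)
Your proposal follows essentially the same route as the paper's proof: for the $\varphi$-inequality, the same restriction to inward moves $1\le|y|\le|x|$ (acceptance bounded by $1$) plus the small $B(0,1)$ term, the law-of-cosines reduction~\eqref{eq:law_of_cosines} with the two-sided power bounds on $q$, and the split of the radial integral at $|x|/2$; and for~\eqref{eq:assumption_heavy_tail_bound}, the same combination of the positivity of the $\Psi\circ V$-drift taken from~\cite{Roberts07} with Lemma~\ref{lem:overshoot}, verified through exactly the paper's overshoot Claim comparing the annulus $\{r\le|y|\le 3r\}$ with the tail $\{|y|\ge 3r\}$ (using the lower bound $q(|t\xi-x|)\gtrsim(|x|+t)^{-(d+\eta)}$ and the integrability of $t\mapsto t^{s-v-1}q(|x-t\xi|)$, i.e.\ $s<v+\eta+1$). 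The near-field term you flag as delicate is not treated any more carefully in the paper: there it is bounded by the same crude pointwise-in-$\xi$ estimate $q(|t\xi-x|)\le\overline k/\max\{(|x|-t)^{d+\eta},1\}$ and the same split at $|x|/2$, so on that step your attempt coincides with (and has no gap beyond) the paper's own argument.
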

\begin{proof}
Consider the function $V(x)=|x|\vee 1$ and denote by $B(R)=\{z\in \R^d:|z|\leq R\}$ the ball with radius $R\in(0,\infty)$, centred at the origin. It follows that for all $x$ with $|x|$ sufficiently large, we have
\begin{align}
\nonumber P(1/V)(x)-1/V(x) &= \int_{\R^d} (1/V(y)-1/V(x)) \max\{\pi(y)/\pi(x),1\} q(|x-y|)\ud y\\
\nonumber &\leq \int_{B(|x|)\setminus B(1)}(1/|y|-1/|x|)q(|x-y|)\ud y + C|x|^{-\eta-d}\\
\label{eq:RW_unbounded_initial}&=\int_{\mathbb{S}^{d-1}}\int_1^{|x|}(1/t-1/|x|)q(|\xi t-x|) t^{d-1}\ud t\omega_d (\ud \xi) + C|x|^{-\eta-d},
\end{align}
where $\mathbb{S}^{d-1}$ denotes the unit sphere in $\R^d$ and $\omega_d$ denotes the surface measure on $\mathbb{S}^{d-1}$. 
Recall that, by definition of the proposal $q$, there exists a constant $\underline{k},\overline k\in(0,\infty)$, such that for all $x\in\R^d$ we have $\underline{k}/\max\{|x|^{d+\eta},1\}\leq q(|x|)\leq \overline k/\max\{|x|^{d+\eta},1\}$. Thus, by the equality in~\eqref{eq:law_of_cosines}, we have
\begin{equation}
\label{eq:q_bounds_heavy}
\frac{\underline k}{\max\{(|x|+t)^{d+\eta},1\}}\leq q(|t\xi-x|)\leq \frac{\overline k}{\max\{||x|-t|^{d+\eta},1\}}%\quad \text{for $t\in\RP$, $x\in\R^d$ and $\xi\in\mathbb{S}^{d-1}$.}
\end{equation}
for all $t\in\RP$, $x\in\R^d$ and $\xi\in\mathbb{S}^{d-1}$ with $|t\xi-x|$ sufficiently large.

Moreover, for any $\xi\in \mathbb{S}^{d-1}$, we have
\begin{align*}
\int_1^{|x|}(1/t-1/|x|)q(|\xi t-x|) t^{d-1}\ud t&\leq\frac{1}{|x|}\int_1^{|x|}\frac{|x|-t}{t}\frac{\overline kt^{d-1}}{\max\{(|x|-t)^{d+\eta},1\}}\ud t\\
&\leq \frac{1}{|x|}\int_{1}^{|x|}\frac{\overline k t^{d-2}}{\max\{(|x|-t)^{d+\eta-1},1\}}\ud t\\
&= \int_{|x|/2}^{|x|}\frac{t^{d-2}/|x|}{\max\{(|x|-t)^{d+\eta-1},1\}}\ud t+\int_1^{|x|/2}\frac{t^{d-2}/|x|}{(|x|-t)^{d+\eta-1}}\ud t\\
&\leq C'|x|^{d-3}\int_1^{|x|/2}\frac{1}{u^{d+\eta-1}}\ud u + C''|x|^{-(d+\eta)}\int_1^{|x|/2}t^{d-2}\ud t\\
&\leq C'''|x|^{-\eta-1}, \qquad\text{for all $|x|$ sufficiently large and}
\end{align*}
some constants $C',C'',C'''\in(0,\infty)$.
The final inequalities in the last display and~\eqref{eq:RW_unbounded_initial} imply that $P(1/V)-1/V\leq \varphi(1/V)$, where $\varphi(1/r)=\tilde C/r^{\eta+1}$ for some constant $\tilde C\in(0,\infty)$.

Let $\Psi(r) = r^s\vee 1$ for $s\in(v+\eta,v+\eta+1)$.  From~\cite[Proof of Prop.~7]{Roberts07}, we obtain
$$
\frac{P(\Psi\circ V)(x)-\Psi\circ V(x)}{\Psi \circ V(x)}\in(0,\infty)\quad\text{for all large $|x|$.}
%= \frac{-k}{|x|^\eta}\int_{\mathbb{S}^{d-1}}\int_0^1\frac{(1-u^{v-s+\eta})(1-u^s)u^{d-1}}{(1+u^2-2u\langle x/|x|,\xi\rangle)^{(d+\eta)/2}}\ud u\ud \omega_d(\ud \xi)>0.
$$
Thus, there exists $\ell\in(0,\infty)$ such that $P(\Psi\circ V)(x)-\Psi\circ V(x)\geq 0$ for all $x\in\R^d$ with $|x|\geq \ell$ and 
the process $\Psi(X_{\cdot\wedge S_{(\ell)}\wedge T^{(r)}})$ is a submartingale under $\P_x$ for all $x\in\R^d$
(Recall $S_{(\ell)} = \inf\{n\in\N:V(X_n)\leq \ell\}$ and $T^{(r)} = \inf\{n\in\N:V(X_n)\geq r\}$.)  
The conclusion of the proof requires the following claim.

\noindent\textbf{Claim}. There exists $C_\Psi\in(0,\infty)$ such that for all  large $r\in(1,\infty)$ and every $|x|\leq r$ we have 
$$\E_x[\Psi\circ V(X_{1})|\{V(X_1)\geq r\}]=\E_x[\Psi\circ V(X_{1})\1{V(X_1)\geq r}]/\P_x(V(X_1)\geq r)\leq C_\Psi/\Psi(r).$$

By the Claim, the assumption in  Lemma~\ref{lem:overshoot}(a) holds. The conclusion of Lemma~\ref{lem:overshoot} implies that $\Psi$ satisfies~\nameref{sub_drift_conditions}\ref{sub_drift_conditions(ii)}.
It remains to establish the Claim.

\noindent \underline{Proof of Claim.} 
 Note that by assumption~\eqref{eq:examples_pi}, we have  
\begin{equation}
\label{eq:pi_bounds_RWM_H}
c_\pi\leq \pi(x)(|x|\vee 1)^{d+v}\leq C_\pi \qquad\text{for some constants $c_\pi,C_\pi\in(0,\infty)$.}
\end{equation}
 Thus, for any $x\in\R^d$ with $|x|<r$,
 by the upper bound in~\eqref{eq:q_bounds_heavy}  there exist constants $C_1,C_2,C_3\in(0,\infty)$ such that
\begin{align}
\nonumber&\E_x[\Psi\circ V(X_{1})\1{V(X_1)\geq r}] = \int_{\mathbb{S}^{d-1}}\int_r^\infty t^{s+d-1}\left(\frac{\pi(t\xi)}{\pi(x)}\wedge 1\right) q(|x-t\xi|)\ud t \otimes\omega_{d-1}(\ud \xi)\\
\nonumber&\leq C_1(|x|\vee 1)^{d+v}\int_{\mathbb{S}^{d-1}}\left(\int_r^{3r} t^{s-v-1}q(|t\xi-x|)\ud t + \int_{3r}^{\infty} t^{s-v-1}|t-|x||^{-d-\eta}\ud t \right)\otimes\omega_{d-1}(\ud \xi)\\
\nonumber&\leq C_2(|x|\vee 1)^{d+v}\left((3r)^s\int_{\mathbb{S}^{d-1}}\int_r^{3r} t^{-v-1} q(|t\xi-x|)\ud t\otimes\omega_{d-1}(\ud \xi) +(3r)^{s-v-d-\eta}\right)\\
&\leq  C_3(|x|\vee 1)^{d+v}(3r)^s\int_{\mathbb{S}^{d-1}}\int_r^{3r} t^{-v-1}q(|t\xi-x|)\ud t \otimes\omega_{d-1}(\ud \xi).
\label{eq:overshoot_exp_event}
\end{align}
%$|t\xi-x|\geq |t\xi|-|x|=t(1-|x|/t)\geq t(1-1/3)=t2/3$
Inequality~\eqref{eq:overshoot_exp_event} follows from the fact that, by the lower bound in~\eqref{eq:q_bounds_heavy}, there exists $c>0$ such that $$\int_{\mathbb{S}^{d-1}}\int_r^{3r} t^{-v-1} q(|t\xi-x|)\ud t\otimes\omega_{d-1}(\ud \xi) \geq cr^{-v-d-\eta}.
$$
Moreover, the bounds in~\eqref{eq:q_bounds_heavy} and~\eqref{eq:pi_bounds_RWM_H} imply that there exist constants $C_4,C_5\in(0,\infty)$ satisfying
\begin{align}
\nonumber
\P_x(V(X_1)\geq r)&=\int_{\mathbb{S}^{d-1}}\int_r^\infty \left(\frac{\pi(t\xi)}{\pi(x)}\wedge 1\right)t^{d-1}q(|x-t\xi|)\ud t \otimes \omega_{d-1}(\ud \xi)\\
\nonumber &\geq C_4\int_{\mathbb{S}^{d-1}}\int_r^{3r} t^{d-1}\left(\frac{\pi(t\xi)}{\pi(x)}\wedge 1\right)q(|t\xi-x|)\ud t \otimes\omega_{d-1}(\ud \xi)\\
 &\geq C_5(|x|\vee 1)^{v+d}\int_{\mathbb{S}^{d-1}}\int_r^{3r} t^{-v-1}q(|t\xi-x|)\ud t \otimes\omega_{d-1}(\ud \xi) .\label{eq:prob_heavy_metropolis_over}
\end{align}
Thus, by the bounds in~\eqref{eq:overshoot_exp_event} and \eqref{eq:prob_heavy_metropolis_over} along with the inequalities in~\eqref{eq:q_bounds_heavy}, there exists $\tilde C\in(0,\infty)$ such that 
$\E_x[\Psi\circ V(X_{1})\1{V(X_1)\geq r}]/\P_x(V(X_1)\geq r)\leq \tilde C r^s$ for all $|x|\leq r$ which concludes the proof.
\end{proof}

\begin{proof}[Proof of Theorem~\ref{thm:RWM_light}]
By Proposition~\ref{prop:drift_metropolis_light}, the \textbf{L}-drift conditions~\nameref{sub_drift_conditions} are satisfied, by the functions $V(x)=|x|\vee 1$, $\Psi(r) = r^k$, for any $k\in(v+2,v+3)$ and $\varphi(1/r)=C/r^{3}$ for some constant $C\in(0,\infty)$. Thus, by Theorem~\ref{thm:CLT}\ref{thm:CLT(a)} (with  $h(r)=r^s$), the CLT fails to hold for the function $g(x) = |x|^s$, if  for some constant $c',c''>0$, we have
$$
\int_{c'}^\infty c'' t^{-k/(2(s+2))}\ud t = \infty.
$$
Thus,   the CLT fails to hold for $S_n(g)$ if $2(s+2)>k$, i.e., for $s>v/2-1$. The case of a bounded function $g$ follows by analogous argument using  Theorem~\ref{thm:CLT}\ref{thm:CLT(b)}.

Pick $\eps>0$ and let the function $h_\eps(r) = r^{v+2-\eps}\vee 1 \eqqcolon v_\eps(x,n)$. By~\cite[Prop.~6]{Roberts07}, the inequality $P^n(h_\eps\circ V)(x)\leq h_\eps V(x) + C_\eps n$ holds for all $n\in\N$ and some $C_\eps\in(0,\infty)$. Moreover, for some $q\in(0,1)$, the function $L_q$ in~\eqref{eq:def_L_eps_q} satisfies $1/L_q(r) \geq  r^{2-k-\eps}$. Thus, $a(r)\leq 1/L_q(h_\eps^{-1}(r))$, where $h_\eps^{-1}$ denotes the inverse of $h_\eps$, is satisfied by $a(r)= cr^{(2-k-\eps)/(v+2-\eps)}$ for some constant $c>0$. Thus, by Theorem~\ref{thm:f_rate}, applied with the function $f\equiv 1$ and $a$, for any $x\in\R^d$ there exists $C_\TV\in(0,\infty)$ such that
$
\| P^n(x,\cdot)-\pi\|_\TV\geq  C_\TV n^{(2-k-\eps)/(4+v-k-2\eps)}$ for all $n\in\N\setminus\{0\}$, which concludes the proof (in the case $p=0$) since the bound holds for any $k\in(v+2,v+3)$. Moreover, applying Theorem~\ref{thm:f_rate} with $f_\star(r)=r^p$ for $p\in(0,v)$, yields a lower bound on the $f_p$-variation distance. Finally, since $V$ is Lipschitz, applying Theorem~\ref{thm:f_rate} also yields a lower bound on the $\cW_p$-distance.
\end{proof}

\begin{proof}[Proof of Theorem~\ref{thm:RWM_heavy}]
By Proposition~\ref{prop:drift_metropolis_heavy}, the \textbf{L}-drift conditions~\nameref{sub_drift_conditions} are satisfied, by the functions $V(x)=|x|\vee 1$, $\Psi(r) = r^{k}$, for any $k\in(v+\eta,v+\eta+1)$ and $\varphi(1/r)=C/r^{1+\eta}$ for some constant $C\in(0,\infty)$. Thus, by Theorem~\ref{thm:CLT}\ref{thm:CLT(a)} (with  $h(r)=r^s$), the CLT fails to hold for the function $g(x) = |x|^s$, if  for some constant $c',c''>0$, we have
$$
\int_{c'}^\infty c'' t^{-k/(2(s+\eta))}\ud t = \infty.
$$
Thus, the CLT fails to hold for $S_n(g)$ if $2(s+\eta)>k$, i.e., for
$s>v/2-\eta/2$. The case of bounded  $g$ follows analogously by applying Theorem~\ref{thm:CLT}\ref{thm:CLT(b)}.

Pick $\eps>0$ and let the function $h_\eps(r) = r^{v+\eta-\eps}\vee 1$. By~\cite[Prop.~6]{Roberts07}, the inequality $P^n(h_\eps\circ V)(x)\leq h_\eps V(x) + C_\eps n\eqqcolon v_\eps(x,n)$, holds for all $n\in\N$ and some $C_\eps\in(0,\infty)$. Moreover, for some $q\in(0,1)$, the function $L_q$ in~\eqref{eq:def_L_eps_q} satisfies $1/L_q(r) \geq  r^{\eta-k-\eps}$. Thus, $a(r)\leq 1/L_q(h_\eps^{-1}(r))$, where $h_\eps^{-1}$ denotes the inverse of $h$, is satisfied by $a(r)= c_qr^{(\eta-k-\eps)/(v+\eta-\eps)}$. Thus, by Theorem~\ref{thm:f_rate}, applied with the functions $G=V$, $f\equiv 1$ and $a$, for any $x\in\R^d$ there exists $C_\TV\in(0,\infty)$ such that
$
\| P^n(x,\cdot)-\pi\|_\TV\geq  C_\TV n^{(\eta-k-\eps)/(2\eta+v-k-2\eps)}\quad\text{for all $n\in\N\setminus\{0\}$,}$ which concludes the proof since the bound holds for any $k\in(v+\eta,v+\eta+1)$. Moreover, applying Theorem~\ref{thm:f_rate} with $f_\star(r)=r^p$ for $p\in(0,v)$, yields a lower bound on the $f_p$-variation distance. 
\end{proof}

\subsection{Unadjusted Langevin algorithm}
\label{subsec:ULA_proofs}

Recall that the target density $\pi$ satisfies~\eqref{eq:examples_pi} with $v\in(0,\infty)$ and
assume that it is twice continuously differentiable.
The ULA chain $X^{(h)}$, defined in~\eqref{eq:ULA}, follows the Markov kernel $Q_h$ in~\eqref{eq:proposal_MALA_finite} for some fixed $h>0$.

\begin{prop}
\label{prop:drift_ULA}
Fix $h>0$ and let $X^{(h)}$ be an ULA chain. For any $s\in\R$, define  the function 
$V_s(x) \coloneqq (|x|\vee 1)^s$. Then the following statements hold.
\begin{myenumi}[label =(\alph*)]
\item For any $s\in(0,v+2)$, there exist constants $c_1,c_2\in(0,\infty)$ and $r>0$ such that
$$
Q_hV_s - V_s\leq -c_1V_s^{1-2/s} + c_2\1{V_s\leq r} \quad\text{on $\R^d$.}
$$
\item The $\mathbf{L}$-drift condition~\nameref{sub_drift_conditions} is satisfied by the functions $V=V_1$, $\Psi(r) = r^{s}\vee 1$ for any $s\in(v+2,\infty)$, and $\varphi(1/r)=C/r^{3}$ where $C\in(0,\infty)$ is some constant.
\end{myenumi}
\end{prop}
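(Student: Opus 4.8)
The plan is to establish the two bulleted claims of Proposition~\ref{prop:drift_ULA} by direct computation with the ULA proposal kernel $Q_h(x,\cdot)=N(x+h\nabla\log\pi(x),2h)$, exploiting the key feature that for a target of the form~\eqref{eq:examples_pi} the drift vanishes at rate $|x|^{-1}$, i.e.\ $\nabla\log\pi(x)=-(v+d)x/|x|^2+o(1/|x|)$ as $|x|\to\infty$ (using the hypothesis $r(\log\ell(r))'\to0$ to absorb the slowly-varying part). First I would write $X_1^{(h)}=x+h\nabla\log\pi(x)+\sqrt{2h}\,Z$ with $Z$ standard Gaussian, set $m(x)\coloneqq x+h\nabla\log\pi(x)$ so $|m(x)|=|x|-h(v+d)/|x|+o(1/|x|)$, and Taylor-expand $V_s(y)=(|y|\vee1)^s$ around $y=m(x)$: a second-order Taylor expansion of $|\cdot|^s$ combined with $\E[Z]=0$, $\E[|Z|^2]=d$ gives, for $|x|$ large,
\begin{equation*}
Q_hV_s(x)=|m(x)|^s+h\,\big(s(s-2+d)\big)|m(x)|^{s-2}+o(|x|^{s-2}),
\end{equation*}
and then $|m(x)|^s=|x|^s-hs(v+d)|x|^{s-2}+o(|x|^{s-2})$. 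Combining the two expansions yields
\begin{equation*}
Q_hV_s(x)-V_s(x)=hs\big(s-2+d-(v+d)\big)|x|^{s-2}+o(|x|^{s-2})=hs(s-2-v)|x|^{s-2}+o(|x|^{s-2}),
\end{equation*}
which is strictly negative for $s\in(0,v+2)$; since $V_s^{1-2/s}=|x|^{s-2}$ outside the unit ball, this is exactly part (a) once we add a bounded correction $c_2\1{V_s\le r}$ to handle a large enough compact set (on which $Q_hV_s-V_s$ is bounded by continuity and Gaussian integrability). The same expansion applied with arbitrary moment order and with $1/V_1$ in place of $V_s$ gives, since $1/|x|$ is bounded and smooth away from the origin, $Q_h(1/V_1)(x)-1/V_1(x)=O(|x|^{-3})$, so the first inequality of~\nameref{sub_drift_conditions}\ref{sub_drift_conditions(i)} holds with $\varphi(1/r)=C/r^3$ for a suitable constant $C$; the monotonicity requirement $r\mapsto r\varphi(1/r)=C/r^2$ decreasing and $\to0$ is immediate.

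For part (b) it remains to produce $\Psi(r)=r^s\vee1$ with $s\in(v+2,\infty)$ satisfying~\nameref{sub_drift_conditions}\ref{sub_drift_conditions(ii)}. Here I would invoke Lemma~\ref{lem:overshoot} with $V=V_1$: the submartingale condition~\eqref{eq:sub_mart_ass}, $Q_h(\Psi\circ V)\ge\Psi\circ V$ on $\{V\ge\ell_0\}$, follows from the very same Taylor expansion applied to $V_s$ with $s>v+2$ (so the leading coefficient $hs(s-2-v)|x|^{s-2}$ is now strictly positive), giving $Q_hV_s(x)-V_s(x)\ge0$ for $|x|$ large. The incremental overshoot bound~\eqref{eq:incremental_overshoot}, $Q_h(\1{V>r}\Psi\circ V)(x)\le C\Psi(r)Q_h(x,\{V>r\})$ for $|x|\le r$, is the Gaussian analogue of the ``Claim'' proved for RWM in Propositions~\ref{prop:drift_metropolis_light}--\ref{prop:drift_metropolis_heavy}: one bounds $\E_x[|X_1^{(h)}|^s\1{|X_1^{(h)}|>r}]$ by comparing the conditional mass just above $r$ (an interval like $[r,3r]$) with the tail mass beyond $3r$, both controlled by Gaussian density estimates, and the finiteness of all exponential moments of a Gaussian makes this comparison routine — the Gaussian tail is far lighter than any polynomial weight $t^s$, so $\E_x[|X_1^{(h)}|^s\1{|X_1^{(h)}|>r}]\le C\,r^s\,Q_h(x,\{|X_1^{(h)}|>r\})$ holds uniformly in $|x|\le r$ and large $r$. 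Lemma~\ref{lem:overshoot} then delivers exactly the inequality~\eqref{eq:assumption_heavy_tail_bound} with this $\Psi$.

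The remaining ingredients of Assumption~\nameref{sub_drift_conditions} are the continuity of $V_1$ (clear), the non-confinement $\limsup_n V_1(X_n^{(h)})=\infty$ a.s., which follows from Lemma~\ref{lem:non_confinement} since the Gaussian proposal gives $\inf_{x}Q_h^{n_0}(x,\{V_1\ge u\})>0$ for a fixed $n_0$ (indeed a single Gaussian step has positive density everywhere with a lower bound uniform over $x$ in any compact set, and the sublevel set $\{V_1\le u\}$ is compact), and the standard Feller/irreducibility/positive-Harris hypotheses, which are well known for ULA under part (a)'s drift condition and are noted at the start of Appendix~\ref{sec:examples_proofs}. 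The main obstacle I anticipate is bookkeeping the $o(|x|^{s-2})$ error terms in the Taylor expansion rigorously and uniformly — in particular verifying that the contributions of the Gaussian tail beyond, say, $|Z|>|x|^{1/2}$ and of the higher-order terms in the expansion of $|\cdot|^s$ and of $\nabla\log\pi$ are genuinely lower order, using the hypothesis $r(\log\ell(r))'\to0$ to control the slowly-varying perturbation of the pure power law; the overshoot estimate~\eqref{eq:incremental_overshoot}, while conceptually easy thanks to Gaussian light tails, also requires care to obtain the constant $C$ independent of $x$ and $r$.
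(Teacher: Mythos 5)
Your proposal is correct and follows essentially the same route as the paper: a second-order Taylor expansion yielding the leading term $hs(s-2-v)|x|^{s-2}$ (the paper expands around $x$ with the drift absorbed into the Gaussian mean, you expand around $m(x)$ — a cosmetic difference), part (a) for $s\in(0,v+2)$, the bound $\varphi(1/r)\propto 1/r^3$ from the $s=-1$ case, and part (b) via Lemma~\ref{lem:overshoot} with the submartingale property for $s>v+2$ plus a Gaussian overshoot estimate comparing the annulus $\{r<V\leq 3r\}$ with the far tail, exactly as in the paper's ``Claim''. The technical caveats you flag (truncating the Gaussian tail at radius $\approx|x|^{1/4}$ and uniformity of the overshoot constant in $|x|\leq r$) are precisely the points the paper's proof addresses.
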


\begin{proof}
For $r\in(0,\infty)$ and $z\in\R^d$, define the ball $B(z,r)\coloneqq\{z'\in\R^d:|z'-z|\leq r\}$. Denote by $y\mapsto p(y-\mu,\sigma^2)$ the density of the normal random vector in $\R^d$ with mean $\mu\in\R^d$ and the covariance matrix $\sigma^2\Id$, where $\sigma^2\in(0,\infty)$ and $\Id\in \R^{d\times d}$ is the identity matrix. For any $s\in\R\setminus\{0\}$ and any function $\phi(y)\propto |y|^2$, it follows that $|2y|^{s}\exp(-\phi(y))\leq \exp(-|y|^{3/2})$ for all $|y|$ sufficiently large. 
Moreover, for  $s\in\R$ we have 
$V_s(x+y)\leq V_{5s}(y)\vee 1$ for all $|y|>|x|^{1/4}/2$ and all $x\in\R^d$ sufficiently large.

Since, by assumption on $\pi$, it holds that $\nabla \log \pi(x)=O(1/|x|)$ as $|x|\to\infty$,  for any $s\in\R$ we obtain
\begin{align}
\nonumber
\int_{B(x,\frac{|x|^{1/4}}{2})^c} V_s(y) p\left(y-(x+h\nabla \log \pi(x)),2h\right)\ud y&=\int_{B(x,\frac{|x|^{1/4}}{2})^c} V_s(y) p\left(y-x+O(1/|x|),2h\right)\ud y\\
\nonumber
&=\int_{B(0,\frac{|x|^{1/4}}{2})^c} V_s(y+x) p\left(y+O(1/|x|),2h\right)\ud y\\
\nonumber&\leq \int_{B(0,\frac{|x|^{1/4}}{2})^c} (V_{5s}(y)\vee 1) p\left(y/2,2h\right)\ud y\\
&\leq \int_{B(0,\frac{|x|^{1/4}}{2})^c}\exp(-|y|^{3/2})\ud y 
\label{eq:ULA_long_jumps}= o(\exp(-|x|^{1/4})),
\end{align}
as $|x|\to\infty$.
Note that for $|x|$ sufficiently large we have
$$
\nabla V_s(x) = sn(x)|x|^{s-1} =V_s(x) sn(x)|x|^{-1} \quad\text{and}\quad \Hessian(V_s)(x) = V_s(x) s|x|^{-2}(\Id + (s-2)n(x)n(x)^\intercal), 
$$
where $\Hessian(f)$ denotes a Hessian matrix of a twice differentiable function $f:\R^d\to\R$, $n(x) = x/|x|$ and $n(x)^\intercal$ its transpose for $x\in \R^d\setminus\{0\}$. For $|x|$ large and $y\in B(x,|x|/2)$, second order Taylor expansion yields
\begin{equation}
\label{eq:ULA_Lyapunov_representation}
\frac{V_s(x+y)-V_s(x)}{V_s(x)}=s|x|^{-1}\langle n(x),y\rangle +|x|^{-2}\frac{s}{2}(|y|^2+(s-2)\langle n(x),y\rangle^2) + o(|x|^{-2})
\end{equation}
as $|x|\to\infty$. Moreover, for any $\mu_0\in(0,\infty)$ and $z\in \mathbb{S}^{d-1}$,~\cite[Lem.~A.5]{Roberts07} and an argument analogous to the one in~\eqref{eq:ULA_long_jumps} yield the following as $r\to\infty$:
\begin{align}
\label{eq:ULA_spherical}
\int_{B(0,r)} \langle y,z\rangle^2 p(y,2h) \ud y&= \frac{1}{d}\int_{B(0,r)} |y|^2 p(y,2h)\ud y=\frac{1}{d}\int_{\R^d} |y|^2 p(y,2h)\ud y+o(\exp(-r))\\
\nonumber&=2h+o(\exp(-r)),\quad\text{and similarly}\\
\nonumber \int_{B(0,r)}\langle z,y\rangle p(y-\mu_0 z,2h)\ud y &= \int_{\R^d}\langle z,y\rangle p(y-\mu_0 z,2h)\ud y+o(\exp(-r)) =\mu_0+o(\exp(-r)).
\end{align}
Thus, the identities in~\eqref{eq:ULA_long_jumps},~\eqref{eq:ULA_Lyapunov_representation}  yield the following expression for $(Q_hV_s(x)-V_s(x))/V_s(x)$: 
\begin{align*}
%\frac{Q_hV_s(x)-V_s(x)}{V_s(x)} 
& \int_{B(x,|x|^{\frac{1}{4}})} \frac{V_s(y)-V_s(x)}{V_s(x)}p\left( y-x+x\frac{h(v+d)}{|x|^2}+o(1/|x|),2h\right)\ud y+o(\exp(-|x|^{\frac{1}{4}}))\\
%&=\int_{B(0,|x|^{\frac{1}{4}})} \left(\langle \nabla  \log V_s(x),y\rangle +\frac{s}{2|x|^{2}}(|y|^2+(s-2)\langle n(x),y\rangle^2)\right)p\left( y+xh\frac{v+d}{|x|^2},2h\right)\ud y+ o(|x|^{-2})\\
 &= \int_{B(0,|x|^{\frac{1}{4}})} \left(|x|^{-1}\langle sn(x),y\rangle +\frac{s}{2|x|^{2}}(|y|^2+(s-2)\langle n(x),y\rangle^2)\right)p\left( y+hx\frac{v+d}{|x|^2},2h\right)\ud y+ o(|x|^{-2}).
\end{align*}
Moreover, by~\eqref{eq:ULA_spherical}, we obtain
\begin{align}
\nonumber \frac{Q_hV_s(x)-V_s(x)}{V_s(x)} =& \frac{-sh(v+d)}{|x|^2} +  \int_{B(0,|x|^{\frac{1}{4}})}\frac{s}{2|x|^{2}}(|y|^2+(s-2)\langle n(x),y\rangle^2)p\left( y,2h\right)\ud y + o(|x|^{-2})\\
\nonumber =& \frac{-sh(v+d)+sdh}{|x|^2} + \frac{s(s-2)}{2d|x|^2}\int_{B(0,|x|^{\frac{1}{4}})}|y|^2p\left( y,2h\right)\ud y + o(|x|^{-2})\\
\label{eq:ULA_sub}=& sh(s-2-v)|x|^{-2}(1+o(1)) \quad\text{as $|x|\to\infty$.}
\end{align}

Choosing $s\in(0,v+2)$ in~\eqref{eq:ULA_sub} and noting that $Q_h V_s$ is bounded on compacts yields part~(a) of the proposition. Moreover, letting $s=-1$ yields $Q_h(1/V_1)-1/V_1\leq \varphi(1/V_1)$, where $\varphi(1/r)\coloneqq \tilde C/r^3$ for an appropriate constant $\tilde C\in(0,\infty)$. 

Part~(b) of the proposition will follow (with functions $\varphi$ and $V=V_1$) once we prove that 
$\Psi(r)\coloneqq r^s$, for any $s\in(v+2,\infty)$, satisfies
 the \textbf{L}-drift condition 
\nameref{sub_drift_conditions}\ref{sub_drift_conditions(ii)}. The identity in~\eqref{eq:ULA_sub} implies that there exists $\ell_0\in(0,\infty)$, such that for $s\in(v+2,\infty)$ we have 
$$Q_h(\Psi\circ V_1)(x)-\Psi\circ V_1(x)\geq 0,\quad\text{ for all $x\in\{V_1\geq \ell_0\}$.}$$ 

\noindent \textbf{Claim.} There exists $r_0\in(0,\infty)$ and $C_\Psi\in(0,\infty)$ such that for all $r\in(1,\infty)$ and $x\in\{V\leq r\}$ we have $\E_x[\Psi\circ V(X_{1})|\{V(X_1)\geq r\}]\leq C_\Psi \Psi(r)$, where $V=V_1$ is defined in the statement of the proposition.

This Claim, which we prove below, and Lemma~\ref{lem:overshoot} imply that $\Psi$ indeed satisfies~\nameref{sub_drift_conditions}\ref{sub_drift_conditions(ii)}. 

\noindent\underline{Proof of Claim.}
Pick $x$ with $|x|<r$. Note that \begin{align}
\nonumber \P_x(V(X_1)\geq r)&=\int_{B(0,r)^c}p\left( y-x+x\frac{h(v+d)+o(1)}{|x|^2},2h\right)\ud y
\\
\nonumber &=\int_{B(-x,r)^c} p\left( y+x\frac{h(v+d)+o(1)}{|x|^2},2h\right)\ud y\\
\label{eq:ULA_over}&\geq\int_{B(-x,3r)\setminus B(-x,r)}p\left( y+x\frac{h(v+d) + o(1)}{|x|^2},2h\right)\ud y.
\end{align}
For all $|y|$ sufficiently large, we have 
$\Psi\circ V(y)=|y|^s$. Hence for all large $r>1$ and $|x|<r$, we have
$$
\Psi\circ V(y+x)p\left(y + x\frac{h(v+d)+o(1)}{|x|^2},2h\right)\leq p\left(y/2 + \frac{xh(v+d)}{|x|^2},2h\right)\quad \text{for all $y\in B(-x,3r)^c$.}
$$
Moreover, there exists $\tilde C\in(0,\infty)$ such that for all sufficiently large $r$ it holds that
\begin{equation}
\label{eq:ULA_over_2}
\int_{B(-x,3r)^c}p\left( y/2+\frac{xh(v+d)}{|x|^2},2h\right) \ud y\leq \tilde C\int_{B(-x,3r)\setminus B(-x,r)} p\left( y+\frac{xh(v+d)}{|x|^2}+o(1),2h\right)\ud y.
\end{equation}
The inequalities in~\eqref{eq:ULA_over} and~\eqref{eq:ULA_over_2} yield
\begin{align}
\nonumber\int_{B(-x,3r)^c} \Psi\circ V(y+x)p\left( y+x\frac{h(v+d)+o(1)}{|x|^2},2h\right)\ud y&\leq \int_{B(-x,3r)^c} p\left( y/2+x\frac{h(v+d)}{|x|^2},2h\right)\ud y\\
\label{eq:ULA_overshoot_prob}&\leq \tilde C\P_x(V(X_1)\geq r).
\end{align}
For $|x|\leq r$, the inequality in~\eqref{eq:ULA_overshoot_prob} yields  a constant $C\in(0,\infty)$, such that
\begin{align*}
\E_x[\Psi\circ V(X_{1})\1{V(X_1)\geq r}] &= \int_{B(-x,r)} \Psi\circ V(y+x)p\left(y+x\frac{h(v+d)}{|x|^2},2h\right)\ud y \\
&\leq \int_{B(-x,3r)\setminus B(-x,r)} \Psi\circ V(y+x)p\left(y+x\frac{h(v+d)}{|x|^2},2h\right)\ud y \\
&~~+\tilde C\P(V(X_1)\geq r)\\
&\leq \int_{B(-x,3r)\setminus B(-x,r)} \Psi(3r)p\left(y+x\frac{h(v+d)}{|x|^2},2h\right) \ud y+\tilde C\P(V(X_1)\geq r)\\
&\leq  C \Psi(3r)\P_x(V(X_1)\geq r)= C3^s\Psi(r)\P_x(V(X_1)\geq r).
\end{align*}
\end{proof}

\begin{proof}[Proof of Theorem~\ref{thm:ULA}]
By Proposition~\ref{prop:drift_ULA}, the \textbf{L}-drift conditions~\nameref{sub_drift_conditions} are satisfied, by the functions $V(x)=|x|\vee 1$, $\Psi(r) = r^{k}$, for any $k\in(v+2,\infty)$ and $\varphi(1/r)=C/r^{3}$ for some constant $C\in(0,\infty)$. Thus, the CLT fails to hold for the function $g(x) \geq |x|^s$, if  for some constant $c',c''>0$, we have
$$
\int_{c'}^\infty c'' t^{-k/(2(s+2))}\ud t = \infty.
$$
Thus, the CLT fails to hold for $S_n(g)$ if $2(s+2)>k$, i.e.,  $s\in(v/2-1,\infty)$.

Pick $\eps>0$ and let the function $h_\eps(r) = r^{v+2-\eps}\vee 1$. By part~(a) of Proposition~\ref{prop:drift_ULA}, the inequality $Q_h^n(h_\eps\circ V)(x)\leq h_\eps\circ V(x) + C_\eps n$, holds for all $n\in\N$ and some $C_\eps\in(0,\infty)$. Moreover, for some $q\in(0,1)$, the function $L_q$ in~\eqref{eq:def_L_eps_q} satisfies $L_q(r) \geq  r^{2-k-\eps}$. Thus, $a(r)\leq L_q(h^{-1}(r))$, where $h^{-1}$ denotes the inverse of $h$, is satisfied by $a(r)= c_qr^{(2-k-\eps)/(v+2-\eps)}$. Thus, by Theorem~\ref{thm:f_rate}, applied with the functions $G=V$, $f\equiv 1$ and $a$, for any $x\in\R^d$ there exists $C_\TV\in(0,\infty)$ such that
$
\| Q_h^n(x,\cdot)-\pi_h\|_\TV\geq  C_\TV n^{(2-k-\eps)/(4+v-k-2\eps)}\quad\text{for all $n\in\N\setminus\{0\}$,}$ which concludes the proof since the bound holds for any $p\in(v+2,\infty)$. Moreover, applying Theorem~\ref{thm:f_rate} with $f_\star(r)=r^p$ for $p\in(0,v)$, yields a lower bound on the $f_p$-variation distance. 

Finally, to prove the upper bound on the tail of the invariant measure $\pi_h$ for the ULA chain $X_h$, we consider the drift condition in Proposition~\ref{prop:drift_ULA}(a) with $s$ arbitrary close to $v+2$ and apply~\cite[Thm~16.1.12]{MarkovChains}. To obtain the lower bound we note that~\nameref{sub_drift_conditions} are satisfied, by the functions $V(x)=|x|\vee 1$, $\Psi(r) = r^{k}$, for any $k\in(v+2,\infty)$ and $\varphi(1/r)=C/r^{3}$ for some constant $C\in(0,\infty)$, and apply Theorem~\ref{thm:invariant}.
\end{proof}

\subsection{Metropolis-adjusted Langevin algorithm}
\begin{prop}
\label{prop:drift_MALA}
Let the assumptions of Theorem~\ref{thm:MALA_light} hold with some $h>0$. 
For $s\in\R$, let  $V_s(x) = (|x|\vee 1)^s$. Then the following statements hold.
\begin{myenumi}[label =(\alph*)]
\item For any $s\in(0,v+2)$ there exist constants $C_1,C_2,r\in(0,\infty)$ such that
$$
PV_s-V_s\leq - C_1V_s^{1-2/s} +C_2\1{V_s\leq r} \quad \text{on $\R^d$.}
$$
\item The $\mathbf{L}$-drift condition~\nameref{sub_drift_conditions} is satisfied by the functions $V=V_1$, $\Psi(r) = r^{s}\vee 1$ for any $s\in(v+2,\infty)$, and $\varphi(1/r)=C/r^{3}$ where $C\in(0,\infty)$ is some constant.
\end{myenumi}
\end{prop}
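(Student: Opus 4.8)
The plan is to deduce Proposition~\ref{prop:drift_MALA} from the unadjusted case, Proposition~\ref{prop:drift_ULA}, by showing that the Metropolis kernel $P$ in~\eqref{eq:Metropolis-adjusted_kernel} (with proposal $Q_h$ in~\eqref{eq:proposal_MALA_finite}) differs from $Q_h$ only through a perturbation of strictly lower order in the tails; this in turn rests on the fact that the acceptance probability $\alpha(x,\cdot)$ in~\eqref{eq:acceptance} tends to $1$ on the bulk of $Q_h(x,\cdot)$ as $|x|\to\infty$. Concretely, writing $u=y-x$, $a=\nabla\log\pi(x)$, $b=\nabla\log\pi(y)$ and $R(x,y)=\pi(y)q_h(y,x)/(\pi(x)q_h(x,y))$, a direct computation with the Gaussian density $q_h$ gives $\log R(x,y)=\int_0^1\langle\nabla\log\pi(x+tu),u\rangle\,\ud t-\tfrac12\langle u,a+b\rangle-\tfrac h4(|b|^2-|a|^2)$, and the trapezoidal-type identity $\int_0^1\langle\nabla\log\pi(x+tu),u\rangle\,\ud t=\tfrac12\langle u,a+b\rangle+\int_0^1t\langle u,H_tu\rangle\,\ud t-\tfrac12\langle u,H_1u\rangle$, with $H_t=\int_0^1\nabla^2\log\pi(x+\sigma tu)\,\ud\sigma$, cancels the first-order terms and leaves
\[
|\log R(x,y)|\le\Big(\sup_{|z-x|\le|u|}\|\nabla^2\log\pi(z)\|\Big)|u|^2+\tfrac h4\big||b|^2-|a|^2\big|.
\]
Since $\pi(x)=\ell(|x|)/|x|^{v+d}$ is twice continuously differentiable with $r(\log\ell(r))'\to0$, the radial structure gives $\nabla\log\pi(x)=O(1/|x|)$ and $\nabla^2\log\pi$ decaying fast enough that $\sup_{y\in B(x,|x|^{1/4})}|1-\alpha(x,y)|\le C\sup_{B(x,|x|^{1/4})}|\log R|=o(|x|^{-5/4})$ as $|x|\to\infty$. (This second-order cancellation is precisely why MALA, unlike RWM, has acceptance tending to $1$ for heavy tails.)

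\textbf{Parts (a) and (b)(i).} With this acceptance lemma in hand, I would write $PV_s(x)-V_s(x)=\big(Q_hV_s(x)-V_s(x)\big)+E_s(x)$, where $E_s(x)=\int_{\R^d}(V_s(y)-V_s(x))(\alpha(x,y)-1)q_h(x,y)\,\ud y$. On $B(x,|x|^{1/4})$ the mean value theorem gives $|V_s(y)-V_s(x)|\le CV_s(x)|x|^{-3/4}$, which together with the bound on $|1-\alpha|$ shows this region contributes $o(V_s(x)|x|^{-2})$; on the complement, $|\alpha-1|\le1$ and $|V_s(y)-V_s(x)|\le (V_{5s}(y)\vee1)+V_s(x)$, so the contribution is exponentially small in $|x|^{1/4}$, exactly as in the estimate~\eqref{eq:ULA_long_jumps} of the ULA proof. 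Hence $E_s(x)=o(V_s(x)|x|^{-2})$, and invoking the ULA asymptotics~\eqref{eq:ULA_sub} one gets $PV_s(x)-V_s(x)=sh(s-2-v)V_s(x)|x|^{-2}(1+o(1))$ as $|x|\to\infty$. For $s\in(0,v+2)$ this is $\le-C_1V_s(x)^{1-2/s}$ off a compact set (using $|x|^{-2}=V_s(x)^{-2/s}$ for $|x|\ge1$), and adding $C_2\1{V_s\le r}$ absorbs the compact part, where $PV_s$ is bounded; this is part~(a). Taking $s=-1$ gives $P(1/V_1)(x)-1/V_1(x)\le\tilde C|x|^{-3}\eqqcolon\varphi(1/V_1(x))$ off a compact set, and $r\mapsto r\varphi(1/r)=\tilde C/r^2$ is decreasing to $0$; since $1/V_1$ is bounded, the $-b\,\1{V_1\le\ell_0}$ correction is automatic, so~\ref{sub_drift_conditions(i)} of~\nameref{sub_drift_conditions} holds with $\varphi$ and $V=V_1$.

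\textbf{Part (b)(ii) and the main obstacle.} For $\Psi(r)=r^s\vee1$ with $s\in(v+2,\infty)$ the plan is to verify the two hypotheses of Lemma~\ref{lem:overshoot} for the MALA kernel: the submartingale bound $P(\Psi\circ V_1)\ge\Psi\circ V_1$ on $\{V_1\ge\ell_0\}$ follows from the displayed drift asymptotics since $s(s-2-v)>0$; and the incremental overshoot bound~\eqref{eq:incremental_overshoot}, $P(\1{V_1>r}\Psi\circ V_1)(x)\le C\Psi(r)P(x,\{V_1>r\})$ for $|x|\le r$, is the MALA analogue of the Claim in the proof of Proposition~\ref{prop:drift_ULA}: the numerator is bounded by the Gaussian integral $Q_h(\1{V_1>r}\Psi\circ V_1)(x)$ via $\alpha q_h\le q_h$, to which the overshoot estimate of that proof applies, while the denominator $P(x,\{V_1>r\})=\int_{|y|>r}\alpha(x,y)q_h(x,y)\,\ud y$ is bounded below using that $\alpha(x,\cdot)$ stays bounded away from $0$ on the jumps carrying most of the proposal mass into $\{V_1>r\}$ (short jumps, where $\alpha\to1$ by the acceptance lemma, and moderate jumps reaching $|y|\asymp|x|$, where $\log\pi(y)-\log\pi(x)$ and the Gaussian correction in $\log R$ are both $O(1)$). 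Lemma~\ref{lem:overshoot} then yields~\ref{sub_drift_conditions(ii)}, and together with the standard facts that the MALA chain is Feller continuous, $\lambda$-irreducible and non-confined (Lemma~\ref{lem:non_confinement}) this proves Proposition~\ref{prop:drift_MALA}. The technical heart — and the place where twice-differentiability of $\pi$ together with $r(\log\ell(r))'\to0$ must really be used — is the quantitative acceptance lemma of Step~1: one must track the decay of $\nabla^2\log\pi$ so that the correction $E_s$ stays at order $o(V_s(x)|x|^{-2})$, and separately control $\alpha(x,\cdot)$ from below on the moderate-jump region for the overshoot estimate; both are elementary but somewhat delicate.
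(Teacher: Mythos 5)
Your overall route is exactly the paper's: parts (a) and (b)(i) are obtained by transferring the ULA asymptotics of Proposition~\ref{prop:drift_ULA} through an estimate that $\alpha(x,\cdot)\to1$ on $B(x,|x|^{1/4})$, and (b)(ii) by feeding Lemma~\ref{lem:overshoot}. One caveat on your Step 1: the hypotheses of Theorem~\ref{thm:MALA_light} control only $\nabla\log\pi$ (namely $\nabla\log\pi(x)=\big(-(v+d)/|x|+o(1/|x|)\big)x/|x|$), not $\nabla^2\log\pi$, so the trapezoidal bound $|\log R|\le\sup\|\nabla^2\log\pi\|\,|u|^2+\dots$ and the resulting claim $\sup_{B(x,|x|^{1/4})}|1-\alpha|=o(|x|^{-5/4})$ are not available as stated; moreover the slowly varying factor $\ell$ alone contributes $o(|u|/|x|)$, i.e.\ only $o(|x|^{-3/4})$ at the edge of the ball. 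The paper instead bounds $1-\alpha$ straight from the ratio form~\eqref{eq:examples_pi} and the explicit Gaussian ratio (cf.~\eqref{eq:MALA_acceptance}); in either version, to conclude $E_s=o(V_s|x|^{-2})$ one should keep the $|y-x|$-dependence of both $|V_s(y)-V_s(x)|$ and $|1-\alpha(x,y)|$ and integrate against the Gaussian, rather than multiplying two sup-bounds over the ball. This is repairable bookkeeping, but the quantitative rate you assert is stronger than the assumptions give.

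The genuine gap is in (b)(ii). Lemma~\ref{lem:overshoot} requires~\eqref{eq:incremental_overshoot} for all $x\in\{V_1\le r\}$ (its proof uses it for all $x$ with $\ell\le V_1(x)\le r$, with $\ell$ fixed and $r\to\infty$), whereas your lower bound on the denominator $P(x,\{V_1>r\})$ only treats ``short jumps'' and ``moderate jumps with $|y|\asymp|x|$'', i.e.\ effectively $|x|\asymp r$. For $\ell\le|x|\ll r$ the overshoot forces $|y-x|\approx r\gg|x|$, and there the Gaussian correction in $\log R$ is not $O(1)$: it is of order $(v+d)(r-|x|)/(2|x|)$. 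In the tail region this term has the favourable sign and offsets $-(v+d)\log(|y|/|x|)$ up to an $O(1)$ error, so the acceptance on the bulk of the proposal's overshoot mass is indeed bounded below — but for a different reason than the one you give, and it must be argued. Worse, for $x$ in the compact core (where~\eqref{eq:examples_pi} imposes nothing on $\nabla\log\pi$) the dominant overshoot direction can satisfy $\langle y-x,\nabla\log\pi(x)\rangle>0$, making $\alpha$ exponentially small in $r$ on that region, so the inequality $Q_h(x,\{|y|>r\})\le C\,P(x,\{V_1>r\})$ you rely on fails; you must either restrict to $x\in\{\ell\le V_1\le r\}$ with $\ell>\ell_0$ large enough that the tail form holds (noting this is all Lemma~\ref{lem:overshoot}'s proof uses), or handle the core separately, e.g.\ via the crude bound $\alpha\ge e^{-Cr}$ on the shell $\{r<|y|\le 3r\}$ against the $O(e^{-cr^{2}/h})$ proposal mass beyond $3r$. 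The paper itself is terse here (``analogous to ULA since $\alpha\to1$''), but as written your justification of the denominator bound does not cover the long-jump regime that the lemma actually requires.
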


\begin{proof}
\label{subsec:MALA_proofs}
Recall that for $r\in(0,\infty)$ and $z\in\R^d$  we denote the closed ball of radius $r$ centred at $z$ by $B(z,r)=\{z'\in\R^d:|z'-z|\leq r\}$. For the MALA chain, the one‑step transition density from $x\in\R^d$ to $y\in\R^d\setminus \{x\}$ is given by
$
 q_h(x,y)\alpha(x,y),
$
where $q_h(x,\cdot)$ is the density of  the Gaussian distribution $N(x+h\nabla \log \pi(x),2h)$ in~\eqref{eq:proposal_MALA_finite} for a fix $h>0$ and the acceptance probability $\alpha(x,y)$ is given in~\eqref{eq:acceptance} above.
For any $s\in\R$, as in~\eqref{eq:ULA_long_jumps}, we obtain
\begin{equation}
\label{eq:MALA_long_jumps}
\int_{B(0,\frac{|x|^{1/4}}{2})^c} V_s(x+y) q_h(x,y)\ud y\leq \int_{B(0,\frac{|x|^{1/4}}{2})^c}\exp(-|y|^{3/2})\ud y = o(\exp(-|x|^{1/4})),
\end{equation}
as $|x|\to \infty$.

Note that $q_h(x,y)\alpha(x,y) = q_h(x,y) - (1-\alpha(x,y))q_h(x,y)$. The definition of acceptance probability $\alpha$ in~\eqref{eq:acceptance}  implies that for all $y\in B(x,|x|^{1/4})$, we have
\begin{align}
\nonumber 1-\alpha(x,y)&= 1-\min\left\{1,\frac{\pi(y)}{\pi(x)}\frac {q_h(y,x)}{q_h(x,y)}\right\} 
=1-\frac{\pi(y)}{\pi(x)}\frac {q_h(y,x)}{q_h(x,y)}
\\
\nonumber &\leq 1-\frac{|x|^{v+d}}{(|x|(1+|x|^{-3/4}))^{v+d}}\exp\left(\frac{|y-x-\frac{xh(v+d)}{|x|^2}|^2-|x-y-\frac{yh(v+d)}{|y|^2}|^2}{2h}+o(|x|^{-2})\right)\\
&\nonumber= 1-\frac{|x|^{v+d}}{(|x|(1+|x|^{-3/4}))^{v+d}}\exp\left(\frac{|y-x+o(1/|x|)|^2-|x-y+o(1/|y|))|^2}{2h}+o(|x|^{-2})\right)\\
\label{eq:MALA_acceptance}&= 1-(1+|x|^{-3/4})^{-v-d}\exp\left(\langle y-x,o(1/|x|)\rangle + \langle x-y,o(1/|y|))+o(|x|^{-2})\right)=o(1),
\end{align}
as $|x|\to\infty$. 
From the inequalities in~\eqref{eq:MALA_long_jumps} and~\eqref{eq:MALA_acceptance} and the assumption on $\pi$, we obtain
\begin{align*}
PV_s(x)- V_s(x) &= \int_{\R^d} (V_s(y)-V_s(x))\alpha(x,y) q_h(x,y)\ud y\\
&=\int_{B(x,|x|^{1/4})} (V_s(y)-V_s(x))\alpha(x,y) q_h(x,y) \ud y+o(\exp(-|x|^{1/4}))\\
&=\int_{B(x,|x|^{1/4})} (V_s(y)-V_s(x))(1+o(1))q_h(x,y)\ud y+o(\exp(-|x|^{1/4}))\quad\text{as $|x|\to\infty$.}
\end{align*}

Thus, the equation in the above display, the equality  in~\eqref{eq:ULA_sub} for ULA (recall that the density of the ULA transition kernel $Q_h$ equals $q_h(x,y)$), where the transition kernel equals the proposal $q_h$ of MALA, then yield
\begin{equation}
\label{eq:MALA_sub} 
\frac{PV_s(x)-V_s(x)}{V_s(x)} =\frac{Q_hV_s(x)-V_s(x)}{V_s(x)}(1+o(1)) =  sh(s-2-v)|x|^{-2}(1+o(1))\>\text{ as $|x|\to\infty$.}
\end{equation}
The inequality in~\eqref{eq:MALA_sub} yields part~(a) of the proposition and implies that $P(1/V_1)-1/V_1\leq \varphi(1/V_1)$, where $\varphi(1/r) = \tilde C/r^3$ for some $\tilde C\in(0,\infty)$.
The proof that $\Psi(r) = r^{s}$, with $s\in(v+2,\infty)$, satisfies the \textbf{L}-drift condition~\nameref{sub_drift_conditions} is analogous to the proof in the case of ULA in Proposition~\ref{prop:drift_ULA} above, since the acceptance probability $\alpha(x,\cdot)$ tends to one as $|x|\to\infty$ by~\eqref{eq:MALA_acceptance}.
\end{proof}

\begin{proof}[Proof of Theorem~\ref{thm:MALA_light}]
Since the \textbf{L}-drift conditions~\nameref{sub_drift_conditions} are satisfied by the same family of functions as in the case of RWM with proposal with finite variance and ULA, the same conclusions hold. For details, see the proofs of Theorems~\ref{thm:RWM_light} and~\ref{thm:ULA} in Appendices~\ref{subsec:RWM_proof} and~\ref{subsec:ULA_proofs} above.
\end{proof}

\subsection{Unadjusted algorithms with  increments of infinite variance}
\label{subsec:proof_for_levy_driven}

\begin{proof}[Proof of Theorem~\ref{thm:levy_driven_discretisation}] 
Let $h>0$ and recall that $B(z,r)=\{z'\in\R^d:|z'-z|\leq r\}$ for $r\in(0,\infty)$ and $z\in\R^d$. Let $p_{\alpha,h}(\cdot)$ denote the density of $Z^{(h)}_k$ for any $k\in\N$ in~\eqref{eq:alpha_stable_ULA}. The transition kernel of the chain $X$ in~\eqref{eq:alpha_stable_ULA}
is given by the transition density $p_\alpha(x,y)\coloneqq p_{\alpha,h}(y-(x+hb(x))$ for any $x,y\in\R^d$. Recall $1<\alpha<2$ and that the assumption on $p_{\alpha,h}(\cdot)$ in~\eqref{eq:p_alpha_assumption} implies the following:
\begin{equation}
\label{eq:heavy_Tailed_move}
\int_{B(x+hb(x),r)^c} p_\alpha(x,y)\ud y \approx r^{-\alpha}\quad\text{ as $r\to\infty$ for every $x\in\R^d$.}
\end{equation}

\noindent\underline{Part~\ref{thm:levy_driven_discretisation:invariant}}
Assume $|b(x)|\leq C(1+|x|)$ for all $x\in\R^d$ and fix $h\in(0,1/(2C))$. Let $V(x) = 1\vee \log |x| $ and note that $0<1/V\leq1$. Thus, by~\eqref{eq:heavy_Tailed_move}, for all large $|x|$ and some constant $C_1\in(0,\infty)$  we have
\begin{equation}
\label{eq:heavy_tailed_ULA_complement}
\int_{B(x+hb(x),2h |x|^{1/2})^c} \left(\frac{1}{V(x+y)}-\frac{1}{V(x)}\right)p_\alpha(x,\ud y)  \leq
C_1|x|^{-\alpha/2}.
\end{equation}
Moreover, for all $y\in B(x+hb(x),2h|x|^{1/2})$ with $|x|$ sufficiently large, we have $$V(x)-V(x+y)\leq V(x)-V\left(x-\frac{x}{|x|}hC(1+|x|) -\frac{x}{|x|}2h|x|^{1/2}\right)\leq \log |x|-\log (2|x|/3)=\log(3/2).$$
Thus by~\eqref{eq:heavy_tailed_ULA_complement} and the previous display, we obtain
\begin{align*}
P (1/V)(x)-(1/V)(x) &= \int_{\R^d}\left(\frac{1}{V(x+y)}-\frac{1}{V(x)}\right) p_\alpha(x,y)\ud y \\
&= \int_{B(x+hb(x),2h|x|^{1/2})} \frac{V(x)-V(x+y)}{V(x)V(x+y)}p_\alpha(x, y)\ud y + O(|x|^{-\alpha/2}) \\
&\leq \int_{B(x+hb(x),2h|x|^{1/2})} \frac{C_2}{V(x)V(x+y)}p_\alpha(x, y)\ud y + O(|x|^{-\alpha/2})\\
&\leq C_3/V(x)^2 + O(|x|^{-\alpha/2})\leq C_4/V(x)^2,
\end{align*}
for some constants $C_2,C_3,C_4\in(0,\infty)$ and all $|x|$ sufficiently large. Set 
$\varphi(1/r)=C_4/r^2$.
By assumption $\int_{B(x,r)^c} p_\alpha(x,y)\ud y \approx r^{-\alpha}$ as $r\to\infty$, we have $P(x,\{V\geq r\}) \geq C_5\exp(-\alpha r)$ for some $C_5\in(0,\infty)$ and all sufficiently large $r$. 
Following Subsection~\ref{subsubsec:Identifyin_Psi} above, we set $\Psi(r)= \exp(\alpha r)$ since 
$P(x,\{\Psi\circ V\geq u\}) \geq C_5/u$ implying
$P(\Psi\circ V)(x)=\infty$. 

Collecting all these facts, we see that  
the functions $V(x)=\log(|x|)\vee1$ for $|x|>0$, $\varphi$ and $\Psi$  satisfy the \textbf{L}-drift conditions~\nameref{sub_drift_conditions}. Thus, for every $\eps>0$, there exists sufficiently small $q\in(0,1)$ such that Theorem~\ref{thm:invariant} yields a constant $c_\pi\in(0,\infty)$ satisfying
$$
\pi(\{V\geq r\})\geq c_\pi\exp(-(\alpha + \eps) r)\quad \text{and thus} \quad \pi(\{|\cdot|\geq r\}) \geq c_\pi r^{-(\alpha +\eps)}\quad\text{for $r\in[1,\infty)$.}
$$

\noindent \underline{Part~\ref{thm:levy_driven_discretisation:transient}.} By assumption, we have $\liminf_{|x|\to\infty} |b(x)|/|x|\to\infty$ and hence 
\begin{equation}
\label{eq:heavy_tailed_ULA_drift}
\left|x+hb(x)-\frac{x+hb(x)}{|x+hb(x)|}2h|x|\right|>|x| \qquad\text{for all $|x|$ sufficiently large.}
\end{equation}
Let $V(x) = \log |x| \vee 1$ and observe that, since $1/V\leq 1$, by~\eqref{eq:heavy_Tailed_move} we obtain 
$$
\int_{B(x+hb(x),2h |x|)^c} \frac{1}{V(y)}p_\alpha(x, y)\ud y \leq C|x|^{-\alpha}\quad\text{for all large $|x|$ and some $C\in(0,\infty)$.}
$$
Furthermore, as $|x|\to\infty$, we obtain
\begin{align*}
P(1/V)(x)-(1/V)(x) &= \int_{B(x+hb(x),2h |x|)} \frac{1}{V(y)} p_\alpha(x, y)\ud y - \frac{1}{V(x)} + O(|x|^{-\alpha})\\
&\leq 1/V\left(x+hb(x)-\frac{x+hb(x)}{|x+hb(x)|}2h|x|\right)- \frac{1}{V(x)} + O(|x|^{-\alpha})\\
&= 1/\log \left|x+hb(x)-\frac{x+hb(x)}{|x+hb(x)|}2h|x|\right| - 1/\log |x|+ O(|x|^{-\alpha}).
\end{align*}
It follows from~\eqref{eq:heavy_tailed_ULA_drift} and the above display that $P(1/V)(x)-1/V(x)\leq 0$ for all $|x|$ sufficiently large.  Since $\lim_{|x|\to\infty}(1/V)(x)\to0$ the process is transient by~\cite[Lemma~10]{Menshikov21} (applied with $f\coloneqq 1/V$) and the fact $\P_x(\limsup_{n\to\infty}V(X_n)=\infty)=1$ for all $x\in\R^d$ (which is a direct consequence of assumption~\eqref{eq:p_alpha_assumption} and  Lemma~\ref{lem:non_confinement} above).
\end{proof}

\subsection{Stereographic projection sampler}
\label{appendix:proofs_SPS}
Let $X=(X_n)_{n\in\N}$ be the SPS Markov chain on $\R^d$, targeting a probability distribution $\pi$, with the transition kernel defined by  Algorithm~\ref{alg:SPS} above. Recall the stereographic projection 
$\mathrm{SP}:\mathbb{S}^d \setminus \{(\mathbf{0},  1)\} \to \R^d$ defined in~\eqref{eq:stereographic_projection_def} above.
The transition kernel  $P$  of the Markov chain $(\mathrm{SP}^{-1}(X_n))_{n\in\N}$ on the sphere $\mathbb{S}^d$ will play a key role in the proof of our results.

We denote by $(\mathbf{y},z)\in\mathbb{S}^{d}\subset\R^d\times\R$ an arbitrary point on the sphere and  by $N=(\mathbf{0},1)\in\R^d\times\R$ the North Pole of $\mathbb{S}^d$. For $\gamma\in(0,\infty)$, define a Lyapunov function $V_\gamma: \mathbb{S}^d\setminus\{N\}\to[1,\infty)$ by
\begin{equation}
\label{eq:V_gamma}
    V_\gamma(\mathbf{y},z) = \tilde V_\gamma(z)\coloneqq\max\left\{(1-z)^{-\gamma},1\right\}, \quad \text{where $(\mathbf{y},z)\in\mathbb{S}^{d}\setminus\{N\}\subset\R^d\times\R$.}
\end{equation}

\begin{prop}
\label{prop:stereographic_drift}
Let $P$  be the transition kernel of the Markov chain $(\mathrm{SP}^{-1}(X_n))_{n\in\N}$ on the sphere $\mathbb{S}^d$. Assume that the target distribution $\pi$ satisfies~\eqref{eq:examples_pi} with $v\in(0,d)$. Pick arbitrary $\gamma\in(0,\infty)$.
\begin{myenumi}[label =(\alph*)]
\item\label{prop:Stereographic_a} If $\gamma<d/2$, there exist constants  $C,c,r\in(0,\infty)$ such that inequality
$$
PV_\gamma-V_{\gamma}\leq - C V_{\gamma}^{1 - (d-v)/(2\gamma)} +c\1{V_\gamma\leq r}\qquad\text{holds on $\mathbb{S}^d$.} 
$$
\item \label{prop:Stereographic_b} For every $\eps\in(0,1)$, there exists a constant $C_2\in(0,1)$ such that 
$$
P(1/V_\gamma)(\mathbf{y},z)-1/V_{\gamma}(\mathbf{y},z)\leq  C_2 (1/V_\gamma(\mathbf{y},z))^{\frac{d-v}{2\gamma}},\quad \text{for all $(\mathbf{y},z)\in\mathbb{S}^d\setminus\{N\}$ with $z\in(1-\eps,1)$.}
$$
\item \label{prop:Stereographic_c} For every  $m\in(0,\infty)$ and $\ell>0$ sufficiently large, there exists $C_{\ell,m}\in(0,1)$ such that for all $(\mathbf{y},z)\in \{V_\gamma\geq \ell+m\}$ and $r>\ell$ we have
$$
\P_{(\mathbf{y},z)}(T^{(r)}<S_{(\ell)}) \geq 
%\1{V(x)\geq r} + 
C_{\ell,m}/r^{d/(2\gamma)},
$$
where  $T^{(r)}\coloneqq\inf\{n\in\N :V_\gamma(\mathrm{SP}^{-1}(X_n))>r\}$ and $S_{(\ell)} \coloneqq \inf\{n\in\N: V_\gamma(\mathrm{SP}^{-1}(X_n))<\ell\}$.
\end{myenumi}
\end{prop}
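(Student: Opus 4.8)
\textbf{Proof plan for Proposition~\ref{prop:stereographic_drift}.} The key analytic input is a local expansion of the transition kernel $P$ near the North Pole $N=(\mathbf{0},1)$, expressed in the coordinate $u\coloneqq 1-z\in(0,2]$, which tends to zero as the chain approaches $N$. Under the change of variables $x=\mathrm{SP}(\mathbf{y},z)$ one has $|x|^2=(1+z)/(1-z)=2/u-1$, so $|x|\to\infty$ corresponds exactly to $u\to0$, and the pull-back density is $\pi_S(\mathbf{y},z)\propto\pi(\mathrm{SP}(\mathbf{y},z))(1+|x|^2)^d\approx |x|^{-(v+d)}|x|^{2d}=|x|^{d-v}\approx u^{-(d-v)/2}$ for small $u$. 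Thus, as $z\uparrow1$, the target density on the sphere \emph{blows up} at rate $u^{-(d-v)/2}$, and since $v<d$ this explosion makes the Metropolis acceptance ratio for a move that decreases $|x|$ (i.e.\ increases $u$, moving away from $N$) behave like $(u/u')^{(d-v)/2}\ll1$. The first step is therefore to set up this expansion carefully: near $N$ the SPS proposal (a Gaussian increment of variance $h^2$ in $\R^{d+1}$ projected onto the tangent space and renormalised) produces, in the $u$-coordinate, a proposed value $u'$ whose distribution has a density that behaves like a fixed $O(1)$ profile in the rescaled variable $u'/u$ (because near the pole the sphere is locally flat at scale $\sqrt u$ and the Brownian-type increment has its own scale $h$; the relevant regime is $\sqrt u \ll h$, where the proposal spreads $u'$ over the whole range $(0,O(1))$). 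This is the content already used in~\cite{Yang24} to establish that the equatorial region is reached in one step with probability bounded below uniformly, and I would cite or re-derive the relevant estimates from the proof of~\cite[Thm~2.1]{Yang24}.

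Granting the local description, part~\ref{prop:Stereographic_b} follows by a direct computation. Writing $w(\mathbf{y},z)\coloneqq 1/V_\gamma(\mathbf{y},z)=u^{\gamma}$ for $u$ small, I estimate $P w - w = \E_{(\mathbf{y},z)}[(u')^{\gamma}-u^{\gamma}]$. Split the proposal into (i) moves toward the equator, $u'\gg u$, which are accepted with probability $\lesssim (u/u')^{(d-v)/2}$, and (ii) moves deeper into the pole, $u'\lesssim u$, always accepted. For (i), $\E[(u')^{\gamma}\alpha]\lesssim u^{(d-v)/2}\int (u')^{\gamma-(d-v)/2}\,(\text{proposal density})\,du'$, which is $\lesssim u^{(d-v)/2}$ provided the exponent $\gamma-(d-v)/2$ keeps the integral convergent at the $O(1)$ end --- this holds for any $\gamma>0$ when $d-v>0$, and one absorbs the $-w=-u^\gamma$ term since $\gamma > (d-v)/2$ would be the wrong direction, so instead the dominant surviving term is exactly of order $u^{(d-v)/2}=w^{(d-v)/(2\gamma)}$; for (ii) the contribution of $(u')^\gamma - u^\gamma$ is negative or of smaller order. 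Collecting the bounds gives $Pw-w\le C_2 w^{(d-v)/(2\gamma)}$ on a neighbourhood of $N$, which is~\ref{prop:Stereographic_b}. Identifying $\varphi(1/r)\propto 1/r^{(d-v)/(2\gamma)}$ then matches the statement: $r\mapsto r\varphi(1/r)= r^{1-(d-v)/(2\gamma)}$ must be decreasing, which forces $\gamma<d/(2(d-v)) \cdot\ldots$; more simply one just needs $(d-v)/(2\gamma)<1$, handled by choosing $\gamma$ appropriately, and $\lim_{r\to\infty}r\varphi(1/r)=0$ then holds.

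For part~\ref{prop:Stereographic_c}, I would invoke the \emph{single-large-jump} mechanism from Section~\ref{subsubsec:Identifyin_Psi}: since the SPS proposal density has polynomial tails in $|x|$ (a Gaussian on the sphere, pushed through $\mathrm{SP}$, gives $J_{\mathrm{SP}}(x)\propto(1+|x|^2)^d$-type heavy tails for the proposed $|\hat X|$), one single step from any starting point with $V_\gamma\ge\ell$ lands in $\{V_\gamma\ge r\}$ with probability at least a constant times the proposal mass there, and since the pull-back target $\pi_S$ only \emph{grows} toward the pole the acceptance of such an upward jump is bounded below. Concretely I would show $P((\mathbf{y},z),\{V_\gamma\ge r\})\ge c\, r^{-d/(2\gamma)}$ uniformly on $\{V_\gamma\ge\ell+m\}$, which immediately gives $\P_{(\mathbf{y},z)}(T^{(r)}<S_{(\ell)})\ge P((\mathbf{y},z),\{V_\gamma\ge r\})\ge C_{\ell,m}/r^{d/(2\gamma)}$, so $\Psi(r)=r^{d/(2\gamma)}$ works and $P(\Psi\circ V_\gamma)=\infty$ as claimed in the ``non-integrable case'' of Section~\ref{subsubsec:Identifyin_Psi}. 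Finally, part~\ref{prop:Stereographic_a} is the \emph{upper-bound} drift estimate: using the same local expansion for $V_\gamma$ itself (not its reciprocal), a Taylor expansion of $\tilde V_\gamma(z')=(u')^{-\gamma}$ around $u$, controlled via the acceptance-damped upward moves, yields $PV_\gamma - V_\gamma\le -C V_\gamma^{1-(d-v)/(2\gamma)}$ outside a compact set, with the restriction $\gamma<d/2$ ensuring the drift function $\phi(r)=r^{1-(d-v)/(2\gamma)}$ is concave and the algebra of the expansion closes (this is the regime where the second-order term in the Taylor expansion does not overwhelm the first); boundedness of $PV_\gamma$ on compacts then gives the stated inequality on all of $\mathbb{S}^d$.

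\textbf{Main obstacle.} The hardest part is making the local expansion of the SPS transition kernel near the North Pole fully rigorous and uniform: one must track how the Gaussian increment of fixed variance $h^2$ in $\R^{d+1}$, after tangent-space projection and renormalisation, distributes the coordinate $u'=1-\hat z$, including both the ``diffusive'' regime $u'\approx u$ and the ``escape'' regime $u'=O(1)$, and then integrate the Metropolis acceptance ratio $\alpha\approx (u/u')^{(d-v)/2}\wedge 1$ against this law to extract the \emph{sharp} exponents $(d-v)/(2\gamma)$ in $\varphi$ and $d/(2\gamma)$ in $\Psi$. The estimates in~\cite{Yang24} supply the qualitative geometry (the equator is reached in one step), but the quantitative tail rates needed here require pushing those computations one order further, and care is needed because the exponents must come out \emph{exactly} right for the lower bounds in Theorems~\ref{thm:CLT}, \ref{thm:invariant} and~\ref{thm:f_rate} to match the upper bounds obtained from part~\ref{prop:Stereographic_a}.
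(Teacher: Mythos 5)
Your overall architecture for parts~\ref{prop:Stereographic_b} and~\ref{prop:Stereographic_c} matches the paper's: split the proposal into pole-ward moves (accepted with probability one, since $\pi_S$ grows toward $N$) and equator-ward moves (damped by the acceptance ratio $\approx\bigl((1-z)/(1-z')\bigr)^{(d-v)/2}$), and obtain part~\ref{prop:Stereographic_c} from the one-step bound $P\bigl((\mathbf{y},z),\{V_\gamma>r\}\bigr)\gtrsim r^{-d/(2\gamma)}$. However, there are two genuine gaps. First, the quantitative heart of the proof is missing: everything hinges on a uniform two-sided tail estimate for the last coordinate $Z(z)$ of the proposal, namely $c(1-a)^{d/2}\leq\P(Z(z)>a)\leq C(1-a)^{d/2}$ uniformly for $\sqrt{3}/2<z<a<1$, together with a uniform lower bound $\P(Z(z)\in(0,2/3))>c_0$ (the paper's Lemma~\ref{lem:proposal_tail_SPS}, proved from the explicit representation $Z(z)=(z+\sqrt{1-z^2}\,hU)/\sqrt{1+h^2(U^2+U_\perp^2)}$ with $U\sim N(0,1)$, $U_\perp^2\sim\chi^2_{d-1}$). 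You flag this as the ``main obstacle'' but do not supply it, and your heuristic description of the proposal as spreading $u'=1-\hat z$ with a fixed $O(1)$ profile in the rescaled variable $u'/u$ is not the right picture: there is no $u'/u$-scaling; the correct statement is a $(1-a)^{d/2}$ tail at the pole, uniform in the starting height $z$, coming from the window $|hU|\asymp\sqrt{1-a}$ combined with $h^2U_\perp^2\lesssim 1-a$.

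Second, your mechanism for part~\ref{prop:Stereographic_a} is misdescribed. A Taylor expansion of $\tilde V_\gamma(z')=(1-z')^{-\gamma}$ around the current point cannot work, because in the $z$-coordinate the chain makes $O(1)$ moves: with probability at least $c_0$ the proposal lands in $Z(z)\in[0,2/3]$, so $V_\gamma$ changes by a multiplicative factor, not a small increment. The correct balance is global: pole-ward moves contribute $\E[\mathbbm{1}_{[z,1)}(Z(z))\tilde V_\gamma(Z(z))]\asymp(1-z)^{d/2-\gamma}$, and this is precisely where the restriction $\gamma<d/2$ enters --- it is an integrability (moment) condition against the $(1-a)^{d/2}$ proposal tail, not a concavity requirement on $\phi$ nor a statement about second-order Taylor terms; equator-ward moves, damped by the acceptance ratio, contribute $-c''(1-z)^{(d-v)/2-\gamma}$, which dominates since $v>0$. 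Without identifying this comparison of exponents, the claimed drift inequality does not follow from your sketch. Finally, a minor slip in your side remark: for $r\mapsto r\varphi(1/r)$ with $\varphi(1/r)\propto r^{-(d-v)/(2\gamma)}$ to be decreasing as required by~\nameref{sub_drift_conditions}, one needs $(d-v)/(2\gamma)>1$, i.e.\ $\gamma<(d-v)/2$ (as used in the proof of Theorem~\ref{thm:stereographic}), not $(d-v)/(2\gamma)<1$; this affects how the proposition is applied rather than its proof.
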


The proof of Proposition~\ref{prop:stereographic_drift} requires Lemma~\ref{lem:proposal_tail_SPS} below.  Note that the distribution of the component $Z$ of the SPS proposal $(Y,Z)\in\mathbb{S}^d\subset \R^d\times\R$ at a point $(\mathbf{y},z)\in\mathbb{S}^d$  with step size $h>0$, given in Algorithm~\ref{alg:SPS} above,   takes the following form~\cite[Lem.~3.1]{Yang24}:
\begin{equation}
\label{eq:proposal_stereographic}
Z = Z(z) \coloneqq \frac{z+\sqrt{1-z^2}hU}{\sqrt{1+h^2(U^2+U_\perp^2)}},\quad\text{where $U\sim N(0,1)$ and $U_\perp^2\sim \chi_{d-1}^2$ are independent.}
\end{equation}
In~\eqref{eq:proposal_stereographic}, $N(0,1)$ denotes the standard Gaussian law on $\R$ with zero mean and unit variance  and  $\chi_{d-1}^2$ is the chi-squared distribution with $d-1$ degrees of freedom and density proportional to 
\begin{equation}
\label{eq:chi_sqare_density}
 r\mapsto r^{\frac{d-1}{2}-1}\exp(-r/2), \qquad r\in(0,\infty).
\end{equation}
The key ingredient in the proof of Proposition~\ref{prop:stereographic_drift} is the quantification of the decay of the tail of the proposal distribution in SPS at the North Pole given in the following  lemma. 

\begin{lem}
\label{lem:proposal_tail_SPS} 
There exist constants $c,C\in(0,\infty)$ such that 
$Z(z)$ in~\eqref{eq:proposal_stereographic}  satisfies 
\begin{equation}
\label{eq:stereographi_proposal_bounds}   
c(1-a)^{d/2}\leq\P(Z(z)>a)\leq C(1-a)^{d/2}\qquad\text{for all $\sqrt{3}/2<z<a<1$.}
\end{equation}
Moreover, %Similarly, the representation in~\eqref{eq:proposal_stereographic} implies that 
there exists a constant $c_0>0$ such that
\begin{equation}
\label{eq:proposal_stereographic_density_equator}
\P( Z(z)\in(0,2/3))>c_0\quad \text{as for all $z\in[0,1)$.}
\end{equation}
\end{lem}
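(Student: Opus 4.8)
The plan is to prove the two displayed estimates \eqref{eq:stereographi_proposal_bounds} and \eqref{eq:proposal_stereographic_density_equator} separately, reducing each to a one‑dimensional computation via the representation \eqref{eq:proposal_stereographic} and the independence of $U\sim N(0,1)$ and $U_\perp^2\sim\chi_{d-1}^2$. For \eqref{eq:stereographi_proposal_bounds}, fix $\sqrt3/2<z<a<1$ and set $s\coloneqq\sqrt{1-z^2}$, so that $s^2<1/4$ and hence $a^2-s^2=a^2+z^2-1\in[1/2,1]$. Conditioning on $U_\perp^2=t$, the event $\{Z(z)>a\}$ is equivalent to $z+shU>0$ together with the quadratic inequality $(s^2-a^2)h^2U^2+2zshU+(z^2-a^2-a^2h^2t)>0$, which (since $s^2-a^2<0$) describes a downward‑opening parabola in $U$. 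Using $s^2+z^2=1$, its discriminant collapses to $4h^2a^2\big((1-a^2)-(a^2-s^2)h^2t\big)$, so the solution set is a non‑empty interval $(u_-(t),u_+(t))$ precisely when $t<T_0\coloneqq\frac{1-a^2}{(a^2+z^2-1)h^2}$, and then
\[
u_+(t)-u_-(t)=\frac{2a}{\sqrt{a^2+z^2-1}}\sqrt{T_0-t}.
\]
A short check, again using $s^2+z^2=1$ together with the strict inequality $z<a$, shows $u_-(t)>0$ for $t\in[0,T_0)$, so the constraint $z+shU>0$ is automatic.

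Hence, by independence,
\[
\P\big(Z(z)>a\big)=\int_0^{T_0}\big(\Phi(u_+(t))-\Phi(u_-(t))\big)\,f_{\chi_{d-1}^2}(t)\,\ud t,
\]
where $\Phi$ is the standard normal distribution function and $f_{\chi_{d-1}^2}$ the $\chi_{d-1}^2$‑density (for $d=1$ this is the point mass at $0$, and the integral reduces to $\Phi(u_+(0))-\Phi(u_-(0))$). I would then read off, uniformly over $\sqrt3/2<z<a<1$, the bounds $T_0\asymp 1-a$, $T_0\le C_h$, and $0<u_\pm(t)\le 4/h$; since the normal density is bounded above and below by positive constants on $[0,4/h]$ and $e^{-t/2}\asymp1$ on $[0,T_0]$, this gives $\Phi(u_+(t))-\Phi(u_-(t))\asymp\sqrt{T_0-t}$ and $f_{\chi_{d-1}^2}(t)\asymp t^{(d-3)/2}$ there, whence
\[
\P\big(Z(z)>a\big)\asymp\int_0^{T_0}\sqrt{T_0-t}\,t^{(d-3)/2}\,\ud t=B\!\Big(\tfrac{d-1}{2},\tfrac32\Big)\,T_0^{d/2}\asymp(1-a)^{d/2},
\]
with implied constants depending only on $d$ and $h$ (for $d=1$ the degenerate identity $\P(Z(z)>a)\asymp\sqrt{T_0}$ is used instead). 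The main (modest) obstacle here is purely the bookkeeping needed to keep all constants uniform as $z\uparrow a$ and as $a\uparrow1$; this is painless because, thanks to the discriminant simplification afforded by $s^2+z^2=1$, every relevant quantity stays in a fixed bounded range.

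For \eqref{eq:proposal_stereographic_density_equator} I would argue by cases on $z\in[0,1)$, in each case exhibiting an event for $(U,U_\perp^2)$ of probability bounded below uniformly in $z$ on which $0<Z(z)<2/3$. When $z$ is bounded away from $1$ (say $z\le 1/2$), it suffices to take $U$ in a small fixed positive interval $[\epsilon_0,\epsilon_1]$ with $h\epsilon_1$ small and $U_\perp^2$ bounded, since then $0<z+shU\le z+h\epsilon_1<2/3$ while the denominator is $\ge 1$. When $z$ is close to $1$ (say $z\ge 4/5$, so $s=\sqrt{1-z^2}\le 3/5$), one takes $U\ge U_0$ with $U_0$ large and $U_\perp^2\le 1$, giving $0<Z(z)\le \tfrac{z}{h|U|}+s\le \tfrac{1}{hU_0}+\tfrac35<\tfrac23$. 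The intermediate band of $z$ is handled, for $d\ge 2$, by instead forcing $U_\perp^2$ to be large (inflating the denominator while the numerator stays bounded and positive), and for $d=1$ (where $U_\perp^2\equiv0$) by the elementary observation that $u\mapsto Z(z)|_{U=u}$ moves continuously from $z$ at $u=0$ down through $(0,2/3)$ as $u$ decreases, so a fixed negative $u$‑interval works. In every case uniformity in $z$ within the relevant compact sub‑interval follows from continuity of $Z(z)$ in $(z,u,t)$ and compactness. This part is routine; the only mild nuisance is the low‑dimensional cases $d=1,2$, which call for these small variants.

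Overall, the crux of the lemma is the first estimate, and within it the one genuinely useful observation is the algebraic collapse of the discriminant via $s^2+z^2=1$, which turns the problem into the explicit Beta‑type integral above; everything else is continuity, compactness, and keeping track of constants.
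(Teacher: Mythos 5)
Your proof is correct, and it reaches both estimates by a genuinely different route than the paper. For \eqref{eq:stereographi_proposal_bounds} the paper works with the same quadratic (writing $\{Z(z)>a\}=\{R(hU)>a^2h^2U_\perp^2\}$ and using $z^2+s^2=1$ to control the roots and the maximum of $R$), but instead of conditioning it sandwiches the event between product events: the upper bound via $\{u_-<hU<u_+\}\cap\{h^2U_\perp^2<\tfrac43 C_m(1-a)\}$, the lower bound by restricting $hU$ to the middle half of the root interval, where $R\geq C_4(1-a)$, and intersecting with $\{h^2U_\perp^2<C_4(1-a)\}$; multiplying the Gaussian factor $\asymp(1-a)^{1/2}$ by the $\chi^2_{d-1}$ factor $\asymp(1-a)^{(d-1)/2}$ gives $(1-a)^{d/2}$ from both sides. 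You instead condition on $U_\perp^2=t$, use the discriminant collapse to get the exact interval length $\tfrac{2a}{\sqrt{a^2+z^2-1}}\sqrt{T_0-t}$, and integrate against the $\chi^2_{d-1}$ density to land on the Beta integral $\asymp T_0^{d/2}$. Both are valid; yours produces the two-sided bound in a single computation with transparent uniformity in $(z,a)$, and it treats $d=1$ explicitly, while the paper's sandwich is shorter and avoids the conditional integral. For \eqref{eq:proposal_stereographic_density_equator} the paper uses one product event, $\{hU\in[0,1],\,h^2U_\perp^2\geq 8\}$, which forces the denominator to be at least $3$ while the numerator is at most $\sqrt2$; this is simpler than your case analysis in $z$, but it tacitly requires $d\geq2$ (for $d=1$, $U_\perp^2\sim\chi^2_0=\delta_0$ and that event has probability zero), so your split into $z\le 1/2$, the intermediate band (with the compactness argument when $d=1$), and $z\ge 4/5$ is more laborious but actually covers the $d=1$ case that the paper's displayed bound misses. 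One small remark: in your $z\le 1/2$ case no condition on $U_\perp^2$ is needed at all, since the denominator is already at least $1$, and your intermediate-band argument for $d\ge2$ is essentially the paper's event with a restricted range of $z$.
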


\begin{proof}
%Without loss of generality, assume $h=1$ %in~\eqref{eq:proposal_stereographic}. Then 
For $\sqrt{3}/2<z<a<1$, we have $\{Z(z)>a\}=\{R(hU)>a^2 h^2U_\perp^2\}$, where 
$R(u)=(1-z^2-a^2)u^2+2z\sqrt{1-z^2}u+z^2-a^2$ is a downward facing parabola with zeros $0<u_-<u_+$, satisfying $u_+-u_-=\frac{a\sqrt{1+a}}{z^2+a^2-1}(1-a)^{1/2}$. The maximum of $R$ satisfies: $R((u_++u_1)/2)=\frac{a(1+a^2)}{z^2+a^2-1}(1-a)\in(0,C_m (1-a))$ for some constant $C_m>0$ and all  $\sqrt{3}/2<z<a<1$.
Hence 
$\{Z(z)>a\}=\{R(hU)>a^2 U_\perp^2\}\subset \{u_-<hU<u_+\}\cap\{\frac{4}{3}C_m(1-a)>h^2U_\perp^2\}$.
By~\eqref{eq:chi_sqare_density} and the fact that the density of $hU$ is globally bounded, there exist $C_1,C_2>0$ such that  
$$\P(4C_m(1-a)/3>h^2U_\perp^2)\leq C_1 (1-a)^{\frac{d-1}{2}}\quad\&\quad 
\P(u_-<hU<u_+)\leq C_2 (1-a)^{\frac{1}{2}}. 
$$
Since $U$ and $U_\perp^2$ are independent, the upper bound in~\eqref{eq:stereographi_proposal_bounds} holds for all $\sqrt{3}/2<z<a<1$.

For all $\sqrt{3}/2<z<a<1$,
the distance between $u_-'\coloneqq u_-+(u_+-u_-)/4$ and $u_+'\coloneqq u_-+(u_+-u_-)3/4$ satisfies $u_+'-u_-'=(u_+-u_-)/2\leq C_3(1-a)^{1/2}$ for some constant $C_3>0$. Moreover, the modulus of the leading coefficient of $R$ is less than $1$. Since $R$ is a quadratic with global maximum $\frac{a(1+a^2)}{z^2+a^2-1}(1-a)$, we have $R(u)>C_4 (1-a)$ for all  $u\in(u_-',u_+')$
and some $C_4>0$. By definition~\eqref{eq:proposal_stereographic},
\begin{align*}
   \{Z(z)>a\}&=\{R(hU)>a^2h^2 U_\perp^2\}\supset \{u_-'<hU<u_+'\}\cap\{R(hU)>a^2 h^2U_\perp^2\} \\
   & \supset \{u_-'<hU<u_+'\}\cap\{C_4 (1-a)>h^2U_\perp^2\}.
\end{align*}
The independence of $U$ and $U_\perp^2$  and~\eqref{eq:chi_sqare_density}
imply the lower bound in~\eqref{eq:stereographi_proposal_bounds}  for all $\sqrt{3}/2<z<a<1$.

By the definition in~\eqref{eq:proposal_stereographic},  there exists a constant  $c_0\in(0,1)$ such that, for all $z\in[0,1)$, we have  
$$\P\left(Z(z)\in [0,2/3]\right)\geq \P(hU\in [0,1],h^2U_\perp^2\geq8)=\P(hU\in [0,1])\P(h^2U_\perp^2\geq8) \geq c_0,$$
implying~\eqref{eq:proposal_stereographic_density_equator}.
\end{proof}

\begin{proof}[Proof of Proposition~\ref{prop:stereographic_drift}]
By the assumption in~\eqref{eq:examples_pi}, there exist constants $c_\pi,C_\pi\in(0,\infty)$ such that
for all $x\in\R^d$ with $|x|$ sufficiently large we have
$$
c_\pi|x|^{d-v}\leq \pi(x)|x|^{2d} \leq C_\pi|x|^{d-v}.
$$
The stereographic projection formula $\mathrm{SP}(\mathbf{y},z) = x$ in~\eqref{eq:stereographic_projection_def} implies $(1-z)^{-1}=(1+|x|^2)/2$. The expression in~\eqref{eq:stereographic_density_prop} for the invariant distribution $\pi_S$ of 
the Markov chain $(\mathrm{SP}^{-1}(X_n))_{n\in\N}$ on the sphere $\mathbb{S}^d$
implies that for all $z\in(-1,1)$  sufficiently close to $1$, there exist constants $c_\pi',C_\pi'\in(0,\infty)$ satisfying 
\begin{equation}
\label{eq:stereographic_invariant_sphere_bounds}
c_\pi'(1-z)^{(v-d)/2}\leq \pi_S(\mathbf{y},z) \leq C_\pi'(1-z)^{(v-d)/2}.
\end{equation}

Let $Q((\mathbf{y},z),\ud \xi)$ be the transition kernel of the proposal Markov chain on the sphere $\mathbb{S}^d$, given by step~2 of Algorithm~\ref{alg:SPS} above (i.e., for every $(\mathbf{y},z)\in\mathbb{S}^d$, $Q((\mathbf{y},z),\ud \xi)$ is a probability measure on the Borel $\sigma$-algebra $\cB(\mathbb{S}^d)$). Recall that $P$ is the transition kernel of the Markov chain $(\mathrm{SP}^{-1}(X_n))_{n\in\N}$.
By~\eqref{eq:Metropolis-adjusted_kernel}, for any $\gamma\in(0,\infty)$, we have
\begin{align*}
PV_\gamma(\mathbf{y},z)-V_\gamma(\mathbf{y},z) = \int_{\mathbb{S}^{d}} (V_\gamma(\xi)-V_\gamma(\mathbf{y},z)) \min\left\{1,\frac{\pi_S(\xi)}{\pi_S(\mathbf{y},z)}\right\} Q((\mathbf{y},z),\ud \xi).
\end{align*}
For any $z\in[0,1)$, let 
$C_z\coloneqq \mathbb{S}^d\cap(\R^d\times [z,1])$ be a neighbourhood of the North Pole.
Let $\Xi$ denote the random vector in $\mathbb{S}^d$ with law $Q((\mathbf{y},z),\ud \xi)$.  
Then $p_{d+1}(\Xi)$ has the law of $Z(z)$ defined in~\eqref{eq:proposal_stereographic}, where $p_{d+1}:\R^d\times \R\to\R$ is the projection on the last coordinate.
We thus get
\begin{align}
\nonumber
\int_{C_z} ( V_\gamma(\xi) -  V_\gamma(\mathbf{y},z))\min\left\{1,\frac{\pi_S(\xi)}{\pi_S(\mathbf{y},z)}\right\} Q((\mathbf{y},z),\ud \xi)
\nonumber & \leq 
\int_{C_z}   V_\gamma(\xi)Q((\mathbf{y},z),\ud\xi)\\
\nonumber & =
\int_{C_z}   \tilde V_\gamma(p_{d+1}(\xi))Q((\mathbf{y},z),\ud\xi)\\
\nonumber&=\E[\mathbbm{1}_{[z,1]}(Z(z))\tilde V_\gamma(Z(z))].
\end{align}
For any $\gamma\in(0,d/2)$, the upper bound in~\eqref{eq:stereographi_proposal_bounds} of Lemma~\ref{lem:proposal_tail_SPS} and Tonelli's theorem yield
\begin{align}
\E[\mathbbm{1}_{[z,1)}(Z(z))\tilde V_\gamma(Z(z))]
&\nonumber\leq c_1' \int_{(z,1)} (1-z')^{-\gamma-1}\P\left( Z(z)>z'\right)\ud z'\\
&\label{eq:drift_stereographic_upper}\leq  c_2' (1-z)^{d/2-\gamma} \quad \text{for constants $ c_1', c_2'>0$ and all $z$ close to $1$.}
\end{align}

 Assume that  $z>2/3$. The inequalities in~\eqref{eq:stereographic_invariant_sphere_bounds} and~\eqref{eq:proposal_stereographic_density_equator} imply
\begin{align}
\nonumber&\int_{\mathbb{S}^d\setminus C_z} (V_\gamma(\xi) - V_\gamma(\mathbf{y},z))\min\left\{1,\frac{\pi_S(\xi)}{\pi_S(\mathbf{y},z)}\right\} Q((\mathbf{y},z),\ud \xi)\\
\nonumber&\leq \int_{\mathbb{S}^d\setminus C_z} (\tilde V_\gamma(p_{d+1}(\xi)) - V_\gamma(\mathbf{y},z)) c\frac{(1-p_{d+1}(\xi))^{(v-d)/2}}{(1-z)^{(v-d)/2}} Q((\mathbf{y},z),\ud \xi) \\
\nonumber&= c\E\left[(\tilde V_\gamma(Z(z))-\tilde V_\gamma(z))\left(\frac{1-z}{1-Z(z)}\right)^{(d-v)/2}\mathbbm{1}_{[-1,z)}(Z(z))\right]\\
\nonumber &\leq c\E\left[(\tilde V_\gamma(Z(z))-\tilde V_\gamma(z))\left(\frac{1-z}{1-Z(z)}\right)^{(d-v)/2}\mathbbm{1}_{[0,2/3]}(Z(z))\right]\\
\nonumber &\leq c'\E\left[(\tilde V_\gamma(Z(z))-\tilde V_\gamma(z))(1-z)^{(d-v)/2}\mathbbm{1}_{[0,2/3]}(Z(z))\right]\\
\nonumber &\leq c'(1-z)^{(d-v)/2}\tilde V_\gamma(z) \E\left[(\tilde V_\gamma(Z(z))/\tilde V_\gamma(z)-1)\mathbbm{1}_{[0,2/3]}(Z(z))\right]\\
\label{eq:drift_stereographic_lower} &\leq - c'' (1-z)^{-\gamma+(d-v)/2}, \quad\text{for some constants $c, c',c''\in(0,\infty)$ and all $z$ close to $1$.}
\end{align}
Since, for any $z'\in[0,2/3]$, we have  $0< \tilde V_\gamma(z')/\tilde V_\gamma(z)=(1-z)^\gamma/(1-z')^\gamma\leq 3^\gamma(1-z)^\gamma$, inequality~\eqref{eq:drift_stereographic_lower} holds.
It now follows from  the definition of $V_\gamma$ in~\eqref{eq:V_gamma} and the inequalities in~\eqref{eq:drift_stereographic_upper} and~\eqref{eq:drift_stereographic_lower} that, for each $\gamma\in(0,d/2)$, there exists a constant $C\in(0,1)$ such that
\begin{equation*}
%\label{eq:drift_stereographic}
PV_\gamma(\mathbf{y},z)-V_{\gamma}(\mathbf{y},z)\leq - C V_{\gamma}(\mathbf{y},z)^{1 - (d-v)/(2\gamma)}\quad \text{for all  $(\mathbf{y},z)$ in a neighbourhood of $(\mathbf{0},1)$.} 
\end{equation*}
Since $P V_\gamma$ is bounded on compact sets in $\mathbb{S}^d\setminus\{(\mathbf{0},1)\}$,
the drift condition in part~(a) follows.

Pick arbitrary $\gamma\in(0,\infty)$ and recall  
$C_z=\mathbb{S}^d\cap(\R^d\times [z,1])$ for $z\in[0,1)$. Then  the inequalities in~\eqref{eq:stereographic_invariant_sphere_bounds} and~\eqref{eq:proposal_stereographic_density_equator} imply 
\begin{align}
\nonumber A(\mathbf{y},z) &\coloneqq \int_{\mathbb{S}^{d}\setminus C_z} (1/V_\gamma(\xi) - 1/V_\gamma(\mathbf{y},z))\min\left\{1,\frac{\pi_S(\xi)}{\pi_S(\mathbf{y},z)}\right\} Q((\mathbf{y},z),\ud \xi)\\
\nonumber
&\leq \int_{\mathbb{S}^d\setminus C_z} (1/\tilde V_\gamma(p_{d+1}(\xi)) - 1/V_\gamma(\mathbf{y},z)) c\left(\frac{1-z}{1-p_{d+1}(\xi)}\right)^{(d-v)/2} Q((\mathbf{y},z),\ud \xi)\\
\label{eq:stereographic_1/V_bound}&= c\E\left[(1/\tilde V_\gamma(Z(z))-1/\tilde V_\gamma(z))\left(\frac{1-z}{1-Z(z)}\right)^{(d-v)/2}\mathbbm{1}_{[-1,z)}(Z(z))\right].
\end{align}
By the identity in~\eqref{eq:proposal_stereographic} there exists $c_0\in(0,\infty)$ such that $\E[(1-Z(z))^{-(d-v)/2}]\leq c_0$ for all $z$ sufficiently close to 1. Thus, there exists a constant $c_0\in(0,\infty)$ such that 
\begin{align*}
\E\left[\mathbbm{1}_{[2/3,z]}(Z(z))\left(\frac{1}{\tilde V_\gamma(Z(z))}-\frac{1}{\tilde V_\gamma(z)}\right)\left(\frac{1-z}{1-Z(z)}\right)^{\frac{d-v}{2}}\right]&\leq c_0(1-z)^{\frac{d-v}{2}}\quad\text{for all $z\in [2/3,1)$.}
\end{align*}
The upper bound in~\eqref{eq:stereographic_1/V_bound} and in the display above yield
\begin{align}
\nonumber A(\mathbf{y},z) &\leq c'\E\left[\mathbbm{1}_{[-1,2/3]}(Z(z))\left(\frac{1-z}{1-Z(z)}\right)^{(d-v)/2}\right]+c_0(1-z)^{\frac{d-v}{2}}\\
\label{eq:drift_stereographic_lower_2} &\leq c''(1-z)^{(d-v)/2}, \quad\text{as $(\mathbf{y},z)\to (\mathbf{0},1)$,}
\end{align}
for some constants $c',c''\in(0,\infty)$.
Furthermore, since
$$
\int_{C_z} (1/\tilde V_\gamma(\xi ) - 1/\tilde V_\gamma(z))\min\left\{1,\frac{\pi_S(\xi)}{\pi_S(\mathbf{y},z)}\right\} Q((\mathbf{y},z),\ud \xi )\leq 0,
$$
and $\tilde V_\gamma(z)(1-z)^\gamma=1$ for $z$ close to $1$, the inequality in~\eqref{eq:drift_stereographic_lower_2} yields 
$$
P(1/V_\gamma)(\mathbf{y},z)-1/V_\gamma(\mathbf{y},z)\leq  C_\gamma/V_\gamma(\mathbf{y},z)^{(d-v)/(2\gamma)},\quad \text{as $(\mathbf{y},z)\to (\mathbf{0},1)$ } 
$$
for some constant $C_\gamma\in(0,\infty)$,
implying $\varphi(1/r)\propto (1/r)^{(d-v)/(2\gamma)}$. This concludes the proof of part~\ref{prop:Stereographic_b} of the proposition. 

Finally, from~\eqref{eq:stereographic_invariant_sphere_bounds} and Lemma~\ref{lem:proposal_tail_SPS}  the definition of $V_\gamma$ for $\gamma>0$, there exists $C_\Psi\in(0,\infty)$ and some $z_0$ such that for all $z\in(z_0,1)$ and $r\in(V_\gamma(z),\infty)$ we obtain
\begin{align*}
P((\mathbf{y},z),\{V_\gamma>r\}) &= \int_{ \mathbb{S}^d \cap \R^d\times [r^{1/\gamma}/(1+r^{1/\gamma}),1]} \min\{1,\pi_S(\xi)/\pi_S(\mathbf{y},z)\} Q((\mathbf{y},z),\ud \xi)\\
&\geq C_\Psi/r^{d/(2\gamma)},
\end{align*}
which concludes the proof of part~\ref{prop:Stereographic_c}.
\end{proof}

\begin{proof}[Proof of Theorem~\ref{thm:stereographic}]
\underline{Upper bounds and sufficient conditions for the CLT}. Pick the parameter $\gamma\in((d-v)/2,d/2)$ and let $V_\gamma$ be the function in~\eqref{eq:V_gamma}. By Proposition~\ref{prop:stereographic_drift}\ref{prop:Stereographic_a} the conditions of~\cite[Thm~2.8]{MR2071426} (see also the inequality in~\eqref{eq:upper_drift} above) are satisfied with the function $\phi(r) = r^{1-(d-v)/(2\gamma)}$, yielding the upper bounds on the rate of convergence by letting $\gamma\uparrow d/2$. Moreover, the drift condition in Proposition~\ref{prop:stereographic_drift}\ref{prop:Stereographic_a}, together with~\cite[Thm~5]{MR2446322} and the fact that $\pi(V_\gamma \cdot \phi\circ V_\gamma)<\infty$ if $\gamma<d/4$, also implies that the CLT holds for the function $\tilde g(z)\leq (1-z)^{-s}$ if $s< v/2-d/4$. Since the stereographic projections maps $z\mapsto (|x|^2-1)/(|x|^2+1)$, it follows that the CLT for the projected algorithm holds for the function $g:\R^d\to\R$, if  $s<v-d/2$ and $g(x)\leq |x|^{s}$ for all large $|x|$.

\noindent \underline{Lower bounds and necessary conditions for the CLT}. Observe that, by Proposition~\ref{prop:stereographic_drift}\ref{prop:Stereographic_b} and~\ref{prop:Stereographic_c}, the \textbf{L}-drift conditions~\nameref{sub_drift_conditions} are satisfied for any $\gamma\in(0,(d-v)/2)$ by functions $(V_\gamma,\phi_\gamma,\Psi_\gamma)$ where $V_\gamma$ is given in~\eqref{eq:V_gamma}, $\varphi_\gamma(1/r) = 1/r^{(d-v)/(2\gamma)}$ and $\Psi_\gamma(r) = r^{d/(2\gamma)}$. Thus, by Theorem~\ref{thm:CLT}, the CLT fails for the function $\tilde g\circ V_\gamma$, if $\tilde g\geq r^s$, and
$$
\int_{1}^{\infty} r^{-d/(4\gamma(s-1)+2(d-v))} \ud r = \infty,
$$
where the above integral diverges if $s>(2v-d)/4\gamma + 1$. Theorem~\ref{thm:CLT} implies that the CLT fails for $S_n(g)$, with  $g:\mathbb{S}^d\to\R$ satisfying $|g|\geq (1-z)^{-s}$, as $z\to 1$, and $s>(2v-d)/4 + \gamma$. 
For any $s>(2v-d)/4$, there exists $\gamma>0$,  implying the necessary condition for the CLT.
Since $z \mapsto (|x|^2-1)/(|x|^2+1)$ under the stereographic projection, the CLT for the estimators on $\R^d$ fail  for the functions $g:\R^d\to\R$, if $g(x)\geq |x|^{2s}$, for all $|x|$ sufficiently large, where $s> v/2-d/4$.
Moreover, let $h(r) = r^{u/\gamma}$, where $u< d/2$, and Proposition~\ref{prop:stereographic_drift}\ref{prop:Stereographic_a}, we have $P^n(h\circ V_\gamma)(z)\leq  h\circ V_\gamma(z) + Cn$. Thus, applying Theorem~\ref{thm:f_rate} with $h(r) = r^{u/\gamma}$, $a(r) = r^{-v/(2u)}$ and $A(r) = r^{-v/(2u-v)}$ and letting $u\to d/2$ yields the lower bound on the rate of convergence. 
\end{proof}

\subsection{Independence sampler}
\label{subsec:independence_proof}
In the proofs for the independence sampler, we focus on the case where the invariant measure $\pi$ and the proposal distribution $q$ satisfy the exponential tail condition stated in Assumption~\eqref{eq:independence_exp}. This case is of particular interest, as it demonstrates that the \textbf{L}-drift conditions yield asymptotically sharp bounds in the presence of target measures with exponential moments. The corresponding results for the polynomial tail case in Assumption~\eqref{eq:independence_poly} follow by similar arguments, and we omit the details for brevity.

\begin{prop}
\label{prop:drift_independence}
Let the assumptions of Theorem~\ref{thm:independence_exponential} hold with~\eqref{eq:independence_exp} (with $0<v<k$). The $\mathbf{L}$-drift condition~\nameref{sub_drift_conditions} is satisfied by the functions $V(x)= |x|\vee 1$, $\Psi(r) = \exp( kr)$ and $\varphi(1/r)\propto\exp(-r(k-v))$. Moreover, for $h(r)=\exp(sr)$, where $s\in(0,k)$, there exists $C_h\in(0,\infty)$, such that
$P^n(h\circ V)(x)\leq h\circ V(x) + C_hn$.
\end{prop}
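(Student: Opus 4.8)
The plan is to verify the three requirements of Assumption~\nameref{sub_drift_conditions} for the triple $V(x)=|x|\vee1$, $\Psi(r)=e^{kr}$, $\varphi(1/r)\propto e^{-(k-v)r}$, and then to obtain the linear growth bound on $P^n(h\circ V)$ from a one-step drift estimate followed by iteration. As recalled at the start of Appendix~\ref{sec:examples_proofs}, the independence sampler is Feller, Lebesgue-irreducible and, by Lemma~\ref{lem:non_confinement} (since $P(x,\{V\ge u\})\ge 1-\int_{\{|y|<u\}}q(y)\,\ud y>0$ when $|x|\ge u$ and $P(x,\{V\ge u\})\ge\int_{\{|y|\ge u\}}q(y)\,\ud y>0$ when $|x|<u$, uniformly in $x$), non-confined; so it suffices to check the two analytic inequalities of~\nameref{sub_drift_conditions} together with the monotonicity of $r\mapsto r\varphi(1/r)$ and the submultiplicativity of $\Psi$, the latter being immediate as $\Psi(r_1+r_2)=\Psi(r_1)\Psi(r_2)$ and $\Psi(1)=e^k\ge1$. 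Throughout I would use that, the proposal being state-independent, $\alpha(x,y)=\min\{1,\pi(y)q(x)/(\pi(x)q(y))\}$, which by~\eqref{eq:independence_exp} equals $\min\{1,e^{(k-v)(|y|-|x|)}\}$ for $|x|,|y|$ large; in particular $\alpha(x,y)=1$ when $|y|\ge|x|$ and $\alpha(x,y)=e^{-(k-v)(|x|-|y|)}$ when $|y|<|x|$.

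For the $\Psi$-condition~\nameref{sub_drift_conditions}\ref{sub_drift_conditions(ii)} I would invoke the non-integrable ("single large jump") regime of Section~\ref{subsubsec:Identifyin_Psi}. Fix $\ell$ large enough that $q(y)=c_q e^{-k|y|}$ for $|y|\ge\ell$, and take $m>0$, $r>\ell+m$ and $x$ with $\ell+m\le|x|<r$ (the case $|x|\ge r$ being trivial). Proposing $y\in\{|y|>r\}$ is accepted with probability one since $|y|>r>|x|$, whence $\P_x(T^{(r)}<S_{(\ell)})\ge P(x,\{V>r\})=\int_{\{|y|>r\}}q(y)\,\ud y\ge c\,e^{-kr}=c/\Psi(r)$ for a constant $c>0$ (using the exponential tail of $q$ and $r\ge1$), and the event $\{V(X_1)>r\}$ forces $T^{(r)}\le1<S_{(\ell)}$ because $V(X_0)=|x|>\ell$. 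This gives~\eqref{eq:assumption_heavy_tail_bound} with $C_{\ell,m}\coloneqq\min\{c,1/2\}$; the same computation shows $P(\Psi\circ V)\equiv\infty$, so no overshoot argument (Lemma~\ref{lem:overshoot}) is needed.

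For the $\varphi$-condition I would estimate $P(1/V)(x)-1/V(x)=\int_{\R^d}(1/V(y)-1/V(x))\alpha(x,y)q(y)\,\ud y$, noting that only $\{|y|<|x|\}$ contributes positively. On $\{|y|\le1\}$, $\alpha(x,y)\le e^{-(k-v)(|x|-1)}$ yields a contribution $\lesssim e^{-(k-v)|x|}$; on $\{1\le|y|<|x|\}$, using $1/|y|-1/|x|\le1/|y|$, $\alpha(x,y)=e^{-(k-v)(|x|-|y|)}$ and $q(y)\asymp e^{-k|y|}$, the integrand is $\lesssim e^{-(k-v)|x|}\,e^{-v|y|}/|y|$, and $\int_{\R^d}e^{-v|y|}\,\ud y<\infty$ gives again $\lesssim e^{-(k-v)|x|}$. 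Hence $P(1/V)(x)-1/V(x)\le Ce^{-(k-v)|x|}$ for $|x|$ large, so one may take $\varphi(1/r)=Ce^{-(k-v)r}$ outside a compact set; since $re^{-(k-v)r}\to0$ this extends to a continuous $\varphi$ on $(0,1]$ with $r\mapsto r\varphi(1/r)$ decreasing on $[1,\infty)$ and limit $0$ (the extension on the compact part is routine, the bounded defect $P(1/V)-1/V$ being dominated there by the decreasing majorant of $r\mapsto re^{-(k-v)r}$; note $b=0$ is forced, as $r\varphi(1/r)\to0$ rules out a positive constant). This completes~\nameref{sub_drift_conditions}. Finally, for $h(r)=e^{sr}$ with $s\in(0,k)$, discarding the nonpositive term $-h\circ V(x)\int\alpha(x,y)q(y)\,\ud y$ and using $\alpha\le1$ gives the one-step bound $P(h\circ V)(x)-h\circ V(x)\le\int_{\R^d}h\circ V(y)q(y)\,\ud y=:C_h<\infty$ for every $x\in\R^d$ (finite because $e^{s|y|}q(y)\asymp e^{-(k-s)|y|}$ with $k-s>0$); applying $P$ repeatedly and using $P\mathbbm{1}=\mathbbm{1}$, an induction yields $P^n(h\circ V)(x)\le h\circ V(x)+C_h n$.

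The main obstacle is the sharp $\varphi$-estimate $P(1/V)(x)-1/V(x)\le Ce^{-(k-v)|x|}$: obtaining the correct exponent $k-v$ requires exploiting the exact cancellation $\alpha(x,y)q(y)\asymp e^{-(k-v)|x|}e^{-v|y|}$ between the $e^{-v|y|}$ coming from $\pi$ and the $e^{-k|y|}$ coming from $q$, uniformly in the direction of $x$; the $\Psi$-step and the iteration are comparatively short.
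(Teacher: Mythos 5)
Your proposal is correct and follows essentially the same route as the paper's proof: the one-step bound $P(h\circ V)\le h\circ V+C_h$ iterated for the moment growth, the acceptance-ratio cancellation $\min\{1,\pi(y)q(x)/(\pi(x)q(y))\}\,q(y)\le \pi(y)\,q(x)/\pi(x)\asymp e^{-(k-v)|x|}e^{-v|y|}$ on $\{|y|<|x|\}$ for the $\varphi$-drift, and the single-large-jump bound $\P_x(T^{(r)}<S_{(\ell)})\ge P(x,\{V>r\})\gtrsim e^{-kr}$ for the $\Psi$-condition. The only cosmetic difference is that the paper dominates the whole integral over $B(0,|x|)$ at once by $q(x)/\pi(x)$ rather than splitting $\{|y|\le 1\}$ from $\{1\le|y|<|x|\}$, which changes nothing of substance.
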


\begin{proof}
Let $V(x) = |x|\vee 1$, $h(r) = \exp(sr)$, where $s\in(0,k)$, and recall $B(0,R) = \{x\in \R^d:|x|\leq R\}$. Then we obtain
\begin{align*}
P(h\circ V)(x)-h\circ V(x) &= \int_{\R^d} \left(h\circ V(y)-h\circ V(x)\right)\min\{\exp((k-v)(|y|-|x|)),1\}\exp(-k|y|)\ud y \\
&\leq \int_{\R^d} h\circ V(y)\exp(-k|y|)\ud y \leq C\quad \text{for some $C\in(0,\infty)$ and all $x\in\R^d$.}
\end{align*}
It follows that $P^n(h\circ V)(x)\leq h\circ V(x)+Cn$ for all $x\in \R^d$ and $n\in\N$.
Moreover for the reciprocal of the Lyapunov function, we obtain the following asymptotic drift
\begin{align*}
P(1/V)(x)-1/V(x) &\leq \int_{\R^d} (1/V(y) - 1/V(x))\min\left\{\frac{\pi(y)q(x)}{q(y)\pi(x)},1\right\}q(y)\ud y\\
&\leq  \int_{B(0,|x|)} (1/V(y) - 1/V(x))\min\left\{\frac{\pi(y)q(x)}{q(y)\pi(x)},1\right\}q(y)\ud y\\
&\leq  \int_{B(0,|x|)} (1/V(y) - 1/V(x))\frac{\pi(y)q(x)}{q(y)\pi(x)}q(y)\ud y\\
&\leq C_1\exp(-(k-v)|x|)\quad \text{for some $C_1\in(0,\infty)$  and all $|x|$ large enough.}
\end{align*}
The last inequality follows from~\eqref{eq:independence_exp}. Moreover, there exist $C_\Psi,r_0\in(0,\infty)$ such that for all $r\geq r_0$ and  $|x|\leq r$ , it holds that 
$$
P(x,\{V\geq r\})\geq C_\Psi/\exp(kr).
$$
Thus, \textbf{L}-drift condition~\nameref{sub_drift_conditions} holds with $V$, $\varphi(1/r) \propto \exp(-r(k-v))$ and $\Psi(r) = \exp(kr)$.
\end{proof}

\begin{proof}[Proof of Theorem~\ref{thm:independence_exponential}]
By Proposition~\ref{prop:drift_independence}, the \textbf{L}-drift conditions~\nameref{sub_drift_conditions} are satisfied, by the functions $V(x)=|x|\vee 1$, $\Psi(r) = \exp(kr)$ and $\varphi(1/r)=C/\exp((k-v)r)$ for some constant $C\in(0,\infty)$. Thus, the CLT fails to hold for the function $g(x) \geq \exp(s|x|)$, if for any $\eps>0$ and some constant $c',c''>0$ we have
$$
\int_{c'}^\infty c'' t^{\frac{-k}{2(s+k-v+\eps)}}\ud t = \infty.
$$
Thus, the CLT fails to hold for $S_n(g)$ if  $2s\in(2v-k,\infty)$. Moreover, the CLT fails for $S_n(g)$ with $g$ bounded if $2v<k$.

Pick $\eps>0$ and let the function $h_\eps(r) = \exp((k-\eps)r)$. By Proposition~\ref{prop:drift_independence}, the inequality $P^n(h_\eps\circ V)(x)\leq h_\eps V(x) + C_\eps n\eqqcolon v_\eps(x,n)$ holds for all $n\in\N$ and some $C_\eps\in(0,\infty)$. Moreover, for some $q\in(0,1)$, the function $L_q$ in~\eqref{eq:def_L_eps_q} satisfies $L_q(r) \geq  \exp((v+\eps)r)$. Thus, $a(r)\leq L_q(h^{-1}(r))$, where $h^{-1}$ denotes the inverse of $h$, is satisfied by $a(r)= c_qr^{(v+\eps)/k}$. Thus, by Theorem~\ref{thm:f_rate}, applied with the function $f\equiv 1$ and $a$, for any $x\in\R^d$ there exists $C_\TV\in(0,\infty)$ such that
$
\| P^n(x,\cdot)-\pi\|_\TV\geq  C_\TV n^{(v+\eps)/(k+v+\eps)}\quad\text{for all $n\in\N\setminus\{0\}$,}$ which concludes the proof. Moreover, applying Theorem~\ref{thm:f_rate}\ref{assumption:f_convergence} with $f_\star(r)=\exp(pr)$ for $p\in(0,v)$, yields a lower bound on the $f_p$-variation distance. 
\end{proof}

\section{How are the CLT-simulations performed?}
\label{A:simulations}
This appendix describes the simulation design and the construction of the QQ-plots in Figures~\ref{fig:first}-\ref{fig:last_MALA_ULA}. Recall that a CLT is said to hold if, for every initial state $X_0=x\in\cX$, we have
\[
\sqrt{n}\,\big(S_n(g)-\pi(g)\big)\ \longrightarrow \ N(0,\sigma_g^2)\quad\text{in distribution as } n\to\infty,
\]
where the ergodic average $S_n(g)$ is defined in~\eqref{eq:ergodic_average}. Consequently, for large $n$,
\[
S_n(g)\ \text{is approximated well by the law }\ N\!\big(\pi(g),\, \sigma_g^2/n\big)\quad \text{for $n$ large.}
\]

\noindent\textbf{Simulation design.} For each algorithm under consideration we ran $N$ independent Markov chain paths initialised at the origin, each for $n$ iterations. For example, in Figure~\ref{fig:first} we set $N = 10^4$ and $n = 2\times 10^8$ for MALA, ULA, and iv\mbox{-}RWM. To account for burn-in, we discarded the first $n_b = \lfloor n/3 \rfloor$ iterations and computed, for each chain $i=1,\dots,N$, the truncated ergodic average
\[
S_n^{(i)}(g) \ :=\ \frac{1}{n-n_b}\sum_{k=n_b+1}^{n} g\!\big(X_k^{(i)}\big).
\]
For instance, in Figure~\ref{fig:first} we take $g(x) = \1{x \ge 2}$. The empirical distribution of the $N$ values $\{S_n^{(i)}(g)\}_{i=1}^N$ approximates the distribution of $S_n(g)$.

\noindent\textbf{Construction of the QQ-plots.} We compare the empirical quantiles of $\{S_n^{(i)}(g)\}_{i=1}^N$ with those of a Gaussian distribution whose mean and variance are estimated from the same $N$ replicates. Specifically, we set
\[
\widehat{\mu}_N \ :=\ \frac{1}{N}\sum_{i=1}^N S_n^{(i)}(g),
\qquad
\widehat{\sigma}_g^{\,2} \ :=\ \frac{1}{N-1}\sum_{i=1}^N \Big(S_n^{(i)}(g)-\widehat{\mu}_N\Big)^2,
\]
The QQ-plots then compare the empirical quantiles of $\{S_n^{(i)}(g)\}$ to those of $N\!\big(\widehat{\mu}_N, \widehat{\sigma}_g^{\,2}/n\big)$. Equivalently, one may view this as a QQ-plot of the standardised quantities
\[
Z_i \ :=\ \frac{\sqrt{n}\,\big(S_n^{(i)}(g)-\widehat{\mu}_N\big)}{\widehat{\sigma}_g},
\qquad i=1,\dots,N,
\]
against the standard normal, but we present the plot on the original $S_n(g)$ scale as this illustrates Monte Carlo uncertainty about $\pi(g)$.

\noindent\textbf{Interpretation.} If the empirical quantiles lie close to the identity function, this is consistent with the Gaussian approximation predicted by the CLT (e.g. Figure~\eqref{fig:third_image}). Systematic deviations, such as curvature or changes in slope, particularly in the tails, indicate departures from Gaussianity (e.g. Figures~\eqref{fig:ULA_mean_QQ_1} and \eqref{fig:MALA_mean_QQ_1} above). The centring and scaling used in the plots also permit immediate uncertainty quantification for $\pi(g)$.

Presenting QQ-plots on the $S_n(g)$ scale therefore simultaneously illustrates the adequacy (or otherwise) of the CLT approximation and the precision of the ergodic averages as estimators of the target quantity.

\section*{Acknowledgements}
This work was completed while MB was a postdoc at Warwick Statistics, funded by the 
EPSRC grant EP/V009478/1. AM and GR were also supported by EP/V009478/1. The authors would like to thank the Isaac Newton Institute for Mathematical Sciences, Cambridge, for support  during the INI satellite programme \textit{Heavy tails in machine learning}, hosted by The Alan Turing Institute, London, and the INI programme \textit{Stochastic systems for anomalous diffusion} hosted at INI in Cambridge, where work on this paper was undertaken. These programmes were supported by the EPSRC grant EP/R014604/1. AM was also supported by the EPSRC grant EP/W006227/1.

\bibliography{lower}

\newcommand{\etalchar}[1]{$^{#1}$}
\providecommand{\bysame}{\leavevmode\hbox to3em{\hrulefill}\thinspace}
\providecommand{\MR}{\relax\ifhmode\unskip\space\fi MR }
% \MRhref is called by the amsart/book/proc definition of \MR.
\providecommand{\MRhref}[2]{%
  \href{http://www.ams.org/mathscinet-getitem?mr=#1}{#2}
}
\providecommand{\href}[2]{#2}
\begin{thebibliography}{HMHBE24}

\bibitem[ALPW22]{MR4524509}
Christophe Andrieu, Anthony Lee, Sam Power, and Andi~Q. Wang, \emph{Comparison of {M}arkov chains via weak {P}oincar\'{e} inequalities with application to pseudo-marginal {MCMC}}, Ann. Statist. \textbf{50} (2022), no.~6, 3592--3618. \MR{4524509}

\bibitem[ALPW24]{MR4783036}
\bysame, \emph{Explicit convergence bounds for {M}etropolis {M}arkov chains: isoperimetry, spectral gaps and profiles}, Ann. Appl. Probab. \textbf{34} (2024), no.~4, 4022--4071. \MR{4783036}

\bibitem[APS19]{MR3910024}
Ari Arapostathis, Guodong Pang, and Nikola Sandri\'{c}, \emph{Ergodicity of a {L}\'{e}vy-driven {SDE} arising from multiclass many-server queues}, Ann. Appl. Probab. \textbf{29} (2019), no.~2, 1070--1126. \MR{3910024}

\bibitem[BE85]{MR889476}
D.~Bakry and Michel \'{E}mery, \emph{Diffusions hypercontractives}, S\'{e}minaire de probabilit\'{e}s, {XIX}, 1983/84, Lecture Notes in Math., vol. 1123, Springer, Berlin, 1985, pp.~177--206. \MR{889476}

\bibitem[BGL14]{MR3155209}
Dominique Bakry, Ivan Gentil, and Michel Ledoux, \emph{Analysis and geometry of {M}arkov diffusion operators}, Grundlehren der mathematischen Wissenschaften [Fundamental Principles of Mathematical Sciences], vol. 348, Springer, Cham, 2014. \MR{3155209}

\bibitem[BM24]{brešar2024subexponential}
Miha Brešar and Aleksandar Mijatović, \emph{Subexponential lower bounds for $f$-ergodic {M}arkov processes}, Probability Theory and Related Fields (2024), 58pp.

\bibitem[BM25]{brevsar2024non}
\bysame, \emph{{Nonasymptotic bounds for forward processes in denoising diffusions: Ornstein–Uhlenbeck is hard to beat}}, The Annals of Applied Probability \textbf{35} (2025), no.~6, 4439 -- 4463.

\bibitem[BMR25]{YouTube_talk}
Miha Bre\v{s}ar, Aleksandar Mijatovi\'c, and Gareth Roberts, \emph{Short {Y}ou{T}ube videos on ``{C}entral limit theorem for ergodic averages of {M}arkov chains with heavy-tailed stationary distributions''}, \href{https://youtu.be/m2y7U4cEqy4}{\underline{Part~I}: {T}heory and {ULA}-type samplers} and \href{https://youtu.be/w8I_oOweuko}{\underline{Part~II}: Applications to MCMC samplers} on the YouTube channel \href{https://www.youtube.com/@prob-am7844}{\underline{Prob-AM}}, 2025.

\bibitem[BZ17]{balcan2017sample}
Maria-Florina Balcan and Hongyang Zhang, \emph{Sample and computationally efficient learning algorithms under s-concave distributions}, Proceedings of the 31st International Conference on Neural Information Processing Systems, 2017, pp.~4799--4808.

\bibitem[CDV09]{MR2551019}
Karthekeyan Chandrasekaran, Amit Deshpande, and Santosh Vempala, \emph{Sampling s-concave functions: the limit of convexity based isoperimetry}, Approximation, randomization, and combinatorial optimization, Lecture Notes in Comput. Sci., vol. 5687, Springer, Berlin, 2009, pp.~420--433. \MR{2551019}

\bibitem[Che99]{Chen99}
Xia Chen, \emph{Limit theorems for functionals of ergodic {M}arkov chains with general state space}, Mem. Amer. Math. Soc. \textbf{139} (1999), no.~664, xiv+203. \MR{1491814}

\bibitem[DFM16]{Durmus16}
Alain Durmus, Gersende Fort, and \'{E}ric Moulines, \emph{Subgeometric rates of convergence in {W}asserstein distance for {M}arkov chains}, Ann. Inst. Henri Poincar\'{e} Probab. Stat. \textbf{52} (2016), no.~4, 1799--1822. \MR{3573296}

\bibitem[DFMS04]{MR2071426}
Randal Douc, Gersende Fort, Eric Moulines, and Philippe Soulier, \emph{Practical drift conditions for subgeometric rates of convergence}, Ann. Appl. Probab. \textbf{14} (2004), no.~3, 1353--1377. \MR{2071426}

\bibitem[DGM08]{MR2446322}
Randal Douc, Arnaud Guillin, and Eric Moulines, \emph{Bounds on regeneration times and limit theorems for subgeometric {M}arkov chains}, Ann. Inst. Henri Poincar\'{e} Probab. Stat. \textbf{44} (2008), no.~2, 239--257. \MR{2446322}

\bibitem[DKTZ20]{diakonikolas2020learning}
Ilias Diakonikolas, Vasilis Kontonis, Christos Tzamos, and Nikos Zarifis, \emph{Learning halfspaces with massart noise under structured distributions}, Conference on Learning Theory, PMLR, 2020, pp.~1486--1513.

\bibitem[DL18]{MR3843830}
George Deligiannidis and Anthony Lee, \emph{Which ergodic averages have finite asymptotic variance?}, Ann. Appl. Probab. \textbf{28} (2018), no.~4, 2309--2334. \MR{3843830}

\bibitem[DM17]{MR3678479}
Alain Durmus and \'{E}ric Moulines, \emph{Nonasymptotic convergence analysis for the unadjusted {L}angevin algorithm}, Ann. Appl. Probab. \textbf{27} (2017), no.~3, 1551--1587. \MR{3678479}

\bibitem[DMPS18]{MarkovChains}
Randal Douc, Eric Moulines, Pierre Priouret, and Philippe Soulier, \emph{Markov chains}, Springer Series in Operations Research and Financial Engineering, Springer, Cham, 2018. \MR{3889011}

\bibitem[FM00]{MR1796485}
Gersende Fort and Eric Moulines, \emph{{$V$}-subgeometric ergodicity for a {H}astings-{M}etropolis algorithm}, Statist. Probab. Lett. \textbf{49} (2000), no.~4, 401--410. \MR{1796485}

\bibitem[GB09]{genz2009computation}
Alan Genz and Frank Bretz, \emph{Computation of multivariate normal and t probabilities}, vol. 195, Springer Science \& Business Media, 2009.

\bibitem[GCS{\etalchar{+}}14]{MR3235677}
Andrew Gelman, John~B. Carlin, Hal~S. Stern, David~B. Dunson, Aki Vehtari, and Donald~B. Rubin, \emph{Bayesian data analysis}, third ed., Texts in Statistical Science Series, CRC Press, Boca Raton, FL, 2014. \MR{3235677}

\bibitem[GJPS08]{MR2655663}
Andrew Gelman, Aleks Jakulin, Maria~Grazia Pittau, and Yu-Sung Su, \emph{A weakly informative default prior distribution for logistic and other regression models}, Ann. Appl. Stat. \textbf{2} (2008), no.~4, 1360--1383. \MR{2655663}

\bibitem[GLM18]{MR3780427}
Joyee Ghosh, Yingbo Li, and Robin Mitra, \emph{On the use of {C}auchy prior distributions for {B}ayesian logistic regression}, Bayesian Anal. \textbf{13} (2018), no.~2, 359--383. \MR{3780427}

\bibitem[Hai09]{MR2540073}
Martin Hairer, \emph{How hot can a heat bath get?}, Comm. Math. Phys. \textbf{292} (2009), no.~1, 131--177. \MR{2540073}

\bibitem[HFBE24]{MR4723893}
Ye~He, Tyler Farghly, Krishnakumar Balasubramanian, and Murat~A. Erdogdu, \emph{Mean-square analysis of discretized {I}t\^{o} diffusions for heavy-tailed sampling}, J. Mach. Learn. Res. \textbf{25} (2024), Paper No. [43], 44. \MR{4723893}

\bibitem[HMHBE24]{heseparation}
Ye~He, Alireza Mousavi-Hosseini, Krishna Balasubramanian, and Murat~A Erdogdu, \emph{A separation in heavy-tailed sampling: Gaussian vs. stable oracles for proximal samplers}, The Thirty-eighth Annual Conference on Neural Information Processing Systems, 2024.

\bibitem[HMW21]{Majka21}
Lu-Jing Huang, Mateusz~B. Majka, and Jian Wang, \emph{Approximation of heavy-tailed distributions via stable-driven {SDE}s}, Bernoulli \textbf{27} (2021), no.~3, 2040--2068. \MR{4278802}

\bibitem[Jon04]{MR2068475}
Galin~L. Jones, \emph{On the {M}arkov chain central limit theorem}, Probab. Surv. \textbf{1} (2004), 299--320. \MR{2068475}

\bibitem[JR02]{MR1890063}
S\o ren~F. Jarner and Gareth~O. Roberts, \emph{Polynomial convergence rates of {M}arkov chains}, Ann. Appl. Probab. \textbf{12} (2002), no.~1, 224--247. \MR{1890063}

\bibitem[JR07]{Roberts07}
\bysame, \emph{Convergence of heavy-tailed {M}onte {C}arlo {M}arkov chain algorithms}, Scand. J. Statist. \textbf{34} (2007), no.~4, 781--815. \MR{2396939}

\bibitem[JT03]{MR1996270}
S\o ren~F. Jarner and Richard~L. Tweedie, \emph{Necessary conditions for geometric and polynomial ergodicity of random-walk-type {M}arkov chains}, Bernoulli \textbf{9} (2003), no.~4, 559--578. \MR{1996270}

\bibitem[Kam18]{MR3788187}
Kengo Kamatani, \emph{Efficient strategy for the {M}arkov chain {M}onte {C}arlo in high-dimension with heavy-tailed target probability distribution}, Bernoulli \textbf{24} (2018), no.~4B, 3711--3750. \MR{3788187}

\bibitem[KM12]{MR2981426}
I.~Kontoyiannis and S.~P. Meyn, \emph{Geometric ergodicity and the spectral gap of non-reversible {M}arkov chains}, Probab. Theory Related Fields \textbf{154} (2012), no.~1-2, 327--339. \MR{2981426}

\bibitem[KN04]{MR2038227}
Samuel Kotz and Saralees Nadarajah, \emph{Multivariate {$t$} distributions and their applications}, Cambridge University Press, Cambridge, 2004. \MR{2038227}

\bibitem[KT17]{MR3737912}
Gady Kozma and B\'{a}lint T\'{o}th, \emph{Central limit theorem for random walks in doubly stochastic random environment: {$\mathcal H_{-1}$} suffices}, Ann. Probab. \textbf{45} (2017), no.~6B, 4307--4347. \MR{3737912}

\bibitem[KV86]{MR834478}
C.~Kipnis and S.~R.~S. Varadhan, \emph{Central limit theorem for additive functionals of reversible {M}arkov processes and applications to simple exclusions}, Comm. Math. Phys. \textbf{104} (1986), no.~1, 1--19. \MR{834478}

\bibitem[LBBG19]{livingstone2019geometric}
Samuel Livingstone, Michael Betancourt, Simon Byrne, and Mark Girolami, \emph{On the geometric ergodicity of {H}amiltonian {M}onte {C}arlo}, Bernoulli \textbf{25} (2019), no.~4A, 3109--3138.

\bibitem[Maj17]{Majka17}
Mateusz~B. Majka, \emph{Coupling and exponential ergodicity for stochastic differential equations driven by {L}\'{e}vy processes}, Stochastic Process. Appl. \textbf{127} (2017), no.~12, 4083--4125. \MR{3718107}

\bibitem[MMW21]{Menshikov21}
Mikhail~V. Menshikov, Aleksandar Mijatovi\'{c}, and Andrew~R. Wade, \emph{Reflecting random walks in curvilinear wedges}, In and out of equilibrium 3. {C}elebrating {V}ladas {S}idoravicius, Progr. Probab., vol.~77, Birkh\"{a}user/Springer, Cham, [2021] \copyright 2021, pp.~637--675. \MR{4237286}

\bibitem[MT93]{meynadntweedie}
S.~P. Meyn and R.~L. Tweedie, \emph{Markov chains and stochastic stability}, Communications and Control Engineering Series, Springer-Verlag London, Ltd., London, 1993. \MR{1287609}

\bibitem[MT96]{Mengersen96}
K.~L. Mengersen and R.~L. Tweedie, \emph{Rates of convergence of the {H}astings and {M}etropolis algorithms}, Ann. Statist. \textbf{24} (1996), no.~1, 101--121. \MR{1389882}

\bibitem[MW00]{MR1782272}
Michael Maxwell and Michael Woodroofe, \emph{Central limit theorems for additive functionals of {M}arkov chains}, Ann. Probab. \textbf{28} (2000), no.~2, 713--724. \MR{1782272}

\bibitem[NSR19]{nguyen2019non}
Than~Huy Nguyen, Umut Simsekli, and Ga{\"e}l Richard, \emph{Non-asymptotic analysis of fractional langevin monte carlo for non-convex optimization}, International Conference on Machine Learning, PMLR, 2019, pp.~4810--4819.

\bibitem[Num84]{MR776608}
Esa Nummelin, \emph{General irreducible {M}arkov chains and nonnegative operators}, Cambridge Tracts in Mathematics, vol.~83, Cambridge University Press, Cambridge, 1984. \MR{776608}

\bibitem[ODM24]{MR4704569}
Alain Oliviero-Durmus and \'{E}ric Moulines, \emph{On geometric convergence for the {M}etropolis-adjusted {L}angevin algorithm under simple conditions}, Biometrika \textbf{111} (2024), no.~1, 273--289. \MR{4704569}

\bibitem[Oec24]{MR4665874}
David Oechsler, \emph{L\'{e}vy {L}angevin {M}onte {C}arlo}, Stat. Comput. \textbf{34} (2024), no.~1, Paper No. 37, 15. \MR{4665874}

\bibitem[PBEM23]{provost2023adaptive}
Mathieu~Le Provost, Ricardo Baptista, Jeff~D Eldredge, and Youssef Marzouk, \emph{An adaptive ensemble filter for heavy-tailed distributions: tuning-free inflation and localization}, arXiv preprint arXiv:2310.08741 (2023), 28pp.

\bibitem[QM16]{MR3538368}
Di~Qi and Andrew~J. Majda, \emph{Predicting fat-tailed intermittent probability distributions in passive scalar turbulence with imperfect models through empirical information theory}, Commun. Math. Sci. \textbf{14} (2016), no.~6, 1687--1722. \MR{3538368}

\bibitem[Ros95]{MR1340509}
Jeffrey~S. Rosenthal, \emph{Minorization conditions and convergence rates for {M}arkov chain {M}onte {C}arlo}, J. Amer. Statist. Assoc. \textbf{90} (1995), no.~430, 558--566. \MR{1340509}

\bibitem[RT96a]{MR1440273}
Gareth~O. Roberts and Richard~L. Tweedie, \emph{Exponential convergence of {L}angevin distributions and their discrete approximations}, Bernoulli \textbf{2} (1996), no.~4, 341--363. \MR{1440273}

\bibitem[RT96b]{roberts1996geometric}
Gareth~O Roberts and Richard~L Tweedie, \emph{Geometric convergence and central limit theorems for multidimensional hastings and metropolis algorithms}, Biometrika \textbf{83} (1996), no.~1, 95--110.

\bibitem[RW01]{MR1856277}
Michael R\"{o}ckner and Feng-Yu Wang, \emph{Weak {P}oincar\'{e} inequalities and {$L^2$}-convergence rates of {M}arkov semigroups}, J. Funct. Anal. \textbf{185} (2001), no.~2, 564--603. \MR{1856277}

\bibitem[SAP22]{MR4429313}
Nikola Sandri\'{c}, Ari Arapostathis, and Guodong Pang, \emph{Subexponential upper and lower bounds in {W}asserstein distance for {M}arkov processes}, Appl. Math. Optim. \textbf{85} (2022), no.~3, Paper No. 24, 45. \MR{4429313}

\bibitem[Sat13]{MR3185174}
Ken-iti Sato, \emph{L\'{e}vy processes and infinitely divisible distributions}, Cambridge Studies in Advanced Mathematics, vol.~68, Cambridge University Press, Cambridge, 2013, Translated from the 1990 Japanese original, Revised edition of the 1999 English translation. \MR{3185174}

\bibitem[SP15]{MR3389843}
Prashant~D. Sardeshmukh and C\'{e}cile Penland, \emph{Understanding the distinctively skewed and heavy tailed character of atmospheric and oceanic probability distributions}, Chaos \textbf{25} (2015), no.~3, 036410, 10. \MR{3389843}

\bibitem[ST99a]{MR1730651}
O.~Stramer and R.~L. Tweedie, \emph{Langevin-type models. {I}. {D}iffusions with given stationary distributions and their discretizations}, Methodol. Comput. Appl. Probab. \textbf{1} (1999), no.~3, 283--306. \MR{1730651}

\bibitem[ST99b]{MR1730652}
\bysame, \emph{Langevin-type models. {II}. {S}elf-targeting candidates for {MCMC} algorithms}, Methodol. Comput. Appl. Probab. \textbf{1} (1999), no.~3, 307--328. \MR{1730652}

\bibitem[SZTG20]{simsekli2020fractional}
Umut Simsekli, Lingjiong Zhu, Yee~Whye Teh, and Mert Gurbuzbalaban, \emph{Fractional underdamped langevin dynamics: Retargeting sgd with momentum under heavy-tailed gradient noise}, International conference on machine learning, PMLR, 2020, pp.~8970--8980.

\bibitem[TT94]{MR1285459}
Pekka Tuominen and Richard~L. Tweedie, \emph{Subgeometric rates of convergence of {$f$}-ergodic {M}arkov chains}, Adv. in Appl. Probab. \textbf{26} (1994), no.~3, 775--798. \MR{1285459}

\bibitem[Vil09]{Villani09}
C\'{e}dric Villani, \emph{Optimal transport}, Grundlehren der mathematischen Wissenschaften [Fundamental Principles of Mathematical Sciences], vol. 338, Springer-Verlag, Berlin, 2009, Old and new. \MR{2459454}

\bibitem[Wil91]{MR1155402}
David Williams, \emph{Probability with martingales}, Cambridge Mathematical Textbooks, Cambridge University Press, Cambridge, 1991. \MR{1155402}

\bibitem[Y{\L}R24]{Yang24}
Jun Yang, Krzysztof {\L}atuszy\'{n}ski, and Gareth~O. Roberts, \emph{Stereographic {M}arkov chain {M}onte {C}arlo}, Ann. Statist. \textbf{52} (2024), no.~6, 2692--2713. \MR{4842823}

\bibitem[ZZ23]{MR4580902}
Xiaolong Zhang and Xicheng Zhang, \emph{Ergodicity of supercritical {SDE}s driven by {$\alpha$}-stable processes and heavy-tailed sampling}, Bernoulli \textbf{29} (2023), no.~3, 1933--1958. \MR{4580902}

\end{thebibliography}
\bibliographystyle{amsalpha}
\end{document}